\newcommand{\ita}[1]{\textit{#1}}
\newcommand*\diff{\mathop{}\!\textup{d}}
\newcommand{\col}{\colon \thinspace}
\newcommand{\pr}{\textup{pr}}
\newcommand{\vol}{\textup{vol}}
\newcommand{\End}{\textup{End}}
\newcommand{\Ad}{\textup{Ad}}
\newcommand{\ad}{\textup{ad}}
\newcommand{\e}{\textup{e}}
\newcommand{\id}{\textup{id}}
\newcommand{\diag}{\textup{diag}}
\newcommand{\VertC}[2]{\Vert_{C^{{#1}}{#2}}}
\newcommand{\VertWC}[3]{\Vert_{C^{{#1}}_{{#2}}{#3}}}
\newcommand{\VertWCt}[3]{\Vert_{C^{{#1}}_{{#2},t}{#3}}}
\newcommand{\VertH}[2]{\Vert_{C^{{#1},\alpha}{#2}}}
\newcommand{\VertWH}[3]{\Vert_{C^{{#1},\alpha}_{{#2}}{#3}}}
\newcommand{\WsH}[3]{]_{C^{{#1},\alpha}_{{#2}}{#3}}}
\newcommand{\VertWHt}[3]{\Vert_{C^{{#1},\alpha}_{{#2},t}{#3}}}
\newcommand{\WsHt}[3]{]_{C^{{#1},\alpha}_{{#2},t}{#3}}}
\newcommand{\im}{\textup{Im}\thinspace }
\newcommand{\re}{\textup{Re}\thinspace }
\newcommand{\piorb}{\mathop{}\pi_1^{\textup{orb}}}
\newcommand{\G}{\textup{G}}
\newcommand{\SO}{\textup{SO}}
\renewcommand{\O}{\textup{O}}
\newcommand{\SU}{\textup{SU}}
\newcommand{\GL}{\textup{GL}}
\newcommand{\U}{\textup{U}}
\renewcommand{\Im}{\textup{Im}\thinspace}
\renewcommand{\Re}{\textup{Re}\thinspace}
\renewcommand{\H}{\textup{H}}
\title{A smooth family of $\G_2$-instantons over a generalised Kummer construction}
\author{Dominik Gutwein}
\numberwithin{equation}{section}
\newcommand{\mynewtheorem}[2]{
  \newaliascnt{#1}{dummycounter}
  \newtheorem{#1}[#1]{#2}
  \aliascntresetthe{#1}
  \expandafter\def\csname #1autorefname\endcsname{#2}
}
\theoremstyle{plain}
\theoremstyle{definition}
\theoremstyle{remark}
\begin{document}
\maketitle

\begin{abstract}
We construct a smooth 1-parameter family of $\G_2$-instantons over a generalised Kummer construction desingularising the $\G_2$-orbifold discovered in \cite[Example~18]{Joyce-GeneralisedKummer2}. For this we extend the gluing construction for $\G_2$-instantons developed in \cite{Walpuski-InstantonsKummer} to Kummer constructions resolving $\G_2$-orbifolds whose singular strata are of codimension 6 and to connections (and entire families of connections) whose linearised instanton operator has a non-trivial cokernel. In order to overcome the corresponding obstructions, we utilise a $\mathbb{Z}_2$-action on the ambient manifold. More precisely, we perturb the (family of) pre-glued almost-instantons inside the class of $\mathbb{Z}_2$-invariant connections, which has the advantage that only the $\mathbb{Z}_2$-invariant locus of the cokernel needs to vanish. We then prove that the instantons that we construct over the resolution of the orbifold in \cite[Example~18]{Joyce-GeneralisedKummer2} are all infinitesimally rigid and non-flat. Moreover, we show that the resulting curve into the moduli space of $\G_2$-instantons modulo gauge is injective, that is, no two distinct instantons within the family are gauge-equivalent. To the author's knowledge this is the first example of a smooth 1-parameter family of instantons over a compact $\G_2$-manifold.
\end{abstract}

\section{Introduction}

A $\G_2$-instanton is a connection $A$ on a principal bundle $P \to Y$ over a $\G_2$-manifold $(Y,\phi)$ whose curvature $F_A\in \Omega^2(Y,\mathfrak{g}_P)$ satisfies the equation \[ F_A \wedge \psi =0 \] where $\psi \coloneqq * \phi$ is the coassociative 4-form on $Y$. These $\G_2$-instantons first appeared in the physics literature \cite{CorriganEtAl-Omega-ASD} and are absolute minima of the Yang--Mills functional (and therefore Yang--Mills connections). A slightly alternative (but equivalent) perspective is that $\G_2$-instantons are the critical points of a $\G_2$-analogue of the Chern--Simons functional on 3-manifolds~\cite{DonaldsonThomas-HigherDimGaugeTheory}. Since its suggestion by Donaldson and Thomas~\cite{DonaldsonThomas-HigherDimGaugeTheory}, one of the driving themes in the theory of special holonomy has been the possibility of constructing enumerative invariants based on such instantons (formally mirroring the familiar Casson-Floer picture for low dimensional manifolds; cf.~\cite{DonaldsonSegal-Gauge}). However, a rigorous construction of such an invariant (even a complete understanding of the list of its ingredients; cf. \cite[Section~6.2]{DonaldsonSegal-Gauge}, \cite[Chapter~6]{Walpuski-Diss}, \cite{Haydys-G2Instantons_and_SW}, \cite{DoanWalpuski-countingAssociatives}) is met with great analytical difficulties mainly arising from various non-compactness phenomena (cf. \cite{Tian-Gauge+Calibrations}, \cite{Walpuski-instantons-associatives-fueter}, \cite{Bera-CSAssociatives}). 

At the same time, the construction of concrete examples of $\G_2$-instantons, which could shed light onto the shape of such a potential invariant and could one day help to compute it, remains a difficult task. In this article we add to the list of known examples by constructing a smooth 1-parameter family of $\G_2$-instantons over a compact $\G_2$-manifold that arises via Joyce's generalised Kummer construction. The deformation theory of $\G_2$-instantons is governed by an elliptic operator, which in the examples constructed in this article has a 1-dimensional kernel (due to the deformations within the family). Since the Fredholm index of any elliptic operator over an odd-dimensional compact manifold is zero, the instantons constructed in this article have therefore a one-dimensional obstruction space. However, the obstructions are anti-invariant under a $\mathbb{Z}_2$-action on the underlying manifold and the constructed instantons are therefore '$\mathbb{Z}_2$-unobstructed'. To the author's knowledge this is the first example of a smooth 1-parameter family of instantons over a compact manifold and shows, in particular, that in the presence of non-trivial group actions instantons may not lie isolated in their moduli space (even when equivariant transversality holds).
 
\textbf{The generalised Kummer construction:} This is historically the first of currently three construction methods (\cite{Joyce-GeneralisedKummer2}, \cite{Kovalev-TCS}, and \cite{JoyceKarigiannis-Kummer}) for producing compact $\G_2$-manifolds and was developed by Joyce in~\cite{Joyce-GeneralisedKummer1, Joyce-GeneralisedKummer2}. It starts with a flat $\G_2$-orbifold, whose singular set is well-behaved (see \autoref{Ass:Codim6Singularities}). The construction then requires a set of resolution data, which in the context of this article roughly consists of a collection of Calabi--Yau 3-folds -- each one resolving the transverse singularity model for one singular stratum of the orbifold -- with an asymptotically locally Euclidean (ALE) geometry. At the topological level, the generalised Kummer construction produces a smooth manifold $\hat{Y}$ by replacing a neighbourhood of each singular stratum by a neighborhood in a certain fiber bundle whose fiber consists of the respective Calabi--Yau 3-fold in the resolution data. Geometrically, it first produces a $\G_2$-structure $\tilde{\phi} \in \Omega^3(\hat{Y})$ with small torsion which is then corrected to a torsion-free $\G_2$-structure $\phi \in \Omega^3(\hat{Y})$ via a singular perturbation. In order to apply this perturbation-scheme the fiberwise geometry of the glued-in Calabi--Yau bundle needs to be scaled down. In fact, by decreasing the scale-factor further, the Kummer construction produces a 1-parameter family of compact $\G_2$-manifolds\footnote{Technically, the smooth manifold $\hat{Y}$ in the construction also depends on $t$. However, since its diffeomorphism-type
remains constant for $t \in (0,\varepsilon)$, we will regard it in this section as one fixed manifold.} $(\hat{Y},\phi_t)_{t\in (0,\varepsilon)}$ that degenerates for $t\to 0$ back to the original orbifold. 

\textbf{Summary of results:} This article constructs $\G_2$-instantons over generalised Kummer constructions by perturbing connections which are in a quantified sense already close to being instantons. These almost-instantons are constructed from so-called gluing data (specified in \autoref{def:gluing_data}), which roughly consist of a flat connection over the orbifold and an asymptotically flat Hermitian Yang--Mills connection over each resolving ALE Calabi--Yau manifold. In fact, we allow for a family of such gluing data, producing a smooth family of almost-instantons. Moreover, when deforming these almost-instantons to genuine instantons we make use of the fact that many generalised Kummer constructions admit finite group actions preserving the coassociative 4-form (cf. \autoref{rem:EquivariantGeneralisedKummer}). If the gluing data is compatible with such a group action (in the sense of \autoref{rem:equiv_gluing_data}), then the almost-instantons are also invariant and can subsequently be deformed inside the class of invariant connections. This has the advantage that in order to perturb these connections, one only needs to show that the invariant part of the obstruction space associated to the linearised instanton operator vanishes. Summarising this discussion, our main theorem can schematically be stated as follows:

\begin{introtheorem}[\autoref{theo:perturbing_almost_instantons}, \autoref{prop:pregluing}, \autoref{prop:pregluing-family}, and \autoref{prop:LinearEstimate}]\label{introthm:G2-instantons-existence} 
Let $(\hat{Y},\phi_t)_{t\in (0,\varepsilon)}$ be a degenerating family of $\G_2$-manifolds arising from the generalised Kummer construction resolving an orbifold whose singular strata are all of codimension 6.\footnote{We concentrate on orbifolds with codimension 6 singularities in order to complement the constructions in \cite{Walpuski-InstantonsKummer} and \cite{Platt-G2Instantons}. However, most of our results can easily be adapted for orbifolds with codimension 4 singularities.} Furthermore, let $H$ be a finite (possibly trivial) group acting on $\hat{Y}$ as in \autoref{rem:EquivariantGeneralisedKummer}. For an $\mathfrak{F}$-family of $H$-equivariant gluing data (as defined in \autoref{def:gluing_data}, \autoref{rem:equiv_gluing_data}, and \autoref{prop:pregluing-family}), where $\mathfrak{F}\subset \mathbb{R}$ is a compact interval\footnote{We restrict to intervals because this amounts to a somewhat simpler notation and is the relevant case for our example. However, a generalisation of these results to any compact manifold $\mathfrak{F}$ (possibly with boundary) is straight-forward.}, there exists for each $t\in (0,\varepsilon)$ a bundle $\pi \col \hat{P} \to \hat{Y}$ together with a smooth $\mathfrak{F}$-family of $H$-invariant connections $(\tilde{A}_{t,\mathfrak{f}})_{\mathfrak{f}\in \mathfrak{F}} \subset \mathcal{A}(\hat{P})$ which are (in a quantified sense) close to being $\G_2$-instantons over $(\hat{Y},\phi_t)$. If the gluing data additionally satisfies \autoref{ass:invertible_linearisations} with respect to the action of $H$, then there exists an $\varepsilon^\prime$ such that for each $t\in (0,\varepsilon^\prime)$, the family of connection $(\tilde{A}_{t,\mathfrak{f}})_{\mathfrak{f}\in \mathfrak{F}}$ can be deformed to a smooth family of $\G_2$-instantons.
\end{introtheorem}

We then apply this theorem to prove the following:
\begin{introtheorem}[\autoref{prop:example18-existence-instantons}, \autoref{prop:example18-non-flat-instantons}, and \autoref{prop:example18-non-gauge-equivalent-instantons}]\label{introprop:instantons-examples}
Let $(Y_0,\phi_0)$ be the $\G_2$-orbifold constructed in \cite[Example~18]{Joyce-GeneralisedKummer2} and let $\mathfrak{F}\subset \mathbb{R}\setminus \pi \mathbb{Z}$ be a compact interval. There exists a $\G_2$-manifold $(\hat{Y},\phi_t)$ arising from $(Y_0,\phi_0)$ via the generalised Kummer construction and an $\SO(14)$-bundle $\pi\col \hat{P} \to \hat{Y}$ such that for sufficiently small $t>0$ there is a smooth $\mathfrak{F}$-family of $\G_2$-instantons $(A_{t,\mathfrak{f}})_{\mathfrak{f}\in \mathfrak{F}} \subset \mathcal{A}(\hat{P})$. All instantons in this family are infinitesimally irreducible (in the sense of \autoref{def:infinitesimally-irreducible}) and non-flat. Furthermore, the induced curve $\mathfrak{F}\to \mathcal{A}(\hat{P})/\mathcal{G}, \mathfrak{f}\mapsto [A_{t,\mathfrak{f}}]$ into the space of connections modulo gauge is injective.
\end{introtheorem}

\textbf{Comparison to previous constructions:} To the author's knowledge, there are three prior methods for constructing $\G_2$-instantons over compact $\G_2$-manifolds developed in \cite{Walpuski-InstantonsKummer}, \cite{WalpuskiSaEarp-Instantons_over_TCS}, \cite{Platt-G2Instantons}. Two of these methods (\cite{Walpuski-InstantonsKummer} and \cite{Platt-G2Instantons}) focus on manifolds arising as orbifold resolutions and are in spirit very similar to our construction. In fact, in this article we extend the construction method originally developed in \cite{Walpuski-InstantonsKummer} (which in the situation that we are considering becomes the same as the method in \cite{Platt-G2Instantons}) to Kummer constructions resolving orbifolds with codimension 6 singularities, to families of connections, and, in particular, to connections whose linearised instanton-operator has a non-trivial cokernel. As described above, we overcome the corresponding obstructions in the perturbation-scheme by utilizing symmetries of the underlying manifold. A somewhat similar technique has recently been developed to produce associative submanifolds in $\G_2$-manifolds arising out of the generalised Kummer construction \cite{DPSDTW-Associatives}. The fact that we perturb families of connections causes additional work compared to the construction in \cite{Walpuski-InstantonsKummer}. This can for example be seen in \autoref{sec: instantons over Kummer} where additional work is needed to prove the smoothness of the deformed family (regarded as a map from an interval $\mathfrak{F} \subset \mathbb{R}$ into the space of connections equipped with its Fréchet space structure) and to derive estimates on the derivatives of this family. These estimates are needed later in \autoref{prop:example18-non-gauge-equivalent-instantons} to prove that the constructed path into the space of connections modulo gauge is injective.

\textbf{Outline of the paper:} In \autoref{sec: Background} we recall the required background on the generalised Kummer construction, the construction of ALE Calabi--Yau 3-folds, and asymptotically flat Hermitian Yang--Mills connections over such ALE Calabi--Yau 3-folds. 

In \autoref{sec: instantons over Kummer} we prove with \autoref{theo:perturbing_almost_instantons} an abstract existence theorem for $\G_2$-instantons over generalised Kummer constructions. It roughly states that whenever an $\mathfrak{F}$-family of $H$-invariant connections $(\tilde{A}_{\mathfrak{f}})_{\mathfrak{f} \in \mathfrak{F}}$ (for $\mathfrak{F}\subset \mathbb{R}$ an interval and $H$ a finite group acting on $\hat{Y}$ preserving $* \phi_t$) is in a quantified sense sufficiently close to being a family of $\G_2$-instanton and if the $H$-invariant cokernel of the linearised instanton operators associated to each $\tilde{A}_{\mathfrak{f}}$ vanishes, then $(\tilde{A}_{\mathfrak{f}})_{\mathfrak{f} \in \mathfrak{F}}$ can be perturbed to a nearby family of $H$-invariant instantons. Moreover, we prove that this family of instantons has the same order of differentiability (seen as a map from $\mathfrak{F}$ into the space of connections) as the initial family of connections and establish estimates on the derivatives of this family. Ultimately, we give a sufficient condition for the perturbed connections to be infinitesimally irreducible. 

In \autoref{sec:approxG2instantons_from_gluingdata} we then give the construction of the family of model connections to which we later apply \autoref{theo:perturbing_almost_instantons}. \autoref{prop:pregluing} and \autoref{prop:pregluing-family} also prove some of the estimates required for the application of \autoref{theo:perturbing_almost_instantons} and  \autoref{prop: derivative estimates on deformed family of instantons}. 

\autoref{sec: linear analysis} studies the linearised instanton operator associated to the connections constructed in the previous section. It ultimately establishes the last missing condition of \autoref{theo:perturbing_almost_instantons} (the linear estimate) under \autoref{ass:invertible_linearisations} on the gluing data. 

\autoref{sec:Example18} considers the $\G_2$-orbifold discovered in~\cite[Example~18]{Joyce-GeneralisedKummer2}, finds a resolution thereof together with an $\mathbb{Z}_2$-action, and a 1-parameter family of $\mathbb{Z}_2$-equivariant gluing data to which \autoref{introthm:G2-instantons-existence} above can be applied. Furthermore, it proves that the resulting family of instantons consists of elements that are non-flat, infinitesimally rigid, and pairwise not gauge-equivalent.

\subsection{Acknowledgements}

This article is an extension of Chapter 4 of my PhD-thesis \cite{Gutwein-Diss} and I am very grateful to my PhD-supervisor Thomas Walpuski for his endless support and guidance. I would also like to express my gratitude to Lorenzo Foscolo and Sebastian Goette for their many helpful suggestions and remarks on my thesis. Furthermore, I am thankful to Gorapada Bera, Thibault Langlais, Viktor Majewski, and Jacek Rzemieniecki for their constructive tips, helpful discussions, and proofreading. I would also like to thank Mateo Galdeano and Daniel Platt for answering my questions on \cite{GaldeanoPlattTanakaWang-spin7instantons} and for their valuable advice on finding instantons. Moreover, I am grateful to Benoit Charbonneau for his interest in this work and helpful discussions on invariant instantons. While working on this article, I was supported by the Simons Collaboration “Special Holonomy in Geometry, Analysis, and Physics” (during my PhD) and later by the Deutsche Forschungsgemeinschaft (DFG, German Research Foundation) under SFB-Geschäftszeichen 1624 – Projektnummer 506632645. 

\subsection{Notation}

In this article we use the following notation: 
\begin{itemize}
\item If $E\to Y$ is a vector bundle, then we denote for $k\in \mathbb{N}_0$ and $\alpha \in (0,1)$ by $C^{k,\alpha}(E)$ the space of sections of $E$ which are of regularity $C^{k,\alpha}$.
\item For two Banach spaces $B_1,B_2$ we will denote the Banach space of bounded linear maps $B_1 \to B_2$ by $\textup{Lin}(B_1,B_2)$.
\item $c>0$ will denote a generic constant whose value may change from one appearance to the next. This constant will always be $t$-independent, except once in the proof of \autoref{theo:perturbing_almost_instantons} where this dependence is noted.
\item If $Y$ carries a Riemannian metric $g$, then for any vector $v \in TY$ we will denote the associated covector by $v^\flat \coloneqq g(v,\cdot) \in T^*Y$. Similarly, we will denote for a covector $a \in T^*Y$ by $a^\sharp \in TY$ the vector defined by $a = g(a^\sharp,\cdot)$.
\end{itemize}

\section{Background} \label{sec: Background}

In this section we first review the necessary background on Joyce's generalised Kummer construction and explain in \autoref{sec:Ricci-Flat ALE metrics on crepant resolutions} a method for finding ALE Calabi--Yau 3-folds, which play an integral part in this construction. In the last section we discuss Hermitian Yang--Mills connections over ALE Calabi--Yau 3-folds which are needed for the construction of $\G_2$-instantons later in this article.

\subsection{Joyce's generalised Kummer construction}\label{Sec:generalisedKummer}

This section explains the generalised Kummer construction, a method developed (and extended) by Joyce~\cite{Joyce-GeneralisedKummer1,Joyce-GeneralisedKummer2,Joyce-Black} to produce $\G_2$-manifolds as desingularisations of certain flat $\G_2$-orbifolds. This section follows the presentations in~\cite[Section~2]{DPSDTW-Associatives}, \cite[Section~2.1]{Gutwein-coassociatives}, and \cite[Chapter~2]{Gutwein-Diss} very closely. The following class of examples fixes our conventions and serves as model for the $\G_2$-manifolds (and orbifolds) considered in this article:
\begin{example}\label{ex_G2Model}
Let $(Z,\omega,\Omega)$ be a Calabi--Yau manifold or orbifold of complex dimension 3 with Kähler form $\omega \in \Omega^2(Z)$ and holomorphic volume form $\Omega \in \Omega^3(Z,\mathbb{C})$. These are related via $\frac{1}{6}\omega^3 = \frac{i}{8}\Omega\wedge\bar{\Omega}$. 
\begin{enumerate}
\item The product $\mathbb{R} \times Z$ carries a torsion-free $\G_2$-structure defined by
\begin{equation}\label{eq:modelG2Structure}
\phi \coloneqq \diff s \wedge \omega + \im \Omega \in \Omega^3(\mathbb{R} \times Z)
\end{equation}
where $s$ denotes the coordinate on $\mathbb{R}$. The corresponding coassociative $4$-form is given by $\psi = \frac{1}{2}\omega	\wedge \omega + \diff s \wedge \re \Omega$.
\item Assume that $\textup{Crys}< \O(\mathbb{R}) \ltimes  \mathbb{R}$ is a crystallographic group that acts on $Z$ via $\rho \col \textup{Crys} \to \textup{Isom}(Z,g_\omega)$ such that for any $(\pm 1,v) \in \textup{Crys}$
\begin{equation}\label{eq:CalabiYauEquivariance}
\rho(\pm 1,v)^* \omega = \pm \omega \quad \textup{and} \quad \rho(\pm 1,v)^*\im\Omega =\im \Omega. 
\end{equation}
The $3$-form $\phi$ is invariant under the product action on $\mathbb{R} \times Z$ and descends therefore to a torsion-free $\G_2$-structure on the quotient $Y\coloneqq (\mathbb{R} \times Z)/\textup{Crys}$. By a slight abuse of notation we denote the corresponding $3$-form on $Y$ by $\phi$ as well. Note that whenever $\textup{Crys}$ is a Bieberbach group (i.e. a 1-dimensional lattice) then the action is free and taking the quotient does not introduce additional singularities in $Y$.
\end{enumerate}  
\end{example}

This article concentrates on $\G_2$-orbifolds that satisfy the following assumption:

\begin{assumption}\label{Ass:Codim6Singularities}
Let $(Y_0,\phi_0)$ be a compact flat $\G_2$-orbifold and denote by $\mathcal{S}$ the set of connected components of the singular set of $Y_0$. We assume that for every $S \in \mathcal{S}$ there exist
\begin{enumerate}
\item A finite subgroup $\Gamma_S < \SU(3)$ which acts freely on $\mathbb{C}^3\setminus\{0\}$, a lattice $\Lambda_S< \mathbb{R}$, and a group action $\rho_S \col \Lambda_S \to N_{\O(\mathbb{C}^3)}(\Gamma_S)/\Gamma_S \subset \textup{Isom}(\mathbb{C}^3/\Gamma_S,g_0)$ that satisfies \eqref{eq:CalabiYauEquivariance}. Denote by \[(Y_S \coloneqq (\mathbb{R} \times \mathbb{C}^3/\Gamma_S)/\Lambda_S,\phi_S)\] the corresponding $\G_2$-orbifold from \autoref{ex_G2Model}. 
\item An open set \[ \mathcal{V}_{S} \coloneqq (\mathbb{R} \times B_{\kappa}(0)/\Gamma_S)/\Lambda_S \subset Y_S \] for a fixed $\kappa>0$ and an open embedding $\mathtt{J}_S \col \mathcal{V}_{S} \to Y_0$ with $S \subset \mathtt{J}_S(\mathcal{V}_{S})$ and $\mathtt{J}_S^*\phi_0 = \phi_S$. The parameter $\kappa$ is chosen such that $\overline{\mathtt{J}_{S_1}(\mathcal{V}_{S_1})}\cap \overline{\mathtt{J}_{S_2}(\mathcal{V}_{S_2})} = \emptyset $ for any two $S_1 \neq S_2 \in \mathcal{S}$.
\end{enumerate}
\end{assumption}
\begin{remark}\label{rem: finite subgroups of SU(3)}
From the classification of finite subgroups of $\SU(3)$ (see~\cite[Chapter~1]{YauYung-3D_Gorenstein_singularities} and the references therein) follows that if $\Gamma_S<\SU(3)$ acts freely on $\mathbb{C}^3\setminus \{0\}$, then $\Gamma_S$ is in fact abelian (see~\cite[Theorem~23]{YauYung-3D_Gorenstein_singularities}).
\end{remark}

\begin{definition}[{cf.~\cite[Definition~11.4.1]{Joyce-Black} and~\cite[Definition~2.6]{DPSDTW-Associatives}}]\label{def:RData}
Let $(Y_0,\phi_0)$ be a flat $\G_2$-orbifold satisfying \autoref{Ass:Codim6Singularities}. A set of resolution data consists for every $S \in \mathcal{S}$ of the following:
\begin{enumerate}
\item A fixed choice of $(\Gamma_S, \Lambda_S,\rho_S,\mathtt{J}_S)$ as in \autoref{Ass:Codim6Singularities}.
\item \label{def:ALE-space}  An asymptotically locally Euclidean Calabi--Yau manifold $(\hat{Z}_S,\tau_S,\hat{\omega}_S,\hat{\Omega}_S)$ (also ALE Calabi--Yau manifold for short) asymptotic to $\mathbb{C}^3/\Gamma_S$. That is, a resolution $\tau_S \col \hat{Z}_S \to \mathbb{C}^3/\Gamma_S$ together with a Calabi--Yau structure $(\hat{\omega}_S,\hat{\Omega}_S)$ on $\hat{Z}_S$ that satisfies 
\begin{equation}\label{eq:ALE-Kählerform_decay}
\big\vert \nabla^k \big({\tau_S}_*\hat{\omega}_S - \omega_0\big) \big\vert = \mathcal{O}(r^{-6-k}) \textup{ as $r \to \infty$}
\end{equation} 
where $\omega_0$ denotes the flat Kähler structure on $\mathbb{C}^3/\Gamma_S$. The norm and covariant derivatives are hereby taken with respect to the flat metric on $(\mathbb{C}^3\setminus\{0\})/\Gamma_S$.
\item \label{bul: lifting group action in definition of R-data} A group action $\hat{\rho}_S \col \Lambda_S \to \textup{Isom}(\hat{Z}_S,g_{\hat{\omega}_S})$ which leaves $(\hat{\omega}_S,\hat{\Omega}_S)$ invariant (in the sense of \eqref{eq:CalabiYauEquivariance}) and makes $\tau_S$ equivariant.
\end{enumerate}
\end{definition}
\begin{remark}
Regarding the previous definition:
\begin{enumerate}
\item \label{bul:ALE-asymptotic-coordinates} In the definition of an ALE-manifold one can relax the condition on $\tau_S$ to be defined on the entire manifold $\hat{Z}_S$. In fact, it suffices that $\tau_S \col (\hat{Z}_S\setminus \hat{K}_S) \to (\mathbb{C}^3 \setminus B_{R_S}(0)) /\Gamma_S$ is a diffeomorphism defined outside a $\Lambda_S$-invariant compact set $\hat{K}_S \subset \hat{Z}_S$. 
\item The condition $\frac{1}{6}\hat{\omega}_S^3 = \frac{i}{8} \hat{\Omega}_S \wedge \bar{\hat{\Omega}}_S$ determines the holomorphic volume form $\hat{\Omega}_S$ on $(\hat{Z}_S,\hat{\omega}_S)$ only up to a constant $\theta \in S^1 \subset \mathbb{C}$. Since $\tau_S \col \hat{Z}_S \to \mathbb{C}^3/\Gamma_S$ is holomorphic, we have a holomorphic function $f \col \mathbb{C}^3 \to \mathbb{C}$ such that $(\tau_S)_* \hat{\Omega}_S = f \Omega_0$ (where $\Omega_0$ is the standard holomorphic volume form $\diff z_1 \wedge \diff z_2 \wedge \diff z_3$ on $\mathbb{C}^3/\Gamma_S$). The ALE-condition~\eqref{eq:ALE-Kählerform_decay}, the identity $\frac{1}{6}\hat{\omega}^3_S = \frac{i}{8}\hat{\Omega}_S \wedge \bar{\hat{\Omega}}_S$, and the maximum principle then imply that $f$ is a constant function of norm 1. Thus, after modifying $\hat{\Omega}_S$, we can assume in the following that $(\tau_S)_*\hat{\Omega}_S = \Omega_0$ holds.
\end{enumerate}
\end{remark}

For a given orbifold $Y_0$, a set of resolution data, and a positive parameter $t>0$ we define the following sets: 
\begin{align*}
\mathcal{V}_{\kappa/8}\coloneqq \bigsqcup_{S\in \mathcal{S}} \mathcal{V}_{S,\kappa/8} \quad &\textup{for} \quad \mathcal{V}_{S,\kappa/8}\coloneqq (\mathbb{R} \times B_{\kappa/8}(0)/\Gamma_S)/\Lambda_S \subset (\mathbb{R} \times \mathbb{C}^3/\Gamma_S)/\Lambda_S \\
\mathcal{V}_{\kappa}\coloneqq \bigsqcup_{S\in \mathcal{S}} \mathcal{V}_{S,\kappa} \quad &\textup{for} \quad \mathcal{V}_{S,\kappa} \coloneqq (\mathbb{R} \times B_{\kappa}(0)/\Gamma_S)/\Lambda_S \subset (\mathbb{R} \times \mathbb{C}^3/\Gamma_S)/\Lambda_S \\
\hat{\mathcal{V}}_{\kappa/8}^t \coloneqq \bigsqcup_{S\in \mathcal{S}} \hat{\mathcal{V}}_{S,\kappa/8}^t \quad &\textup{for} \quad \hat{\mathcal{V}}_{S,\kappa/8}^t \coloneqq (\mathbb{R} \times (t\tau_S)^{-1}(B_{\kappa/8}(0)/\Gamma_S))/\Lambda_S \subset (\mathbb{R} \times \hat{Z}_S)/\Lambda_S \\
\hat{\mathcal{V}}_{\kappa}^t \coloneqq \bigsqcup_{S\in \mathcal{S}} \hat{\mathcal{V}}_{S,\kappa}^t \quad &\textup{for} \quad \hat{\mathcal{V}}_{S,\kappa}^t \coloneqq(\mathbb{R} \times (t \tau_S)^{-1} (B_{\kappa}(0)/\Gamma_S))/\Lambda_S \subset (\mathbb{R} \times \hat{Z}_S)/\Lambda_S
\end{align*}

Denote by $\mathtt{J} \col \mathcal{V}_{\kappa} \to Y_0$ and $t \tau \col \hat{\mathcal{V}}_{\kappa}^t \to \mathcal{V}_{\kappa}$ the maps induced by all $\{\mathtt{J}_S\}_{S\in \mathcal{S}}$ and $\{t\tau_S\}_{S \in \mathcal{S}}$, respectively. 

\begin{definition}[{\cite[Proof of Theorem~2.2.1]{Joyce-GeneralisedKummer2}}]\label{def:OrbifoldResolution}
Given a flat $\G_2$-orbifold $(Y_0,\phi_0)$ and a set of resolution data, Joyce defines a 1-parameter family of smooth manifolds by \[\hat{Y}_t \coloneqq \big(Y_0 \setminus \mathtt{J}(\mathcal{V}_{\kappa/8})\big) \cup \hat{\mathcal{V}}_{\kappa}^t/\sim \] where $\hat{\mathcal{V}}_{\kappa}^t\setminus \hat{\mathcal{V}}_{\kappa/8}^t \ni y \sim \mathtt{J}(t\tau(y)) \in \mathtt{J}(\mathcal{V}_{\kappa}\setminus \mathcal{V}_{\kappa/8})$.
\end{definition}

Joyce (cf.~\cite[Proof of Theorem~2.2.1]{Joyce-GeneralisedKummer2}) then equips $\hat{Y}_t$ for sufficiently small $t$ with a closed $\G_2$-structure $\tilde{\phi}_t$ which we explain next.

For $S\in \mathcal{S}$ let $(\hat{Z}_S,\tau_S,\hat{\omega}_S,\hat{\Omega}_S)$ be the ALE Calabi--Yau 3-fold used to resolve $S$. There exist $\sigma_S^t \in \Omega^1((\mathbb{C}^3\setminus \{0\})/\Gamma_S)$ with $ \vert \nabla^k \sigma_S^t \vert = t^{6}\mathcal{O}(r^{-5-k})$ such that $(t\tau_S)_* (t^2\hat{\omega}_S) - \omega_0 = \diff \sigma_S^t$ (cf.~\cite[Theorem~8.2.3]{Joyce-Black}). Let $\tilde{\phi}_t \in \Omega^3(\hat{Y}_t)$ be defined as  
\begin{align*}
\tilde{\phi}_t \coloneqq 
\begin{cases}
\phi_0 &\textup{ over $Y_0\setminus \mathtt{J}(\mathcal{V}_{\kappa})$} \\
\diff s \wedge \big(t^2 \hat{\omega}_S - \diff (\chi_t \cdot (t\tau_S)^*\sigma_S^t)\big) +  \im t^3 \hat{\Omega}_S &\textup{ over each $\hat{\mathcal{V}}_{S,\kappa}^t$}
\end{cases}
\end{align*}
where $\chi_t \coloneqq \chi \circ (t\tau)$ for a smooth non-decreasing function $\chi \col [0,\kappa] \to [0,1]$ with 
\begin{align*}
\chi(s) = \begin{cases}
0 \textup{ for $s \leq \kappa/4$}\\
1 \textup{ for $s \geq \kappa/2$.}
\end{cases}
\end{align*}
\begin{proposition}[{\cite[Proof of Theorem~2.2.1]{Joyce-GeneralisedKummer2}}]
Let $(Y_0,\phi_0)$ be a flat $\G_2$-orbifold satisfying \autoref{Ass:Codim6Singularities} and let $\mathcal{R}$ be a set of resolution data. Denote by $(\hat{Y}_t,\tilde{\phi}_t)$ be the smooth manifold and $3$-form constructed above. Then there exists a positive constant $T_{pK} = T_{pK}(Y_0,\phi_0,\mathcal{R})$ such that $\tilde{\phi}_t$ defines a closed $\G_2$-structure on $\hat{Y}_t$ for $t<T_{pK}$.
\end{proposition}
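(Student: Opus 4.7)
The statement has three components: $\tilde{\phi}_t$ must be well-defined as a smooth $3$-form on $\hat{Y}_t$, it must be closed, and pointwise it must lie in the positive cone of admissible $\G_2$-forms. I would treat these in turn, noting that only the last imposes a smallness condition on $t$ and hence determines $T_{pK}$.

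\textbf{Well-definedness and closedness.} Although the two formulas defining $\tilde{\phi}_t$ are written on a priori disjoint regions, smoothness at the common boundary $|t\tau_S|=\kappa$ (which is identified via $\sim$) requires them to agree on a collar. The decisive observation is that $\chi_t\equiv 1$ on the locus where $|t\tau_S|\in[\kappa/2,\kappa]$; there, pushing forward via $t\tau_S$ and applying the identities $(t\tau_S)_*(t^2\hat{\omega}_S) - \omega_0 = \diff\sigma_S^t$ and $(t\tau_S)_*(t^3\hat{\Omega}_S) = \Omega_0$ causes the correction term $-\diff s \wedge \diff\sigma_S^t$ to cancel exactly the $\diff\sigma_S^t$ produced by the rescaled Kähler form; the ALE formula therefore collapses identically to $\diff s\wedge\omega_0 + \im\Omega_0 = \phi_S = \mathtt{J}_S^*\phi_0$ on this collar. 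Closedness is then a piecewise check: $\diff\phi_0 = 0$ by torsion-freeness of $(Y_0,\phi_0)$; on $\hat{\mathcal{V}}_{S,\kappa}^t$, one has $\diff(\diff s\wedge t^2\hat{\omega}_S) = 0$ by Kählerness of $\hat{\omega}_S$, the correction $-\diff s\wedge\diff(\chi_t\cdot(t\tau_S)^*\sigma_S^t)$ is itself exact, and $\diff(\im t^3\hat{\Omega}_S) = 0$ because $\hat{\Omega}_S$ is a holomorphic volume form.

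\textbf{The $\G_2$-structure condition.} This is the crux of the argument and the only step requiring $t$ to be small. Since being a $\G_2$-structure is an open condition on $3$-forms, I would partition $\hat{Y}_t$ into three regions on which positivity is either manifest or realised as a small perturbation of a known $\G_2$-structure: (a) the exterior $Y_0\setminus\mathtt{J}(\mathcal{V}_\kappa)$, where $\tilde{\phi}_t=\phi_0$ is admissible by assumption; (b) the inner core $|t\tau_S|\leq \kappa/4$ where $\chi_t\equiv 0$, making $\tilde{\phi}_t$ the $\G_2$-structure given by \eqref{eq:modelG2Structure} applied to the rescaled Calabi--Yau pair $(t^2\hat{\omega}_S,\,t^3\hat{\Omega}_S)$; and (c) the transition annulus $\kappa/4\leq |t\tau_S|\leq \kappa$, which is the main obstacle. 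On (c), $t\tau_S$ is a diffeomorphism provided $t$ is small enough that the ALE core $t\hat{K}_S$ lies inside $|z|\leq \kappa/4$, and a direct pushforward yields
\[
(t\tau_S)_*\tilde{\phi}_t \;=\; \phi_S + \diff s\wedge\diff\bigl((1-\chi)\,\sigma_S^t\bigr).
\]
Combining the decay estimate $|\nabla^k\sigma_S^t|=t^6\mathcal{O}(r^{-5-k})$ with the fact that $r$ is bounded above and below on (c) gives this correction a norm of order $\mathcal{O}(t^6)$ in $C^1$ with respect to the flat background metric on $\mathbb{C}^3/\Gamma_S$. Openness of the admissibility condition then guarantees that $\tilde{\phi}_t$ remains a $\G_2$-structure on (c) for all $t$ below an $S$-dependent threshold; taking $T_{pK}$ to be the infimum of these thresholds over $S\in\mathcal{S}$ (together with the requirement $t\hat{K}_S\subset\{|z|\leq \kappa/4\}$) completes the proof.
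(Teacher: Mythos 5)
Your proof is correct and is essentially the argument of the cited source \cite[Proof of Theorem~2.2.1]{Joyce-GeneralisedKummer2}, which the paper states without reproducing. All three components are handled as in that reference: the collar identity forcing the ALE formula to reduce to $\phi_S$ where $\chi_t\equiv 1$ (using $(t\tau_S)_*(t^2\hat{\omega}_S)-\omega_0=\diff\sigma_S^t$ and $(t\tau_S)_*(t^3\hat{\Omega}_S)=\Omega_0$), piecewise closedness, and positivity via openness of the $\G_2$ cone combined with the $t^6\mathcal{O}(r^{-5-k})$ smallness of the interpolation error on the transition annulus where $r$ is bounded.
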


Joyce then perturbs the $\G_2$-structure $\tilde{\phi}_t$ to a torsion-free $\G_2$-structure $\phi_t$. In order to state a (slightly enhanced) version of his result we need to introduce the following norms:

First, let $r_t \col \hat{Y}_t \to [0,\kappa]$ be the function defined by 
\begin{align*}
r_t(y) \coloneqq 
\begin{cases}
\vert t \tau(z) \vert &\textup{ if $y=[s,z] \in \hat{\mathcal{V}}_{\kappa}^t$} \\
\kappa & \textup{ if $y \in Y_0 \setminus \mathtt{J}(\mathcal{V}_{\kappa})$}
\end{cases}
\end{align*}

For $t \in (0,T_{pK})$, $k\in \mathbb{N}_0$, $\alpha \in (0,1)$, a weight exponent $\beta \in \mathbb{R}$, and a domain $U \subset \hat{Y}_t$ we define (as in~\cite[Section~6]{Walpuski-InstantonsKummer}) the following weighted Hölder norms:
\begin{align*}
[f\WsHt{0}{\beta}{(U)} &\coloneqq \sup_{x,y,\in U,\ d(x,y)\leq w_t(x,y)} w_t(x,y)^{\alpha-\beta} \frac{\vert f(x)-f(y) \vert}{d(x,y)^\alpha}  \\
\Vert f \VertWCt{0}{\beta}{(U)} & \coloneqq \big\Vert w_t^{-\beta} f \big{\VertC{0}{(U)}} \\
\Vert f \VertWHt{k}{\beta}{(U)} & \coloneqq \sum_{j=0}^k \left\Vert \nabla^j f \right\VertWCt{0}{\beta-j}{(U)} + \left[\nabla^j f\right\WsHt{0}{\beta-j}{(U)}
\end{align*}
where \[w_t(y) \coloneqq t + r_t(y) \quad \text{and} \quad w_t(x,y)\coloneqq \min\{w_t(x),w_t(y)\}.\] Whenever $f$ is a function, then all derivatives, inner products, and distances in the definition of the above norms are taken with respect to $\tilde{g}_t$ (induced from the $\G_2$-structure $\tilde{\phi}_t$) and its associated Levi--Civita connection. If $f$ is a tensor field (or, more generally, a section of a vector bundle equipped with an inner-product and a metric connection), then we use parallel transport to compare two values of $f$ lying in different fibers. If we do not specify a domain, then we consider $U= \hat{Y}_t$.

These norms satisfy (cf.~\cite[Equation~(6.5) and~(6.6)]{Walpuski-InstantonsKummer}
\begin{equation}\label{eq:weighted-t-Höldernorms-multiplication}
\Vert f\cdot g\VertWHt{k}{\beta}{(U)} \leq \Vert f\VertWHt{k}{\beta_1}{(U)}\cdot \Vert g\VertWHt{k}{\beta_2}{(U)} 
\end{equation}  
for $\beta=\beta_1+\beta_2$ and 
\begin{equation}\label{eq:weighted-t-Höldernorms-estimates}
(T_{pK}+\kappa)^{\beta_1-\beta_3} \Vert f \VertWHt{k}{\beta_1}{(U)} \leq \Vert f \VertWHt{k}{\beta_3}{(U)} \leq t^{\beta_2-\beta_3} \Vert f \VertWHt{k}{\beta_2}{(U)} 
\end{equation} 
for $\beta_1 \leq \beta_2\leq \beta_3$.

The following existence theorem was first proven by Joyce in~\cite{Joyce-GeneralisedKummer1}. Its enhancement to $C^{k,\alpha}_{0,t}$-norms comes from~\cite[Proposition~4.20 and Remark~4.26]{Walpuski-InstantonsKummer}.\footnote{\cite[Proposition~4.20 and Remark~4.26]{Walpuski-InstantonsKummer} are formulated for $\G_2$-orbifolds whose singular strata are of codimension $4$. However, the same proof also holds for orbifolds satisfying \autoref{Ass:Codim6Singularities}.}
\begin{theorem}[{\cite{Joyce-GeneralisedKummer2}[Theorem~2.2.1],\cite[Theorem~A]{Joyce-GeneralisedKummer1}, and~\cite[Proposition~4.20 and Remark~4.26]{Walpuski-InstantonsKummer}}]\label{theo:torsionfree_G2_structure}
Let $(Y_0,\phi_0)$ be a compact and flat $\G_2$-orbifold satisfying \autoref{Ass:Codim6Singularities} and let $\mathcal{R}$ be a set of resolution data. Furthermore, let $k\in \mathbb{N}_0$ and $\alpha \in (0,1)$. Then there are positive constants $T_{K} = T_{K}(Y_0,\phi_0,\mathcal{R},\alpha,k)$ with $T_{K} \leq T_{pK}$ and $c_{K}=c_{K}(Y_0,\phi_0,\mathcal{R},\alpha,k)$ such that for any $t\in (0,T_{K})$ there exists a torsion-free $\G_2$-structure $\phi_t$ on $\hat{Y}_t$ with \[ \Vert \phi_t - \tilde{\phi}_t \VertWHt{k}{0}{} < c_{K} t^{1/2}. \]
\end{theorem}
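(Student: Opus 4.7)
The plan is to follow Joyce's iteration scheme: seek the torsion-free $\G_2$-structure in the form $\phi_t = \tilde{\phi}_t + \diff \eta_t$ for a small $2$-form $\eta_t$, subject to a gauge condition such as $\diff^* \eta_t = 0$. Expanding $\psi_t = *_{\phi_t}\phi_t$ around $\tilde{\phi}_t$ turns the torsion-free equation $\diff \psi_t = 0$ into a nonlinear elliptic PDE
\begin{equation*}
L_t \eta_t + N_t(\diff \eta_t) = -\diff \tilde{\psi}_t,
\end{equation*}
where $L_t$ is, after combination with the gauge condition, essentially the Hodge Laplacian on $\Omega^2(\hat{Y}_t)$ and $N_t$ is at least quadratic in its argument. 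The aim is to solve this by the Banach fixed-point theorem in a small ball of a weighted Hölder space $C^{k,\alpha}_{\beta,t}$ for a suitable weight $\beta$.

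The first step is to estimate the torsion $\diff \tilde{\psi}_t$ of the pre-glued structure. Outside $\bigsqcup_{S} \mathtt{J}_S(\mathcal{V}_{S,\kappa})$ the structure $\tilde{\phi}_t$ is flat; inside each $\hat{\mathcal{V}}_{S,\kappa}^t$ the torsion comes only from the cut-off term $\diff(\chi_t \cdot (t\tau_S)^*\sigma_S^t)$ and from the deviation of the scaled ALE datum $(t^2\hat{\omega}_S, t^3\hat{\Omega}_S)$ from its flat limit. Combining the decay $\vert \nabla^k \sigma_S^t \vert = t^6\mathcal{O}(r^{-5-k})$ with the rescalings and with the weight function $w_t = t + r_t$, one obtains a bound of the form $\Vert \diff\tilde{\psi}_t \VertWHt{k-1}{\beta-2}{} \leq c\, t^{1/2}$.

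The second step is a uniform-in-$t$ linear estimate $\Vert \eta \VertWHt{k}{\beta}{} \leq c\, \Vert L_t \eta \VertWHt{k-2}{\beta-2}{}$ for $\beta$ avoiding the indicial weights of the model Laplacians on the asymptotic cones $\mathbb{R} \times \mathbb{C}^3/\Gamma_S$ and on the ALE pieces $\mathbb{R} \times \hat{Z}_S$. I would argue by a standard blow-up/rescaling contradiction: a failure along a sequence $t_i \to 0$ would yield a non-trivial decaying harmonic $2$-form on one of three limit geometries (the bulk orbifold $Y_0$, a rescaled ALE fibre $\mathbb{R} \times \hat{Z}_S$, or the flat cone $\mathbb{R} \times \mathbb{C}^3/\Gamma_S$), each contradicting the choice of $\beta$, possibly after projecting onto the orthogonal complement of a small finite-dimensional cokernel arising from parallel $2$-forms. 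The nonlinearity is then tamed via the product estimate~(\ref{eq:weighted-t-Höldernorms-multiplication}), which gives $\Vert N_t(\diff \eta) \VertWHt{k-1}{\beta-2}{} \leq c\, \Vert \eta \VertWHt{k}{\beta}{}^2$ whenever $\Vert \eta \VertWHt{k}{\beta}{} \leq 1$. The Banach fixed-point theorem then produces $\eta_t$ with $\Vert \eta_t \VertWHt{k}{\beta}{} = \mathcal{O}(t^{1/2})$, and the monotonicity estimate~(\ref{eq:weighted-t-Höldernorms-estimates}) translates this into the stated bound $\Vert \phi_t - \tilde{\phi}_t \VertWHt{k}{0}{} \leq c_K\, t^{1/2}$.

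The main obstacle is the uniform linear estimate: the degenerating family $\hat{Y}_t$ carries two regions of very different scale — the bulk orbifold and the rescaled ALE fibres — glued across a neck of size $\sim t$, so the weighted Schauder theory must be patched carefully to preserve uniformity in $t$. Equally subtle is the upgrade from the $C^{0,\alpha}$-level solution in Joyce's original argument to the $C^{k,\alpha}_{0,t}$ estimate for arbitrary $k$ (the refinement due to Walpuski), which requires an iterative interior Schauder bootstrap performed in the weighted norms so that no negative powers of $t$ creep into the constants.
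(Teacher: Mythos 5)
The paper does not actually prove this statement: it is imported from Joyce's Theorem~A (via \cite{Joyce-GeneralisedKummer1} and \cite{Joyce-GeneralisedKummer2}) and Walpuski's quantitative refinement \cite[Proposition~4.20 and Remark~4.26]{Walpuski-InstantonsKummer}; the only original content here is the footnote observing that the codimension-6 case goes through by the same argument. Your outline is a faithful high-level description of that imported proof --- the ansatz $\phi_t = \tilde{\phi}_t + \diff\eta_t$, the error estimate on $\diff\tilde{\psi}_t$ coming from the cut-off of $\sigma_S^t$ and the ALE decay, the uniform linear estimate via a blow-up/rescaling trichotomy (bulk orbifold, ALE fibre, asymptotic cone), and Walpuski's upgrade from the $C^0$-level to $C^{k,\alpha}_{0,t}$ via weighted interior Schauder bootstrapping --- so you are matching the approach of the cited references rather than diverging from the paper (which has no proof of its own to diverge from). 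Two small caveats worth noting if you were to write this out in full: the precise gauge condition Joyce and Walpuski impose is not literally $\diff^*\eta_t=0$ on $\eta_t$ alone but a projection built into the particular form of the nonlinear map, and the potential cokernel you mention is in fact absent because the equation is solved within a fixed de~Rham cohomology class, so no projection onto a complement of parallel forms is needed.
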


\begin{remark}\label{rem:improvedEstimates}
Going through the proofs of~\cite[Theorem~2.2.1]{Joyce-GeneralisedKummer2} and~\cite[Theorem~A]{Joyce-GeneralisedKummer1} one can check that the factor of $t^{1/2}$ in the estimate of the previous theorem can be improved to $t^{5/2}$. This is due to the observation that whenever $Y_0$ has only codimension $6$ singularities, then the exponents inside condition i) of~\cite[Theorem~A]{Joyce-GeneralisedKummer1} can be improved to $t^{6}$ (cf.~\cite[Section~11.5.5]{Joyce-Black}).
\end{remark}

\begin{remark}[{Equivariant generalised Kummer construction (cf.~\cite[Remark~2.20]{DPSDTW-Associatives}}]\label{rem:EquivariantGeneralisedKummer}
Let $H$ be a finite group and $\lambda_0 \col H \to \textup{Isom}(Y_0,g_{\phi_0})$ be a group action that fixes $\psi_0 \coloneqq * \phi_0$. An $H$-equivariant resolution data is a resolution data with the additional property that for every $h\in H$ and $S\in \mathcal{S}$: \begin{enumerate}
\item We have \[ \Gamma_{hS} = \Gamma_S, \quad \Lambda_{hS}=\Lambda_S, \quad \text{and} \quad \rho_{hS}=\rho_S.\] Furthermore, we demand that for every $[S]\in \mathcal{S}/H$ there is a homomorphism $ \lambda_{[S]} \col H \to \big(N_{(\O(\mathbb{R})\ltimes \mathbb{R})\times (N_{\O(\mathbb{C}^3)}(\Gamma_S)/\Gamma_S)}(\Lambda_S)\big)/\Lambda_S$ that satisfies \[\lambda_0(h)\circ \mathtt{J}_S = \mathtt{J}_{hS} \circ [\lambda_{[S]}(h) ].\] Here $[\lambda_{[S]}(h)]$ denotes the induced map on $\mathcal{V}_{S,\kappa} = \mathcal{V}_{hS,\kappa}$.
\item The resolving ALE spaces also satisfy $(\hat{Z}_S,\tau_S,\hat{\omega}_S,\hat{\Omega}_S)=(\hat{Z}_{hS},\tau_{hS},\hat{\omega}_{hS},\hat{\Omega}_{hS})$.
\item We have a lift $\hat{\lambda}_{[S]} \col H \to \big(N_{(\O(\mathbb{R})\ltimes \mathbb{R) \times}\textup{Isom}(\hat{Z}_{[S]},\hat{g}_{[S]})}(\Lambda_{[S]})\big)/\Lambda_{[S]}$ of $\lambda_{[S]}$ along $\tau_{[S]}$ such that for every $h \in H$ \[[\hat{\lambda}_{[S]}(h)]^*\psi_{[S]} = \psi_{[S]}\] (where $\psi_{[S]}$ is the coassociative 4-form associated to the Calabi--Yau structure $(\hat{\omega}_{[S]},\hat{\Omega}_{[S]})$ as in \autoref{ex_G2Model}).
\end{enumerate}
In this case we obtain an induced action $\hat{\lambda} \col H \to \textup{Diff}(\hat{Y}_t)$ and can choose $\sigma_{[S]}^t$ in the pre-gluing construction to be $H$-invariant. This implies that $\hat{\lambda}(h)^*\tilde{\psi}_t = \tilde{\psi}_t$ and therefore $\hat{\lambda}(h)^*\tilde{g}_t = \tilde{g}_t$ (cf. the following remark) for every $h\in H$. The Kummer construction can then be performed $H$-equivariantly (see for example \cite[Section~4.2]{Joyce-GeneralisedKummer2}) and $\psi_t =*_{\phi_t} \phi_t$ associated to the torsion-free $\G_2$-structure of \autoref{theo:torsionfree_G2_structure} is also invariant under $H$.
\end{remark}
\begin{remark}\label{rem: stabiliser 4-form}
Let $\phi_0 \in \Lambda^3(\mathbb{R}^7)^*$ be the model $\G_2$-structure over $\mathbb{R}^7$ (obtained by choosing $(Z,\omega,\Omega)=(\mathbb{C}^3,\omega_0,\Omega_0)$ in \autoref{ex_G2Model}) and $\psi_0 = * \phi_0 \in \Lambda^4(\mathbb{R}^7)^*$ its coassociative 4-form. It is well known that the stabiliser of $\psi_0$ in $\GL(\mathbb{R}^7)$ equals \[\textup{Stab}_{\GL(\mathbb{R}^7)}(\psi_0) = \G_2 \times \mathbb{Z}_2 < \O(\mathbb{R}^7) \] where $(R,\pm 1) \in \G_2\times \mathbb{Z}_2$ corresponds to the isomorphism $\pm R \in \GL(\mathbb{R}^7)$ (see for example \cite[Proposition~2.1.3]{Gutwein-Diss} for an elementary proof of this fact). Thus, demanding that the group action $\lambda_0 \col H \to \textup{Isom}(Y_0,g_{\phi_0})$ in the previous remark fixes $\psi_0$ is slightly weaker than demanding it to preserve $\phi_0$.  
\end{remark}

\subsection{ALE Calabi--Yau structures on crepant resolutions}
\label{sec:Ricci-Flat ALE metrics on crepant resolutions}

As discussed in the previous section, the generalised Kummer construction requires ALE Calabi--Yau 3-folds resolving orbifolds of the form $\mathbb{C}^3/\Gamma$ where $\Gamma<\SU(3)$ is a finite group acting freely on $\mathbb{C}^3\setminus \{0\}$ (see \autoref{def:ALE-space} of \autoref{def:RData}). This section summarises the construction of such resolutions of $\mathbb{C}^3/\Gamma$ via Kähler reduction as carried out in \cite{Sardo-Infirri-ParitalResolutions} and the existence of ALE Calabi--Yau structures on them as proven in~\cite[Section~4]{WalpuskiDegeratu-HYM_on_crepant_resolutions} via Joyce's proof of the ALE Calabi conjecture~\cite[Theorem~8.2.3]{Joyce-Black}. We then give a condition under which certain group actions on $\mathbb{C}^3/\Gamma$ lift to the resolving ALE space (as required in \autoref{bul: lifting group action in definition of R-data} of \autoref{def:RData}). All the material below except \autoref{bul: lifting group actions to resolution} (which is analogous to the case of ALE hyperkähler 4-manifolds) is contained in~\cite[Sections~2,3,4]{WalpuskiDegeratu-HYM_on_crepant_resolutions} and its presentation is similar to the one given for ALE hyperkähler $4$-manifolds in~\cite[Remark~2.15]{DPSDTW-Associatives} and~\cite[Section~2.2.2]{Gutwein-coassociatives}.

\begin{enumerate}
\item Let $\Gamma<\SU(3)$ be a finite group acting freely on $\mathbb{C}^3\setminus \{0\}$ and denote by $\mathbb{C}[\Gamma]\coloneqq \textup{Maps}(\Gamma,\mathbb{C})$ the regular representation of $\Gamma$ equipped with its standard Hermitian inner product. Furthermore, define $G \coloneqq \mathbb{P}\U(\mathbb{C}[\Gamma])^\Gamma$ and \[ N \coloneqq \left\{ B \in \big( \mathbb{C}^3 \otimes_\mathbb{C} \End(\mathbb{C}[\Gamma]) \big)^\Gamma \mid [B \wedge B]=0 \in \Lambda^2\mathbb{C}^3 \otimes \End(\mathbb{C}[\Gamma]) \right\} \] where $[ \cdot \wedge \cdot]$ is the simultaneous wedge-product $\mathbb{C}^3 \otimes \mathbb{C}^3 \to \Lambda^2 \mathbb{C}^3$ and commutator $\End(\mathbb{C}[\Gamma])\otimes \End(\mathbb{C}[\Gamma])\to \End(\mathbb{C}[\Gamma])$. Equip $(\mathbb{C} \otimes \End(\mathbb{C}[\Gamma]))^\Gamma$ with the canonical (flat) Kähler structure and $N$ with the induced Kähler structure as a subvariety. 

\item The adjoint action of $G$ on $(\mathbb{C}^3 \otimes \End(\mathbb{C}[\Gamma]))^\Gamma$ is Hamiltonian and has a distinguished moment map (written down in~\cite[Proposition~3.1]{WalpuskiDegeratu-HYM_on_crepant_resolutions}) \[\mu \col (\mathbb{C}^3 \otimes \End(\mathbb{C}[\Gamma]))^\Gamma \to \mathfrak{g}^*.\] \cite[Section~4.2]{Sardo-Infirri-ParitalResolutions} proves that the Kähler quotient $(N \cap \mu^{-1}(0))/G$ is isometric to $\mathbb{C}^3/\Gamma$.

\item Let $\mathfrak{z}^*$ be the annihilator of $[\mathfrak{g},\mathfrak{g}]$ in $\mathfrak{g}^*$, i.e. all elements in $\mathfrak{g}^*$ fixed by the coadjoint action of $G$. An element $\zeta \in \mathfrak{z}^*$ is called \[ \textit{generic} \quad \Leftrightarrow \quad \textup{$\zeta (i \pi_R) \neq 0$ for every non-trivial proper subrepresentation $R\subset \mathbb{C}[\Gamma]$} \] where $\pi_R \col \mathbb{C}[\Gamma] \to R$ is the orthogonal projection onto $R$.\footnote{Note that the center of $\mathfrak{u}(\mathbb{C}[\Gamma])^\Gamma$ is spanned by $\{ i\pi_R \mid \textup{for $R \subset \mathbb{C}[\Gamma]$ irreducible subrepresentation} \}$ (since by \autoref{rem: finite subgroups of SU(3)} $\Gamma$ is abelian, all irreducible representations are 1-dimensional). This implies that set of generic elements in $\mathfrak{z}^*$ is given by the complement of a finite union of linear codimension 1 subspaces (cf.~\cite[Section~2,3]{WalpuskiDegeratu-HYM_on_crepant_resolutions}). The set of generic elements is therefore open and dense.}  Furthermore, we define $\mathfrak{g}^*_\mathbb{Q}$ to be the set of linear maps $\zeta \col \mathfrak{g} \to \mathbb{R}$ with $\zeta(i\pi_R) \in \mathbb{Q}$ for every subrepresentation $R\subset \mathbb{C}[\Gamma]$.

\item \label{bul:ALE-Crepant-Resolutions-via-KählerQuotients} If $\zeta \in \mathfrak{z}^*\cap \mathfrak{g}^*_\mathbb{Q}$ is generic, then the Kähler quotient \[\hat{Z}_\zeta \coloneqq \big(N \cap \mu^{-1}(\zeta)\big) /G \] is a smooth Kähler manifold underlying a crepant resolution $\tau_\zeta \col \hat{Z}_\zeta \to \mathbb{C}^3/\Gamma$ (see \cite[Theorem~1.1]{BridegelandKingReid--Mukai_McKay}, \cite[Theorem~2.5]{CrawIshii-Flops}, and~\cite[Proposition~3.6]{WalpuskiDegeratu-HYM_on_crepant_resolutions}).\footnote{Moreover, \cite[Section~2.1]{Yamagishi--Craw_Ishii_conjecture} argues that this result also holds true for generic $\zeta \in \mathfrak{z}^*$ that do not lie in $\mathfrak{g}_\mathbb{Q}^*$.} Conversely, \cite[Theorem~1.1]{CrawIshii-Flops} proves that every projective crepant resolution is biholomorphic to $\hat{Z}_\zeta$ for some generic $\zeta\in \mathfrak{z}^*\cap \mathfrak{g}^*_\mathbb{Q}$. \cite[Theorem~5.1]{Sardo-Infirri-ParitalResolutions} shows that the induced Kähler structure $\tilde{\omega}_\zeta$ on $\hat{Z}_\zeta$ is ALE of rate (at least) $4$ (i.e. it satisfies the conditions of \autoref{def:ALE-space} of \autoref{def:RData} with exponents $r^{-4-k}$).

\item Using Joyce's proof of the ALE Calabi conjecture~\cite[Theorem~8.2.3]{Joyce-Black},~\cite[Theorem~4.1]{WalpuskiDegeratu-HYM_on_crepant_resolutions} shows that there exists a unique Ricci-flat ALE Kähler structure $\hat{\omega}_\zeta$ on $\hat{Z}_\zeta$ in the Kähler class of $\tilde{\omega}_\zeta$. This $\hat{\omega}_\zeta$ is ALE of order $6$.

\item \label{bul: lifting group actions to resolution} Let $U$ be an element in the normalizer $N_{\U(\mathbb{C}^3)}(\Gamma)$. Then $U$ acts on $\Gamma$ by conjugation and we extend this action to a complex linear map $\textup{conj}_{U} \in \U(\mathbb{C}[\Gamma])$. The standard representation of $U$ on $\mathbb{C}^3$ tensored with the adjoint action of $\textup{conj}_{U}$ on $\End(\mathbb{C}[\Gamma])$ induces an action on $(\mathbb{C}^3 \otimes \End(\mathbb{C}[\Gamma]))^\Gamma$ which restricts to $N$. The moment map satisfies \[ \mu \circ (U \otimes \Ad_{\textup{conj}_{U}}) = \Ad_{\textup{conj}_{U}^{-1}}^* \circ \mu. \] Thus, if $\Ad^*_{\textup{conj}_{U}^{-1}} \zeta = \zeta$, we obtain a holomorphic isometry $\hat{U} \in \textup{Isom}(\hat{Z}_\zeta,\tilde{\omega}_\zeta)$ satisfying $\tau_\zeta \circ \hat{U} = U \circ \tau_\zeta$. By the uniqueness of the Ricci-flat ALE metric inside the same Kähler class as $\tilde{\omega}_\zeta$, we also obtain $\hat{U}^*\hat{\omega}_\zeta = \hat{\omega}_\zeta$.
\end{enumerate}

\subsection{HYM connections over ALE Calabi--Yau 3-folds}

In \autoref{sec:approxG2instantons_from_gluingdata} we construct approximate $\G_2$-instantons over resolutions of $\G_2$-orbifolds satisfying \autoref{Ass:Codim6Singularities}. One ingredient of this construction are asymptotically flat Hermitian Yang--Mills connections over ALE $3$-folds whose theory we review in this section.

Let $(\hat{Z},\tau,\hat{\omega},\hat{\Omega})$ be an ALE Calabi--Yau $3$-fold asymptotic to $\mathbb{C}^3/\Gamma$ as defined in \autoref{def:ALE-space} of \autoref{def:RData}. Furthermore, let $G$ be a Lie group whose Lie algebra $\mathfrak{g}$ has been equipped with an $\Ad$-invariant inner product and $\pi \col \hat{P} \to \hat{Z}$ be a principal $G$-bundle. We denote the space of connections on $\hat{P}$ by $\mathcal{A}(\hat{P})$ and for any $A \in \mathcal{A}(P)$ its associated connection $1$-form by $\theta_A \in \Omega^1(\hat{P},\mathfrak{g})^G$. For the following discussion we fix 
\begin{itemize}
\item a local framing at infinity, i.e. a principal $G$-bundle $ \pi_\infty \col P_\infty \to (\mathbb{C}^3\setminus \{0\})/\Gamma$ together with a bundle isomorphism $\tilde{\tau} \col \hat{P}_{\vert \hat{Z}\setminus \tau^{-1}(0)} \to P_\infty$ covering $\tau$, and 
\item a flat connection $A_\infty$ on $P_\infty$.
\end{itemize} 
\begin{definition}
For $\beta \in \mathbb{R}$ we denote by $\mathcal{A}_\beta(\hat{P},A_\infty)$ the space of connections $A\in \mathcal{A}(\hat{P})$ that satisfy \[\vert \nabla^k (\tilde{\tau}_* \theta_A - \theta_{A_\infty} ) \vert = \mathcal{O}(r^{\beta-k})\textup{ as $r \to \infty$.}\] The covariant derivative is hereby induced by $A_\infty$ and the  Levi--Civita connection associated to the flat metric on $(\mathbb{C}^3\setminus \{0\})/\Gamma$. Similarly, the norm is induced by the flat metric and the $\Ad$-invariant inner product on $\mathfrak{g}$.

Similarly, for any associated vector bundle $\hat{P} \times_G V$ (where $V$ is a linear $G$-module) we define $\Omega^k_\beta(\hat{Z},\hat{P}\times_G V)$ as the space of $\omega \in \Omega^k(\hat{Z},\hat{P}\times_G V)$ with \[ \vert \nabla^k\tilde{\tau}_*\omega \vert = \mathcal{O}(r^{\beta-k}) \textup{ as $r\to \infty$.}\]
\end{definition}
\begin{definition}\label{def:HYMconnection}
A connection $A\in \mathcal{A}_\beta(\hat{P},A_\infty)$ is called \ita{Hermitian Yang--Mills asymptotic to $A_\infty$ of order $\beta$} if \[ \Lambda_{\hat{\omega}} F_A = 0 \qquad \textup{and} \qquad F_A^{0,2} = 0 \] where $\Lambda_{\hat{\omega}}$ is the dual Lefschetz operator associated to $\hat{\omega}$. The set of hermitian Yang--Mills connections asymptotic to $A_\infty$ of order $\beta$ is denoted by $ \mathcal{A}^{\textup{HYM}}_{\beta}(\hat{P},A_\infty)$.
\end{definition}
\begin{remark}
If no order is specified we will always assume $\beta = -5$.
\end{remark}
\begin{remark}\label{rem:HYMaltCondition}
In the following we tacitly assume that $G$ is a \ita{real} Lie group. In the above definition we therefore complexify the adjoint bundle in order to project to $F_A^{0,2}$. An equivalent condition in this case (with the benefit of not requiring the extra step of complexification) is (cf. \cite[Proposition~1.2.30]{Huybrechts-Complex}) \[ \hat{\omega} \wedge \hat{\omega} \wedge F_A = 0 \qquad \textup{and} \qquad \re \hat{\Omega} \wedge F_A = 0. \]
\end{remark}

In the following, we denote by $\mathfrak{g}_{\hat{P}}^\mathbb{C} \coloneqq \mathfrak{g}_{\hat{P}}\otimes_\mathbb{R} \mathbb{C}$ the complexified adjoint bundle.

\begin{proposition}\label{prop:HYMConnection improve decay}
Assume that $A \in \mathcal{A}_\beta^{\textup{HYM}}(\hat{P},A_\infty)$ for some $\beta\in(-5,-1)$. Then there exists a \[u\in \mathcal{G}_{0,\beta+1}\coloneqq \{u \in  \mathcal{G} \mid \vert \nabla^k(\tilde{\tau}_* u - \textup{Id} ) \vert = \mathcal{O}(r^{\beta+1-k})) \}\] such that $u^*A \in \mathcal{A}_{-5}^{\textup{HYM}}(\hat{P},A_\infty)$. Here, $\mathcal{G}$ denotes the group of bundle automorphisms of $\hat{P}$ that cover the identity. Furthermore, we assumed (for simplicity) that $G$ is a subgroup of $\GL(V)$ for a vector space $V$ (and $\mathcal{G}$ can therefore be regarded as a subset of the linear bundle $\End(\hat{P}\times_G V)$) in order to make sense of the difference $\tilde{\tau}_*u-\id$.
\end{proposition}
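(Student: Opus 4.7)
The plan is to place $A$ in a relative Coulomb gauge with respect to $A_\infty$ at infinity, and then bootstrap the decay using weighted elliptic theory on the asymptotic cone $\mathbb{C}^3/\Gamma$. The key observation is that, after pulling back via $\tilde\tau$, the HYM equations on $\hat Z$ differ from those on the flat cone only by error terms of size $O(r^{-6})$ coming from \eqref{eq:ALE-Kählerform_decay}. Once a Coulomb gauge is imposed, the HYM system becomes overdetermined elliptic with leading operator the Hodge Laplacian $\Delta_{A_\infty}$ on $\mathfrak g_{\hat P}$-valued $1$-forms, and since the open interval $(-5,-1)$ contains no indicial root of this operator on $\mathbb{C}^3/\Gamma$, the Lockhart--McOwen regularity theory forces any solution lying in $\Omega^1_\beta$ to improve to $\Omega^1_{-5}$.

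\textbf{Step 1: Coulomb gauge fixing.} First I would write $a := \tilde\tau_*(\theta_A - \theta_{A_\infty}) \in \Omega^1_\beta$ over $(\mathbb{C}^3\setminus B_R)/\Gamma$ for a sufficiently large $R$, and look for $u = \exp(\chi \xi) \in \mathcal G_{0,\beta+1}$, with $\chi$ a radial cutoff at $r = R$ and $\xi \in \Omega^0_{\beta+1}(\mathfrak g_{\hat P})$ solving schematically
\begin{equation*}
\Delta_{A_\infty}\xi \;=\; d_{A_\infty}^{*} a + \mathcal N(\xi, a),
\end{equation*}
where $\mathcal N$ gathers the nonlinear terms produced by conjugation and by the exponential. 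Since $\beta \in (-5,-1)$, the shifted weight $\beta + 1 \in (-4, 0)$ lies strictly between two consecutive indicial roots of the scalar Laplacian on the cone, so weighted Fredholm theory (as used in \cite{Joyce-Black} and \cite{Walpuski-InstantonsKummer}) provides a bounded right-inverse on the appropriate representation-theoretic complement. A Banach fixed-point argument converges because each nonlinear term of $\mathcal N$ decays strictly faster than the linear one, contributing a small factor once $R$ is taken large enough.

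\textbf{Step 2: Decay improvement.} Set $a' := \tilde\tau_*(\theta_{u^* A} - \theta_{A_\infty})$, which now satisfies $d_{A_\infty}^{*} a' = 0$ outside $B_R$. Combining this gauge condition with the HYM equations (rewritten as in \autoref{rem:HYMaltCondition}) yields an elliptic equation of schematic form
\begin{equation*}
\Delta_{A_\infty} a' \;=\; Q(a') \;+\; E(a'),
\end{equation*}
where $Q$ is pointwise quadratic in $a'$ and $\nabla a'$, and $E$ collects the ALE correction terms from \eqref{eq:ALE-Kählerform_decay}. The terms $Q(a')$ and $E(a')$ decay at rate $r^{2\beta-2}$ and $r^{\beta-8}$ respectively, both strictly faster than the a priori rate $r^{\beta-2}$. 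Since no indicial root of $\Delta_{A_\infty}$ on $\mathfrak g_{\hat P}$-valued $1$-forms lies in the open interval $(-5, -1)$, weighted Schauder estimates together with the Lockhart--McOwen indicial-root theorem let us improve $a'$ from rate $\beta$ to rate $\beta - \varepsilon$ for some fixed $\varepsilon > 0$. Iterating this gain finitely many times, absorbing the nonlinearity at each stage, pushes $a'$ down to the target rate $-5$.

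\textbf{Main obstacle.} The technically most delicate point is Step~1: the interaction between the representation content of $\mathfrak g_{\hat P}^{\mathbb C}$ under the holonomy of $A_\infty$ and the indicial roots of $\Delta_{A_\infty}$ on the cone can in principle produce a finite-dimensional obstruction to gauge fixing in exactly $\Omega^0_{\beta+1}$. Handling this requires splitting off the resonant modes explicitly and verifying that they are absorbed by the compatible part of $d_{A_\infty}^{*} a$, which is controlled by the HYM hypothesis and the already-assumed decay of $A$. Once the Coulomb gauge has been secured, the decay bootstrap in Step~2 reduces to a standard weighted Schauder iteration.
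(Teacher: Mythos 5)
Your overall template—Coulomb gauge fixing followed by a weighted elliptic bootstrap—matches the paper's template, and your Step~1 is essentially in the spirit of the paper (which fixes a relative Coulomb gauge against a reference connection $B$ agreeing with $A_\infty$ outside a compact set, following Freed--Uhlenbeck and Bartnik, rather than your cutoff exponential ansatz; the difference is inessential). The genuine divergence, and the gap, is in Step~2.

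You run the bootstrap through the second-order real operator $\Delta_{A_\infty}$ on $\mathfrak{g}_{\hat P}$-valued $1$-forms and assert that it has no indicial roots in $(-5,-1)$. This is the load-bearing claim, and you neither prove it nor point to a reference that does. The unambiguous statement of this type (and the one Bartnik's Proposition~6.1 gives) is for the \emph{scalar} Laplacian on $\mathbb{R}^6/\Gamma$, whose gap is $(-4,0)$ — an interval that does \emph{not} contain $(-5,-4]$, exactly the range you need to reach to push $\beta$ down to $-5$. The Hodge Laplacian on $1$-forms has a different (and $\Gamma$-dependent) set of indicial roots coming from the exact, coexact and mixed modes on $S^5/\Gamma$; whether or not its critical rates happen to miss $(-5,-1)$ must be checked, and your ``Main obstacle'' paragraph hints at this difficulty without resolving it. This is precisely why the paper's proof does not work with the $1$-form Laplacian at all: after gauge fixing it passes to the Dolbeault decomposition $u^*\theta_A - \theta_B = \alpha + \bar\alpha$ with $\alpha \in \Omega^{0,1}_\beta$, rewrites the HYM equations together with the gauge condition as a \emph{first-order Dirac-type} equation $D\alpha = f + Q(\alpha)$ with $D = 2(\bar\partial_B + \bar\partial_B^*)$, and then invokes that this twisted Dirac operator's critical rates are the eigenvalues of the Dirac operator on $S^5/\Gamma$ shifted by $-5/2$, which can be controlled cleanly (via the Lichnerowicz formula, see the argument in \cite[Proof of Theorem~6.1]{WalpuskiDegeratu-HYM_on_crepant_resolutions}) and miss the entire interval $(-5,0)$. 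Your approach would require you to either establish the analogous gap for the $1$-form Laplacian on $\mathbb{C}^3/\Gamma$ from scratch, or abandon the Laplacian in favour of the Dolbeault Dirac operator as the paper does.

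A secondary, smaller point: after Coulomb gauge fixing the HYM system is a first-order overdetermined elliptic system; ``squaring'' it to a Laplacian equation $\Delta_{A_\infty}a' = Q(a') + E(a')$ as you write schematically requires using the HYM equation $\Lambda F_{A} = 0$ and $F_A^{0,2}=0$ together with the gauge condition, and the resulting second-order operator is not literally the Hodge Laplacian — it is the composition of the first-order linearisation and its formal adjoint. That operator's indicial roots are what matter, and passing to the first-order Dirac form both avoids this bookkeeping and, crucially, puts you in a setting where the Lichnerowicz argument is available.
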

\begin{remark}
We made the assumption $\beta<-1$ in order to ensure that the based gauge group $\mathcal{G}_{0,\beta+1}$ acts on $\mathcal{A}_\beta(\hat{P},A_\infty)$. This is used in the proof when the connection is put in Coulomb gauge.
\end{remark}
A similar result was stated without proof in~\cite[Remark~4.8]{WalpuskiDegeratu-HYM_on_crepant_resolutions}. We outline a proof for the convenience of the reader.
\begin{proof}
Denote by $\mathcal{A}_{\textup{cpt}}(\hat{P},A_\infty)$ the set of connections which are equal to $A_\infty$ outside a compact set. There exists a $B \in \mathcal{A}_{\textup{cpt}}(\hat{P},A_\infty)$ which can be put in Coulomb gauge relative to $A$.\footnote{This can be proven as~\cite[Theorem~3.2]{FreedUhlenbeck-Instantons} by using the results of~\cite[Section~1 and~2]{Bartnik-MassofALF}; cf.~\cite[Proposition~2.3]{Nakajima-ASDonALE}. When working with weighted Sobolev norms, one has to slightly enlarge $\beta < \beta^\prime<-1$. A similar argument as in the current proof then improves the decay of the gauge transformation $u\in \mathcal{G}_{0,\beta^\prime+1}$ to $\beta+1$} That is, there exists a $u\in \mathcal{G}_{0,\beta+1}$ such that $\diff_A^*((u^{-1})^*\theta_B-\theta_A)=0$, or, equivalently, $\diff_B^*(u^*\theta_A-\theta_B)=0$. 

Next, we write $u^*\theta_A -\theta_B \eqqcolon \alpha+\bar{\alpha}$ with $\alpha\in \Omega^{0,1}_\beta(\hat{Z},\mathfrak{g}_{\hat{P}}^\mathbb{C})$. Together with the gauge-fixing, the Hermitian Yang--Mills equation on $A = B + (\alpha +\bar{\alpha})$ becomes
\begin{align*}
2\bar{\partial}_B\alpha &= -2F_B^{0,2}-[\alpha\wedge\alpha]\\
2\bar{\partial}_B^* \alpha &= 2\Re(\bar{\partial}_B^* \alpha) + 2i \im(\bar{\partial}^*_B \alpha) + i \Lambda_{\hat{\omega}} \diff_B(\alpha+\overline{\alpha}) - i \Lambda_{\hat{\omega}} \diff_B(\alpha +\bar{\alpha}) \\ &= 2i \im(\bar{\partial}^*_B\alpha) + i \Lambda_{\hat{\omega}} \diff_B(\alpha+\overline{\alpha}) +  i\Lambda_{\hat{\omega}} F_B + i\Lambda_{\hat{\omega}} [\alpha\wedge \bar{\alpha}].
\end{align*}

Outside of a compact subset of $\hat{Z}$ the connection $B$ agrees with $A_\infty$ and the twisted Kähler identites~\cite[Lemma~5.2.3]{Huybrechts-Complex} imply therefore that \[ \Lambda_{\hat{\omega}} \diff_B (\alpha + \bar{\alpha}) = \Lambda_{\hat{\omega}} \partial_B \alpha + \Lambda \bar{\partial}_B \bar{\alpha} = -2 \Im(\bar{\partial}^*_B \alpha) \] holds outside of this subset. This shows that we can write the equations above schematically as $D \alpha = f + Q(\alpha)$ where $D= 2(\bar{\partial}_B+\bar{\partial}_B^*)$, $f$ has compact support, and $\vert \nabla^k Q(\alpha)\vert = \mathcal{O}(r^{2\beta-k})$. Thus, $D\alpha$ has more decay than expected (this again uses $\beta<-1$). Up to a constant, $D$ is a twisted Dirac operator and its critical rates (which correspond to the eigenvalues of the Dirac operator on $S^5/\Gamma$ shifted by $-\frac{5}{2}$; cf. \cite[Proof of Theorem~6.1]{WalpuskiDegeratu-HYM_on_crepant_resolutions}) do not intersect the interval $(-5,0)$ (cf.~\cite[Proposition~6.1]{Bartnik-MassofALF}). This implies that the decay of $\alpha$ can iteratively be improved until $\beta = -5$ (cf.~\cite[Theorem~1.14]{Bartnik-MassofALF},~\cite[Theorem~1.17]{Bartnik-MassofALF}, and~\cite[Section~I.5]{LockhartMcOwen-ellipticOperators_nonCompactMfds}).
\end{proof}

\begin{definition}\label{def:HYM_infinitesimal_deformations}
For $A \in \mathcal{A}^{\textup{HYM}}_\beta(\hat{P},A_\infty)$ we define the space of infinitesimal deformations as  \[\mathcal{H}^1_{A,\beta}\coloneqq \left\{\alpha\in \Omega^{0,1}_\beta(Z,\mathfrak{g}_{\hat{P}}^\mathbb{C}) \mid (\bar{\partial}_A + \bar{\partial}_A^*)\alpha=0 \right\}. \] The connection is called \ita{infinitesimally rigid} if $\mathcal{H}^1_{A,\beta} = 0$.
\end{definition}

The following proposition can be proven as \autoref{prop:HYMConnection improve decay}:
\begin{proposition}[{\cite[Proposition~5.5]{WalpuskiDegeratu-HYM_on_crepant_resolutions}}]\label{prop:kernel-HYM-decay}
Let $A \in \mathcal{A}^{\textup{HYM}}_\beta(\hat{P},A_\infty)$ for $\beta<0$. Then every $\alpha \in \mathcal{H}^1_{A,\beta}$ lies in $\mathcal{H}^1_{A,-5}$. (In fact, any $\alpha \in \ker(\bar{\partial}_A+\bar{\partial}_A^*) \cap \Omega^{0,1}(\hat{Z},\mathfrak{g}_{\hat{P}}^\mathbb{C})$ that satisfies $\lim_{r\to \infty} \vert \alpha \vert = 0$ lies in $\mathcal{H}^1_{A,-5}$.)
\end{proposition}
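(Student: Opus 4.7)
The proof will run in close analogy to that of \autoref{prop:HYMConnection improve decay}. The idea is that the equation $(\bar{\partial}_A + \bar{\partial}_A^*)\alpha = 0$ is a linear homogeneous Dirac-type equation, and the HYM connection $A$ differs from the flat asymptotic model $A_\infty$ only by a zeroth-order perturbation that decays like $r^\beta$. Hence on the asymptotic end, the equation becomes a perturbation of the cone Dirac equation, and the improvement of decay follows from the mapping theory of the model operator on weighted spaces, exactly as in the earlier proposition.

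Concretely, I would proceed as follows. First, via the identification $\tilde{\tau}$ and writing $a \coloneqq \tilde{\tau}_*\theta_A - \theta_{A_\infty}$, the equation outside a compact set reads
\[
\tilde{D}(\tilde{\tau}_*\alpha) = -[a,\tilde{\tau}_*\alpha] + (\text{Clifford-type contractions of }a\text{ with }\tilde{\tau}_*\alpha),
\]
where $\tilde{D} \coloneqq 2(\bar{\partial}_{A_\infty}+\bar{\partial}_{A_\infty}^*)$ is the flat model Dirac operator on $(\mathbb{C}^3\setminus\{0\})/\Gamma$. Denoting the right-hand side by $Q(\alpha)$, one has schematically $\|Q(\alpha)\|_{C^{k,\alpha}_{\delta+\beta}}\le c\|\alpha\|_{C^{k,\alpha}_\delta}$, i.e.\ the zeroth-order perturbation gains $|\beta|$ units of decay. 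Elliptic regularity for the first-order elliptic operator $\bar{\partial}_A+\bar{\partial}_A^*$, applied to the homogeneous equation, together with the pointwise decay $|\alpha|\to 0$, implies that $\alpha\in C^{k,\alpha}_{\delta_0}$ for every $k$ and some $\delta_0\in(-5,0)$ close to $0$.

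Next I would set up the iterative improvement of the decay. The critical rates of $\tilde{D}$ on the cone come from the eigenvalues of the Dirac operator on $S^5/\Gamma$ shifted by $-5/2$, and these do not meet the open interval $(-5,0)$ (as recalled in the proof of \autoref{prop:HYMConnection improve decay}). Hence each application of the Lockhart–McOwen theorem for asymptotically conical elliptic operators (cf.~\cite[Theorem~1.14 and~1.17]{Bartnik-MassofALF} and~\cite[Section~I.5]{LockhartMcOwen-ellipticOperators_nonCompactMfds}) converts the decay $O(r^{\delta_0+\beta})$ of $Q(\alpha)$ into a corresponding improved decay of $\alpha$, as long as the new rate does not cross a critical rate. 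Iterating, the rate is pushed monotonically from $\delta_0$ towards $-5$ without ever crossing a critical value.

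The main subtlety—which I expect to be the only non-routine point—lies at the final step, where the iterative improvement terminates because $-5$ is itself a critical rate of $\tilde{D}$. This is handled by invoking the asymptotic-expansion version of the Lockhart–McOwen theorem (e.g.~\cite[Theorem~1.17]{Bartnik-MassofALF}): a solution that decays at any rate strictly above $-5$ and satisfies $\tilde{D}\alpha=Q(\alpha)$ with right-hand side decaying below $-5-1$ admits an asymptotic expansion whose leading term is at a critical rate, and in particular $\alpha = O(r^{-5})$. Combined with standard interior elliptic estimates, this yields $\alpha\in\mathcal{H}^1_{A,-5}$ and hence the claim, the parenthetical generalisation requiring no additional argument beyond the fact that the whole proof only used $|\alpha|\to 0$ rather than any prescribed initial rate.
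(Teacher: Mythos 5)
Your overall strategy matches what the paper intends: the paper's own "proof" is the single line "can be proven as \autoref{prop:HYMConnection improve decay}," and you correctly identify the machinery involved—write the Dirac equation on the end as a perturbation of the conic model operator $\tilde D=2(\bar\partial_{A_\infty}+\bar\partial_{A_\infty}^*)$, note that the indicial roots of $\tilde D$ (eigenvalues of the $S^5/\Gamma$ Dirac operator shifted by $-5/2$) miss the interval $(-5,0)$, and use the Lockhart--McOwen/Bartnik weighted theory and asymptotic expansions. Two steps, however, are stated too loosely and one of them hides the real work.

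First, the passage from $\lvert\alpha\rvert\to 0$ to $\alpha\in C^{k,\alpha}_{\delta_0}$ for some $\delta_0\in(-5,0)$ is not a consequence of elliptic regularity. Elliptic (Schauder) estimates upgrade smoothness at a \emph{given} weight but do not manufacture a definite polynomial rate out of mere pointwise decay. Producing a rate $\delta_0<0$ from $\lvert\alpha\rvert\to 0$ is precisely where the asymptotic-expansion/indicial-root argument (or a three-annulus lemma for the conic model) must be invoked; it is the content of the proposition, not a preliminary. Put differently: once the expansion theorem is applicable, it already hands you $\alpha=O(r^{-5})$ in one shot, because the only indicial roots in $[-5,0]$ are $\{-5,0\}$ and the rate $0$ is excluded by the decay assumption. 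The "monotone iteration from $\delta_0$ to $-5$" is a somewhat misleading picture: in the linear problem the rate is not nudged step-by-step, it jumps directly to the first admissible critical rate.

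Second, the iteration as you describe it only improves decay when $\beta<-1$: the commutator term $[a,\alpha]$ has rate $\delta_0+\beta$, and the Lockhart--McOwen step returns $\alpha$ at rate $\delta_0+\beta+1$, which is an improvement over $\delta_0$ if and only if $\beta+1<0$. (This is the same reason \autoref{prop:HYMConnection improve decay} restricts to $\beta\in(-5,-1)$.) Under the stated hypothesis $\beta<0$ this is a gap for $\beta\in(-1,0)$; one should either assume $\beta\le -1$, or first reduce to $A\in\mathcal A^{\mathrm{HYM}}_{-5}$ by gauge fixing via \autoref{prop:HYMConnection improve decay} (which covers $\beta\in(-5,-1)$ and is harmless since $\mathcal H^1$ transforms covariantly under gauge), or observe that in all applications in this paper $a$ already decays at rate $-5$. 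This deserves an explicit word, since as written the iteration claim is false for $\beta$ near $0$, even though the conclusion is ultimately correct in the cases that matter.
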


\begin{lemma}\label{lem:delbar_harmonic_with_decay}
Let $\beta<0$ and $A\in \mathcal{A}^{\textup{HYM}}_\beta(\hat{P},A_\infty)$ and assume that either \[\eta \in \Omega^{0,0}_\beta(\hat{Z},\mathfrak{g}_{\hat{P}}^\mathbb{C})\quad \textup{satisfies} \quad\bar{\partial}_A^*\bar{\partial}_A\eta = 0\] or  \[\eta \in \Omega^{0,3}_\beta(\hat{Z},\mathfrak{g}_{\hat{P}}^\mathbb{C})\quad \textup{satisfies} \quad\bar{\partial}_A\bar{\partial}_A^*\eta = 0.\] Then $\eta =0$.
\end{lemma}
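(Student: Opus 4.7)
The plan is to attack both cases by the same three-step strategy: first improve the decay rate of $\eta$ to one at which integration by parts on the noncompact manifold $\hat{Z}$ is justified; then invoke a Bochner--Kodaira identity together with the HYM condition to rewrite the given equation as $\nabla_A^*\nabla_A \eta = 0$; and finally integrate by parts once more to conclude that $\eta$ is covariantly constant --- and hence identically zero since $|\eta|\to 0$ at infinity. Case~$(0,3)$ can additionally be reduced to case~$(0,0)$ via the parallel isomorphism $\Omega^{0,0}(\mathfrak{g}_{\hat{P}}^\mathbb{C})\to \Omega^{0,3}(\mathfrak{g}_{\hat{P}}^\mathbb{C})$, $f\mapsto f\overline{\hat{\Omega}}$, which is available because $\hat{\Omega}$ is parallel and trivialises the canonical bundle of the Calabi--Yau $\hat{Z}$.

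For the decay improvement, the operators $\bar{\partial}_A^*\bar{\partial}_A$ on $\Omega^{0,0}(\mathfrak{g}_{\hat{P}}^\mathbb{C})$ and $\bar{\partial}_A\bar{\partial}_A^*$ on $\Omega^{0,3}(\mathfrak{g}_{\hat{P}}^\mathbb{C})$ both coincide with the Dolbeault Laplacian in their respective degrees (on a $3$-fold $\bar{\partial}_A$ annihilates every $(0,3)$-form automatically), and hence are elliptic of Laplace type. On the ALE end $A$ is asymptotic to the flat $A_\infty$, so under the trivialisation $\tilde{\tau}$ each of these operators agrees with the flat scalar Laplacian on $(\mathbb{C}^3\setminus\{0\})/\Gamma$ modulo errors that decay at infinity. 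The indicial exponents of the latter are contained in $\mathbb{Z}_{\geq 0}\cup \mathbb{Z}_{\leq -4}$ and therefore miss the open interval $(-4,0)$. Iterating the asymptotic expansion / elliptic regularity argument already used in the proof of \autoref{prop:HYMConnection improve decay} (\cite[Theorems~1.14 and~1.17]{Bartnik-MassofALF} combined with \cite[Section~I.5]{LockhartMcOwen-ellipticOperators_nonCompactMfds}), the decay of $\eta$ can then be improved from $\beta<0$ to any $\beta'\in(-4,\beta]$, and in particular to some $\beta'<-2$.

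With this improved decay I would apply the Bochner--Kodaira identity. On sections the Chern connection of a Hermitian holomorphic bundle over a Kähler manifold satisfies $2\bar{\partial}_A^*\bar{\partial}_A = \nabla_A^*\nabla_A - i\Lambda_{\hat{\omega}}F_A$, and the HYM hypothesis $\Lambda_{\hat{\omega}}F_A = 0$ turns case~$(0,0)$ into $\nabla_A^*\nabla_A\eta = 0$. For case~$(0,3)$ the same reduction is obtained either directly (on the Ricci-flat $\hat{Z}$ the Ricci term in the Bochner--Kodaira--Nakano formula vanishes, and for dimensional reasons $[iF_A,\Lambda_{\hat{\omega}}]$ annihilates $(0,3)$-forms) or via the above isomorphism. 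Pairing with $\eta$ and integrating over $B_R\subset \hat{Z}$, the boundary contribution is bounded by a multiple of $R^{2\beta'+4}\to 0$ since $\beta'<-2$. Hence $\nabla_A\eta = 0$ globally, $|\eta|^2$ is constant on the (connected) manifold $\hat{Z}$, and combined with $|\eta|\to 0$ at infinity this forces $\eta \equiv 0$.

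The main obstacle is the decay improvement in step one: it hinges on the absence of indicial exponents of the scalar Laplacian on $\mathbb{R}^6/\Gamma$ in the open interval $(-4,0)$, which is the same spectral gap exploited in the proof of \autoref{prop:HYMConnection improve decay}. Once the decay of $\eta$ is sufficient, the remainder is a clean Bochner--Weitzenböck argument whose curvature terms are all killed by the HYM and Calabi--Yau conditions.
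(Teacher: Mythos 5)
Your proof is correct, but it takes a noticeably longer route than the paper's. Both arguments begin identically: the Weitzenböck / Bochner--Kodaira identity, with the HYM condition and Ricci-flatness killing the curvature terms, turns the hypothesis into $\nabla_A^*\nabla_A\eta = 0$. After that, you improve the decay of $\eta$ from $\beta<0$ to some $\beta'<-2$ via the indicial-root gap of the flat Laplacian on $(\mathbb{C}^3\setminus\{0\})/\Gamma$, then integrate $\langle \nabla_A^*\nabla_A\eta,\eta\rangle$ by parts over exhausting balls and show the boundary terms vanish to conclude $\nabla_A\eta = 0$. The paper instead observes directly from the Bochner formula that $\Delta|\eta|^2 = 2\langle\nabla_A^*\nabla_A\eta,\eta\rangle - 2|\nabla_A\eta|^2 = -2|\nabla_A\eta|^2 \leq 0$, so $|\eta|^2$ is a subharmonic function on the complete ALE manifold which tends to zero at infinity, and the maximum principle forces $|\eta|^2\equiv 0$ --- no decay improvement and no integration by parts needed. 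The trade-off is that your approach reuses the weighted elliptic machinery already set up for \autoref{prop:HYMConnection improve decay} (decay improvement, indicial roots, Bartnik/Lockhart--McOwen), whereas the paper's maximum-principle argument is shorter and more elementary, requiring only the qualitative fact $|\eta|\to 0$ rather than any quantitative rate; on the other hand your argument would adapt more readily to situations where the maximum principle is unavailable (e.g.\ systems without a pointwise Bochner sign). One minor point worth noting: once you have $\nabla_A\eta=0$ you should still invoke connectedness of $\hat{Z}$ together with $|\eta|\to 0$ to conclude $\eta\equiv 0$, which you do.
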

This result is for example contained in~\cite[Proof of Proposition~6.4]{WalpuskiDegeratu-HYM_on_crepant_resolutions}. As the argument is short, we have included a (slightly simplified) version of the proof.
\begin{proof}
Since $g_{\hat{\omega}}$ is Ricci-flat and $A$ is Hermitian Yang--Mills, the Weitzenböck formula (see for example~\cite[Corollary~1.1.4]{Oliveira-MonopolesDiss}) implies that \[\nabla_A^*\nabla_A \eta = 2 \bar{\partial}_A^*\bar{\partial}_A\eta = 0 \quad \textup{if $\eta \in \Omega^{0,0}_\beta(\hat{Z},\mathfrak{g}_{\hat{P}}^\mathbb{C})$}\] or \[\nabla_A^*\nabla_A \eta = 2 \bar{\partial}_A\bar{\partial}_A^*\eta = 0 \quad \textup{if $\eta \in \Omega^{0,3}_\beta(\hat{Z},\mathfrak{g}_{\hat{P}}^\mathbb{C})$}.\] Together with the Bochner formula this leads to \[ \Delta \vert \eta \vert^2 = 2 \langle \nabla_A^*\nabla_A \eta,\eta \rangle - 2 \vert \nabla_A \eta \vert^2 \leq 0. \] The function $\vert \eta \vert^2$ is therefore subharmonic and decays to zero at infinity. Thus, by the maximum principle, $\vert \eta \vert^2 = 0$.
\end{proof}

Let now $\Gamma<\SU(3)$ be a fixed finite subgroup acting freely on $\mathbb{C}^3\setminus\{0\}$ and let $\zeta \in \mathfrak{z}^* \cap (\mathfrak{pu}(\mathbb{C}[\Gamma])^\Gamma)^*_\mathbb{Q}$ be generic as defined in \autoref{sec:Ricci-Flat ALE metrics on crepant resolutions}. Let $(\hat{Z}_\zeta,\tau_\zeta,\hat{\omega}_\zeta,\hat{\Omega}_\zeta)$ be the corresponding ALE Calabi--Yau manifold asymptotic to $\mathbb{C}^3/\Gamma$ as described in \autoref{sec:Ricci-Flat ALE metrics on crepant resolutions}. The following proposition follows from the work 
in~\cite{WalpuskiDegeratu-HYM_on_crepant_resolutions} and \autoref{prop:HYMConnection improve decay} (or~\cite[Remark~4.8]{WalpuskiDegeratu-HYM_on_crepant_resolutions}). The last statement about the monodromy of the asymptotic connection can be proven as~\cite[Proposition~2.2 (ii)]{KronheimerNakajima-YangMillsInstantons_on_ALE}.
\begin{proposition}[{\cite[Theorem~1.1]{WalpuskiDegeratu-HYM_on_crepant_resolutions} and~\cite[Proof of Lemma~5.1]{WalpuskiDegeratu-HYM_on_crepant_resolutions}}]\label{prop:irred_HYM_on_tautological_bundles}
For every unitary representation $\nu \col \Gamma \to \U(n)$ there exists a locally framed $\U(n)$-bundle $\pi_\nu \col \hat{P}_\nu \to \hat{Z}_\zeta$ together with an infinitesimally rigid HYM connection $A_\nu \in \mathcal{A}_{-5}^{\textup{HYM}}(P_\nu,A_\infty)$ asymptotic to the flat connection $A_\infty$ over $(\mathbb{C}^3\setminus\{0\})/\Gamma$ whose monodromy corresponds to $\nu$.
\end{proposition}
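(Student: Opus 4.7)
The plan is to obtain $\hat{P}_\nu$ and $A_\nu$ directly from the Kähler quotient description of $\hat{Z}_\zeta$ given in \autoref{sec:Ricci-Flat ALE metrics on crepant resolutions}, and then to verify the three claims (HYM, asymptotic rate, infinitesimal rigidity) by combining standard Kähler reduction yoga with the decay-improvement result \autoref{prop:HYMConnection improve decay} and the cohomological vanishing from~\cite{WalpuskiDegeratu-HYM_on_crepant_resolutions}.

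First I would decompose $\nu = \bigoplus_i \rho_i$ into irreducible representations of $\Gamma$. Since $\Gamma$ is abelian (\autoref{rem: finite subgroups of SU(3)}), each $\rho_i$ is $1$-dimensional and sits inside $\mathbb{C}[\Gamma]$ as an isotypic component. The principal $G$-bundle $N \cap \mu^{-1}(\zeta) \to \hat{Z}_\zeta$ underlying the Kähler reduction of point~\ref{bul:ALE-Crepant-Resolutions-via-KählerQuotients} carries, for every irreducible $\rho$ of $\Gamma$, a tautological Hermitian line bundle $\mathcal{R}_\rho$ equipped with the canonical unitary connection $A_\rho$ determined by the horizontal distribution (the orthogonal complement of the $G$-orbits inside the ambient flat Kähler manifold). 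I would take $\hat{P}_\nu$ to be the unitary frame bundle of $\bigoplus_i \mathcal{R}_{\rho_i}$, equipped with the direct-sum connection $A_\nu$. By the general fact that Kähler reduction converts the complex moment condition $[B \wedge B] = 0$ into $F_{A_\nu}^{0,2} = 0$ and the real moment condition $\mu = \zeta$ into $\Lambda_{\hat\omega_\zeta} F_{A_\nu} = 0$ (both at the level of $\tilde\omega_\zeta$, hence at the level of $\hat\omega_\zeta$ since the two Kähler forms lie in the same class and $A_\nu$ is constructed algebraically), the connection $A_\nu$ is Hermitian Yang--Mills in the sense of \autoref{rem:HYMaltCondition}. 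This is the content of \cite[Theorem~1.1]{WalpuskiDegeratu-HYM_on_crepant_resolutions}.

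Second, I would read off the asymptotic behaviour from $\tau_\zeta$. Outside a compact set, $\hat{Z}_\zeta$ is biholomorphic via $\tau_\zeta$ to a punctured neighbourhood of infinity in $\mathbb{C}^3/\Gamma$, and under this identification each $\mathcal{R}_\rho$ pushes forward to the flat equivariant line bundle $\bigl((\mathbb{C}^3\setminus\{0\}) \times \mathbb{C}_\rho\bigr)/\Gamma$ with its standard flat connection, whose monodromy is precisely $\rho$. Taking direct sums gives the required monodromy $\nu$ and a locally framed bundle $\pi_\nu \col \hat{P}_\nu \to \hat{Z}_\zeta$ together with an asymptotic flat model $A_\infty$. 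Using that the Kähler form $\tilde\omega_\zeta$ is ALE of rate $4$ and that the compactly supported correction passing from $\tilde\omega_\zeta$ to the Ricci-flat representative $\hat\omega_\zeta$ does not affect the asymptotic bundle data, one obtains $A_\nu \in \mathcal{A}_\beta^{\textup{HYM}}(\hat{P}_\nu,A_\infty)$ for some $\beta \in (-5,-1)$. An application of \autoref{prop:HYMConnection improve decay} then upgrades the decay rate to $-5$, yielding $A_\nu \in \mathcal{A}_{-5}^{\textup{HYM}}(\hat{P}_\nu,A_\infty)$.

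Finally, for infinitesimal rigidity I would combine \autoref{prop:kernel-HYM-decay} with a Dolbeault/vanishing argument. By \autoref{prop:kernel-HYM-decay} it suffices to show $\mathcal{H}^1_{A_\nu,-5} = 0$. Via Hodge theory on the Calabi--Yau $3$-fold $(\hat{Z}_\zeta,\hat\omega_\zeta)$, harmonic representatives in $\mathcal{H}^1_{A_\nu,-5}$ correspond to the weighted Dolbeault cohomology of $\End\bigl(\bigoplus_i \mathcal{R}_{\rho_i}\bigr)$, which splits into a sum of weighted Dolbeault cohomologies of tautological line bundles $\mathcal{R}_{\rho_i}^* \otimes \mathcal{R}_{\rho_j}$. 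The vanishing of each such summand is precisely what is proved in~\cite[Proof of Lemma~5.1]{WalpuskiDegeratu-HYM_on_crepant_resolutions}, using the explicit description of the tautological bundles on the crepant resolution together with a Weitzenböck argument in the spirit of \autoref{lem:delbar_harmonic_with_decay}. This cohomological vanishing is the main substantive step, and I would import it directly from~\cite{WalpuskiDegeratu-HYM_on_crepant_resolutions} rather than reprove it; assembling the three steps then gives the proposition.
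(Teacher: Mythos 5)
The paper gives no independent proof of this proposition: it is imported from \cite{WalpuskiDegeratu-HYM_on_crepant_resolutions} (their Theorem~1.1 for the existence of the HYM connection asymptotic to $A_\infty$, and the proof of their Lemma~5.1 for the vanishing of $\mathcal{H}^1_{A_\nu,-5}$), with the decay rate upgraded to $-5$ via \autoref{prop:HYMConnection improve decay} (equivalently their Remark~4.8) and the monodromy identification deduced as in \cite[Proposition~2.2~(ii)]{KronheimerNakajima-YangMillsInstantons_on_ALE}. There is therefore no in-paper argument to compare against line by line; what matters is whether your unpacking reproduces the cited result correctly.

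The scaffold you sketch --- tautological bundles from the Kähler quotient, framing read off via $\tau_\zeta$, decay improvement via \autoref{prop:HYMConnection improve decay}, and the weighted Dolbeault vanishing from Lemma~5.1 --- is in the right spirit, but the central step is explained incorrectly. You claim that the direct-sum quotient connection $A_\nu$ is HYM with respect to $\hat\omega_\zeta$ because it is HYM with respect to $\tilde\omega_\zeta$ and ``the two Kähler forms lie in the same class and $A_\nu$ is constructed algebraically.'' This does not follow: the equation $\Lambda_\omega F_A = 0$ depends on the pointwise value of the Kähler form, not merely on its cohomology class. The Chern connection produced by Kähler reduction is HYM for $\tilde\omega_\zeta$, and replacing $\tilde\omega_\zeta$ with the Ricci-flat representative $\hat\omega_\zeta$ will generically break that equation. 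The genuine content of \cite[Theorem~1.1]{WalpuskiDegeratu-HYM_on_crepant_resolutions} is an existence theorem of Donaldson--Uhlenbeck--Yau type adapted to the ALE Calabi--Yau setting: the tautological bundle supplies the holomorphic bundle and the asymptotic flat model, and the theorem produces a \emph{different} Hermitian metric whose Chern connection satisfies $\Lambda_{\hat\omega_\zeta}F=0$. If you replace your parenthetical with an honest appeal to that analytic existence result (and note that the same reference, not algebra, supplies the initial decay rate $\beta\in(-5,-1)$ needed before \autoref{prop:HYMConnection improve decay} can be applied), the gap closes; as written, the step is false.
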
 
\begin{remark}
Since $\Gamma<\SU(3)$ is abelian by \autoref{rem: finite subgroups of SU(3)}, the locally framed $\U(n)$-bundles and connections in the previous proposition decompose into a direct sum of $\U(1)$-bundles and connections.
\end{remark}
Recall from \autoref{sec:Ricci-Flat ALE metrics on crepant resolutions} \autoref{bul: lifting group actions to resolution} that for every $U\in N_{\U(\mathbb{C}^3)}(\Gamma)$ with $\Ad_{\textup{conj}_{U}^{-1}}^*\zeta = \zeta$ there exists a holomorphic lift $\hat{U}$ to $\hat{Z}_\zeta$ preserving the Calabi--Yau metric. This implies, in particular, that for every element $U\in C_{\U(\mathbb{C}^3)}(\Gamma)$ in the centraliser of $\Gamma$ we obtain such a lift $\hat{U}$ (because conjugation by $U$ induces the trivial action on $\mathbb{C}[\Gamma]$; i.e. $\textup{conj}_{U} = 1$ in the notation of the aforementioned paragraph). The following can be deduced from~\cite[Sections~3 and~4]{WalpuskiDegeratu-HYM_on_crepant_resolutions} and \autoref{prop:HYMConnection improve decay} (note for this that the gauge transformation in \autoref{prop:HYMConnection improve decay} can be chosen invariantly):
\begin{lemma}\label{lem:group_lift_to_HYMbundle}
Let $\nu \col \Gamma \to \U(n)$ be a representation and $U\in C_{\U(\mathbb{C}^3)}(\Gamma)$ be an element of the centraliser of $\Gamma$. Denote by $\hat{U}$ the induced isometry on $\hat{Z}_\zeta$. There exists a lift of $\hat{U}$ to a bundle isomorphism of $\hat{P}_\nu$ which preserves the HYM connection $A_\nu$.
\end{lemma}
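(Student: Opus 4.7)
The plan is to follow the Kähler-quotient construction of $\hat{Z}_\zeta$ and the construction of tautological HYM connections from \cite{WalpuskiDegeratu-HYM_on_crepant_resolutions} while tracking the $U$-action at each step. Since $U$ centralises $\Gamma$, conjugation by $U$ acts trivially on $\Gamma$, so $\textup{conj}_{U}$ may be taken to be the identity on $\mathbb{C}[\Gamma]$. Consequently, the induced action on $(\mathbb{C}^3\otimes \End(\mathbb{C}[\Gamma]))^\Gamma$ is simply $U \otimes \id$; this commutes with the $G$-action, preserves the quadratic subvariety $N$, and satisfies $\mu \circ (U \otimes \id) = \mu$, so the hypotheses of \autoref{bul: lifting group actions to resolution} are trivially fulfilled and $\hat{U}$ descends as described there.

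First I would construct the bundle lift. In the construction of \cite{WalpuskiDegeratu-HYM_on_crepant_resolutions}, $\hat{P}_\nu$ is obtained, via $G$-reduction, from the $\Gamma$-isotypic piece (corresponding to $\nu$) of the trivial vector bundle $(N\cap \mu^{-1}(\zeta)) \times \mathbb{C}[\Gamma]$. Since $U$ acts on the base via $U\otimes \id$ and trivially on the fiber $\mathbb{C}[\Gamma]$ (because $\textup{conj}_{U}=1$), its action is $G$-equivariant, preserves the $\Gamma$-isotypic decomposition, and hence descends to a bundle automorphism $\tilde{U}_\nu$ of $\hat{P}_\nu$ covering $\hat{U}$ and compatible with the local framing at infinity.

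Next I would show that $\tilde{U}_\nu$ preserves the HYM connection $A_\nu$. The natural connection induced on the isotypic component from the trivial connection on $(N\cap \mu^{-1}(\zeta)) \times \mathbb{C}[\Gamma]$ is HYM with respect to the quotient Kähler form $\tilde{\omega}_\zeta$ and is $\tilde{U}_\nu$-invariant by construction, since each input is $U$-invariant. Following \cite{WalpuskiDegeratu-HYM_on_crepant_resolutions}, one then deforms this connection to an HYM connection with respect to the Ricci-flat Kähler form $\hat{\omega}_\zeta$ by solving a Hermitian--Einstein-type PDE for a gauge transformation; this PDE is $\hat{U}$-equivariant, and uniqueness of its solution within the prescribed decay class forces the solution, and hence the deformed connection, to be $\tilde{U}_\nu$-invariant.

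The main (and rather mild) obstacle is the final step: improving the asymptotic decay rate to $\beta = -5$ via \autoref{prop:HYMConnection improve decay}. The key observation, as noted just above the statement of the lemma, is that the Coulomb-gauge transformation produced in the proof of \autoref{prop:HYMConnection improve decay} can be chosen $\tilde{U}_\nu$-invariant: the condition $\diff_B^*(u^*\theta_A - \theta_B) = 0$ is a $\tilde{U}_\nu$-equivariant elliptic problem on the relevant weighted Sobolev spaces, whose solution is unique in its decay class and therefore automatically invariant. Piecing these steps together produces a lift of $\hat{U}$ to $\hat{P}_\nu$ that preserves $A_\nu$, as required.
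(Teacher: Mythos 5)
Your proposal is correct and mirrors the paper's own (terse) argument: the paper deduces the lemma from \cite[Sections~3 and~4]{WalpuskiDegeratu-HYM_on_crepant_resolutions} together with \autoref{prop:HYMConnection improve decay}, explicitly noting that the Coulomb-gauge transformation can be chosen invariantly, and the remark following the lemma gives exactly the quotient-bundle description $\hat{P}_\nu = (N\cap \mu^{-1}(\zeta))\times_{\mathbb{P}\U(\mathbb{C}[\Gamma])^\Gamma} \U(n)$ with the $U\otimes\id$ action that you use. Your three-step organisation (lift via the Kähler-quotient picture using $\textup{conj}_U=1$, invariance of the tautological HYM connection by uniqueness of the Hermitian--Einstein-type equation, equivariant Coulomb gauge in the decay-improvement step) is a faithful expansion of what the paper leaves implicit.
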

\begin{remark}
This remark explains the bundle isomorphism of the previous lemma and, in particular, the condition that $U \in C_{\U(\mathbb{C}^3)}(\Gamma)$ in more detail. Recall from \autoref{sec:Ricci-Flat ALE metrics on crepant resolutions} that $\hat{Z}_\zeta$ is constructed via Kähler reduction \[ \hat{Z}_\zeta = (N\cap \mu^{-1}(\zeta))/\mathbb{P}\U(\mathbb{C}[\Gamma])^\Gamma.\] Similarly, the $\U(n)$-bundle of \autoref{prop:irred_HYM_on_tautological_bundles} is constructed in \cite[Equation~(3.4)]{WalpuskiDegeratu-HYM_on_crepant_resolutions} as the associated bundle \[ \hat{P}_\nu \coloneqq (N\cap \mu^{-1}(\zeta))\times_{\mathbb{P}\U(\mathbb{C}[\Gamma])^\Gamma} \U(n) \] (where the homomorphism $\mathbb{P}\U(\mathbb{C}[\Gamma])^\Gamma \to \U(n)$ is in a certain way induced by the representation $\nu$). If $U\in C_{\U(\mathbb{C}^3)}(\Gamma)$, then the induced map $\textup{conj}_U\in \U(\mathbb{C}[\Gamma])$ is trivial. In this case, the action of $U$ on $N$ described in \autoref{bul: lifting group actions to resolution} of \autoref{sec:Ricci-Flat ALE metrics on crepant resolutions} commutes with the action of $\mathbb{P}\U(\mathbb{C}[\Gamma])^\Gamma$ (and restricts to $N\cap \mu^{-1}(\zeta)$). This induces the action on $\hat{P}_\nu$.
\end{remark}

In \autoref{sec:Example18} we use the Hermitian Yang--Mills connections of \autoref{prop:irred_HYM_on_tautological_bundles} to construct $\G_2$-instantons on a certain $\SO(14)$-bundle. For this, we consider the connections of \autoref{prop:irred_HYM_on_tautological_bundles} as $\SO(2n)$-connections (induced by the inclusion $\U(n) < \SO(2n)$). The following proposition ensures that these connections are still infinitesimally rigid (and therefore satisfy \autoref{ass:invertible_linearisations}) when considered as $\SO(2n)$-connections. 

\begin{proposition}\label{prop:rigid_SO(n)-connections}
For a unitary representation $\nu \col \Gamma \to \U(n)$ let $(\pi_\nu \col \hat{P}_\nu \to \hat{Z}_\zeta,A_\nu)$ be as in \autoref{prop:irred_HYM_on_tautological_bundles}. Furthermore, denote by $\hat{E}_\nu$ the unitary vector bundle associated to $\hat{P}_\nu$. For any $k \in \mathbb{N}_0$ we define $\pi\col \SO(\underline{\mathbb{R}}^k\oplus \hat{E}_\nu) \to \hat{Z}$ to be the $\SO(2n+k)$-bundle associated to the Euclidean vector bundle $\underline{\mathbb{R}}^k \oplus \hat{E}_\nu$. This comes equipped with a canonical local framing and an Hermitian Yang--Mills connection $A$ (induced by $A_\nu$ and the trivial connection). This connection is infinitesimally rigid and asymptotic to the flat connection associated to the (real) representation $\nu_{\textup{triv}}^{\oplus k} \oplus \nu$.
\end{proposition}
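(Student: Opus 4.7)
The plan is to reduce the infinitesimal rigidity of the $\SO(2n+k)$-connection $A$ to the rigidity of a larger $\U$-connection provided by \autoref{prop:irred_HYM_on_tautological_bundles}, via complexification of the underlying real Euclidean bundle. Setting $V \coloneqq \underline{\mathbb{R}}^k \oplus \hat{E}_\nu$ (as a rank $k+2n$ real Euclidean bundle), the complex structure on $\hat{E}_\nu$ yields a canonical splitting $V \otimes_\mathbb{R} \mathbb{C} \cong \underline{\mathbb{C}}^k \oplus \hat{E}_\nu \oplus \overline{\hat{E}_\nu}$. The $\mathbb{C}$-linear extension $A_\mathbb{C}$ of $A$ decomposes accordingly as the direct sum of the trivial connection on $\underline{\mathbb{C}}^k$, the HYM connection $A_\nu$ on $\hat{E}_\nu$, and its complex conjugate $\overline{A_\nu}$ on $\overline{\hat{E}_\nu}$. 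By the equivalent description of the real HYM equation in \autoref{rem:HYMaltCondition}, the HYM property of $A$ translates into the usual HYM equations for the unitary connection $A_\mathbb{C}$; moreover $A_\mathbb{C}$ is asymptotic to the flat connection of the $\Gamma$-representation $\rho \coloneqq \nu_\textup{triv}^{\oplus k} \oplus \nu \oplus \overline{\nu}$.

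Next I would invoke \autoref{prop:irred_HYM_on_tautological_bundles} for $\rho$ itself, which produces an infinitesimally rigid HYM $\U(k+2n)$-connection on $\hat{E}_\rho$. The tautological-bundle construction in \cite[Sections~3--4]{WalpuskiDegeratu-HYM_on_crepant_resolutions} is compatible with direct-sum decompositions of the defining representation, so this rigid connection coincides up to gauge with $A_\mathbb{C}$, yielding $\mathcal{H}^1_{A_\mathbb{C}, -5} = 0$. The inclusion $\mathfrak{so}(V)_\mathbb{C} \hookrightarrow \End(V_\mathbb{C})$ of complexified adjoint bundles is preserved by the adjoint action of $A_\mathbb{C}$, since $A_\mathbb{C}$ arises from the orthogonal connection $A$; consequently the Dolbeault operator and its $L^2$-adjoint for the $\SO$-connection coincide with those of $A_\mathbb{C}$ after restriction to the subbundle. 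This gives the inclusion $\mathcal{H}^1_{A,-5} \hookrightarrow \mathcal{H}^1_{A_\mathbb{C},-5} = 0$, proving infinitesimal rigidity of $A$. The asymptotic claim is immediate from the block-diagonal structure of $A$: on $\underline{\mathbb{R}}^k$ it is the trivial flat connection, while on $\hat{E}_\nu$ it equals $A_\nu$, whose asymptotic as a real connection is the flat connection of the real representation underlying $\nu$.

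The most delicate step will be the identification of $A_\mathbb{C}$ with the rigid HYM connection from \autoref{prop:irred_HYM_on_tautological_bundles}. If invoking the functoriality of the Kähler-quotient construction proves insufficient, I would instead decompose $\mathfrak{so}(V)_\mathbb{C}$ directly into summands of the form $\hat{E}_\sigma$ indexed by characters $\sigma$ of $\Gamma$ (possible since $\Gamma$ is abelian by \autoref{rem: finite subgroups of SU(3)}), and for each such $\sigma$ apply \autoref{prop:irred_HYM_on_tautological_bundles} to the representation $\nu_\textup{triv} \oplus \sigma$ to read off the vanishing of $\mathcal{H}^{0,1}_{-5}(\hat{E}_\sigma)$ from the off-diagonal block $\Hom(\underline{\mathbb{C}},\hat{E}_\sigma)$ of its $\U$-rigidity; summing over all such summands then again yields $\mathcal{H}^1_{A,-5} = 0$.
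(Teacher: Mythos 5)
Your proof takes essentially the same route as the paper's: complexify the real adjoint bundle, identify $\mathfrak{so}(\underline{\mathbb{R}}^k\oplus\hat E_\nu)^{\mathbb{C}}$ as a parallel subbundle of $\mathfrak{u}(\underline{\mathbb{C}}^k\oplus\hat E_\nu\oplus\hat E_{\nu^*})^{\mathbb{C}}$ (your $\overline{\hat E_\nu}$ is canonically $\hat E_\nu^*\cong\hat E_{\nu^*}$ for a unitary bundle), and invoke \autoref{prop:irred_HYM_on_tautological_bundles} for the sum $\nu_{\textup{triv}}^{\oplus k}\oplus\nu\oplus\nu^*$; the worry you flag about identifying $A_{\mathbb{C}}$ with the tautological rigid connection is resolved by the remark after \autoref{prop:irred_HYM_on_tautological_bundles} that the construction splits into a direct sum of $\U(1)$-pieces for abelian $\Gamma$, which is also implicitly what the paper relies on.
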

A similar statement and proof for ALE hyperkähler 4-manifolds appeared in \cite[Proof of Proposition~7.8]{GaldeanoPlattTanakaWang-spin7instantons} and~\cite[Proposition~5.22]{Walpuski-InstantonsKummer}.
\begin{proof}
We only prove the infinitesimal rigidity of $A$. For this note that
\begin{align*}
\mathfrak{so}\big(\underline{\mathbb{R}^k} \oplus \hat{E}_\nu\big)^\mathbb{C}  &\cong \mathfrak{so}_\mathbb{C}\big( \underline{\mathbb{C}^k} \oplus \hat{E}_\nu^\mathbb{C}\big) \cong \mathfrak{so}_\mathbb{C}\big( \underline{\mathbb{C}^k} \oplus \hat{E}_\nu \oplus \hat{E}_\nu^* \big) \cong \mathfrak{so}_\mathbb{C}\big( \underline{\mathbb{C}^k} \oplus \hat{E}_\nu \oplus \hat{E}_{\nu^*} \big) 
\end{align*}
and that $A$ is mapped to the connection on the right-hand bundle that is induced by the trivial connection on $\underline{\mathbb{C}^k}$ and $A_{\nu}\oplus A_{\nu^*}$ on $\hat{E}_{\nu}\oplus \hat{E}_{\nu^*}$. Furthermore, this bundle is a parallel subbundle of $\End_\mathbb{C}(\underline{\mathbb{C}^k}\oplus \hat{E}_\nu \oplus \hat{E}_{\nu^*}\big)=\mathfrak{u}\big(\underline{\mathbb{C}^k}\oplus \hat{E}_\nu \oplus \hat{E}_{\nu^*}\big)^\mathbb{C}$. By \autoref{prop:irred_HYM_on_tautological_bundles}, $\mathcal{H}^1_{A_{\textup{triv}} \oplus A_\nu \oplus A_{\nu^*},-5} = 0$ and therefore also $\mathcal{H}^1_{A,-5} = 0$.
\end{proof}

\section{$\G_2$-instantons over generalised Kummer constructions} \label{sec: instantons over Kummer}

The first part of this section reviews the theory of $\G_2$-instantons over generalised Kummer constructions relevant for the rest of this article. In particular, we discuss the (non-linear) instanton map and prove an estimate on its quadratic part. We then give in \autoref{theo:perturbing_almost_instantons} sufficient conditions under which a family of connections over a generalised Kummer construction may be perturbed to a family of $\G_2$-instantons. Furthermore, we prove estimates on the derivatives of this family and show under which hypothesis the instantons in the family are infinitesimally irreducible.

\subsection{Gauge theory on generalised Kummer constructions}\label{subsec: gauge theory on Kummer}

Let $(Y,\phi)$ be a $7$-manifold together with a coclosed $\G_2$-structure and $\pi \col P \to Y$ a principal $G$-bundle where $G$ is a Lie group whose Lie algebra $\mathfrak{g}$ has been equipped with an $\Ad$-invariant inner product.
\begin{definition}[{cf.~\cite[Proposition~3.7]{Walpuski-InstantonsKummer}}]\label{def:G2-instanton}
A connection $A\in \mathcal{A}(P)$ is called a $\G_2$-instanton if $F_A \wedge \psi = 0$ where $\psi = *\phi$.
\end{definition}
The following two examples are well-known.
\begin{example}
Any flat connection over $Y$ is a $\G_2$-instanton.
\end{example}
\begin{example}\label{ex:HYM_is_G2-instanton}
Let $(Z,\omega,\Omega)$ be a Calabi--Yau $3$-fold and $\pi \col P \to Z$ be a principal bundle over $Z$. Furthermore, let $A \in \mathcal{A}(P)$ be a Hermitian Yang--Mills connection as defined in~\autoref{def:HYMconnection}. Its pullback to $\mathbb{R}\times Z$ is a $\G_2$-instanton when $\mathbb{R}\times Z$ is equipped with the $\G_2$-structure described in \autoref{ex_G2Model}. This follows immediately from \autoref{rem:HYMaltCondition}.
\end{example}
\begin{proposition}[{\cite[Proposition~3.7]{Walpuski-InstantonsKummer}}]\label{prop:AugmentedInstanton}
If $Y$ is compact, then a connection $A\in \mathcal{A}(P)$ is a $\G_2$-instanton if and only if there exists a $\xi \in \Omega^7(Y,\mathfrak{g}_P)$ such that 
\begin{equation}\label{eq:AugmentedInstanton}
F_A \wedge \psi - \diff_A^* \xi = 0.
\end{equation}
\end{proposition}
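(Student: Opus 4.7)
The forward direction is immediate: if $A$ is a $\G_2$-instanton, then $F_A \wedge \psi = 0$ by definition, so one may simply take $\xi = 0$.

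For the backward direction, the plan is to apply $d_A$ to the equation \eqref{eq:AugmentedInstanton} and then integrate by parts. Since $\phi$ is coclosed, we have $d\psi = 0$, so combined with the Bianchi identity $d_A F_A = 0$ we get
\[
d_A(F_A \wedge \psi) = (d_A F_A) \wedge \psi + F_A \wedge d\psi = 0.
\]
Therefore \eqref{eq:AugmentedInstanton} forces $d_A d_A^* \xi = 0$. Pairing with $\xi$ in $L^2$ and using compactness of $Y$ to integrate by parts,
\[
0 = \int_Y \langle d_A d_A^* \xi, \xi \rangle \, \vol = \int_Y \vert d_A^* \xi \vert^2 \, \vol,
\]
so $d_A^* \xi = 0$ and consequently $F_A \wedge \psi = 0$, i.e. $A$ is a $\G_2$-instanton.

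There is no serious obstacle here: the only subtleties are the sign bookkeeping in the graded Leibniz rule $d_A(\omega \wedge \eta) = d_A\omega \wedge \eta + (-1)^{\vert \omega \vert}\omega \wedge d\eta$ and the fact that integration by parts requires $Y$ compact (which is why this characterisation fails on non-compact manifolds). In the application later in the paper, the usefulness of this reformulation is that the ``augmented'' instanton equation~\eqref{eq:AugmentedInstanton} has a better Fredholm theory: adding the auxiliary 7-form $\xi$ turns the underdetermined equation $F_A \wedge \psi = 0$ (whose linearisation $\alpha \mapsto d_A\alpha \wedge \psi$ is not elliptic modulo gauge in the usual Coulomb-slice sense for mapping into $\Omega^6$) into a square system whose linearisation is elliptic modulo gauge, enabling the contraction-mapping perturbation argument set up in the remainder of this section.
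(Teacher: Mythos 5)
Your proof is correct and follows the same argument as the paper: apply $\diff_A$ to~\eqref{eq:AugmentedInstanton}, use the Bianchi identity and $\diff\psi=0$ to conclude $\diff_A\diff_A^*\xi=0$, then integrate by parts (you spell out the step the paper compresses into ``$\xi$ is harmonic''). One small slip in your closing commentary: with the Coulomb-gauge condition appended, the map $\Omega^1\to\Omega^6\oplus\Omega^0$, $a\mapsto(\diff_A a\wedge\psi,\diff_A^*a)$, is \emph{over}determined, not underdetermined (pointwise $7\to 8$); the auxiliary $\xi\in\Omega^7$ adds the one missing degree of freedom to make the system square and elliptic.
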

Since the proof is rather short, we have included it here in order to be self-contained.
\begin{proof}
If $A\in \mathcal{A}(P)$ is a $\G_2$-instanton, then it satisfies~\eqref{eq:AugmentedInstanton} with $\xi=0$. Conversely, assume that $A$ and $\xi$ satisfy~\eqref{eq:AugmentedInstanton}. The Bianchi-identity and closedness of $\psi$ imply \[-\diff_A\diff_A^* \xi = \diff_A\big( F_A\wedge\psi-\diff_A^*\xi)\big) = 0.\] Thus, $\xi$ is harmonic and therefore $\diff_A^* \xi =0$ because $Y$ is closed.
\end{proof}
\begin{remark}
The advantage of the augmented equation~\eqref{eq:AugmentedInstanton} is that its linearisation (together with the Coulomb gauge condition) is elliptic.
\end{remark}

Throughout the rest of this section we denote by $(\hat{Y}_t,\phi_t)$ for $t \in (0,T_K)$ a $1$-parameter family of $\G_2$-manifolds obtained from Joyce's generalised Kummer construction (as specified in \autoref{def:OrbifoldResolution} and \autoref{theo:torsionfree_G2_structure}). Furthermore, let $\pi \col P \to \hat{Y}_t$ be a principal $G$-bundle.\footnote{Since the diffeomorphism type of $\hat{Y}_t$ does not change, we can assume that $\pi\col P \to \hat{Y}_t$ is (up to isomorphism) defined for all $t\in(0,T_K)$.} For any $A \in \mathcal{A}(P)$ we define 
\begin{align*}
\Upsilon_A \col \Omega^1(\hat{Y}_t,\mathfrak{g}_P)\oplus \Omega^7(\hat{Y}_t,\mathfrak{g}_P) &\to \Omega^6(\hat{Y}_t,\mathfrak{g}_P)\oplus \Omega^0(\hat{Y}_t,\mathfrak{g}_P) \\
(a,\xi) &\mapsto \big(F_{A+a} \wedge\psi_t  - \diff_{A+a}^{\tilde{*}_t}\xi,\diff_A^{\tilde{*}_t} a \big)
\end{align*}
where we have written $\diff^{\tilde{*}_t}$ to emphasise that the formal adjoints are taken with respect to the metric and orientation induced by $\tilde{\phi}_t$ (the closed but not torsion-free $\G_2$-structure on $\hat{Y}_t$ as described below \autoref{def:OrbifoldResolution}). In the following, we will omit this explicit dependence on $\tilde{\phi}_t$ from our notation. $\Upsilon_A$ can be written as \[\Upsilon_A(a,\xi) = \Upsilon_A(0) + L_A (a,\xi) + Q_A(a,\xi)\] for 
\begin{align*}
L_A \coloneqq \begin{pmatrix}
\psi_t \wedge \diff_A & -\diff_A^* \\
\diff_A^* & 0
\end{pmatrix} 
\quad \textup{and} \quad Q_A(a,\xi) \coloneqq \left(\tfrac{1}{2} [a\wedge a]\wedge \psi_t + [i_{a^\sharp}, \xi],0 \right)
\end{align*}
where $[i_{a^\sharp}, \xi]$ denotes insertion on the level of forms and the Lie bracket on $\mathfrak{g}_P$. Whenever $\Upsilon_A(a,\xi)=0$, then $A+a$ is a $\G_2$-instanton (in Coulomb gauge relative to $A$) by \autoref{prop:AugmentedInstanton}. 
\begin{remark}\label{rem:definition_UpsilonA_different_adjoints}
In the definition of $\Upsilon_A$ we used the formal adjoints with respect to $\tilde{g}_t$ because the Hodge star $*_{\tilde{\phi}_t}$ is a parallel isometry with respect to $\tilde{g}_t$. This makes estimates in norms taken with respect to this metric slightly simpler. However, since the Hodge star is determined by a (universal) smooth function of the $\G_2$-structure, we have by \autoref{theo:torsionfree_G2_structure} \[ \Vert *_{\phi_t} \VertWHt{k}{0}{} \leq \tilde{c} \Vert \phi_t- \tilde{\phi}_t \VertWHt{k}{0}{} + \Vert *_{\tilde{\phi}_t} \VertWHt{k}{0}{} \leq c \] for all $t<T_K$ and a $t$-independent constant $c>0$ (where we consider the Hodge star as a section of the bundle $\End(\Lambda^* T^*\hat{Y}_t)$ with its induced inner product). This implies that all statements below also hold if we take the adjoints in the definition of $\Upsilon_A$ with respect to $g_t$ (cf.~\cite[Sections~6,7,8]{Walpuski-InstantonsKummer}).
\end{remark}
\begin{proposition}[{cf.~\cite[Section~8]{Walpuski-InstantonsKummer}}]\label{prop:DecompositionNonlinearMap}
The linearisation $L_A$ is an elliptic operator whose formal adjoint satisfies $L_A^{\tilde{*}_t} = \tilde{*}_t L_A \tilde{*}_t$ (in the following we will again drop the dependence on $\tilde{\phi}_t$ from our notation). Furthermore, for any $k\in \mathbb{N}_0$ and $\alpha\in(0,1)$, there exists a $c_Q>0$ such that for any $t\in(0,T_K)$, $\beta
\in \mathbb{R}$, and $\underline{a}_1,\underline{a}_2 \in \Omega^1(\hat{Y}_t,\mathfrak{g}_P)\oplus \Omega^7(\hat{Y}_t,\mathfrak{g}_P)$ we have \[ \Vert Q_A(\underline{a}_1) - Q_A(\underline{a}_2)\VertWHt{k}{2\beta}{} \leq c_Q \big( \Vert \underline{a}_1 \VertWHt{k}{\beta}{} + \Vert \underline{a}_2 \VertWHt{k}{\beta}{}\big) \Vert \underline{a}_1 - \underline{a}_2 \VertWHt{k}{\beta}{}. \] 
\end{proposition}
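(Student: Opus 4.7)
The plan is to split the proposition into its three assertions — ellipticity of $L_A$, the symmetry $L_A^{\tilde{*}_t} = \tilde{*}_t L_A \tilde{*}_t$, and the quadratic estimate on $Q_A$ — and to exploit the fact that the first two are pointwise/algebraic properties of the coefficients of $L_A$ (depending only on the $\G_2$-form $\tilde{\phi}_t$), while the third is the analytic piece following from bilinearity together with the product property \eqref{eq:weighted-t-Höldernorms-multiplication} of the weighted Hölder norms.

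For ellipticity I would compute the principal symbol at a non-zero covector $\zeta \in T^*\hat{Y}_t$:
\begin{equation*}
\sigma(L_A)(\zeta)(a,\xi) \;=\; \bigl(\psi_t \wedge \zeta \wedge a + \iota_{\zeta^\sharp}\xi,\; -\iota_{\zeta^\sharp} a\bigr),
\end{equation*}
up to sign conventions for $\sigma(d^{\tilde{*}_t})$. Normalising $\zeta$ to a unit covector and working in an adapted orthonormal frame, injectivity reduces to the linear-algebraic claim that on the six-dimensional subspace $\ker \iota_{\zeta^\sharp} \subset \Omega^1$ the map $a \mapsto \psi_t \wedge \zeta \wedge a$ is injective; this follows from the standard algebraic structure of the $\G_2$-form, where modulo Hodge duality it corresponds to an isomorphism attached to the transverse $\SU(3)$-structure on $\zeta^\perp$. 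An equivalent route is to identify $\tilde{*}_t L_A$ with a twisted Dirac-type operator, as done in \cite[Section~8]{Walpuski-InstantonsKummer}. The symmetry $L_A^{\tilde{*}_t} = \tilde{*}_t L_A \tilde{*}_t$ I would verify by pairing both sides against test sections in $\Omega^1\oplus\Omega^7$ and $\Omega^6\oplus\Omega^0$, integrating by parts, and using $\tilde{*}_t\tilde{*}_t = \id$ together with the elementary identity $\langle\psi_t\wedge\alpha,\beta\rangle = \langle\alpha,\tilde{*}_t(\psi_t\wedge \tilde{*}_t\beta)\rangle$.

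For the quadratic estimate, write $\underline{a}_j = (a_j,\xi_j)$ and expand bilinearly:
\begin{equation*}
[a_1\wedge a_1] - [a_2\wedge a_2] = [(a_1-a_2)\wedge a_1] + [a_2 \wedge (a_1 - a_2)],
\end{equation*}
\begin{equation*}
[\iota_{a_1^\sharp},\xi_1] - [\iota_{a_2^\sharp},\xi_2] = [\iota_{(a_1-a_2)^\sharp},\xi_1] + [\iota_{a_2^\sharp},\xi_1-\xi_2].
\end{equation*}
Each summand is a product of two factors, one a component of $\underline{a}_1 - \underline{a}_2$ and the other a component of one of the $\underline{a}_j$. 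Applying \eqref{eq:weighted-t-Höldernorms-multiplication} with weight partition $\beta + \beta = 2\beta$, together with a uniform bound $\Vert\psi_t\VertWHt{k}{0}{}\leq c$ (a consequence of \autoref{theo:torsionfree_G2_structure} and the explicit construction of $\tilde{\phi}_t$), gives the desired estimate with a $t$-independent constant $c_Q$.

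The main technical obstacle I anticipate is the ellipticity step — specifically, verifying injectivity of the principal symbol despite $\wedge\psi_t$ not being injective on all of $\Omega^2$; the careful use of the Coulomb-type constraint $\iota_{\zeta^\sharp}a = 0$ to cut down to the injectivity locus is the subtlest algebraic point. Fortunately, this is classical for the $\G_2$-instanton deformation operator and can be transported essentially verbatim from \cite[Section~8]{Walpuski-InstantonsKummer}. The adjoint identity and the quadratic estimate are then direct (if bookkeeping-heavy) computations.
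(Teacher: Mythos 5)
Your proposal is correct and follows essentially the same path as the paper: for the quadratic estimate both expand $Q_A(\underline{a}_1)-Q_A(\underline{a}_2)$ bilinearly (your $[(a_1-a_2)\wedge a_1]+[a_2\wedge(a_1-a_2)]$ equals the paper's $[(a_1-a_2)\wedge(a_1+a_2)]$), bound $\Vert\psi_t\VertWHt{k}{0}{}$ uniformly via \autoref{theo:torsionfree_G2_structure}, and apply \eqref{eq:weighted-t-Höldernorms-multiplication}; for the first assertion the paper cites precisely the ingredients your integration-by-parts verification would use, namely $\diff^{*}_{A}=\pm*\diff_A*$, $*^2=1$, $i_{\psi_t^\sharp}=*(\psi_t\wedge*\cdot)$, and $\diff\psi_t=0$. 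One small remark on your symbol computation: saying injectivity reduces to injectivity of $a\mapsto\psi_t\wedge\zeta\wedge a$ on $\ker\iota_{\zeta^\sharp}$ elides a step, since the kernel condition is $\psi_t\wedge\zeta\wedge a+\iota_{\zeta^\sharp}\xi=0$, so one also needs the images of $a\mapsto\psi_t\wedge\zeta\wedge a$ (on $\zeta^\perp$) and $\xi\mapsto\iota_{\zeta^\sharp}\xi$ to meet only at zero — this does hold (a frame computation shows they are orthogonal complements in $\Lambda^6$), and then $\xi=0$ follows for free.
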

\begin{proof}
The first part follows from $\diff^{*}_{A\vert \Omega^k} = (-1)^{7(k-1)+1} *\diff_A*$, $*^2=1$, $i_{\psi_t^\sharp} = *( \psi_t \wedge (* \cdot))$, and the closedness of $\psi_t$. In order to prove the quadratic estimate, we write $\underline{a}_i \coloneqq (a_i,\xi_i)$ and note that $[i_{{a_i}^\sharp},\xi_i] = \tilde{*}_t [a_i \wedge \tilde{*}_t\xi_i]$. Since $\psi_t = *_{\phi_t} \phi_t$ is determined by a (universal) smooth function on the $\G_2$-structure we have by \autoref{theo:torsionfree_G2_structure} \[ \Vert \psi_t \VertWHt{k}{0}{} \leq \Vert \psi_t - \tilde{\psi}_t \VertWHt{k}{0}{} + \Vert \tilde{\psi}_t \VertWHt{k}{0}{} \leq c \] for sufficiently small $t$ and a $t$-independent $c>0$. We therefore obtain by~\eqref{eq:weighted-t-Höldernorms-multiplication}
\begin{align*}
\Vert Q_A(\underline{a}_1) - Q_A(\underline{a}_2)\VertWHt{k}{2\beta}{} & \leq c \big( \Vert [a_1\wedge a_1]-[a_2 \wedge a_2] \VertWHt{k}{2\beta}{}  + \Vert [a_1\wedge*\xi_1]-[a_2\wedge*\xi_2] \VertWHt{k}{2\beta}{} \big) \\
& \leq c \big(\big( \Vert [(a_1-a_2)\wedge (a_1+a_2)] \VertWHt{k}{2\beta}{} +\Vert [a_1\wedge*(\xi_1-\xi_2)]\VertWHt{k}{2\beta}{} \\
& \qquad  +\Vert [a_1-a_2\wedge*\xi_2]\VertWHt{k}{2\beta}{} \big) \\
& \leq c \big( \Vert \underline{a}_1 \VertWHt{k}{\beta}{} + \Vert \underline{a}_2 \VertWHt{k}{\beta}{}\big) \Vert \underline{a}_1 - \underline{a}_2 \VertWHt{k}{\beta}{}.  \qedhere
\end{align*}
\end{proof}

\begin{remark}
Note that $Q_A$ and therefore also the constant $c_Q$ in the previous proposition are independent of the connection $A\in \mathcal{A}(P)$.
\end{remark}

Let $\lambda\col H \to \textup{Isom}(\hat{Y}_t,\tilde{g}_{t})$ now be an action by a (possibly trivial) group $H$ preserving $\psi_t$.
\begin{definition}
A lift of $\lambda \col H \to \textup{Isom}(\hat{Y}_t,\tilde{g}_{t})$ to $\pi \col P \to \hat{Y}_t$ is a homomorphism $\tilde{\lambda}\col H \to \textup{Isom}(P)$ such that for every $h\in H$ the bundle isomorphism $\tilde{\lambda}(h)$ covers $\lambda(h)$ (i.e. $\lambda(h) \circ \pi = \pi \circ \tilde{\lambda}(h)$).
\end{definition}
\begin{lemma}
The lift $\tilde{\lambda} \col H \to \textup{Isom}(P)$ induces an $H$-action on $\Omega^k(\hat{Y}_t,\mathfrak{g}_P)$ defined by \[ \big(\tilde{\lambda}(h)^*\eta \big) (v_1,\dots,v_k) \coloneqq \tilde{\lambda}(h)^{-1}\big(\eta(D\lambda(h)(v_1),\dots,D\lambda(h)(v_k))\big) \] for $\eta \in \Omega^k(\hat{Y}_t,\mathfrak{g}_P)$ and for all $v_1,\dots,v_k \in T_y\hat{Y}_t$ and $y \in \hat{Y}_t$. If $A \in \mathcal{A}(P)$ satisfies $\tilde{\lambda}(h)^*\theta_A = \theta_A$ for every $h\in H$ (where $\theta_A$ is the connection 1-form associated to $A$), then $\Upsilon_A$ is an $H$-equivariant map.
\end{lemma}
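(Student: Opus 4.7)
The statement decomposes into two essentially formal pieces: (i) verifying that the displayed formula defines an $H$-action on $\Omega^k(\hat{Y}_t,\mathfrak{g}_P)$, and (ii) checking that the three constituents of $\Upsilon_A$ are intertwined with this action when $A$ itself is $H$-invariant. The plan is to dispatch (i) in one sentence and spend the rest of the proof on the naturality calculations in (ii).

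For (i), I would note that $\tilde{\lambda}(h)$ is a bundle isomorphism covering $\lambda(h)$, so it acts both on horizontal vectors (via the differential $D\lambda(h)$) and on fiber values of $\mathfrak{g}_P$ (via $\Ad$ of $\tilde{\lambda}(h)$ on $\mathfrak{g}_P$, here written as $\tilde{\lambda}(h)^{-1}(\cdot)$). The composition law $\tilde{\lambda}(h_1 h_2) = \tilde{\lambda}(h_1)\tilde{\lambda}(h_2)$ together with the chain rule give $\tilde{\lambda}(h_1 h_2)^* = \tilde{\lambda}(h_2)^* \tilde{\lambda}(h_1)^*$, so we indeed have a (right) action.

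For (ii), the main input is the naturality of curvature: for any bundle isomorphism $\Phi$ covering a diffeomorphism $\varphi$ one has $\Phi^* F_A = F_{\Phi^* A}$, where pullback of connections is defined by pullback of connection $1$-forms. Consequently, since $\tilde{\lambda}(h)^*\theta_A = \theta_A$, one has $\tilde{\lambda}(h)^*\theta_{A+a} = \theta_A + \tilde{\lambda}(h)^* a = \theta_{A + \tilde{\lambda}(h)^* a}$, hence
\[ \tilde{\lambda}(h)^* F_{A+a} = F_{A + \tilde{\lambda}(h)^* a}. \]
Using that $\lambda(h)^* \psi_t = \psi_t$ and that pullback commutes with the wedge product, this yields
\[ \tilde{\lambda}(h)^*\bigl(F_{A+a}\wedge \psi_t\bigr) = F_{A + \tilde{\lambda}(h)^* a}\wedge \psi_t. \]
For the remaining terms I would observe that $\tilde{g}_t$ is $\lambda$-invariant (being determined by $\tilde{\psi}_t = \psi_t$, since the Hodge star is a universal function of the metric) and that $\tilde{\lambda}(h)$ preserves the fiber inner product on $\mathfrak{g}_P$. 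Hence the Hodge star $\tilde{*}_t$ commutes with $\tilde{\lambda}(h)^*$. Combined with the naturality $\tilde{\lambda}(h)^* \circ \diff_{B} = \diff_{\tilde{\lambda}(h)^* B} \circ \tilde{\lambda}(h)^*$ for any connection $B$ (which is standard from $\diff_B = \diff + [\theta_B, \cdot]$ and naturality of $\diff$ and of the bracket), one deduces
\[ \tilde{\lambda}(h)^* \diff_{A+a}^{\tilde{*}_t} \xi = \diff_{A + \tilde{\lambda}(h)^* a}^{\tilde{*}_t} \tilde{\lambda}(h)^* \xi, \qquad \tilde{\lambda}(h)^* \diff_A^{\tilde{*}_t} a = \diff_A^{\tilde{*}_t} \tilde{\lambda}(h)^* a. \]
Assembling these three identities gives $\tilde{\lambda}(h)^* \Upsilon_A(a,\xi) = \Upsilon_A(\tilde{\lambda}(h)^* a, \tilde{\lambda}(h)^* \xi)$, which is the asserted equivariance.

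There is no genuine obstacle here; the only point requiring mild care is the commutativity of $\tilde{\lambda}(h)^*$ with $\tilde{*}_t$, which I would justify by appealing to the invariance of $\tilde{\psi}_t$ (and hence of $\tilde{g}_t$) under $\lambda$, rather than trying to recover it from the torsion-free $\G_2$-structure $\phi_t$ itself.
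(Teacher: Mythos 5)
Your overall strategy coincides with the paper's: reduce to the naturality of $\diff_B$, $\diff_B^*$, and $F_B\wedge\psi_t$ under pullback by $\tilde{\lambda}(h)$, and invoke $\tilde{\lambda}(h)^*\theta_A=\theta_A$. Two points need repair, both in the justification of the $\diff^{\tilde{*}_t}$-naturality.

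First, the invariance of $\tilde{g}_t$ under $\lambda(h)$ is part of the standing hypothesis ($\lambda\col H\to\textup{Isom}(\hat{Y}_t,\tilde{g}_t)$), so there is nothing to derive; your attempted derivation is also wrong as stated, since $\tilde{\psi}_t$ (the coassociative form of the pre-glued $\tilde{\phi}_t$) and $\psi_t$ (that of the torsion-free $\phi_t$) are \emph{not} equal --- the whole point of \autoref{theo:torsionfree_G2_structure} is that they differ by a perturbation. Second, and more importantly, the claim ``the Hodge star $\tilde{*}_t$ commutes with $\tilde{\lambda}(h)^*$'' is false in the relevant generality: the $H$-action is not assumed to be orientation-preserving (and indeed, in the $\mathbb{Z}_2$-action used in \autoref{sec:Example18} it is orientation-reversing; see \autoref{prop:Example18_Z2-action}), and the Hodge star acquires a sign under pullback by orientation-reversing isometries. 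What you should argue instead is that the \emph{formal adjoint} $\diff_B^{\tilde{*}_t}$ commutes with $\tilde{\lambda}(h)^*$, because the $L^2$-adjoint depends only on the metric $\tilde{g}_t$ and the fiber inner product, not on a choice of orientation --- equivalently, the two occurrences of $\tilde{*}_t$ in $\diff_B^{\tilde{*}_t}=\pm\,\tilde{*}_t\diff_B\tilde{*}_t$ contribute compensating signs. With that fix the argument is correct and matches the paper's.
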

\begin{remark}
The assumption that $\hat{\lambda}(h)$ preserves both $\tilde{g}_t$ and $\psi_t$ (and therefore also $g_t$ by \autoref{rem: stabiliser 4-form}) was made so that the previous lemma holds. All group actions which come via the equivariant generalised Kummer construction (as explained in \autoref{rem:EquivariantGeneralisedKummer}) satisfy this assumption. Note, however, that if one takes the adjoints in the definition of $\Upsilon_A$ with respect to $g_t$ (cf. \autoref{rem:definition_UpsilonA_different_adjoints}), then it suffices that $H$ only fixes $\psi_t$.
\end{remark}
\begin{proof}
For any (not necessarily $H$-invariant) connection $B\in \mathcal{A}(P)$ and any $\eta \in \Omega^k(\hat{Y}_t,\mathfrak{g}_P)$ we have $\tilde{\lambda}(h)^* (\diff_B \eta) = \diff_{\tilde{\lambda}(h)^*B} \tilde{\lambda}(h)^*\eta$. Since $\lambda(h)$ preserves $\tilde{g}_t$, the same holds true with respect to $\diff_B^*$ and, similarly, \[ \tilde{\lambda}(h)^*(F_{B} \wedge \psi_t) = F_{\tilde{\lambda}(h)^*B} \wedge \psi_t. \qedhere \]
\end{proof}

\subsection{Perturbing families of invariant almost-instantons}\label{subsec: perturbing families of connections}

In the following we will prove in \autoref{theo:perturbing_almost_instantons} the main result of this section. This gives sufficient conditions under which a family of $H$-invariant connections may be perturbed to a nearby family of ($H$-invariant) $\G_2$-instantons. Furthermore, we prove that the perturbed family of instantons has the same regularity (seen as a map into $\mathcal{A}(P)$) as the initial family of connections and give in \autoref{prop: derivative estimates on deformed family of instantons} estimates on its derivatives (which are needed later in the proof of \autoref{prop:example18-non-gauge-equivalent-instantons}). The proof of these results requires the following extension of Banach's fixed-point Theorem to a family of contractions, which is well known.

\begin{lemma}\label{lem: fixed-points of families}
Let $(B,\Vert \cdot \Vert)$ be a Banach space and $U \coloneqq \overline{B_R(0)}\subset B$ be a closed ball of radius $R\in (0,\infty]$. Assume further that $\mathfrak{F}$ is a manifold (possibly non-compact and/or with boundary), and $E \col \mathfrak{F} \times U \to U$ is a family of contractions (i.e. $E_{\mathfrak{f}} \coloneqq E(\mathfrak{f},\cdot)$ is a contraction for every $\mathfrak{f} \in \mathfrak{F}$) that satisfies $\Vert E_{\mathfrak{f}}(b_1) - E_{\mathfrak{f}}(b_2) \Vert \leq C \Vert b_1 - b_2 \Vert$ for every $\mathfrak{f}\in \mathfrak{F}$ and $b_1,b_2\in U$ for an $\mathfrak{f}$-independent constant $C<1$. For any $\mathfrak{f}\in \mathfrak{F}$ we denote by $a_{\mathfrak{f}}\in U$ the unique fixed-point of $E_{\mathfrak{f}}$ and by $\textup{Fix}\col \mathfrak{F} \to U$ the map $\mathfrak{f} \mapsto a_{\mathfrak{f}}$. The following hold:
\begin{enumerate}
\item If $E$ is continuous (as a map $\mathfrak{F} \times U \to U$), then so is $\textup{Fix}$.
\item If $E$ is $C^k$ (as a map $\mathfrak{F} \times U \to U$), then so is $\textup{Fix}$. Furthermore, its derivative $D\textup{Fix} \col T \mathfrak{F} \to U\times B$ satisfies for every $\mathfrak{f}\in \mathfrak{F}$ and $v\in T_\mathfrak{f}\mathfrak{F}$ 
\begin{align*}
\Vert (D \textup{Fix})_\mathfrak{f}(v) \Vert &\leq (1- \Vert (\partial_BE)_{(\mathfrak{f},a_\mathfrak{f})} \Vert_{\textup{op}})^{-1} \Vert (\partial_{\mathfrak{F}} E)_{(\mathfrak{f},a_\mathfrak{f})}(v) \Vert \\
& \leq  (1- c_\mathfrak{f})^{-1} \Vert (\partial_{\mathfrak{F}} E)_{(\mathfrak{f},a_\mathfrak{f})}(v) \Vert.
\end{align*} 
Here $\partial_{\mathfrak{F}}E$ and $\partial_BE$ denote the respective partial derivatives of $E$ in $\mathfrak{F}$ and $B$ direction and $c_\mathfrak{f}<C$ denotes the optimal constant for which $ \Vert E_\mathfrak{f}(b_1) -E_\mathfrak{f}(b_2) \Vert \leq c_\mathfrak{f} \Vert b_1 - b_2 \Vert$ for every $b_1,b_2 \in U$.
\end{enumerate}
\end{lemma}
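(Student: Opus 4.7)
The plan is to treat the two parts separately, using the standard Banach fixed-point machinery for the first and the implicit function theorem (in Banach spaces) for the second.

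For continuity, I would fix $\mathfrak{f}_0 \in \mathfrak{F}$, let $a_0 \coloneqq \textup{Fix}(\mathfrak{f}_0)$, and for nearby $\mathfrak{f}$ estimate
\[ \Vert a_\mathfrak{f} - a_0 \Vert = \Vert E_\mathfrak{f}(a_\mathfrak{f}) - E_{\mathfrak{f}_0}(a_0) \Vert \leq \Vert E_\mathfrak{f}(a_\mathfrak{f}) - E_\mathfrak{f}(a_0) \Vert + \Vert E_\mathfrak{f}(a_0) - E_{\mathfrak{f}_0}(a_0) \Vert \leq C \Vert a_\mathfrak{f} - a_0 \Vert + \Vert E_\mathfrak{f}(a_0) - E_{\mathfrak{f}_0}(a_0) \Vert, \]
so that $(1-C) \Vert a_\mathfrak{f} - a_0 \Vert$ is controlled by $\Vert E_\mathfrak{f}(a_0) - E_{\mathfrak{f}_0}(a_0) \Vert$, which tends to zero by continuity of $E$ in the first variable.

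For the $C^k$-statement I would apply the implicit function theorem in Banach spaces to $F\col \mathfrak{F}\times U \to B$, $F(\mathfrak{f},b) \coloneqq b - E(\mathfrak{f},b)$. The partial derivative $\partial_B F_{(\mathfrak{f},a_\mathfrak{f})} = \textup{Id} - (\partial_B E)_{(\mathfrak{f},a_\mathfrak{f})}$ is invertible: since $E_\mathfrak{f}$ is a contraction with constant $c_\mathfrak{f} < 1$, the mean value inequality forces $\Vert (\partial_B E)_{(\mathfrak{f},b)} \Vert_{\textup{op}} \leq c_\mathfrak{f}$ for every $b\in U$, so a Neumann series yields $\Vert (\textup{Id} - (\partial_B E)_{(\mathfrak{f},a_\mathfrak{f})})^{-1} \Vert_{\textup{op}} \leq (1-c_\mathfrak{f})^{-1}$. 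The implicit function theorem then produces a local $C^k$-map $\mathfrak{f} \mapsto a_\mathfrak{f}$, and by uniqueness of fixed-points this local map must coincide with $\textup{Fix}$; global $C^k$-regularity follows by patching.

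To obtain the derivative estimate I would differentiate the identity $a_\mathfrak{f} = E(\mathfrak{f},a_\mathfrak{f})$ in $\mathfrak{f}$ to get
\[ (D\textup{Fix})_\mathfrak{f}(v) = (\partial_\mathfrak{F} E)_{(\mathfrak{f},a_\mathfrak{f})}(v) + (\partial_B E)_{(\mathfrak{f},a_\mathfrak{f})}\bigl((D\textup{Fix})_\mathfrak{f}(v)\bigr), \]
rearrange to
\[ (D\textup{Fix})_\mathfrak{f}(v) = \bigl(\textup{Id} - (\partial_B E)_{(\mathfrak{f},a_\mathfrak{f})}\bigr)^{-1}(\partial_\mathfrak{F} E)_{(\mathfrak{f},a_\mathfrak{f})}(v), \]
and apply the Neumann-series bound above to conclude. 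The mildly delicate point is justifying $\Vert (\partial_B E)_{(\mathfrak{f},b)}\Vert_{\textup{op}} \leq c_\mathfrak{f}$ from the contraction hypothesis, but this is immediate once $E$ is known to be $C^1$: if $\Vert (\partial_B E)_{(\mathfrak{f},b_0)} w \Vert > c_\mathfrak{f} \Vert w \Vert$ for some $w$, then $t \mapsto E_\mathfrak{f}(b_0 + t w)$ would violate the Lipschitz bound for small $t$. This is the only place where a little care is needed; everything else is a routine application of the implicit function theorem together with the Neumann-series inverse bound.
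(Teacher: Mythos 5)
Your continuity argument is identical to the paper's. For differentiability you invoke the implicit function theorem applied to $F(\mathfrak{f},b)=b-E(\mathfrak{f},b)$, whereas the paper proceeds by a bare-hands argument: it expands $a_\mathfrak{f}-a_0 = E_\mathfrak{f}(a_\mathfrak{f})-E_0(a_0)$ to first order, uses the continuity estimate $\Vert a_\mathfrak{f}-a_0\Vert\leq C'\Vert E_\mathfrak{f}(a_0)-E_0(a_0)\Vert$ to absorb the $o(\vert\mathfrak{f}\vert+\Vert a_\mathfrak{f}-a_0\Vert)$ remainder into $o(\vert\mathfrak{f}\vert)$, inverts $\textup{Id}-(\partial_B E)_{(0,a_0)}$ by Neumann series, and reads off the derivative formula directly; higher regularity then follows because inversion and composition are smooth. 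Both routes land on the same formula and the same bound. What the paper's approach buys is that it sidesteps the open-domain requirements of the IFT: here $U$ is a \emph{closed} ball and $\mathfrak{F}$ may have boundary, and the paper explicitly works on charts $\mathbb{R}^n$ or $\mathbb{R}^n_{x_1\geq 0}$ without ever needing $a_{\mathfrak{f}_0}$ to be an interior point of $U$. Your argument is fine in substance, but you should either use a half-space/boundary version of the IFT or, more simply, observe (as you already do for the operator norm bound) that one-sided difference quotients suffice. Your justification of $\Vert(\partial_B E)_{(\mathfrak{f},a_\mathfrak{f})}\Vert_{\textup{op}}\leq c_\mathfrak{f}$ matches the paper's, and the identification of the IFT branch with $\textup{Fix}$ via uniqueness is correct once one also invokes the continuity of $\textup{Fix}$ from part (1) to keep the branch inside the uniqueness neighborhood.
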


\begin{proof}
In order to prove the first point, let $\mathfrak{f}_1,\mathfrak{f}_2 \in \mathfrak{F}$. Then 
\begin{align*}
\Vert a_{\mathfrak{f}_1} -a_{\mathfrak{f}_2} \Vert & \leq \Vert E_{\mathfrak{f}_1} (a_{\mathfrak{f}_1}) -E_{\mathfrak{f}_2}(a_{\mathfrak{f}_1}) \Vert + \Vert E_{\mathfrak{f}_2}(a_{\mathfrak{f}_1}) -E_{\mathfrak{f}_2}(a_{\mathfrak{f}_2}) \Vert \\
& \leq \Vert E_{\mathfrak{f}_1}(a_{\mathfrak{f}_1}) -E_{\mathfrak{f}_2}(a_{\mathfrak{f}_1}) \Vert +C \Vert a_{\mathfrak{f}_1} -a_{\mathfrak{f}_2} \Vert 
\end{align*}
where $C<1$ is $\mathfrak{f}_2$-independent by the assumption. Absorbing the second term on the right-hand side gives 
\begin{equation} \label{eq: continuity family contractions} 
\Vert a_{\mathfrak{f}_1} -a_{\mathfrak{f}_2} \Vert \leq C^\prime \Vert E_{\mathfrak{f}_1}(a_{\mathfrak{f}_1}) -E_{\mathfrak{f}_2}(a_{\mathfrak{f}_1}) \Vert.
\end{equation} 
When $E$ is continuous, then $\mathfrak{f}_2 \to \mathfrak{f}_1$ implies therefore $a_{\mathfrak{f}_2} \to a_{\mathfrak{f}_1}$ which proves the first assertion. 

For the second point, we work locally and assume that $\mathfrak{F}$ is an open subset of $\mathbb{R}^n$ or $\mathbb{R}^n_{x_1\geq 0}$ and prove differentiability of $\textup{Fix}$ at $\mathfrak{f}_0=0$. By the differentiability of $E$ we have
\begin{align*}
a_\mathfrak{f} - a_0 &= E_\mathfrak{f}(a_\mathfrak{f}) - E_0(a_0) = (DE)_{(0,a_0)}(\mathfrak{f},a_{\mathfrak{f}}-a_0) + o (\vert \mathfrak{f} \vert +\Vert a_{\mathfrak{f}}-a_0 \Vert) \\
&= (\partial_B E)_{(0,a_0)}(a_\mathfrak{f} - a_0) + (\partial_{\mathfrak{F}} E)_{(0,a_0)} \cdot \mathfrak{f} + o (\vert \mathfrak{f} \vert+\Vert a_\mathfrak{f} -a_0 \Vert)
\end{align*}
where $DE$ denotes the (total) differential of $E$ and $\partial_{\mathfrak{F}}E$ and $\partial_BE$ the partial derivatives in $\mathfrak{F}$ and $B$ direction. Since $\Vert E_0(b) - E_0(a_0) \Vert \leq c_0 \Vert b - a_0 \Vert$ for every $b \in U$ with $c_0<1$, we obtain $\Vert (\partial_BE)_{(0,a_0)} \Vert_{\textup{op}} = \Vert (DE_0)_{a_0} \Vert_{\textup{op}} \leq c_0 <1$. The operator $(\textup{Id} - (\partial_BE)_{(0,a_0)})$ is therefore invertible via the Neumann series of $(\partial_BE)_{(0,a_0)}$. This gives \[ a_\mathfrak{f} - a_0 = (\textup{Id} - (\partial_BE)_{(0,a_0)} )^{-1} \big((\partial_{\mathfrak{F}} E)_{(0,a_0)} \cdot \mathfrak{f} + o (\vert \mathfrak{f} \vert+\Vert a_\mathfrak{f} -a_0 \Vert)\big). \] The differentiability of $E$ together with~\eqref{eq: continuity family contractions} imply that $o(\vert \mathfrak{f}\vert + \Vert a_\mathfrak{f} -a_0 \Vert)=o(\vert \mathfrak{f} \vert)$. The previous formula for $a_\mathfrak{f}-a_0$ shows therefore that $\textup{Fix}$ is differentiable at $\mathfrak{f}=0$ with derivative \[ (D\textup{Fix})_\mathfrak{f} = (\textup{Id} - (\partial_BE)_{(\mathfrak{f},a_\mathfrak{f})} )^{-1} \big( (\partial_{\mathfrak{F}} E)_{(\mathfrak{f},a_\mathfrak{f})}(\ \cdot\ )\big). \] Since inversion and concatenation are smooth operations, $E \in C^k$ implies via bootstrapping that $\textup{Fix}\in C^k$.
\end{proof}

In the following let $(\hat{Y}_t,\phi_t)$ for $t\in(0,T_K)$ again be a fixed 1-parameter family of $\G_2$-manifolds obtained from Joyce's generalised Kummer construction together with an $H$-action $\lambda \col H \to \textup{Isom}(\hat{Y}_t,\tilde{g}_t)$ by a (possibly trivial) group $H$ preserving $\psi_t$.

\begin{theorem}\label{theo:perturbing_almost_instantons}
Let $\alpha \in (0,1)$ be a fixed Hölder exponent and $\gamma,C>0$ and $\varepsilon\geq 0$ be constants with $\varepsilon<\gamma$. Then there exist constants $0<T_I<T_K$ and $c_I>0$ depending only $\alpha,\gamma,C,\varepsilon$ with the following significance: Let $t\in (0,T_I)$ and $\pi \col P \to \hat{Y}_t$ be a principal $G$-bundle together with a lift of the $H$-action. Assume further that $\mathfrak{F} \subset \mathbb{R}$ is a connected subset (i.e. a point or an interval -- possibly non-compact and/or with boundary) and that $\mathfrak{F}\ni \mathfrak{f}\mapsto \tilde{A}_{\mathfrak{f}}\in \mathcal{A}(P)$ is a family of $H$-invariant connections that satisfies 
\begin{enumerate}
\item \label{bul:pregluing-error} $\Vert F_{\tilde{A}_{\mathfrak{f}}} \wedge \psi_t \VertWHt{0}{-2-\varepsilon}{} \leq C t^{\gamma}$ for every $\mathfrak{f} \in \mathfrak{F}$
\item \label{bul:linearEstimate} $\Vert \underline{b} \VertWHt{2}{-\varepsilon}{} \leq C \Vert L_{\tilde{A}_{\mathfrak{f}}}L_{\tilde{A}_{\mathfrak{f}}}^* \underline{b} \VertWHt{0}{-2-\varepsilon}{}$ for every $H$-invariant $\underline{b}\in \Omega^6(\hat{Y}_t,\mathfrak{g}_P) \oplus \Omega^0(\hat{Y}_t,\mathfrak{g}_P)$ and every $\mathfrak{f}\in \mathfrak{F}$
\item \label{bul: operator norm estimate on L_A^*} $\Vert L_{\tilde{A}_{\mathfrak{f}}}^* \underline{b} \VertWHt{1}{-1-\varepsilon}{} \leq C \Vert  \underline{b} \VertWHt{2}{-\varepsilon}{}$  for every $H$-invariant $\underline{b}\in \Omega^6(\hat{Y}_t,\mathfrak{g}_P) \oplus \Omega^0(\hat{Y}_t,\mathfrak{g}_P)$ and every $\mathfrak{f}\in \mathfrak{F}$.
\end{enumerate}
All norms in the expressions above are taken with respect to the metric induced by $\tilde{g}_t$ and the $\Ad$-invariant inner product on $\mathfrak{g}$. Similarly, covariant derivatives are taken with respect to the Levi--Civita connection and a fixed $B\in \mathcal{A}(P)$. Under these assumptions there exists an $H$-invariant family of $\G_2$-instantons $\mathfrak{F}\ni \mathfrak{f} \mapsto A_{\mathfrak{f}}\in \mathcal{A}(P)$ with $\Vert \tilde{A}_{\mathfrak{f}}-A_{\mathfrak{f}} \VertWHt{1}{-1-\varepsilon}{} \leq c_I t^{\gamma}$ for every $\mathfrak{f} \in \mathfrak{F}$. Furthermore, if the map $\mathfrak{f} \mapsto \tilde{A}_{\mathfrak{f}}$ lies in $C^{L}(\mathfrak{F}, \mathcal{A}(P))$ for $L \in \mathbb{N}_0 \cup \{\infty\}$, then $\mathfrak{f} \mapsto A_{\mathfrak{f}}$ also lies in $C^{L}(\mathfrak{F}, \mathcal{A}(P))$.
\end{theorem}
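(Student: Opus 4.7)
The plan is to recast the instanton equation $\Upsilon_{\tilde{A}_\mathfrak{f}}(a,\xi) = 0$ as a fixed point problem on the space of $H$-invariant sections and apply the parametric contraction argument of \autoref{lem: fixed-points of families}. Using the decomposition $\Upsilon_{\tilde{A}_\mathfrak{f}}(a,\xi) = \Upsilon_{\tilde{A}_\mathfrak{f}}(0) + L_{\tilde{A}_\mathfrak{f}}(a,\xi) + Q_{\tilde{A}_\mathfrak{f}}(a,\xi)$ from \autoref{prop:DecompositionNonlinearMap}, I would make the Coulomb-type ansatz $(a,\xi) = L_{\tilde{A}_\mathfrak{f}}^* \underline{b}$ for an unknown $\underline{b}\in \Omega^6(\hat{Y}_t,\mathfrak{g}_P)\oplus\Omega^0(\hat{Y}_t,\mathfrak{g}_P)$. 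The problem then becomes $L_{\tilde{A}_\mathfrak{f}}L_{\tilde{A}_\mathfrak{f}}^*\underline{b} = -\Upsilon_{\tilde{A}_\mathfrak{f}}(0) - Q_{\tilde{A}_\mathfrak{f}}(L_{\tilde{A}_\mathfrak{f}}^*\underline{b})$, and because $\tilde{A}_\mathfrak{f}$ is $H$-invariant, both $L_{\tilde{A}_\mathfrak{f}}$ and $Q_{\tilde{A}_\mathfrak{f}}$ are $H$-equivariant and $\Upsilon_{\tilde{A}_\mathfrak{f}}(0) = (F_{\tilde{A}_\mathfrak{f}}\wedge\psi_t,0)$ is $H$-invariant, so the iteration can be performed inside the closed subspace of $H$-invariant sections where assumption \ref{bul:linearEstimate} supplies an inverse of $L_{\tilde{A}_\mathfrak{f}}L_{\tilde{A}_\mathfrak{f}}^*$.

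Next I would introduce the Banach space $B \coloneqq \{\,\underline{b}\in C^{2,\alpha}_{-\varepsilon,t}(\Lambda^6T^*\hat{Y}_t\otimes\mathfrak{g}_P \oplus \mathfrak{g}_P)^H\,\}$ together with the closed ball $U \coloneqq \overline{B_R(0)}$ for a radius $R = R(C)\cdot t^\gamma$ to be fixed, and define
\begin{equation*}
E_\mathfrak{f}(\underline{b}) \coloneqq -(L_{\tilde{A}_\mathfrak{f}}L_{\tilde{A}_\mathfrak{f}}^*)^{-1}\bigl[\Upsilon_{\tilde{A}_\mathfrak{f}}(0) + Q_{\tilde{A}_\mathfrak{f}}(L_{\tilde{A}_\mathfrak{f}}^*\underline{b})\bigr].
\end{equation*}
Assumption \ref{bul:pregluing-error} together with assumption \ref{bul:linearEstimate} gives $\Vert E_\mathfrak{f}(0)\VertWHt{2}{-\varepsilon}{} \leq C^2 t^\gamma$. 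For the contraction estimate I would chain assumptions \ref{bul:linearEstimate} and \ref{bul: operator norm estimate on L_A^*} with the quadratic estimate of \autoref{prop:DecompositionNonlinearMap} applied with $\beta = -1-\varepsilon$, so that for $\underline{b}_1,\underline{b}_2\in U$
\begin{equation*}
\Vert E_\mathfrak{f}(\underline{b}_1)-E_\mathfrak{f}(\underline{b}_2)\VertWHt{2}{-\varepsilon}{} \leq c\,C^3 c_Q\bigl(R + t^{\gamma-\varepsilon}\bigr)\,\Vert\underline{b}_1-\underline{b}_2\VertWHt{2}{-\varepsilon}{},
\end{equation*}
where I also use \eqref{eq:weighted-t-Höldernorms-estimates} to absorb the shift from $-\varepsilon$ to $-1-\varepsilon$ into a positive power of $t$ (this is where the hypothesis $\varepsilon < \gamma$ enters). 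Choosing $R \sim t^\gamma$ and $T_I$ small enough then forces the Lipschitz constant to be $\leq 1/2$ and ensures $E_\mathfrak{f}(U)\subset U$, so $E_\mathfrak{f}$ is a well-defined contraction for every $\mathfrak{f}\in\mathfrak{F}$.

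Existence of an $H$-invariant family of instantons together with the bound $\Vert \tilde{A}_\mathfrak{f}-A_\mathfrak{f}\VertWHt{1}{-1-\varepsilon}{} \leq c_I t^\gamma$ then follows from part (i) of \autoref{lem: fixed-points of families} applied with the uniform Lipschitz constant $C = 1/2$, combined with assumption \ref{bul: operator norm estimate on L_A^*}. For the regularity statement I would verify that $\mathfrak{F}\times U \ni (\mathfrak{f},\underline{b})\mapsto E_\mathfrak{f}(\underline{b})$ is of class $C^L$ whenever $\mathfrak{f}\mapsto\tilde{A}_\mathfrak{f}$ is, since the map $A\mapsto L_A$ is affine, $A\mapsto Q_A$ is $A$-independent, and the inversion of $L_AL_A^*$ on the $H$-invariant part is a smooth operation between Banach spaces by the Neumann series argument inside the proof of \autoref{lem: fixed-points of families}; then part (ii) of the same lemma delivers $\mathfrak{f}\mapsto A_\mathfrak{f}\in C^L(\mathfrak{F},\mathcal{A}(P))$.

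The main subtlety will be choosing the weight exponents so that all three inequalities \ref{bul:pregluing-error}, \ref{bul:linearEstimate}, \ref{bul: operator norm estimate on L_A^*} combine cleanly with the quadratic estimate: one must pay the right power of $t$ in each application of \eqref{eq:weighted-t-Höldernorms-estimates} to arrive at the fixed exponent $-\varepsilon$ used in the contraction, and this is what forces the gap condition $\varepsilon < \gamma$. A second point requiring care is that $A\mapsto L_A^*$ contains the formal adjoint with respect to the fixed metric $\tilde{g}_t$, so the dependence on $\tilde{A}_\mathfrak{f}$ is only through $\diff_{\tilde{A}_\mathfrak{f}}$ and $[\cdot,\cdot]$, keeping the smoothness statement for $E$ elementary. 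Finally, $H$-equivariance of $L^*_{\tilde{A}_\mathfrak{f}}$, $Q_{\tilde{A}_\mathfrak{f}}$, and the initial error is what allows the invariant linear estimate \ref{bul:linearEstimate} to be used in place of a full linear estimate, so the fixed point $\underline{b}_\mathfrak{f}$ is $H$-invariant and $A_\mathfrak{f} = \tilde{A}_\mathfrak{f} + L^*_{\tilde{A}_\mathfrak{f}}\underline{b}_\mathfrak{f}$ is an $H$-invariant $\G_2$-instanton as required.
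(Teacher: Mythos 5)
Your fixed-point scheme is a conjugated version of the paper's: you contract on the domain $W^{2,\alpha}_{-\varepsilon,t}$ of $L_{\tilde{A}_\mathfrak{f}}L_{\tilde{A}_\mathfrak{f}}^*$ and recover the solution as $\underline{a}_\mathfrak{f}=L^*_{\tilde{A}_\mathfrak{f}}\underline{b}_\mathfrak{f}$, whereas the paper contracts on the error space $W^{0,\alpha}_{-2-\varepsilon,t}$ and applies the right-inverse $R_{\tilde{A}_\mathfrak{f}}=L^*_{\tilde{A}_\mathfrak{f}}(L_{\tilde{A}_\mathfrak{f}}L^*_{\tilde{A}_\mathfrak{f}})^{-1}$; the two are equivalent and your arithmetic (the $t^{-\varepsilon}$ loss from shifting the weight exponent to match the quadratic estimate, absorbed since $R\sim t^\gamma$ and $\gamma>\varepsilon$) is correct in substance, even if the displayed Lipschitz bound $cC^3c_Q(R+t^{\gamma-\varepsilon})$ is written a bit loosely. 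The existence, the $H$-invariance, and the bound $\Vert\tilde{A}_\mathfrak{f}-A_\mathfrak{f}\VertWHt{1}{-1-\varepsilon}{}\leq c_It^\gamma$ all follow as you say.

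There is, however, a genuine gap in your treatment of the regularity claim. \autoref{lem: fixed-points of families}\,(ii) gives $\mathfrak{f}\mapsto\underline{b}_\mathfrak{f}\in C^L(\mathfrak{F},W^{2,\alpha}_{-\varepsilon,t})$, and hence $\mathfrak{f}\mapsto a_\mathfrak{f}\in C^L(\mathfrak{F},V^{1,\alpha}_{-1-\varepsilon,t})$ --- that is, differentiability into a \emph{fixed Banach space}. But the theorem asserts $\mathfrak{f}\mapsto A_\mathfrak{f}\in C^L(\mathfrak{F},\mathcal{A}(P))$, where $\mathcal{A}(P)$ carries its Fr\'echet topology (all $C^{k,\alpha}$-norms simultaneously). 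You cannot simply cite the lemma for this: the fixed-point map is only a contraction in the single weighted $C^{2,\alpha}$-norm, and nothing in the lemma controls the higher derivatives of $a_\mathfrak{f}$ uniformly in $\mathfrak{f}$. The paper closes this gap with a bootstrapping argument: it differentiates the equation $\Upsilon_{\tilde{A}_\mathfrak{f}}(\underline{a}_\mathfrak{f})=0$ in $\mathfrak{f}$, uses elliptic regularity for $L_{\tilde{A}_{\mathfrak{f}_0}}$ to conclude that the Banach-space derivative $(\partial_\mathfrak{F}\underline{a})_{\mathfrak{f}_0}$ is actually smooth, and then runs an inductive Schauder-estimate argument to show that $(\partial_\mathfrak{F}\underline{a})_{\mathfrak{f}_0}$ is the derivative of $\underline{a}$ in every $C^{k,\alpha}$-norm, before concluding continuity of the derivative in the Fr\'echet sense. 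Your proposal should supply this bootstrapping step; as written, the final paragraph only establishes the weaker Banach-space regularity.
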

\begin{remark}
Throughout this article we consider $\mathcal{A}(P)$ as an affine Fréchet space and a map $\textup{Fam} \col \mathfrak{F} \to \mathcal{A}(P)$ (for an interval $\mathfrak{F}\subset \mathbb{R}$) to be of regularity $C^1$ if the map 
\begin{align*}
D \textup{Fam} \col \mathfrak{F} \times \mathbb{R} &\to \Omega^1(\hat{Y}_t,\mathfrak{g}_P) \\
(\mathfrak{f},v) &\mapsto (D\textup{Fam})_\mathfrak{f}(v) \coloneqq \lim_{s \to 0} \tfrac{\textup{Fam}(\mathfrak{f}+s v)-\textup{Fam}(\mathfrak{f})}{s}
\end{align*}
exists and is (jointly) continuous. Higher regularity is defined in a similar manner.
\end{remark}
\begin{remark}\label{rem:perturbing almost instantons weakening assumptions}
Regarding weakening the assumptions of the previous theorem:
\begin{enumerate}
\item\label{bul: perturbing for general families} Denote by $\textup{Fam} \col \mathfrak{F} \to \mathcal{A}(P)$ the family of connections of the previous theorem, i.e. $\textup{Fam}(\mathfrak{f})\coloneqq \tilde{\mathcal{A}}_\mathfrak{f}$. We have made the assumption that $\mathfrak{F}$ is an interval because this leads to the somewhat simpler notation $(\partial_\mathfrak{F}\textup{Fam})_\mathfrak{f} \coloneqq (D\textup{Fam})_{\mathfrak{f}}(1)$ for the derivative of $\textup{Fam}$. Since the connections in \autoref{sec:Example18} to which we apply \autoref{theo:perturbing_almost_instantons} are parametrised by an interval, this assumption does not restrict us in this article. Note however, that one can easily generalise the theorems of this section to apply to any manifold $\mathfrak{F}$  (possibly non-compact and/or with boundary).
\item\label{bul: perturbing for more general H-actions}  Similarly, one can weaken the condition that the $H$-action lifts to $P$. Indeed, it suffices that the action lifts to the adjoint bundle $\mathfrak{g}_P$ and $\Upsilon_A$ is equivariant with respect to this lift.
\end{enumerate}
\end{remark}
\begin{remark}
If $H$ and $\mathfrak{F}$ are both trivial and $\varepsilon=0$, then the above theorem becomes analogous to the construction used in \cite{Walpuski-InstantonsKummer} to produce $\G_2$-instantons over generalised Kummer constructions resolving orbifolds with codimension $4$ singularities. Note, however, that for non-trivial $H$-actions our second assumption (listed under \autoref{bul:linearEstimate}) differs from the analogous condition in~\cite[Section~8]{Walpuski-InstantonsKummer}. This is because the $H$-invariant kernel and cokernel of $L_A$ are in general not isomorphic (as can be see in the examples of \autoref{sec:Example18}). Note also that establishing \autoref{bul:linearEstimate} of \autoref{theo:perturbing_almost_instantons} (which is done in \autoref{prop:LinearEstimate}) becomes simpler when $\varepsilon>0$ and we will therefore make use of this freedom. 
\end{remark}
\begin{remark}
If $\mathfrak{F}$ is an interval, then each $\G_2$-instanton $A_\mathfrak{f}$ in the family $(A_\mathfrak{f})_{\mathfrak{f}\in \mathfrak{F}}$ constructed in the previous theorem is necessarily obstructed. Note, however, that under mild assumptions (e.g. the analogue of \autoref{bul:infinitesimally_irreducible} in \autoref{prop:infinitesimal_irreducibility} for the operator $L_{\tilde{A}_{\mathfrak{f}}}$) the instanton $A_\mathfrak{f}$ becomes $H$-unobstructed (i.e. there are no $H$-invariant elements in the cokernel of $L_{A_\mathfrak{f}}$) once $t$ is sufficiently small. This can be proven as \autoref{prop:infinitesimal_irreducibility}.
\end{remark}
\begin{proof}[Proof of \autoref{theo:perturbing_almost_instantons}]
For $k\in \mathbb{N}_0$ define the closed (and therefore complete) subspaces 
\begin{align*}
V^{k,\alpha}_{\beta,t} &\coloneqq \left\{ \underline{a} \in C^{k,\alpha}_{\beta,t}(T^*\hat{Y}_t\otimes\mathfrak{g}_P\oplus \Lambda^7T^*\hat{Y}_t \otimes \mathfrak{g}_P) \mid \textup{$\underline{a}$ is $H$-invariant}\right\} \\
W^{k,\alpha}_{\beta,t} &\coloneqq \left\{ \underline{b} \in C^{k,\alpha}_{\beta,t}(\Lambda^6T^*\hat{Y}_t\otimes \mathfrak{g}_P\oplus \mathfrak{g}_P) \mid \textup{$\underline{b}$ is $H$-invariant}\right\}. 
\end{align*} 
Since $L_{\tilde{A}_{\mathfrak{f}}}$ and $L_{\tilde{A}_{\mathfrak{f}}}^*$ are $H$-equivariant, the second condition implies that $L_{\tilde{A}_{\mathfrak{f}}}L_{\tilde{A}_{\mathfrak{f}}}^* \col W^{2,\alpha}_{-\varepsilon,t} \to W^{0,\alpha}_{-\varepsilon-2,t}$ is injective. Formal self-adjointness, $H$-equivariance, and elliptic regularity imply that $L_{\tilde{A}_{\mathfrak{f}}}L_{\tilde{A}_{\mathfrak{f}}}^* \col W^{2,\alpha}_{-\varepsilon,t} \to W^{0,\alpha}_{-\varepsilon-2,t}$ is also surjective and we can therefore define \[R_{\tilde{A}_{\mathfrak{f}}} \coloneqq L_{\tilde{A}_{\mathfrak{f}}}^*(L_{\tilde{A}_{\mathfrak{f}}}L_{\tilde{A}_{\mathfrak{f}}}^*)^{-1} \col W^{0,\alpha}_{-2-\varepsilon,t} \to V^{1,\alpha}_{-1-\varepsilon,t}. \] By construction, this is a right-inverse to $L_{\tilde{A}_{\mathfrak{f}}}$ and \autoref{bul:linearEstimate} and \autoref{bul: operator norm estimate on L_A^*} in the above statement and \autoref{theo:torsionfree_G2_structure} imply
\begin{align}\label{eq: estimate right-inverse}
\Vert R_{\tilde{A}_{\mathfrak{f}}} \underline{b} \VertWHt{1}{-1-\varepsilon}{} & \leq c \Vert \underline{b}\VertWHt{0}{-2-\varepsilon}{}
\end{align}
for a $t$ and $\mathfrak{f}$-independent constant $c>0$ (which we will assume to be larger than 2 in the following).

Since $F_{\tilde{A}_{\mathfrak{f}}}\wedge\psi_t \in W^{0,\alpha}_{-2-\varepsilon,t}$ and $Q_{\tilde{A}_{\mathfrak{f}}}\col V^{1,\alpha}_{-1-\varepsilon,t} \to  W^{0,\alpha}_{-2-\varepsilon,t}$, we can define the map 
\begin{align*}
E_{\mathfrak{f}} \col W^{0,\alpha}_{-2-\varepsilon,t} &\to W^{0,\alpha}_{-2-\varepsilon,t} \\
\underline{b} & \mapsto E_{\mathfrak{f}}(\underline{b}) \coloneqq - \big(Q_{\tilde{A}_{\mathfrak{f}}}(R_{\tilde{A}_{\mathfrak{f}}}\underline{b})+F_{\tilde{A}_{\mathfrak{f}}}\wedge\psi_t \big).
\end{align*}
Our assumptions, \autoref{prop:DecompositionNonlinearMap}, its following remark, and~\eqref{eq:weighted-t-Höldernorms-estimates} imply 
\begin{align*}
\Vert E_{\mathfrak{f}}(0) \VertWHt{0}{-2-\varepsilon}{} &\leq c t^{\gamma} \\
\Vert E_{\mathfrak{f}}(\underline{b}_1)-E_{\mathfrak{f}}(\underline{b}_2) \VertWHt{0}{-2-\varepsilon}{} &\leq  c \big( \Vert R_{\tilde{A}_{\mathfrak{f}}}\underline{b}_1 \VertWHt{0}{-1-\varepsilon/2}{} + \Vert R_{\tilde{A}_{\mathfrak{f}}}\underline{b}_2 \VertWHt{0}{-1-\varepsilon/2}{}\big) \Vert R_{\tilde{A}_{\mathfrak{f}}}(\underline{b}_1 - \underline{b}_2) \VertWHt{0}{-1-\varepsilon/2}{} \\
& \leq c t^{-\varepsilon} \big( \Vert R_{\tilde{A}_{\mathfrak{f}}}\underline{b}_1 \VertWHt{0}{-1-\varepsilon}{} + \Vert R_{\tilde{A}_{\mathfrak{f}}}\underline{b}_2 \VertWHt{0}{-1-\varepsilon}{}\big) \Vert R_{\tilde{A}_{\mathfrak{f}}}(\underline{b}_1 - \underline{b}_2) \VertWHt{0}{-1-\varepsilon}{}\\
& \leq ct^{-\varepsilon} \big( \Vert \underline{b}_1 \VertWHt{0}{-2-\varepsilon}{} + \Vert \underline{b}_2 \VertWHt{0}{-2-\varepsilon}{}\big) \Vert \underline{b}_1 - \underline{b}_2 \VertWHt{0}{-2-\varepsilon}{}
\end{align*}
where $c=c(\alpha,C)$ is independent of $t$ and of $\mathfrak{f}$.
Therefore, the map $E_{\mathfrak{f}}$ restricts to a contraction on $\overline{B_R(0)}\subset W^{0,\alpha}_{-2-\varepsilon,t}$ if \[ 2 ct^{-\varepsilon} R < 1 \quad \textup{and} \quad ct^\gamma + c t^{-\varepsilon} R^2<R.\] This holds, for example, whenever $t^\gamma<(2 c)^{-2}t^{\varepsilon}$ (which can be achieved as $0<\varepsilon< \gamma$) and $R\coloneqq 2 ct^\gamma$. Let $\underline{b}_{\mathfrak{f}} \in \overline{B_R(0)}$ be the unique fixpoint of $E_{\mathfrak{f}}$ and $\underline{a}_{\mathfrak{f}}\coloneqq R_{\tilde{A}_{\mathfrak{f}}}\underline{b}_{\mathfrak{f}}$. By construction, this solves \begin{align*}
\Upsilon_{A_{\mathfrak{f}}}(\underline{a}_{\mathfrak{f}}) & = L_{\tilde{A}_{\mathfrak{f}}}\underline{a}_{\mathfrak{f}} + Q_{\tilde{A}_{\mathfrak{f}}}(\underline{a}_{\mathfrak{f}})+F_{\tilde{A}_{\mathfrak{f}}}\wedge \psi_t = \underline{b}_{\mathfrak{f}} + Q_{\tilde{A}_{\mathfrak{f}}}(R_{\tilde{A}_{\mathfrak{f}}}\underline{b}_{\mathfrak{f}})+F_{\tilde{A}_{\mathfrak{f}}}\wedge \psi_t =  0
\end{align*} 
and is therefore smooth by elliptic regularity. Furthermore, \autoref{prop:AugmentedInstanton} implies that $A_{\mathfrak{f}} \coloneqq \tilde{A}_{\mathfrak{f}}+a_{\mathfrak{f}} \in \mathcal{A}(P)$ is a $\G_2$-instanton.

If $\mathfrak{f} \mapsto \tilde{A}_{\mathfrak{f}}$ lies in $C^L(\mathfrak{F},\mathcal{A}(P))$, where we assume from now on that $\mathfrak{F}$ is an interval, then $\mathfrak{f} \mapsto L_{\tilde{A}_{\mathfrak{f}}}$ lies in $C^L(\mathfrak{F},\textup{Lin}(C^{k,\alpha}_{\beta,t},C^{k-1,\alpha}_{\beta-1,t}))$ for all $k \in \mathbb{N}_0$ and $\beta \in \mathbb{R}$ (where $\textup{Lin}(C^{k,\alpha}_{\beta,t},C^{k-1,\alpha}_{\beta-1,t})$ denotes the Banach space of bounded linear maps $C^{k,\alpha}_{\beta,t} \to C^{k-1,\alpha}_{\beta-1,t}$). Since concatenation and inversion are smooth operations, this implies that \[ \mathfrak{F} \times W^{0,\alpha}_{-2-\varepsilon,t} \ni (\mathfrak{f},\underline{b}) \mapsto E_{\mathfrak{f}}(\underline{b})\in W^{0,\alpha}_{-2-\varepsilon,t} \] is a $C^L$-map. \autoref{lem: fixed-points of families} implies therefore that $\mathfrak{f} \mapsto a_{\mathfrak{f}}$ lies in $C^L(\mathfrak{F},V^{1,\alpha}_{-1-\varepsilon,t})$ (where $V^{1,\alpha}_{-1-\varepsilon,t}$ is equipped with its Banach space topology). 

Next, we will prove that $\mathfrak{f} \mapsto a_{\mathfrak{f}}$ is indeed of regularity $C^L$ when regarded as a map into the space $C^\infty(T^*\hat{Y}_t \otimes \mathfrak{g}_P \oplus \Lambda^7 T^*\hat{Y}_t \otimes \mathfrak{g}_P)$ equipped with its Fréchet topology. For this we will denote in the following by $(\partial_\mathfrak{F}\underline{a})_\mathfrak{f} \in V^{1,\alpha}_{-1-\varepsilon,t}$ and $(\partial_\mathfrak{F}L_{\tilde{A}})_\mathfrak{f} \in \textup{Lin}(V^{1,\alpha}_{\beta,t},W^{0,\alpha}_{\beta-1,t})$ the respective derivatives of $\mathfrak{f}\mapsto \underline{a}_\mathfrak{f}$ and $\mathfrak{f} \mapsto L_{\tilde{A}_\mathfrak{f}}$ taken with respect to the indicated Banach space topologies. Similarly, we will denote by $(\partial_\mathfrak{F}F_{\tilde{A}})_\mathfrak{f} \in \Omega^2(\hat{Y}_t,\mathfrak{g}_P)$ the derivative of $\mathfrak{f} \mapsto F_{\tilde{A}_\mathfrak{f}}$ taken with respect to the Fréchet topology. 

Taking the derivative of $\mathfrak{f}\mapsto \Upsilon_{\tilde{A}_\mathfrak{f}}(\underline{a}_\mathfrak{f})$ (as a map into the Banach space $W^{0,\alpha}_{-2-\varepsilon,t}$) at $\mathfrak{f}_0 \in \mathfrak{F}$ (and noting that $Q_{\tilde{A}_{\mathfrak{f}}} \equiv Q$ is independent of the connection) yields \[ L_{\tilde{A}_{\mathfrak{f}_0}} (\partial_\mathfrak{F}\underline{a})_{\mathfrak{f}_0} + (\partial_\mathfrak{F}L_{\tilde{A}})_{\mathfrak{f}_0} \underline{a}_{\mathfrak{f}_0} + (D_{\underline{a}_{\mathfrak{f}_0}}Q)((\partial_\mathfrak{F}\underline{a})_{\mathfrak{f}_0}) + (\partial_\mathfrak{F}F_{\tilde{A}})_{\mathfrak{f}_0} \wedge \psi_t = 0.\] Elliptic regularity of the operator $L_{\tilde{A}_{\mathfrak{f}_0}}$ gives via bootstrapping $(\partial_\mathfrak{F}\underline{a})_{\mathfrak{f}_0} \in C^{\infty}(T^*\hat{Y}_t\otimes \mathfrak{g}_P \oplus \Lambda T^*\hat{Y}_t \otimes \mathfrak{g}_P)$. 

Next, we prove that $(\partial_\mathfrak{F}\underline{a})_{\mathfrak{f}_0}$ is also the derivative of $\underline{a}$ at ${\mathfrak{f}_0}$ when regarded as a map $\mathfrak{F} \to C^\infty(T^*\hat{Y}_t \otimes \mathfrak{g}_P \oplus \Lambda^7 T^*\hat{Y}_t \otimes \mathfrak{g}_P)$. For this we assume that we have already proven that $(\partial_\mathfrak{F}\underline{a})_{\mathfrak{f}_0}$ is the derivative of $\underline{a}$ at ${\mathfrak{f}_0}$ seen as a map $\mathfrak{F} \to C^{k-1,\alpha}(T^*\hat{Y}_t \otimes \mathfrak{g}_P \oplus \Lambda^7 T^*\hat{Y}_t \otimes \mathfrak{g}_P)$ for some fixed $k \in \mathbb{N}$ (the case $k=2$ holds by construction and \autoref{lem: fixed-points of families}) and show the same statement holds when regarded as a map $\mathfrak{F} \to C^{k,\alpha}(T^*\hat{Y}_t \otimes \mathfrak{g}_P \oplus \Lambda^7 T^*\hat{Y}_t \otimes \mathfrak{g}_P)$. Since sequences converge in the $C^\infty$-topology if and only if they converge in all $C^{k,\alpha}$-norms, this implies the claim. 

The Schauder estimate\footnote{The constant $c$ in the following estimates will in general depend on $t$. This does not affect the argument.} for $L_{\tilde{A}_{\mathfrak{f}_0}}$ gives for any $\mathfrak{f}\in \mathbb{R}$ with $\vert \mathfrak{f} \vert $ sufficiently small
\begin{align*}
\Vert \underline{a}_{\mathfrak{f}_0+\mathfrak{f}} - \underline{a}_{\mathfrak{f}_0} - (\partial_\mathfrak{F}\underline{a})_{\mathfrak{f}_0} \cdot \mathfrak{f} \VertH{k}{} &\leq c \big( \Vert L_{\tilde{A}_{\mathfrak{f}_0}} (\underline{a}_{\mathfrak{f}_0+\mathfrak{f}} - \underline{a}_{\mathfrak{f}_0} - (\partial_\mathfrak{F}\underline{a})_{\mathfrak{f}_0} \cdot \mathfrak{f} ) \VertH{k-1}{} \\
& \qquad + \Vert \underline{a}_{\mathfrak{f}_0+\mathfrak{f}} - \underline{a}_{\mathfrak{f}_0} - (\partial_\mathfrak{F}\underline{a})_{\mathfrak{f}_0} \cdot \mathfrak{f}  \VertC{0}{} \big).
\end{align*}
The second term on the right-hand side is by assumption $o(\vert \mathfrak{f} \vert)$. In order to bound the first term we rewrite \[ L_{\tilde{A}_{\mathfrak{f}_0}} \underline{a}_{\mathfrak{f}_0+\mathfrak{f}} = \big(L_{\tilde{A}_{\mathfrak{f}_0+\mathfrak{f}}} - (\partial_\mathfrak{F}L_{\tilde{A}})_{\mathfrak{f}_0} \cdot \mathfrak{f} - \mathfrak{R}_{\mathfrak{f}_0}(\mathfrak{f})\big) \underline{a}_{\mathfrak{f}_0+\mathfrak{f}} \] where by the differentiability of $\mathfrak{f}\mapsto \tilde{A}_{\mathfrak{f}}$ and $\mathfrak{f}\mapsto \underline{a}_\mathfrak{f}$ we have $\Vert \mathfrak{R}_{\mathfrak{f}_0}(\mathfrak{f}) \underline{a}_{\mathfrak{f}_0+\mathfrak{f}} \VertH{k-1}{} = o(\vert \mathfrak{f} \vert)$. Inserting the equations for $L_{\tilde{A}_{\mathfrak{f}_0+\mathfrak{f}}}\underline{a}_{\mathfrak{f}_0+\mathfrak{f}}$, $L_{\tilde{A}_{\mathfrak{f}_0}}\underline{a}_{\mathfrak{f}_0}$, and $L_{\tilde{A}_{\mathfrak{f}_0}} (\partial_\mathfrak{F}\underline{a})_{\mathfrak{f}_0}$, noting that $(\partial_\mathfrak{F}L_{\tilde{A}})_{\mathfrak{f}_0}$ is an algebraic operator, and using the assumption that $(\partial_{\mathfrak{F}}\underline{a})_{\mathfrak{f}_0}$ is the derivative of the map $\mathfrak{f}\mapsto \underline{a}_\mathfrak{f}\in C^{k-1,\alpha}$, one can prove that the first term on the right-hand side is also $o(\vert \mathfrak{f} \vert)$. This shows that $(\partial_\mathfrak{F}\underline{a})_{\mathfrak{f}_0}$ is also the derivative of the map $\mathfrak{F} \to C^{k,\alpha}(T^*\hat{Y}_t \otimes \mathfrak{g}_P \oplus \Lambda^7 T^*\hat{Y}_t \otimes \mathfrak{g}_P)$ and by iteration also the derivative of $\mathfrak{f} \mapsto \underline{a}_\mathfrak{f}\in C^{\infty}(T^*\hat{Y}_t \otimes \mathfrak{g}_P \oplus \Lambda^7 T^*\hat{Y}_t \otimes \mathfrak{g}_P)$.

Finally, the continuity of the derivative $\mathfrak{F} \ni \mathfrak{f} \mapsto (\partial_\mathfrak{F}\underline{a})_{\mathfrak{f}} \in C^{\infty}(T^*\hat{Y}_t \otimes \mathfrak{g}_P \oplus \Lambda^7 T^*\hat{Y}_t \otimes \mathfrak{g}_P) $ can be proven analogously which implies that $\mathfrak{f} \mapsto A_{\mathfrak{f}}$ lies in $C^1(\mathfrak{F},\mathcal{A}(P))$.

For the higher derivatives, inductively take the $\ell$-th derivative of $\Upsilon_{\tilde{A}_{\mathfrak{f}}}(\underline{a}_{\mathfrak{f}})$ for $\ell \in \{1,\dots,L\}$ and proceed in the same manner.
\end{proof}

\begin{proposition}\label{prop: derivative estimates on deformed family of instantons}
Assume the situation of \autoref{theo:perturbing_almost_instantons} in which the family $\mathfrak{f}\mapsto \tilde{A}_{\mathfrak{f}}$ lies in $C^L(\mathfrak{F},\mathcal{A}(P))$ for some fixed $L \in \mathbb{N}$ and assume that we additionally have 
\begin{enumerate}\setcounter{enumi}{3}
\item \label{bul:pregluing-error-of-derivatives} $\Vert (\partial_\mathfrak{F}^\ell F_{\tilde{A}})_{\mathfrak{f}} \wedge \psi_t \VertWHt{0}{-2-\varepsilon}{} \leq C t^{\gamma}$ for every $\ell\in \{0,\dots,L\}$ and every $\mathfrak{f}\in \mathfrak{F}$,
\item \label{bul: operator norm estimate on derivatives of L_A} $\Vert (\partial^\ell_{\mathfrak{F}}L_{\tilde{A}})_{\mathfrak{f}} \underline{a} \VertWHt{0}{-2-\varepsilon}{} \leq C \Vert  \underline{a} \VertWHt{1}{-1-\varepsilon}{}$  for every $H$-invariant $\underline{a}\in \Omega^1(\hat{Y}_t,\mathfrak{g}_P) \oplus \Omega^7(\hat{Y}_t,\mathfrak{g}_P)$, every $\ell\in \{0,\dots,L\}$, and every $\mathfrak{f}\in \mathfrak{F}$,
\item \label{bul: operator norm estimate on derivatives of L_A^*} $\Vert (\partial^\ell_{\mathfrak{F}}L_{\tilde{A}}^*)_{\mathfrak{f}} \underline{b} \VertWHt{1}{-1-\varepsilon}{} \leq C \Vert  \underline{b} \VertWHt{2}{-\varepsilon}{}$  for every $H$-invariant $\underline{b}\in \Omega^6(\hat{Y}_t,\mathfrak{g}_P) \oplus \Omega^0(\hat{Y}_t,\mathfrak{g}_P)$, every $\ell\in \{0,\dots,L\}$, and every $\mathfrak{f}\in \mathfrak{F}$.
\end{enumerate}
Here and below $(\partial_\mathfrak{F}L_{\tilde{A}})_{\mathfrak{f}}$ and $(\partial_{\mathfrak{F}}F_{\tilde{A}})_{\mathfrak{f}}$ denote the derivatives of $\mathfrak{f}\mapsto L_{\tilde{A}_\mathfrak{f}}\in \textup{Lin}(C^{1,\alpha}_{-1-\varepsilon,t},C^{0,\alpha}_{-2-\varepsilon,t})$ and $\mathfrak{f} \mapsto F_{\tilde{A}_\mathfrak{f}} \in \Omega^2(\hat{Y},\mathfrak{g}_P)$, respectively. Under these assumptions $c_I$ in \autoref{theo:perturbing_almost_instantons} can be enlarged so that additionally \[\Vert (\partial_{\mathfrak{F}}^\ell \tilde{A})_\mathfrak{f} - (\partial_{\mathfrak{F}}^\ell A)_\mathfrak{f} \VertWHt{1}{-1-\varepsilon}{} \leq c_I t^\gamma \] holds for every $\ell\in \{0,\dots,L\}$.
\end{proposition}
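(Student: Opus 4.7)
The approach is induction on $\ell$, with $\ell = 0$ being the content of \autoref{theo:perturbing_almost_instantons}. Writing $\underline{a}_\mathfrak{f} = R_{\tilde{A}_\mathfrak{f}} \underline{b}_\mathfrak{f}$ for the correction constructed there (so that $A_\mathfrak{f} - \tilde{A}_\mathfrak{f}$ is the one-form component of $\underline{a}_\mathfrak{f}$), it suffices to prove $\Vert (\partial_\mathfrak{F}^\ell \underline{a})_\mathfrak{f} \VertWHt{1}{-1-\varepsilon}{} \leq c\, t^\gamma$. The Leibniz rule applied to $\underline{a}_\mathfrak{f} = R_{\tilde{A}_\mathfrak{f}} \underline{b}_\mathfrak{f}$ reduces this to (a) uniform operator-norm bounds on the $(\partial_\mathfrak{F}^m R_{\tilde{A}})_\mathfrak{f}$ for $m \leq \ell$ and (b) inductive bounds of size $O(t^\gamma)$ on the $(\partial_\mathfrak{F}^j \underline{b})_\mathfrak{f}$ for $j \leq \ell$ in $W^{0,\alpha}_{-2-\varepsilon,t}$.

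For (a) I will expand $R_{\tilde{A}} = L_{\tilde{A}}^* (L_{\tilde{A}} L_{\tilde{A}}^*)^{-1}$ and differentiate via the product rule together with $\partial (LL^*)^{-1} = -(LL^*)^{-1} [\partial(LL^*)] (LL^*)^{-1}$, applied iteratively. Each resulting term is a product of factors $\partial_\mathfrak{F}^i L_{\tilde{A}}$, $\partial_\mathfrak{F}^i L_{\tilde{A}}^*$, and $(L_{\tilde{A}} L_{\tilde{A}}^*)^{-1}$, whose norms between the relevant weighted Hölder spaces of $H$-invariant forms are uniformly bounded in $t$ and $\mathfrak{f}$ by hypotheses \autoref{bul: operator norm estimate on derivatives of L_A}, \autoref{bul: operator norm estimate on derivatives of L_A^*}, and \autoref{bul:linearEstimate} of \autoref{theo:perturbing_almost_instantons}. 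Consequently $\Vert (\partial_\mathfrak{F}^m R_{\tilde{A}})_\mathfrak{f} \Vert_{\textup{op}} \leq c$ uniformly.

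For (b), the case $j=1$ follows directly from the second part of \autoref{lem: fixed-points of families}: the contraction constant of $E_\mathfrak{f}$ is at most $1/2$ by the construction in the proof of \autoref{theo:perturbing_almost_instantons}, and expanding
\[ (\partial_\mathfrak{F} E)_{(\mathfrak{f}, \underline{b}_\mathfrak{f})}(1) = -(D_{R_{\tilde{A}_\mathfrak{f}} \underline{b}_\mathfrak{f}} Q)\bigl[(\partial_\mathfrak{F} R_{\tilde{A}})_\mathfrak{f} \underline{b}_\mathfrak{f}\bigr] - (\partial_\mathfrak{F} F_{\tilde{A}})_\mathfrak{f} \wedge \psi_t \]
produces a bound of size $O(t^\gamma)$ via hypothesis \autoref{bul:pregluing-error-of-derivatives}, the quadratic estimate from \autoref{prop:DecompositionNonlinearMap}, the already-established bound $\Vert \underline{b}_\mathfrak{f} \VertWHt{0}{-2-\varepsilon}{} \leq c\, t^\gamma$, and step (a). For $j \geq 2$, I differentiate $\underline{b}_\mathfrak{f} = E_\mathfrak{f}(\underline{b}_\mathfrak{f})$ exactly $j$ times and isolate the top-order term to obtain
\[ \bigl(\textup{Id} - (\partial_B E)_{(\mathfrak{f}, \underline{b}_\mathfrak{f})}\bigr) (\partial_\mathfrak{F}^j \underline{b})_\mathfrak{f} = \mathcal{R}_j, \]
where $\mathcal{R}_j$ is a Faà di Bruno-style polynomial in $\partial_\mathfrak{F}^i \underline{b}_\mathfrak{f}$ ($i < j$), $\partial_\mathfrak{F}^i R_{\tilde{A}}$, derivatives of $Q$ at $R_{\tilde{A}_\mathfrak{f}} \underline{b}_\mathfrak{f}$, and $\partial_\mathfrak{F}^i F_{\tilde{A}} \wedge \psi_t$. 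Since $\Vert (\partial_B E)_{(\mathfrak{f}, \underline{b}_\mathfrak{f})} \Vert_{\textup{op}} \leq C\, t^{\gamma - \varepsilon}$ becomes arbitrarily small after possibly shrinking $T_I$, the left-hand operator is invertible with norm $\leq 2$, so $\Vert (\partial_\mathfrak{F}^j \underline{b})_\mathfrak{f} \Vert \leq c\, \Vert \mathcal{R}_j \Vert$ and the induction closes provided each monomial of $\mathcal{R}_j$ carries at least one $t^\gamma$-small factor.

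The main obstacle is precisely this last point: the combinatorial bookkeeping required to verify that every monomial in $\mathcal{R}_j$ has at least one factor of size $O(t^\gamma)$ (either a lower-order $\partial_\mathfrak{F}^i \underline{b}_\mathfrak{f}$ controlled by the inductive hypothesis, or a $\partial_\mathfrak{F}^i F_{\tilde{A}} \wedge \psi_t$ controlled by \autoref{bul:pregluing-error-of-derivatives}) without any accumulation of negative powers of $t$ that would spoil the bound. This reduces to careful repeated application of \eqref{eq:weighted-t-Höldernorms-multiplication} and \eqref{eq:weighted-t-Höldernorms-estimates}, together with the quadratic estimate of \autoref{prop:DecompositionNonlinearMap}, in the same spirit as the contraction estimate in the proof of \autoref{theo:perturbing_almost_instantons}. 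Enlarging $c_I$ to absorb the accumulated constants (which depend on $\alpha, \gamma, C, \varepsilon$ but not on $t$) then yields the asserted estimate.
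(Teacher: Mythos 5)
Your proof follows the paper's argument essentially verbatim: the reduction to estimating $(\partial_\mathfrak{F}^\ell \underline{a})_\mathfrak{f}$ via the decomposition $\underline{a}_\mathfrak{f} = R_{\tilde{A}_\mathfrak{f}}\underline{b}_\mathfrak{f}$ and the Leibniz rule, the uniform operator-norm bounds on $(\partial^m_\mathfrak{F} R_{\tilde{A}})_\mathfrak{f}$ obtained by differentiating $(L_{\tilde{A}}L_{\tilde{A}}^*)(L_{\tilde{A}}L_{\tilde{A}}^*)^{-1}=\id$ and invoking hypotheses \autoref{bul:linearEstimate}, \autoref{bul: operator norm estimate on derivatives of L_A}, and \autoref{bul: operator norm estimate on derivatives of L_A^*}, and the $O(t^\gamma)$ control of $(\partial^j_\mathfrak{F} \underline{b})_\mathfrak{f}$ coming from \autoref{lem: fixed-points of families} applied to the fixed-point equation $\underline{b}_\mathfrak{f}=E_\mathfrak{f}(\underline{b}_\mathfrak{f})$. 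The combinatorial bookkeeping for the higher-order induction step that you flag as the ``main obstacle'' is exactly the part the paper also leaves implicit after the explicit $L=1$ case, so no gap is introduced relative to what the paper proves.
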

\begin{proof}
In order to estimate the difference of the first derivatives of $\mathfrak{f} \mapsto \tilde{A}_{\mathfrak{f}}$ and $\mathfrak{f} \mapsto A_{\mathfrak{f}}$ note that by construction \[\Vert (\partial_{\mathfrak{F}} \tilde{A})_{\mathfrak{f}} - (\partial_{\mathfrak{F}} A)_{\mathfrak{f}} \VertWHt{1}{-1-\varepsilon}{} = \Vert (\partial_\mathfrak{F} \underline{a})_{\mathfrak{f}} \VertWHt{1}{-1-\varepsilon}{} = \Vert (\partial_\mathfrak{F} R_{\tilde{A}})_{\mathfrak{f}} \underline{b}_{\mathfrak{f}} + R_{\tilde{A}_{\mathfrak{f}}} (\partial_\mathfrak{F}\underline{b})_{\mathfrak{f}} \VertWHt{1}{-1-\varepsilon}{} \] where $\underline{b}_{\mathfrak{f}}$ and $R_{\tilde{A}_{\mathfrak{f}}}$ are as in the proof of \autoref{theo:perturbing_almost_instantons}. In the following we will prove that there exists a constant so that 
\begin{equation}\label{equ: auxiliary derivative estimates}
\Vert (\partial_\mathfrak{F}\underline{b})_f \VertWHt{0}{-2-\varepsilon}{} \leq c t^{\gamma} \qquad \textup{and} \qquad \Vert (\partial_\mathfrak{F} R_{\tilde{A}})_f \underline{b}\VertWHt{1}{-1-\varepsilon}{}\leq c \Vert \underline{b}\VertWHt{0}{-2-\varepsilon}{} 
\end{equation} 
for every ${\mathfrak{f}}\in \mathfrak{F}$ and $H$-invariant $\underline{b}\in \Omega^6(\hat{Y}_t,\mathfrak{g}_P)\oplus \Omega^0(\hat{Y}_t,\mathfrak{g}_P)$. Since by \eqref{eq: estimate right-inverse} and \autoref{theo:perturbing_almost_instantons} we additionally have \[\Vert R_{\tilde{A}_{\mathfrak{f}}} \underline{b} \VertWHt{1}{-1-\varepsilon}{}  \leq c \Vert \underline{b}\VertWHt{0}{-2-\varepsilon}{} \qquad \textup{and} \qquad \Vert \underline{b}_{\mathfrak{f}} \VertWHt{0}{-2-\varepsilon}{} \leq c t^\gamma \] for every ${\mathfrak{f}}\in \mathfrak{F}$ and $H$-invariant $\underline{b}\in \Omega^6(\hat{Y}_t,\mathfrak{g}_P)\oplus \Omega^0(\hat{Y}_t,\mathfrak{g}_P)$, this implies the estimate on the first derivative.

Note that the contraction constant of the contraction $E_{\mathfrak{f}}$ in the proof of \autoref{theo:perturbing_almost_instantons} can be bounded $\mathfrak{f}$-independently by $4c^2t^{\gamma-\varepsilon} <1$. \autoref{lem: fixed-points of families} implies therefore for the derivative of $\mathfrak{f} \mapsto \underline{b}_{\mathfrak{f}}$: \[ \Vert (\partial_\mathfrak{F}\underline{b})_{\mathfrak{f}} \VertWHt{0}{-2-\varepsilon}{} \leq c \Vert (\partial_\mathfrak{F}E)_{\mathfrak{f}}(\underline{b}_{\mathfrak{f}}) \VertWHt{0}{-2-\varepsilon}{} = c \Vert (D_{R_{\tilde{A}_{\mathfrak{f}}}\underline{b}_{\mathfrak{f}}}Q)((\partial_\mathfrak{F}R_{\tilde{A}})_{\mathfrak{f}}\underline{b}_{\mathfrak{f}}) - \partial_\mathfrak{F} (F_{\tilde{A}})_{\mathfrak{f}} \wedge \psi_t \VertWHt{0}{-2-\varepsilon}{}. \] 
Furthermore, the derivative of the quadratic map $Q$ takes the form \[(D_{R_{\tilde{A}_{\mathfrak{f}}}\underline{b}_{\mathfrak{f}}}Q)((\partial_\mathfrak{F}R_{\tilde{A}})_{\mathfrak{f}}\underline{b}_{\mathfrak{f}}) = \{R_{\tilde{A}_{\mathfrak{f}}}\underline{b}_{\mathfrak{f}},(\partial_\mathfrak{F}R_{\tilde{A}})_{\mathfrak{f}} \underline{b}_{\mathfrak{f}}\}\] where $\{ \cdot,\cdot\}$ is a bilinear algebraic map. This observation together with \autoref{bul:pregluing-error-of-derivatives} in the assumptions, $\Vert \underline{b}_\mathfrak{f} \VertWHt{0}{-2-\varepsilon}{} \leq ct^\gamma$, and $\Vert R_{\tilde{A}_{\mathfrak{f}}} \underline{b}_{\mathfrak{f}} \VertWHt{1}{-1-\varepsilon}{} \leq c t^\gamma$, implies that the first estimate in~\eqref{equ: auxiliary derivative estimates} follows from the second estimate.

We are therefore left to prove \[\Vert (\partial_\mathfrak{F} R_{\tilde{A}})_{\mathfrak{f}} \underline{b}\VertWHt{1}{-1-\varepsilon}{}\leq c \Vert \underline{b}\VertWHt{0}{-2-\varepsilon}{} \] for every ${\mathfrak{f}}\in \mathfrak{F}$ and $H$-invariant $\underline{b}\in \Omega^6(\hat{Y}_t,\mathfrak{g}_P)\oplus \Omega^0(\hat{Y}_t,\mathfrak{g}_P)$. For this note that taking the derivative of $(L_{\tilde{A}_{\mathfrak{f}}} L_{\tilde{A}_{\mathfrak{f}}}^*)(L_{\tilde{A}_{\mathfrak{f}}}L_{\tilde{A}_{\mathfrak{f}}}^*)^{-1} = \id$ implies that \[ \partial_\mathfrak{F} (L_{\tilde{A}} L_{\tilde{A}}^*)_{\mathfrak{f}}^{-1} = - (L_{\tilde{A}_{\mathfrak{f}}} L_{\tilde{A}_{\mathfrak{f}}}^*)^{-1} (\partial_\mathfrak{F}(L_{\tilde{A}} L_{\tilde{A}}^*)_{\mathfrak{f}})(L_{\tilde{A}_{\mathfrak{f}}} L_{\tilde{A}_{\mathfrak{f}}}^*)^{-1}. \] The assumption in \autoref{bul:linearEstimate} of \autoref{theo:perturbing_almost_instantons} and \autoref{bul: operator norm estimate on derivatives of L_A} and \autoref{bul: operator norm estimate on derivatives of L_A^*} in this proposition yield the $t$-independent bound \[ \Vert \partial_\mathfrak{F} (L_{\tilde{A}} L_{\tilde{A}}^*)_{\mathfrak{f}}^{-1} \underline{b} \VertWHt{2}{-\varepsilon}{} \leq c \Vert \underline{b} \VertWHt{0}{-2-\varepsilon}{} \] for every $H$-invariant $\underline{b}\in \Omega^6(\hat{Y}_t,\mathfrak{g}_P)\oplus \Omega^0(\hat{Y}_t,\mathfrak{g}_P)$. Similarly, we obtain \[ \Vert \partial_\mathfrak{F} (R_{\tilde{A}})_{\mathfrak{f}}\underline{b} \VertWHt{1}{-\varepsilon}{} = \Vert (\partial_\mathfrak{F} L_{\tilde{A}}^*)_{\mathfrak{f}}(L_{\tilde{A}_{\mathfrak{f}}} L_{\tilde{A}_{\mathfrak{f}}}^*)^{-1}\underline{b} + L_{\tilde{A}_{\mathfrak{f}}}^*(\partial_\mathfrak{F}(L_{\tilde{A}} L_{\tilde{A}}^*)_{\mathfrak{f}}^{-1})\underline{b} \VertWHt{1}{-\varepsilon}{} \leq c \Vert \underline{b} \VertWHt{0}{-2-\varepsilon}{} \] for every $H$-invariant $\underline{b}\in \Omega^6(\hat{Y}_t,\mathfrak{g}_P)\oplus \Omega^0(\hat{Y}_t,\mathfrak{g}_P)$ which is exactly the second estimate in \eqref{equ: auxiliary derivative estimates}. This completes the proof for $L=1$. The estimates for the difference of the higher derivatives of $f \mapsto \tilde{A}_{\mathfrak{f}}$ and $f\mapsto A_f$ are proven in a similar way by inductively showing that~\eqref{equ: auxiliary derivative estimates} also holds for higher  $\partial_\mathfrak{F}$-derivatives.
\end{proof}

Denote by $\mathfrak{z} \subset \mathfrak{g}$ the center of $\mathfrak{g}$, i.e. all elements of $\mathfrak{g}$ that are fixed under the Adjoint-action. There exists a canonical inclusion $\mathfrak{z} \subset \Omega^0(\hat{Y}_t,\mathfrak{g}_P)$ which associates to each $\zeta \in \mathfrak{z}$ the section \[ \hat{Y}_t \ni y \mapsto [p_y,\zeta] \in \mathfrak{g}_P \] where $p_y \in P$ is any element in the fiber over $y$.

\begin{definition}\label{def:infinitesimally-irreducible}
A connection $A \in \mathcal{A}(P)$ is called infinitesimally irreducible if the kernel of $\diff_A \col \Omega^0(\hat{Y}_t,\mathfrak{g}_P) \to \Omega^1(\hat{Y}_t,\mathfrak{g}_P)$ equals $\mathfrak{z}$.
\end{definition}

\begin{proposition}\label{prop:infinitesimal_irreducibility}
Assume the set-up of \autoref{theo:perturbing_almost_instantons}. There exists a $0<T_{iI}<T_I$ depending only on $\alpha,\gamma,C,$ and $\varepsilon$ with the following significance: If $t<T_{iI}$ and $\tilde{A}_{\mathfrak{f}}$ for ${\mathfrak{f}}\in \mathfrak{F}$ satisfies the additional hypothesis
\begin{enumerate}
\setcounter{enumi}{3}
\item \label{bul:infinitesimally_irreducible} $\Vert \xi \VertWHt{2}{-\varepsilon}{} \leq C \Vert \diff_{\tilde{A}_{\mathfrak{f}}}^*\diff_{\tilde{A}_{\mathfrak{f}}} \xi \VertWHt{0}{-2-\varepsilon}{}$ for every $\xi \in \mathfrak{z}^{\perp,L^2} \subset \Omega^0(\hat{Y}_t,\mathfrak{g}_P)$,
\item \label{bul: estimate on diff_A} $\Vert \diff_{\tilde{A}_{\mathfrak{f}}} \xi \VertWHt{0}{\beta-1}{} \leq C \Vert \xi \VertWHt{1}{\beta}{}$ for any $\xi \in \Omega^0(\hat{Y}_t,\mathfrak{g}_P)$ and $\beta \in \{0, -1-\varepsilon\}$,
\end{enumerate}
then the $\G_2$-instanton $A_{\mathfrak{f}}$ constructed in \autoref{theo:perturbing_almost_instantons} is infinitesimally irreducible.
\end{proposition}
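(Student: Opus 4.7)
The plan is as follows. Suppose $\xi \in \Omega^0(\hat{Y}_t, \mathfrak{g}_P)$ satisfies $\diff_{A_\mathfrak{f}} \xi = 0$; elliptic regularity for $\diff_{A_\mathfrak{f}}^*\diff_{A_\mathfrak{f}}$ makes $\xi$ smooth. I would $L^2$-orthogonally decompose $\xi = \xi_\mathfrak{z} + \xi^\perp$, where $\xi_\mathfrak{z}$ is the image of some element of $\mathfrak{z}$ under the canonical inclusion $\mathfrak{z} \hookrightarrow \Omega^0(\hat{Y}_t,\mathfrak{g}_P)$ and $\xi^\perp \in \mathfrak{z}^{\perp, L^2}$. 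Since every $\zeta \in \mathfrak{z}$ satisfies $\Ad_g \zeta = \zeta$ for all $g \in G$, the associated constant section is parallel with respect to \emph{every} connection on $P$; hence $\diff_{A_\mathfrak{f}} \xi_\mathfrak{z} = 0$ and consequently $\diff_{A_\mathfrak{f}} \xi^\perp = 0$. Setting $a_\mathfrak{f} \coloneqq A_\mathfrak{f} - \tilde{A}_\mathfrak{f}$, this amounts to $\diff_{\tilde{A}_\mathfrak{f}} \xi^\perp = -[a_\mathfrak{f}, \xi^\perp]$.

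The key step is to apply $\diff_{\tilde{A}_\mathfrak{f}}^*$ to both sides and invoke hypothesis \autoref{bul:infinitesimally_irreducible}, which is available because $\xi^\perp \in \mathfrak{z}^{\perp,L^2}$, to obtain
\[
\Vert \xi^\perp \VertWHt{2}{-\varepsilon}{} \leq C \, \Vert \diff_{\tilde{A}_\mathfrak{f}}^*[a_\mathfrak{f}, \xi^\perp] \VertWHt{0}{-2-\varepsilon}{}.
\]
I would then expand the right-hand side via Leibniz: $\diff_{\tilde{A}_\mathfrak{f}}^*[a_\mathfrak{f}, \xi^\perp]$ is the sum of $[\diff_{\tilde{A}_\mathfrak{f}}^* a_\mathfrak{f}, \xi^\perp]$ and a pointwise bilinear contraction of $a_\mathfrak{f}$ with $\diff_{\tilde{A}_\mathfrak{f}} \xi^\perp$. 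The first summand I bound by the multiplication estimate \eqref{eq:weighted-t-Höldernorms-multiplication} with weights $(-2-\varepsilon, 0)$: the factor $\Vert \diff_{\tilde{A}_\mathfrak{f}}^* a_\mathfrak{f} \VertWHt{0}{-2-\varepsilon}{}$ is controlled by $c \, \Vert a_\mathfrak{f} \VertWHt{1}{-1-\varepsilon}{} \leq c\, c_I\, t^\gamma$ (coming from \autoref{theo:perturbing_almost_instantons}, with the operator bound on $\diff_{\tilde{A}}^*$ implicit in the $t$-uniform coefficient bounds already used to set up \autoref{theo:perturbing_almost_instantons}), and the factor $\Vert \xi^\perp \VertWHt{0}{0}{}$ is controlled by $t^{-\varepsilon}\, \Vert \xi^\perp \VertWHt{2}{-\varepsilon}{}$ via \eqref{eq:weighted-t-Höldernorms-estimates}. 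The second summand is handled analogously with weights $(-1-\varepsilon, -1)$, using $\Vert a_\mathfrak{f} \VertWHt{0}{-1-\varepsilon}{} \leq c_I\, t^\gamma$ together with \autoref{bul: estimate on diff_A} at $\beta = 0$ to bound $\Vert \diff_{\tilde{A}_\mathfrak{f}} \xi^\perp \VertWHt{0}{-1}{} \leq C\, \Vert \xi^\perp \VertWHt{1}{0}{} \leq C\, t^{-\varepsilon}\, \Vert \xi^\perp \VertWHt{2}{-\varepsilon}{}$.

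Assembling these estimates gives
\[
\Vert \xi^\perp \VertWHt{2}{-\varepsilon}{} \leq c \, t^{\gamma - \varepsilon} \, \Vert \xi^\perp \VertWHt{2}{-\varepsilon}{},
\]
and since $\gamma > \varepsilon$ there is a $T_{iI} \in (0, T_I]$, depending only on $\alpha, \gamma, C, \varepsilon$, such that $c\, t^{\gamma - \varepsilon} < \tfrac12$ for all $t < T_{iI}$. This forces $\xi^\perp = 0$, so $\xi = \xi_\mathfrak{z} \in \mathfrak{z}$, proving infinitesimal irreducibility of $A_\mathfrak{f}$.

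The main obstacle is the weighted-norm bookkeeping: each weight shift of the form $\Vert \cdot \VertWHt{k}{0}{} \leq t^{-\varepsilon} \Vert \cdot \VertWHt{k}{-\varepsilon}{}$ produces a factor of $t^{-\varepsilon}$, and the argument closes only because this penalty is dominated by the $t^\gamma$ smallness of $a_\mathfrak{f}$ supplied by \autoref{theo:perturbing_almost_instantons}. This is precisely why the strict inequality $\varepsilon < \gamma$ is built into the hypotheses of that theorem.
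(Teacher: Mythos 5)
Your proof is correct and follows essentially the same strategy as the paper's: reduce infinitesimal irreducibility to vanishing on $\mathfrak{z}^{\perp,L^2}$, view $\diff_{A_\mathfrak{f}}$ as a $t^\gamma$-small perturbation of $\diff_{\tilde{A}_\mathfrak{f}}$, and absorb the resulting $t^{\gamma-\varepsilon}$ error into the left-hand side of the hypothesis-\autoref{bul:infinitesimally_irreducible} estimate. The only difference is cosmetic: the paper proves a uniform operator-norm bound on $\diff_{\tilde{A}_\mathfrak{f}}^*\diff_{\tilde{A}_\mathfrak{f}}-\diff_{A_\mathfrak{f}}^*\diff_{A_\mathfrak{f}}$ (which involves two summands), whereas you substitute $\diff_{\tilde{A}_\mathfrak{f}}\xi^\perp=-[a_\mathfrak{f},\xi^\perp]$ directly and so only need to estimate the single term $\diff_{\tilde{A}_\mathfrak{f}}^*[a_\mathfrak{f},\xi^\perp]$, which is marginally shorter but logically equivalent.
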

\begin{proof}
By integration by parts, the connections $A_{\mathfrak{f}}$ and $\tilde{A}_{\mathfrak{f}}$ are infinitesimally irreducible if and only if the associated Laplacians $\diff_{A_{\mathfrak{f}}}^*\diff_{A_{\mathfrak{f}}}$ and $\diff_{\tilde{A}_{\mathfrak{f}}}^*\diff_{\tilde{A}_{\mathfrak{f}}}$ mapping $ C^{2,\alpha}_{-\varepsilon,t}(\mathfrak{g}_P) \to C^{0,\alpha}_{-2-\varepsilon,t}(\mathfrak{g}_P)$ have kernels equal to $\mathfrak{z}$. For $\xi \in C^{2,\alpha}_{-\varepsilon,t}(\mathfrak{g}_P)$ we have by~\eqref{eq:weighted-t-Höldernorms-multiplication},~\eqref{eq:weighted-t-Höldernorms-estimates}, \autoref{theo:perturbing_almost_instantons}, and \autoref{bul: estimate on diff_A}
\begin{align*}
\Vert (\diff_{\tilde{A}_{\mathfrak{f}}}^*\diff_{\tilde{A}_{\mathfrak{f}}} - \diff_{A_{\mathfrak{f}}}^*\diff_{A_{\mathfrak{f}}}) \xi \VertWHt{0}{-2-\varepsilon}{} & \leq \Vert \diff_{\tilde{A}_{\mathfrak{f}}}^*(\diff_{\tilde{A}_{\mathfrak{f}}} - \diff_{A_{\mathfrak{f}}}) \xi \VertWHt{0}{-2-\varepsilon}{} + \Vert (\diff_{\tilde{A}_{\mathfrak{f}}}^* - \diff_{A_{\mathfrak{f}}}^*)\diff_{A_{\mathfrak{f}}} \xi \VertWHt{0}{-2-\varepsilon}{} \\
& \leq c \Vert (\diff_{\tilde{A}_{\mathfrak{f}}} - \diff_{A_{\mathfrak{f}}}) \xi \VertWHt{1}{-1-\varepsilon}{} + \Vert \tilde{A}_{\mathfrak{f}}-A_{\mathfrak{f}} \VertWHt{0}{-1-\varepsilon}{} \Vert \diff_{A_{\mathfrak{f}}} \xi \VertWHt{0}{-1}{} \\
& \leq \Vert \tilde{A}_{\mathfrak{f}}-A_{\mathfrak{f}} \VertWHt{1}{-1-\varepsilon}{} \big(c \Vert \xi \VertWHt{1}{0}{}+ \Vert A_{\mathfrak{f}} - \tilde{A}_{\mathfrak{f}} \VertWHt{0}{-1-\varepsilon}{} \Vert  \xi \VertWHt{0}{\varepsilon}{} \\
&\qquad  +\Vert \diff_{\tilde{A}_{\mathfrak{f}}} \xi \VertWHt{0}{-1}{} \big)\\
&\leq c t^{\gamma-\varepsilon} \big(\Vert \xi \VertWHt{1}{-\varepsilon}{}+  t^{\gamma-\varepsilon} \Vert \xi \VertWHt{0}{-\varepsilon}{} + \Vert \xi \VertWHt{1}{-\varepsilon}{} \big) \\
& \leq c t^{\gamma-\varepsilon} (2+t^{\gamma-\varepsilon}) \Vert \xi \VertWHt{2}{-\varepsilon}{},
\end{align*}
where all constants are independent of $t$ and $\mathfrak{f}$. This together with \autoref{bul:infinitesimally_irreducible} implies that $A_{\mathfrak{f}}$ is infinitesimally irreducible once $c t^{\gamma-\varepsilon} (2+ t^{\gamma-\varepsilon})<\frac{1}{C}$. 
\end{proof}

\section{Approximate $\G_2$-instantons from gluing data}\label{sec:approxG2instantons_from_gluingdata}

Let $(Y_0,\phi_0)$ be a flat $\G_2$-orbifold satisfying \autoref{Ass:Codim6Singularities}. That is, for any connected component of the singular set $S\in \mathcal{S}$ there is a finite subgroup $\Gamma_S < \SU(3)$, a lattice $\Lambda_S < \mathbb{R}$, and an open embedding $\mathtt{J}_S \col \mathcal{V}_S \to Y_0$ for $\mathcal{V}_S \coloneqq (\mathbb{R}\times B_\kappa(0)/\Gamma_S)/\Lambda_S$ that satisfies $\mathtt{J}_S^*\phi_0 = \phi_S$.

\begin{lemma}
Let $(\pi_0\col P_0 \to Y_0,A_0)$ be a flat orbifold principal $G$-bundle. The pullback $\mathtt{J}_S^*\pi_0 \col \mathtt{J}_S^*P_0 \to \mathcal{V}_S$ is isomorphic to \[ \big(\mathbb{R} \times (B_{\kappa}(0) \times_{\Gamma_S} G)\big)/\Lambda_S\] where $\Gamma_S$ and $\Lambda_S$ act on $G$ via the monodromy representation associated to $\mathtt{J}_S^*A_0$. This isomorphism identifies $\mathtt{J}^*_SA_0$ with the flat connection induced from the canonical connection on $(\mathbb{R}\times B_\kappa)\times G$.
\end{lemma}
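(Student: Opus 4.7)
My plan is to invoke the standard correspondence between flat principal bundles and monodromy representations, adapted to the orbifold setting. First, I would identify the orbifold universal cover of $\mathcal{V}_S = (\mathbb{R}\times B_\kappa(0)/\Gamma_S)/\Lambda_S$ as the contractible space $\mathbb{R}\times B_\kappa(0)$, with orbifold deck transformation group $K_S$ sitting in a group extension
\[ 1 \to \Gamma_S \to K_S \to \Lambda_S \to 1, \]
where the extension is determined by the lift of $\rho_S$ from $\mathbb{C}^3/\Gamma_S$ to $\mathbb{C}^3$ via elements of $N_{\O(\mathbb{C}^3)}(\Gamma_S)$. Here $\Gamma_S$ acts on $\mathbb{R}\times B_\kappa(0)$ trivially on the first factor and standardly on the second; the lifts of elements of $\Lambda_S$ act by translation on $\mathbb{R}$ and by their chosen representatives on $B_\kappa(0)$.

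Since $\mathbb{R}\times B_\kappa(0)$ is simply connected and contractible, the pullback of $(\mathtt{J}_S^*P_0, \mathtt{J}_S^*A_0)$ to it admits a global parallel trivialisation, which I would fix once and for all. In this trivialisation each deck transformation $k\in K_S$ lifts uniquely to a connection-preserving bundle automorphism of the form $(x,g)\mapsto (k\cdot x,\ \rho(k)g)$, defining the monodromy representation $\rho\col K_S \to G$ (depending on the trivialisation only up to overall conjugation). Taking the quotient of $(\mathbb{R}\times B_\kappa(0))\times G$ equipped with its canonical flat connection by this $K_S$-action then recovers $\mathtt{J}_S^*P_0$ together with its flat connection $\mathtt{J}_S^*A_0$ as an associated bundle.

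The desired identification with $(\mathbb{R}\times (B_\kappa(0)\times_{\Gamma_S}G))/\Lambda_S$ is then obtained by performing this $K_S$-quotient in two stages: first modding out by the normal subgroup $\Gamma_S \triangleleft K_S$ (which acts trivially on the $\mathbb{R}$-factor and by $\rho|_{\Gamma_S}$ on $G$) to produce $\mathbb{R}\times (B_\kappa(0)\times_{\Gamma_S}G)$, and then modding out by the residual $\Lambda_S$-action, which acts on the base via $\rho_S$ and on the fibre via $\rho$ applied to a chosen set-theoretic section of $K_S \to \Lambda_S$. The canonical product connection is preserved at every stage and descends to $\mathtt{J}_S^*A_0$. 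The main book-keeping step — verifying that the $\Lambda_S$-action on the intermediate space $\mathbb{R}\times (B_\kappa(0)\times_{\Gamma_S}G)$ is well-defined, i.e.\ independent of the choice of section — reduces to the fact that $\Gamma_S$ is normal in $K_S$, which is exactly the statement that $\rho_S$ takes values in the normaliser of $\Gamma_S$; this is the only point where care is needed, but it is built into the definition of the orbifold.
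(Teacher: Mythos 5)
Your proposal is correct and takes essentially the same approach as the paper: both identify $\mathbb{R}\times B_\kappa(0)$ as the orbifold universal cover of $\mathcal{V}_S$, use the short exact sequence $1\to\Gamma_S\to\piorb(\mathcal{V}_S)\to\Lambda_S\to 1$, and appeal to the monodromy correspondence for flat orbifold principal bundles. The paper's proof simply cites the general theory (Donaldson--Kronheimer Prop.~2.2.3 and Adem--Leida--Ruan Thm.~2.36) where you spell out the parallel trivialisation and the two-stage quotient explicitly; the only thing worth noting is that the well-definedness and group-action property of the residual $\Lambda_S$-action on $\mathbb{R}\times(B_\kappa(0)\times_{\Gamma_S}G)$ also uses that $\Gamma_S$ acts trivially on $B_\kappa(0)\times_{\Gamma_S}G$ by definition of the associated bundle — this, together with normality of $\Gamma_S$ (which you correctly pinpoint), is what makes the choice of set-theoretic section immaterial.
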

\begin{proof}
This follows from the general theory of flat bundles (cf. \cite[Proposition~2.2.3]{DonaldsonKronheimer} and \cite[Theorem~2.36]{ALR-Orbifolds_and_stringy_topology}) because $\mathbb{R}\times B_\kappa(0)$ is the universal orbifold covering space of $\mathcal{V}_S$ (cf.~\cite[Example~2.20]{ALR-Orbifolds_and_stringy_topology}) and the orbifold fundamental group fits into the short exact sequence
\begin{equation*}
\begin{tikzcd}
1 \arrow[r] & \Gamma_S \arrow[r] & \piorb \arrow[r] & \Lambda_S \arrow[r] & 1.
\end{tikzcd}\qedhere
\end{equation*}
\end{proof}

In the following we will specify data from which we will subsequently construct the connections over $\hat{Y}_t$ which are our candidates for applying \autoref{theo:perturbing_almost_instantons}.

\begin{definition}[{cf.~\cite[Definition~6.3]{Walpuski-InstantonsKummer}}]\label{def:gluing_data}
Let $(Y_0,\phi_0)$ be a flat $\G_2$-orbifold satisfying ~\autoref{Ass:Codim6Singularities} together with a set $\mathfrak{R}$ of resolution data. A set of gluing data compatible with $\mathfrak{R}$ consists of
\begin{itemize}
\item a flat orbifold principal $G$-bundle $(\pi_0\col P_0 \to Y_0,A_0)$,
\end{itemize} 
and for every $S \in \mathcal{S}$ of 
\begin{itemize}
\item a choice of isomorphism $\tilde{\mathtt{J}}_S \col \big(\mathbb{R}\times (B_\kappa(0)\times_{\Gamma_S} G)\big)/\Lambda_S \to \mathtt{J}^*_SP_0$ as in the previous lemma,
\item a $G$-bundle $\pi_S\col \hat{P}_S \to \hat{Z}_S$ together with a locally framed HYM connection $\hat{A}_S$ asymptotic to the flat connection over the bundle $(\mathbb{C}^3\setminus\{0\}) \times_{\Gamma_S} G$ (with rate $-5$), and
\item a lift $\hat{\tilde{\rho}}_S \col \Lambda_S \to \textup{Isom}(\hat{P}_S)$ of the action $\hat{\rho}_S \col \Lambda_S \to \textup{Isom}(\hat{Z}_S)$ that 
\begin{enumerate}
\item satisfies $\tilde{\tau}_S^{-1}\circ \tilde{\rho}_S \circ \tilde{\tau}_S = \hat{\tilde{\rho}}_S$ where $\tilde{\rho} \col \Lambda_S \to \textup{Isom}((\mathbb{C}^3\setminus\{0\}) \times_{\Gamma_S} G)$ is induced by the monodromy representation of $A_0$. 
\item preserves $\hat{A}_S$.
\end{enumerate}
\end{itemize}
\end{definition}

\begin{remark}
The last point in the previous definition means that for every $g\in \Lambda_S$ the following diagram commutes (where the dashed arrows signify that $\tau_S$ and $\tilde{\tau}_S$ are only defined on (or restricted to) the complement of the exceptional divisor $\tau_S^{-1}(0)$ and map $\hat{A}_S$ only asymptotically to $A_0$):
\begin{equation*}
\begin{tikzcd}
& \big((\mathbb{C}^3\setminus\{0\})\times_{\Gamma_S} G,A_0\big) \arrow[rr, "\tilde{\rho}_S(g)"] \arrow[dd] &  & \big((\mathbb{C}^3\setminus\{0\})\times_{\Gamma_S} G,A_0\big) \arrow[dd] \\
\big(\hat{P}_S,\hat{A}_S\big)\arrow{rr}[near end] {\hat{\tilde{\rho}}_S(g)} \arrow[ur, dashed, "\tilde{\tau}_S"] \arrow[dd]&  & \big(\hat{P}_S,\hat{A}_S\big)\arrow[ur, dashed, "\tilde{\tau}_S"] \arrow[dd] &  \\
 & (\mathbb{C}^3\setminus\{0\})/\Gamma_S \arrow{rr}[near start]{\rho_S(g)}& & (\mathbb{C}^3\setminus\{0\})/\Gamma_S \\
\hat{Z}_S\arrow{rr}{\hat{\rho}_S(g)} \arrow[ur, dashed, "\tau_S"] & & \hat{Z}_S \arrow[ur, dashed, "\tau_S"] &  
\end{tikzcd}
\end{equation*}
\end{remark}

In \autoref{rem:EquivariantGeneralisedKummer} we have seen that some generalised Kummer constructions carry a non-trivial group action preserving the coassociative 4-form. The following extension of \autoref{def:gluing_data} helps us to lift this action to the principal bundle constructed out of the data specified in \autoref{def:gluing_data} and to perform the pre-gluing construction of the connection equivariantly.

\begin{remark}[Equivariant gluing data]\label{rem:equiv_gluing_data}
Let $\lambda_0 \col H \to \textup{Isom}(Y_0,g_{\phi_0})$ be a group action as in \autoref{rem:EquivariantGeneralisedKummer} and let the chosen resolution data be $H$-equivariant. An $H$-equivariant gluing data is a set of gluing data together with a lift $\tilde{\lambda}_0 \col H \to \textup{Isom}(P_0)$ preserving $A_0$ and, moreover:
\begin{enumerate}
\item $\tilde{\mathrm{J}}_S=\tilde{\mathrm{J}}_{hS}$, $\hat{P}_S = \hat{P}_{hS}$, $\tilde{\tau}_S=\tilde{\tau}_{hS}$, $\hat{A}_{S}=\hat{A}_{hS}$, and $\hat{\tilde{\rho}}_S=\hat{\tilde{\rho}}_{hS}$ for every $h\in H$ and $S\in \mathcal{S}$.
\item For every $[S]\in \mathcal{S}/H$ there exists $\tilde{\lambda}_{[S]}\col H \to N_{\textup{Isom}((\mathbb{R}\times \mathbb{C}^3)\times_{\Gamma_S} G)}(\Lambda_{[S]})/\Lambda_{[S]}$ covering $\lambda_{[S]}$ that satisfies \[ \tilde{\mathtt{J}}_{[S]} \circ [\tilde{\lambda}_{[S]}(h)] = \tilde{\lambda}(h) \circ \tilde{\mathtt{J}}_{[S]}\] for every $h\in H$ (where $[\tilde{\lambda}_{[S]}(h)]$ denotes the induced map on $((\mathbb{R}\times \mathbb{C}^3)\times_{\Gamma_{[S]}} G)/\Lambda_{[S]}$).
\item For every $[S]\in \mathcal{S}/H$ we have a lift $\hat{\tilde{\lambda}}_{[S]} \col H \to N_{\textup{Isom}(\mathbb{R}\times\hat{P}_{[S]})}(\Lambda_{[S]})/\Lambda_{[S]}$ of $\hat{\lambda}_{[S]}$ that preserves $\hat{A}_{[S]}$ and satisfies \[ \tilde{\tau}_{[S]} \circ [\hat{\tilde{\lambda}}_{[S]}(h)] = [\tilde{\lambda}_{[S]}(h)] \circ \tilde{\tau}_{[S]}\] for every $h\in H$.
\end{enumerate}
\end{remark}

\begin{proposition}[{cf.~\cite[Proposition~6.7]{Walpuski-InstantonsKummer}}]\label{prop:pregluing}
Let $(Y_0,\phi_0)$ be a flat $\G_2$-orbifold satisfying \autoref{Ass:Codim6Singularities} together with a chosen set of resolution data $\mathfrak{R}$. Furthermore, assume that $\mathfrak{G}$ is a set of gluing data compatible with $\mathfrak{R}$. Then there exists for each $t\in (0,T_K)$ a bundle $\pi_t\col \hat{P}_t \to \hat{Y}_t$ together with a connection $\tilde{A}_t$. Moreover, for each $k\in \mathbb{N}_0$, $\alpha\in(0,1)$, and $\vartheta\geq 0$ there exists a constant $c_{pI}$ (independent of $t<T_K \equiv T_K(k,\alpha)$) with \[ \Vert F_{\tilde{A}_t} \wedge \psi_t \VertWHt{k}{-6+\vartheta}{} \leq c_{pI} t^{\textup{min}\{9/2-\vartheta,4\}}. \] 

In addition: if $\lambda_0 \col H \to \textup{Isom}(Y_0)$ is a group action and the resolution and gluing data are chosen $H$-equivariantly (in the sense of \autoref{rem:EquivariantGeneralisedKummer} and \autoref{rem:equiv_gluing_data}), then there exists a lift $\hat{\tilde{\lambda}}\col H \to \textup{Isom}(\hat{P}_t)$ (covering the action $\hat{\lambda}$ of \autoref{rem:EquivariantGeneralisedKummer}) that preserves $\tilde{A}_t$.
\end{proposition}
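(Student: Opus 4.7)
The plan is to extend the pre-gluing construction of \cite[Proposition~6.7]{Walpuski-InstantonsKummer} from codimension-4 to codimension-6 singular strata. First, I would construct the bundle $\hat{P}_t$ by the same scheme as $\hat{Y}_t$ in \autoref{def:OrbifoldResolution}: take $P_0$ over $Y_0\setminus\mathtt{J}(\mathcal{V}_{\kappa/8})$, take the $\Lambda_S$-quotient of $\mathbb{R}\times\hat{P}_S$ (a $t$-rescaled version thereof) over $\hat{\mathcal{V}}_\kappa^t$, and glue them over the annular overlap via the composition $\tilde{\mathtt{J}}_S\circ\tilde{\tau}_S^{-1}$, which identifies both restrictions with the flat bundle $(\mathbb{C}^3\setminus\{0\})\times_{\Gamma_S}G$. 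Next, I would define $\tilde{A}_t$ by a cutoff interpolation: set $\tilde{A}_t=A_0$ on $Y_0\setminus\mathtt{J}(\mathcal{V}_\kappa)$, take the pullback of $\hat{A}_S$ on $\hat{\mathcal{V}}_{\kappa/4}^t$, and on the annulus $\hat{\mathcal{V}}_\kappa^t\setminus\hat{\mathcal{V}}_{\kappa/4}^t$ write $\hat{A}_S=A_\infty+a_S$ with $a_S\in\Omega^1_{-5}$ and put $\tilde{A}_t\coloneqq A_0+\chi_t\cdot(t\tau_S)^*a_S$, where $\chi_t$ is the same cutoff used in the definition of $\tilde\phi_t$. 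The $\Lambda_S$-invariance of $\hat{A}_S$ in \autoref{def:gluing_data} makes this descend to the quotient.

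The error $F_{\tilde{A}_t}\wedge\psi_t$ vanishes on $Y_0\setminus\mathtt{J}(\mathcal{V}_\kappa)$ (where $A_0$ is flat) and must be controlled on the other two regions. Over $\hat{\mathcal{V}}_{\kappa/4}^t$ the connection is $\hat{A}_S$, and applying \autoref{ex:HYM_is_G2-instanton} to the Calabi--Yau structure $(t^2\hat\omega_S,t^3\hat\Omega_S)$ gives $F_{\hat{A}_S}\wedge\psi_{CY,t}=0$ for $\psi_{CY,t}\coloneqq \tfrac12 t^4\hat\omega_S\wedge\hat\omega_S+\diff s\wedge\re t^3\hat\Omega_S$, so the error reduces to $F_{\hat{A}_S}\wedge(\psi_t-\psi_{CY,t})$; the 4-form difference is controlled via \autoref{theo:torsionfree_G2_structure}, \autoref{rem:improvedEstimates}, and the $O(t^6 r_t^{-5})$ bound on the correction $\diff(\chi_t(t\tau_S)^*\sigma_S^t)$. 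Over the annulus $\tilde{A}_t=A_0+\chi_t(t\tau_S)^*a_S$ has curvature
\begin{equation*}
F_{\tilde{A}_t} = \diff\chi_t\wedge(t\tau_S)^*a_S + \chi_t\cdot(t\tau_S)^*\diff a_S + \tfrac{1}{2}\chi_t^2\cdot (t\tau_S)^*[a_S\wedge a_S].
\end{equation*}
Using $\vert a_S\vert=O(r^{-5})$, $\vert\diff a_S\vert=O(r^{-6})$ and the fact that the rescaling $t\tau_S$ turns a pointwise $r^{-k}$-decay into $t^k r_t^{-k}$ in the rescaled metric produces $\vert F_{\tilde{A}_t}\vert=O(t^6 r_t^{-6})$, i.e.\ of weight $-6$ in $w_t$. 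Combining with a uniform bound on $\psi_t$, the multiplication rule \eqref{eq:weighted-t-Höldernorms-multiplication}, and trading weight against $t$ via \eqref{eq:weighted-t-Höldernorms-estimates} gives the claimed $t^{\min\{9/2-\vartheta,4\}}$; the cap at $t^4$ comes from the interior contribution of $\psi_t-\psi_{CY,t}$, which dominates once the ALE correction has been absorbed.

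For the equivariant statement, the compatibility conditions in \autoref{rem:equiv_gluing_data} force the identifications $\tilde{\mathtt{J}}_S$ and $\tilde{\tau}_S$ to intertwine the $H$-actions with $\hat\lambda$, so $\hat{P}_t$ inherits a lift $\hat{\tilde\lambda}$; since $\sigma_S^t$ and $a_S$ can be chosen $H$-invariantly and the cutoff $\chi_t$ depends only on $r_t$, the connection $\tilde{A}_t$ is automatically $H$-invariant. I expect the main technical hurdle to be the book-keeping in the annular region: carefully tracking how the rate-$5$ decay of $a_S$ interacts with the rescaling $t\tau_S$, with the cutoff $\chi_t$ supported in a shell at $r_t\sim\kappa$, and with the weighted Hölder norms (including derivative and Hölder seminorm pieces), so that the powers of $t$ and $r_t$ combine to give exactly $t^{\min\{9/2-\vartheta,4\}}$ rather than a weaker rate.
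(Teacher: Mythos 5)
Your overall scheme matches the paper's: glue $P_0|_{Y_0\setminus\mathtt{J}(\mathcal{V}_{\kappa/8})}$ with $(\mathbb{R}\times\hat{P}_S)/\Lambda_S$ over the annulus via $\tilde{\mathtt{J}}_S$ and the framing (plus parallel transport), then interpolate the connection with a cutoff and estimate the curvature by splitting into the $F_{\hat{A}_S}$ piece (small because it kills the product coassociative $4$-form) and the cutoff/quadratic pieces over the annulus. The equivariance argument is also the one the paper uses.

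However, there are two concrete problems with the details of your cutoff interpolation.

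First, the cutoff is oriented backwards. The function $\chi_t=\chi\circ(t\tau)$ from the definition of $\tilde\phi_t$ satisfies $\chi_t=0$ for $r_t\leq\kappa/4$ and $\chi_t=1$ for $r_t\geq\kappa/2$. With your formula $\tilde A_t = A_0 + \chi_t\cdot(t\tau_S)^*a_S$ on the annulus, the connection equals $A_0$ on the inner edge $r_t=\kappa/4$ (where you want it to match $\hat{A}_S$) and equals $A_0+(t\tau_S)^*a_S\approx\hat A_S$ on the outer region (where you want it to match $A_0$), so $\tilde A_t$ fails to be even $C^0$. The paper instead introduces a separate cutoff $\tilde\chi_t$ with the opposite monotonicity — equal to $1$ for $r_t\leq\kappa/8$ and $0$ for $r_t\geq\kappa/4$ — and writes $\tilde A_t=(\widetilde{t\tau})^*\tilde{\mathtt{J}}_S^*A_0+\tilde\chi_t a_S$ where $a_S=\hat A_S-(\widetilde{t\tau})^*\tilde{\mathtt{J}}_S^*A_0$. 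This choice also guarantees that the support of $\tilde\chi_t$ is contained in $r_t\leq\kappa/4$, disjoint from the transition region of $\chi_t$; hence $\tilde\psi_t$ is the exact product coassociative form where $\tilde\chi_t$ lives, and $\tilde\chi_t F_{\hat A_S}\wedge\tilde\psi_t=0$ identically. If you instead make the two cutoff regions overlap (as your $\kappa/4\leq r_t\leq\kappa$ annulus does), you would also have to account for $\diff(\chi_t(t\tau_S)^*\sigma_S^t)$ in the HYM region, which the paper avoids entirely.

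Second, the attribution of the $t^4$ cap is incorrect. You write that it "comes from the interior contribution of $\psi_t-\psi_{CY,t}$." In fact the interior HYM region contributes the bound $\|\tilde\chi_t F_{\hat A_S}\wedge\psi_t\|_{C^{k,\alpha}_{-6+\vartheta,t}}\leq c\,t^{9/2-\vartheta}$ (from $\|\psi_t-\tilde\psi_t\|\leq ct^{1/2}$ and $|F_{\hat A_S}|=t^4\mathcal{O}(r_t^{-6})$), while the $t^4$ cap (dominant for $\vartheta\leq1/2$) comes from the cutoff term $\diff\tilde\chi_t\wedge a_S$ in the annulus, estimated by $\|a_S\|_{C^{k,\alpha}_{-5,t}}\leq ct^4$; there all weighted norms are uniformly equivalent because $r_t$ is bounded above and below. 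So the correct statement is that the annulus contributes $t^4$ and the HYM region contributes $t^{9/2-\vartheta}$, and the exponent $\min\{9/2-\vartheta,4\}$ is the minimum of these two.
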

The proof is almost identical to \cite[Proof of Proposition~6.7]{Walpuski-InstantonsKummer}, where Kummer constructions resolving orbifolds with codimension 4 singularities and trivial group actions were considered. We have included a proof, nevertheless, for the sake of completeness and to verify the $H$-equivariance in the construction.
\begin{proof}
For the construction of $\hat{P}_t$ we first set $P_0^\circ \coloneqq {P_{0}}_{\vert Y_0 \setminus \mathrm{J}(\mathcal{V}_{\kappa/8})}$ and for each $S\in \mathcal{S}$ we define $\hat{P}^\circ_S \coloneqq (\mathbb{R} \times \hat{P}_S)/\Lambda_S$. Note that the latter naturally carries the structure of a $G$-bundle $\pi^\circ_S \col \hat{P}^\circ_S \to (\mathbb{R}\times \hat{Z}_S)/\Lambda_S$. Furthermore, let in the following \[ \widetilde{t \tau} \col \hat{P}_S^\circ \to ((\mathbb{R}\times B_{\kappa}(0))\times_{\Gamma_S} G)/\Lambda_S \] denote the concatenation of first applying $[\textup{Id}_{\mathbb{R}}\times\tilde{\tau}]$ and then parallel translation (with respect to the flat connection) over the (shortest) straight line connecting any $[s,z]\in (\mathbb{R}\times B_{t^{-1}\kappa}(0)/\Gamma_S)/\Lambda_S$ with $[s,tz]\in \mathcal{V}_{S,\kappa}$. As a manifold \[ \hat{P}_t \coloneqq P_0^\circ \cup \big(\cup_{S\in \mathcal{S}} \hat{P}^\circ_{S\vert \hat{\mathcal{V}}_{S,\kappa}^t} \big)/\sim \] where $\hat{P}^\circ_{S\vert \hat{\mathcal{V}}_{S,\kappa}^t\setminus \hat{\mathcal{V}}^t_{S,\kappa/8}} \ni p \sim \tilde{\mathtt{J}}_S(\widetilde{t\tau}(p)) \in P^\circ_{0 \vert \mathtt{J}(\mathcal{V}_{\kappa}\setminus \mathcal{V}_{\kappa/8})}$. Since $\tilde{\mathtt{J}}_S \circ \widetilde{t\tau_S}$ is a bundle map, $\hat{P}_t$ carries a canonical $G$-bundle structure $\pi_t \col \hat{P}_t \to \hat{Y}_t$.

For the construction of $\tilde{A}_t$ we first note that the connections $(\hat{A}_S)_{S\in \mathcal{S}}$ over $(\hat{P}_S)_{S\in \mathcal{S}}$ induce connections over $(\hat{P}^\circ_S)_{S\in\mathcal{S}}$ which we again denote by $\hat{A}_S$. Furthermore, over $\hat{\mathcal{V}}_{S,\kappa}^t$ these satisfy: \[ \hat{A}_S =  \widetilde{t \tau}^*\hat{\mathtt{J}}_S^* A_0 + a_S  \] where $a_S \in \Omega^1(\hat{\mathcal{V}}^t_{S,\kappa}, \mathfrak{g}_{\hat{P}^\circ_S})$ satisfies $\vert \nabla^k a_S \vert_{g_\mathbb{R} \oplus t^2 g_{\hat{Z}_S}} = t^4 \mathcal{O}(r_t^{-5-k}).$

The connection over $\hat{P}_t$ is now defined as
\begin{align*}
\tilde{A}_t \coloneqq 
\begin{cases}
A_0 &\textup{ over $(P_0^\circ)_{\vert r_t^{-1}[\kappa/4,\kappa]}$} \\
(\widetilde{t \tau})^*\tilde{\mathtt{J}}_S^* A_0 + \widetilde{\chi}_t a_S &\textup{ over each $\hat{P}^\circ_S$}
\end{cases}
\end{align*} 
where $\widetilde{\chi}_t \coloneqq \widetilde{\chi} \circ r_t$ for a smooth non-increasing function $\widetilde{\chi} \col [0,\kappa] \to [0,1]$ with 
\begin{align*}
\widetilde{\chi}(s) = \begin{cases}
0 \textup{ for $s \geq \kappa/4$}\\
1 \textup{ for $s \leq \kappa/8$.}
\end{cases}
\end{align*}

To estimate the error we first observe that the connection $\tilde{A}_t$ agrees over $(P_0^\circ)_{\vert r_t^{-1}[\kappa/4,\kappa]}$ with $A_0$ and is therefore flat in this area. Over each $\hat{P}_S^\circ$ we have \[F_{\tilde{A}_t} = \widetilde{\chi}_t F_{\hat{A}_S} + \diff \widetilde{\chi}_t \wedge a_S + \tfrac{\widetilde{\chi}_t^2-\widetilde{\chi}_t}{2} [a_S\wedge a_S]. \] The curvature of $\hat{A}_S$ can be estimated by $\vert \nabla^k F_{\hat{A}_S} \vert_{g_\mathbb{R} \oplus t^2 g_{\hat{Z}_S}} = t^4 \mathcal{O}(r_t^{-6-k}) $ which together with~\eqref{eq:weighted-t-Höldernorms-multiplication}, \autoref{ex:HYM_is_G2-instanton}, and \autoref{theo:torsionfree_G2_structure} leads to 
\begin{align*}
\Vert \widetilde{\chi}_t F_{\hat{A}_S} \wedge \psi_t \VertWHt{k}{-6}{(\hat{\mathcal{V}}^t_{S,\kappa})} &\leq \Vert \widetilde{\chi}_t F_{\hat{A}_S} \wedge (\psi_t-\tilde{\psi}_t) \VertWHt{k}{-6}{(\hat{\mathcal{V}}^t_{S,\kappa})} \leq c t^{1/2} \Vert F_{\hat{A}_S} \VertWHt{k}{-6}{(\hat{\mathcal{V}}^t_{S,\kappa})} \leq c t^{9/2}.
\end{align*}
Finally, \eqref{eq:weighted-t-Höldernorms-estimates} implies $\Vert F_{\hat{A}_t} \wedge \psi_t \VertWHt{k}{-6+\vartheta}{} \leq c t^{9/2-\vartheta}$. 

The rest in the expression of $F_{\tilde{A}_t}$ is supported in the region $\hat{\mathcal{V}}_{S,\kappa}^t\setminus \hat{\mathcal{V}}_{S,\kappa/8}^t$ where the $C^{k,\alpha}_{\beta,t}$-norms for all values of $\beta$ are equivalent (uniformly in $t$). This leads to 
\begin{align*}
\left\Vert \left(\diff \widetilde{\chi}_t \wedge a_S + \tfrac{\widetilde{\chi}_t^2 - \widetilde{\chi}_t}{2}[a_S\wedge a_S]\right) \wedge \psi_t  \right\VertWHt{k}{-6+\vartheta}{(\hat{\mathcal{V}}^t_{S,\kappa})}   & \leq c \big( \Vert a_S \VertWHt{k}{-5}{(\hat{\mathcal{V}}^t_{S,\kappa})} + \Vert a_S \VertWHt{k}{-5}{(\hat{\mathcal{V}}^t_{S,\kappa})}^2 \big) \\
& \leq c \big(t^4+t^8\big)
\end{align*}
and in total \[ \Vert F_{\tilde{A}_t} \wedge \psi_t \VertWHt{k}{-6+\vartheta}{} \leq c t^{\textup{min}\{9/2-\vartheta,4\}}. \] 

Assume now that $\mathfrak{R}$ and $\mathfrak{G}$ are $H$-equivariant for some $H$-action as defined in \autoref{rem:EquivariantGeneralisedKummer} and \autoref{rem:equiv_gluing_data}. We then obtain induced $H$-actions on $P_0^\circ$ and $\hat{P}_S^\circ$ (via $\tilde{\lambda}$ and $\hat{\tilde{\lambda}}_{S}$). Since $\tilde{\tau}_S$ and parallel transport are equivariant, these combine to $\hat{\tilde{\lambda}}\col H \to \textup{Isom}(\hat{P}_t)$. Since $A_0$, $A_S$, and $\tilde{\chi}_t$ are invariant under $H$, the same holds true for $\tilde{A}_t$.
\end{proof}
\begin{remark}
An improved estimate on the $\G_2$-structure $\phi_t$ (cf. \autoref{rem:improvedEstimates}) of the form \[ \Vert \phi_t - \tilde{\phi}_t \VertWHt{k}{0}{} \leq c t^\gamma \] would improve the error estimate in the previous proposition by \[ \Vert F_{\tilde{A}_t} \wedge \psi_t \VertWHt{k}{-6+\vartheta}{} \leq c t^{\textup{min}\{4+\gamma-\vartheta,4\}}. \]  
\end{remark}

\begin{proposition}[Pregluing for a simple family of gluing data]\label{prop:pregluing-family}
Assume that we have a set of gluing data $\mathfrak{G}\coloneqq \{(\pi \col P_0\to Y_0,A_0), (\tilde{\mathtt{J}}_S,\pi_S \col \hat{P}_S \to \hat{Z}_S, \tilde{\tau}_S, \hat{A}_S, \hat{\tilde{\rho}}_S)_{S\in \mathcal{S}}\} $ compatible with a chosen set of resolution data. Assume further that $\mathfrak{F}\subset \mathbb{R}$ is a compact interval and that we have
\begin{itemize}
\item a smooth family $(A_0 + a_{0,\mathfrak{f}})_{\mathfrak{f} \in \mathfrak{F}} \subset \mathcal{A}(P_0)$ of flat connections on $P_0$,
\item for every $S\in \mathcal{S}$ a smooth family $(\hat{A}_S + \hat{a}_{S,\mathfrak{f}})_{\mathfrak{f} \in \mathfrak{F}} \subset \mathcal{A}((\mathbb{R}\times \hat{P}_S)/\Lambda_S)$ of $\G_2$-instantons,
\end{itemize}
and that these families of connections satisfy for every $S\in \mathcal{S}$ and $\mathfrak{f}\in \mathfrak{F}$:
\begin{itemize}
\item $\tilde{\mathtt{J}}_S^*a_{0,\mathfrak{f}} = (\tilde{\tau}_S)_*\hat{a}_{S,\mathfrak{f}}$ over $\mathcal{V}_{S,\kappa}\setminus \mathcal{V}_{S,\kappa/8}$,
\item the restriction of $(\tilde{\tau}_S)_*\hat{a}_{S,\mathfrak{f}}$ to $(\mathbb{R}\times (\mathbb{C}^3 \setminus B_{\kappa/8}(0))/\Gamma_S)/\Lambda_S$ satisfies \[\delta_{\lambda}^*((\tilde{\tau}_S)_*\hat{a}_{S,\mathfrak{f}}) = (\tilde{\tau}_S)_*\hat{a}_{S,\mathfrak{f}} \] for every $\lambda\geq 1$, where $\delta_\lambda \col (\mathbb{R}\times (\mathbb{C}^3 \setminus B_{\kappa/8}(0))/\Gamma_S)/\Lambda_S \to (\mathbb{R}\times (\mathbb{C}^3 \setminus B_{\lambda\kappa/8}(0))/\Gamma_S)/\Lambda_S$ denotes the dilation by $\lambda$ in $\mathbb{C}^3/\Gamma_S$-direction (i.e. $\delta_\lambda([s,z]) \coloneqq [s,\lambda z]$). 
\end{itemize}
Then there exists for each $t \in (0,T_K)$ a bundle $\pi_t \col \hat{P}_t \to \hat{Y}_t$ together with a smooth family of connections $(\tilde{A}_{t} + \tilde{a}_{t,\mathfrak{f}})_{\mathfrak{f} \in \mathfrak{F}} \subset \mathcal{A}(\hat{P}_t)$ such that for each $L,k\in \mathbb{N}_0$, $\alpha \in (0,1)$, and $\vartheta\geq 0$ there exists a ($t$-independent) constant $c_{pI}>0$ with \[ \Vert (\partial_\mathfrak{F}^\ell F_{\tilde{A}_t + \tilde{a}_{t,\mathfrak{f}}}) \wedge \psi_t \VertWHt{k}{-6+\vartheta}{} \leq c_{pI} t^{\min\{9/2-\vartheta,4\}} \] and 
\begin{align*}
\Vert (\partial_\mathfrak{F}^\ell L_{\tilde{A}_t+\tilde{a}_{t,\mathfrak{f}}}) \underline{a} \VertWHt{k}{\beta-1}{} \leq c_{pI} \Vert \underline{a} \VertWHt{k+1}{\beta}{} &\textup{ for any $\underline{a} \in \Omega^1(\hat{Y}_t,\mathfrak{g}_P)\oplus \Omega^7(\hat{Y}_t,\mathfrak{g}_P)$ and $\beta \in \mathbb{R}$,}\\
\Vert (\partial_\mathfrak{F}^\ell L^*_{\tilde{A}_t+\tilde{a}_{t,\mathfrak{f}}}) \underline{b} \VertWHt{k}{\beta-1}{} \leq c_{pI} \Vert \underline{b} \VertWHt{k+1}{\beta}{} &\textup{ for any $\underline{b} \in \Omega^6(\hat{Y}_t,\mathfrak{g}_P)\oplus \Omega^0(\hat{Y}_t,\mathfrak{g}_P)$ and $\beta \in \mathbb{R}$}
\end{align*} 
for any $\ell \in \{0,\dots, L\}$. Here we again denote by $\partial_{\mathfrak{F}}^\ell F_{\tilde{A}_t + \tilde{a}_{t,\mathfrak{f}}}\in \Omega^2(\hat{Y}_t,\mathfrak{g}_P)$ and $\partial_{\mathfrak{F}}^\ell L_{\tilde{A}_{t}+\tilde{a}_{f,t}} \in \textup{Lin}(C^{k+1,\alpha}_{\beta,t},C^{k,\alpha}_{\beta-1,t})$ the $\ell$-th derivative of the family $\mathfrak{f} \mapsto F_{\tilde{A}_t + \tilde{a}_{t,\mathfrak{f}}} \in \Omega^2(\hat{Y}_t,\mathfrak{g}_P)$ and $\mathfrak{f} \mapsto L_{\tilde{A}_{t}+\tilde{a}_{f,t}} \in \textup{Lin}(C^{k+1,\alpha}_{\beta,t},C^{k,\alpha}_{\beta-1,t})$. Furthermore, the covariant derivatives in the $\Vert \cdot \VertWHt{k}{\beta}{}$-norms in the estimates above are taken with respect to $\tilde{A}_t$ and the Levi-Civita connection with respect to $\tilde{g}_t$. 

In addition: if $\mathfrak{G}$ is a set of $H$-equivariant gluing data and both $a_{0,\mathfrak{f}}$ and $\hat{a}_{S,\mathfrak{f}}$ are $H$-invariant for every $\mathfrak{f} \in \mathfrak{F}$, then so is $\tilde{A}_t+\tilde{a}_{t,\mathfrak{f}}$.
\end{proposition}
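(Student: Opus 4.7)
The plan is to reuse the bundle $\hat{P}_t$ and the preglued background connection $\tilde{A}_t$ built in \autoref{prop:pregluing}, and to add the $\mathfrak{F}$-family $\tilde{a}_{t,\mathfrak{f}}$ \emph{without any cutoff}. Concretely, I would set $\tilde{a}_{t,\mathfrak{f}} = a_{0,\mathfrak{f}}$ over $(P_0^\circ)_{\vert Y_0\setminus \mathtt{J}(\mathcal{V}_{\kappa/8})}$ and $\tilde{a}_{t,\mathfrak{f}} = \hat{a}_{S,\mathfrak{f}}$ over each $\hat{P}_S^\circ$. The first step is to verify that these two local prescriptions agree on the overlap $\hat{\mathcal{V}}_{S,\kappa}^{t}\setminus \hat{\mathcal{V}}_{S,\kappa/8}^{t}$ under the identification $\tilde{\mathtt{J}}_S\circ \widetilde{t\tau}$. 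Writing $\alpha \coloneqq (\tilde{\tau}_S)_*\hat{a}_{S,\mathfrak{f}}$, the matching hypothesis gives $\tilde{\mathtt{J}}_S^*a_{0,\mathfrak{f}} = \alpha$ on the model annulus in $Y_0$, so the pullback of the first prescription equals $\tau_S^*m_t^*\alpha$ while the second equals $\tau_S^*\alpha$ (with $m_t$ the dilation by $t$ on $\mathbb{C}^3/\Gamma_S$). Dilation invariance applied with $\lambda = t^{-1}\geq 1$ (valid for $t\leq 1$) yields $m_t^*\alpha = \alpha$ on the overlap, and the two prescriptions match. Given this, smoothness in $\mathfrak{f}$, $H$-invariance in the equivariant setting, and uniform-in-$\mathfrak{f}$ control of every $\partial_{\mathfrak{F}}^\ell \tilde{a}_{t,\mathfrak{f}}$ all follow from the input hypotheses and the compactness of $\mathfrak{F}$.

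Next I would prove the curvature bound by decomposing $\hat{Y}_t$ into the three regions $r_t\in[\kappa/4,\kappa]$, $r_t\in[\kappa/8,\kappa/4]$, and $r_t\leq\kappa/8$. On the outer region, $\tilde{A}_t+\tilde{a}_{t,\mathfrak{f}} = A_0+a_{0,\mathfrak{f}}$ is flat, so the contribution is zero. On the inner ALE region, $\chi_t$ vanishes and $\tilde{A}_t+\tilde{a}_{t,\mathfrak{f}} = \hat{A}_S+\hat{a}_{S,\mathfrak{f}}$ is by hypothesis a $\G_2$-instanton for the model $\G_2$-structure, so $F\wedge\psi_t = F\wedge(\psi_t - \tilde{\psi}_t)$ and the required decay follows from \autoref{theo:torsionfree_G2_structure} exactly as in \autoref{prop:pregluing}. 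On the transition band, the identification from the first step gives $\tilde{A}_t+\tilde{a}_{t,\mathfrak{f}} = (\hat{A}_S+\hat{a}_{S,\mathfrak{f}}) - (1-\tilde{\chi}_t)a_S$, so $F_{\tilde{A}_t+\tilde{a}_{t,\mathfrak{f}}}$ splits as $F_{\hat{A}_S+\hat{a}_{S,\mathfrak{f}}}$ plus a correction built from $(1-\tilde{\chi}_t)a_S$, $\diff\tilde{\chi}_t\wedge a_S$, and brackets thereof; the correction is bounded by the very same weighted computation used in \autoref{prop:pregluing}. Since $\partial_{\mathfrak{F}}^\ell$ never hits the cutoff and only replaces $a_{0,\mathfrak{f}}$, $\hat{a}_{S,\mathfrak{f}}$ by uniformly bounded derivatives, the analogous bound holds for all $\ell\in\{0,\ldots,L\}$.

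The operator-norm estimates on $\partial_{\mathfrak{F}}^\ell L_{\tilde{A}_t+\tilde{a}_{t,\mathfrak{f}}}$ and $\partial_{\mathfrak{F}}^\ell L^*_{\tilde{A}_t+\tilde{a}_{t,\mathfrak{f}}}$ are purely algebraic: from the explicit form of $L_A$ in \autoref{prop:DecompositionNonlinearMap} and the identity $L_A^* = *L_A*$, each $\mathfrak{f}$-derivative replaces $\diff_A$ by an algebraic commutator $[\partial_{\mathfrak{F}}^{\ell'}\tilde{a}_{t,\mathfrak{f}},\cdot]$, and the multiplication property \eqref{eq:weighted-t-Höldernorms-multiplication} combined with the uniform $C^{k+1}_{0,t}$-bounds produced in the first step then gives the claimed mapping bounds in the weighted Hölder spaces. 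I expect the main obstacle to be the transition-band curvature estimate with exponent $-6+\vartheta$: the precise combination of the decay $|\nabla^k a_S|=t^4\mathcal{O}(r_t^{-5-k})$ with the weight $w_t$ must be carried out exactly as in \autoref{prop:pregluing} to extract the optimal factor $t^{\min\{9/2-\vartheta,4\}}$, and one must verify that the new (uniformly bounded) $\hat{a}_{S,\mathfrak{f}}$-contributions and their $\mathfrak{f}$-derivatives do not upset this bookkeeping. Once the transition band is handled, everything else is a reshuffling of the argument of \autoref{prop:pregluing}.
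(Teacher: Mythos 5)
Your construction --- defining $\tilde{a}_{t,\mathfrak{f}}$ piecewise without a cutoff, verifying consistency on the overlap via the matching hypothesis together with dilation invariance, and re-running the curvature and operator-norm estimates of \autoref{prop:pregluing} --- is exactly the route the paper takes, and the explicit three-region split of the curvature estimate is a valid reorganisation of the paper's single cutoff formula.

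There is, however, a real gap in how you control the new $\hat{a}_{S,\mathfrak{f}}$-contributions. You call them ``uniformly bounded'' and attribute this to ``the input hypotheses and the compactness of $\mathfrak{F}$''; in the operator estimates you further quote ``uniform $C^{k+1}_{0,t}$-bounds produced in the first step.'' But what is actually needed is $\Vert \hat{a}_{S,\mathfrak{f}} \VertWHt{k}{-1}{(\hat{\mathcal{V}}_{S,\kappa}^t)}\leq c$ with $c$ independent of $t$, and this is neither a weight-$0$ bound nor a consequence of compactness alone. As $t\to 0$ the region $\hat{\mathcal{V}}_{S,\kappa}^t$ corresponds to the growing subset $(t\tau_S)^{-1}(B_\kappa)$ of $\hat{Z}_S$, and the $\hat{Z}_S$-components of a $1$-form scale like $t^{-1}$ under $g_{\mathbb{R}}\oplus t^2 g_{\hat{Z}_S}$, so a weight-$0$ estimate would blow up on the compact core. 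The paper derives the weight-$(-1)$ bound by using the dilation-invariance hypothesis a \emph{second} time, beyond the overlap matching: it forces $(\tilde{\tau}_S)_*\hat{a}_{S,\mathfrak{f}} = a_{r,\mathfrak{f}}\,\tfrac{\diff r}{r} + a_{S^5/\Gamma_S,\mathfrak{f}} + a_{s,\mathfrak{f}}\,\diff s$ with coefficients pulled back from the compact link $(\mathbb{R}\times S^5/\Gamma_S)/\Lambda_S$, whence the first two terms carry weight $w_t^{-1}$, the last weight $w_t^{0}$, and \eqref{eq:weighted-t-Höldernorms-estimates} converts this into the uniform weight-$(-1)$ bound. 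You need this estimate twice: to control the commutator $[a_S,\hat{a}_{S,\mathfrak{f}}]$ appearing in $F_{\hat{A}_S+\hat{a}_{S,\mathfrak{f}}}$ on the asymptotic region (pairing $\Vert \hat{a}_{S,\mathfrak{f}}\VertWHt{k}{-1}{}\leq c$ with $\Vert a_S\VertWHt{k}{-5}{}\leq c\,t^4$ to produce the $t^4$ in the curvature bound), and to make the algebraic bilinear map $\{\tilde{a}_{t,\mathfrak{f}},\cdot\}$ a bounded operator $C^{k+1,\alpha}_{\beta,t}\to C^{k,\alpha}_{\beta-1,t}$ via \eqref{eq:weighted-t-Höldernorms-multiplication}. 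Without this derivation the proof is incomplete, and the weight-$0$ claim as stated is false.
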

\begin{remark}
The first condition on the families of connections in the previous proposition ensures that the respective connections match over the gluing region. The second condition says that on the complement of $\mathcal{V}_{S,\kappa/8} \subset (\mathbb{R}\times \mathbb{C}^3/\Gamma_S)/\Lambda_S$, the (asymptotic) 1-form $(\tau_S)_* \hat{a}_{S,\mathfrak{f}}$ is dilation-invariant. This is in particular satisfies whenever the 1-form $(\tilde{\tau}_S)_*\hat{a}_{S,\mathfrak{f}}$ is on the complement of $\mathcal{V}_{S,\kappa/8}$ pulled back from $(\mathbb{R}\times S^5/\Gamma_S)/\Lambda_S \subset(\mathbb{R}\times (\mathbb{C}^3\setminus \{0\})/\Gamma_S)/\Lambda_S$.
\end{remark}
\begin{remark}\label{rem: pregluing families weakening assumptions}
As in the formulation of \autoref{theo:perturbing_almost_instantons} we restricted ourself in the previous proposition to families parametrised by an interval $\mathfrak{F} \subset \mathbb{R}$, because this simplifies the notation and is the relevant case for our examples in \autoref{sec:Example18}. However, \autoref{prop:pregluing-family} can be generalised to any \ita{compact} manifold $\mathfrak{F}$ (possibly with boundary). Similarly, one can weaken the condition that $(\tilde{\tau}_S)_* \hat{a}_{S,\mathfrak{f}}$ is dilation-invariant on the complement of $\mathcal{V}_{S,\kappa/8}$ and agrees with $\tilde{J}_S^*a_{0,\mathfrak{f}}$ in the gluing region to respective asymptotic conditions.
\end{remark}
\begin{remark}\label{rem: estimate on d_A^*}
If $\underline{b} \in \Omega^6(\hat{Y}_t,\mathfrak{g}_P)\oplus \Omega^0(\hat{Y}_t,\mathfrak{g}_P)$ is of the form $b=(0,\xi)$ for some $\xi \in \Omega^0(\hat{Y}_t,\mathfrak{g}_P)$, then $L_{\tilde{A}_t+\tilde{a}_{t,\mathfrak{f}}}^*$ acts as $L_{\tilde{A}_t+\tilde{a}_{f,t}}^* \underline{b} = \diff_{\tilde{A}_t + \tilde{a}_{t,\mathfrak{f}}} \xi$. The previous proposition implies therefore the estimate \[\Vert \diff_{\tilde{A}_t + \tilde{a}_{t,\mathfrak{f}}} \xi \VertWHt{k}{\beta-1}{} \leq c_{pI} \Vert \xi \VertWHt{k+1}{\beta}{} \textup{ for any $\xi \in \Omega^0(\hat{Y}_t,\mathfrak{g}_P)$ and $\beta \in \mathbb{R}$.} \]
\end{remark}
\begin{proof}[Proof of \autoref{prop:pregluing-family}]
First, we construct $\hat{\pi}_t \col \hat{P}_t \to \hat{Y}_t$ and $\tilde{A}_t \in \mathcal{A}(\hat{P}_t)$ from $\mathfrak{G}$ as in \autoref{prop:pregluing}. Next, we define $\tilde{a}_{t,\mathfrak{f}} \in \Omega^1(\hat{Y}_t,\mathfrak{g}_{\hat{P}_t})$ as follows:
\begin{align*}
\tilde{a}_{t,\mathfrak{f}} \coloneqq \begin{cases}
a_{0,\mathfrak{f}} \textup{ over $Y_0\setminus \mathtt{J}(\mathcal{V}_{\kappa/8})$,} \\
\hat{a}_{S,\mathfrak{f}} \textup{ over any $\hat{\mathcal{V}}_{S,\kappa}^t$.}
\end{cases}
\end{align*}
Our conditions ensure that $\tilde{\mathtt{J}}^*_Sa_{0,\mathfrak{f}}  =  (\widetilde{t\tau_S})_*\hat{a}_{S,\mathfrak{f}}$ over $\mathcal{V}_{S,\kappa}\setminus\mathcal{V}_{S,\kappa/8}$ for any $t>0$. Thus, $\tilde{a}_{t,\mathfrak{f}}$ is well-defined and smooth. Furthermore, the $H$-invariance (in the presence of an $H$-action) of $\tilde{a}_{t,\mathfrak{f}}$ follows immediately from the $H$-invariance of $a_{0,\mathfrak{f}}$ and $\hat{a}_{S,\mathfrak{f}}$.

We will now prove all pregluing-estimates for $L=0$. The estimates for higher $L$ can be proven analogously.

For the error estimate we note as in the proof of \autoref{prop:pregluing} that over $P_0^\circ$, the connection $\tilde{A}_t + \tilde{a}_{t,\mathfrak{f}}$ agrees with the flat connection $A_{0}+a_{0,\mathfrak{f}}$. Over each $\hat{P}_S^\circ$ we have \[F_{\tilde{A}_t+\tilde{a}_{t,\mathfrak{f}}} = \widetilde{\chi}_t F_{\hat{A}_S+\hat{a}_{S,\mathfrak{f}}} + \diff \widetilde{\chi}_t \wedge a_S + \tfrac{\widetilde{\chi}_t^2-\widetilde{\chi}_t}{2} [a_S\wedge a_S]+ (1-\widetilde{\chi}_t)(\widetilde{t\tau_S})^*\tilde{\mathtt{J}}_S^*\big(\diff_{A_0}a_{0,\mathfrak{f}} + \tfrac{1}{2}[a_{0,\mathfrak{f}} \wedge a_{0,\mathfrak{f}}]\big) \] (where $a_S\in \Omega^1(\hat{Z}_S,\mathfrak{g}_{\hat{P}_S})$ is as in the previous proof defined by $\hat{A}_S-(\widetilde{t\tau_S})^*\tilde{\mathtt{J}}_S^*A_0$). Note that the last term vanishes because $\diff_{A_0}a_{0,\mathfrak{f}} + \tfrac{1}{2}[a_{0,\mathfrak{f}}\wedge a_{0,\mathfrak{f}}] = F_{A_0+a_{0,\mathfrak{f}}} = 0$ and that the second and the third term can be estimated as in the proof of \autoref{prop:pregluing}. Furthermore, over the complement of $\tau_S^{-1}(\mathcal{V}_{S,\kappa/8}) \subset (\mathbb{R}\times \hat{Z}_S)/\Lambda_S$ we have
\begin{align}
F_{\hat{A}_S+\hat{a}_{S,\mathfrak{f}}} &= F_{\hat{A}_S} + (\widetilde{t\tau_S})^*\tilde{\mathtt{J}}_S^*\big(\diff_{A_0}a_{0,\mathfrak{f}} + \tfrac{1}{2}[a_{0,\mathfrak{f}} \wedge a_{0,\mathfrak{f}}]\big) + [a_S,\hat{a}_{S,\mathfrak{f}}] \notag\\
&= F_{\hat{A}_S} + [a_S,\hat{a}_{S,\mathfrak{f}}]. \label{equ: pregluing family curvature outside compactum}
\end{align}

Since $\delta_{\lambda}^*(\tilde{\tau}_S)_*(\hat{a}_{S,\mathfrak{f}}) = (\tilde{\tau}_S)_*(\hat{a}_{S,\mathfrak{f}})$ for any $\lambda\geq 1$ over \[(\mathbb{R}_s\times (\mathbb{R}_{r>\kappa/8} \times S^5/\Gamma_S))/\Lambda_S = (\mathbb{R}\times (\mathbb{C}^3\setminus B_{\kappa/8}(0))/\Gamma_S)/\Lambda_S,\] we may write $(\tilde{\tau}_S)_*\hat{a}_{S,\mathfrak{f}}$ over this domain as \[(\tilde{\tau}_S)_*\hat{a}_{S,\mathfrak{f}} = \pr_{S^5/\Gamma_S}^*(a_{r,\mathfrak{f}}) \tfrac{\diff r}{r} + \pr_{S^5/\Gamma_S}^*(a_{S^5/\Gamma_S,\mathfrak{f}}) + \pr_{S^5/\Gamma_S}^*(a_{s,\mathfrak{f}}) \diff s \] where $a_{r,\mathfrak{f}},a_{s,\mathfrak{f}} \in \Omega^0((\mathbb{R}\times S^5/\Gamma_S)/\Lambda_S,\mathfrak{g}_{P})$ and $a_{S^5/\Gamma_S,\mathfrak{f}}\in \Omega^1((\mathbb{R}\times S^5/\Gamma_S)/\Lambda_S,\mathfrak{g}_{P})$ satisfies $i_{\partial_s}(a_{S^5/\Gamma_S,\mathfrak{f}})=0$ and where $\pr_{S^5/\Gamma_S}\col (\mathbb{R}\times (\mathbb{C}^3\setminus \{0\})/\Gamma_S)/\Lambda_S \to (\mathbb{R}\times S^5/\Gamma_S)/\Lambda_S$ denotes the radial projection. In the following we will drop the pullback by $\pr_{S^5/\Gamma_S}$ from our notation. 

Since $\mathfrak{F}$ and $(\mathbb{R}\times S^5/\Gamma_S)/\Lambda_S$ are compact, the explicit form of $(\tilde{\tau}_S)_*\hat{a}_{S,\mathfrak{f}}$ over the region $(\mathbb{R}\times B_{t^{-1}\kappa}(0)/\Gamma_S)/\Lambda_S$ implies that \[ \vert \nabla^k (a_{r,\mathfrak{f}} \tfrac{\diff r}{r} + a_{S^5/\Gamma_S,\mathfrak{f}}) \vert_{g_{\mathbb{R}}\oplus t^2 g_{\mathbb{C}^3}} \leq c w_t^{-k-1}  \] and \[ \vert \nabla^k (a_{s,\mathfrak{f}} \diff s) \vert_{g_{\mathbb{R}}\oplus t^2 g_{\mathbb{C}^3}} \leq c w_t^{-k} \] and by~\eqref{eq:weighted-t-Höldernorms-estimates}
\begin{equation}\label{eq:weighted_estimate_family_1-form}
\Vert \hat{a}_{S,\mathfrak{f}} \VertWHt{k}{-1}{(\hat{\mathcal{V}}_{S,\kappa}^t)} \leq \Vert a_{r,\mathfrak{f}} \tfrac{\diff r}{r} + a_{S^5/\Gamma_S,\mathfrak{f}} \VertWHt{k}{-1}{(\hat{\mathcal{V}}_{S,\kappa}^t)} + \Vert a_{s,\mathfrak{f}} \diff s \VertWHt{k}{0}{(\hat{\mathcal{V}}_{S,\kappa}^t)} \leq c. 
\end{equation} 

Using \eqref{equ: pregluing family curvature outside compactum} and \eqref{eq:weighted_estimate_family_1-form} we can bound as in the proof of \autoref{prop:pregluing}
\begin{align*}
\Vert \widetilde{\chi}_t F_{\hat{A}_S+\hat{a}_{S,\mathfrak{f}}} \wedge \psi_t \VertWHt{k}{-6}{(\hat{\mathcal{V}}_{S,\kappa}^t)} & \leq c t^{1/2} \Vert F_{\hat{A}_S+\hat{a}_{S,\mathfrak{f}}} \VertWHt{k}{-6}{(\hat{\mathcal{V}}_{S,\kappa}^t)} \\
& \leq ct^{1/2} \big( \Vert F_{\hat{A}_S} \VertWHt{k}{-6}{(\hat{\mathcal{V}}_{S,\kappa}^t)} + \Vert a_S \VertWHt{k}{-5}{(\hat{\mathcal{V}}_{S,\kappa}^t)} \cdot \Vert \hat{a}_{S,\mathfrak{f}} \VertWHt{k}{-1}{(\hat{\mathcal{V}}_{S,\kappa}^t)} \big) \\
&\leq c t^{9/2}
\end{align*}
which together with~\eqref{eq:weighted-t-Höldernorms-estimates} implies the pregluing estimate.

In order to prove $\Vert L_{\tilde{A}_t+\tilde{a}_{t,\mathfrak{f}}} \underline{a} \VertWHt{k}{\beta-1}{} \leq c \Vert \underline{a} \VertWHt{k+1}{\beta}{}$ for every $\underline{a}\in \Omega^1(\hat{Y}_t,\mathfrak{g}_P)\oplus \Omega^7(\hat{Y}_t,\mathfrak{g}_P)$ we first note that \[L_{\tilde{A}_t+\tilde{a}_{t,\mathfrak{f}}}\underline{a} = L_{\tilde{A}_t} \underline{a} + \{\tilde{a}_{t,\mathfrak{f}},\underline{a}\} \] where $\{ \cdot, \cdot \}$ denotes a bilinear algebraic operation. Since we consider $C^{k,\alpha}_{\beta,t}$-norms on $\hat{Y}_t$ in which the derivative is taken with respect to $\tilde{A}_t$, it suffices to estimate \[ \Vert \{\tilde{a}_{t,\mathfrak{f}} , \underline{a} \} \VertWHt{k}{\beta-1}{} \leq c \Vert \tilde{a}_{t,\mathfrak{f}} \VertWHt{k}{-1}{} \cdot \Vert \underline{a} \VertWHt{k}{\beta}{}.\] Since $\mathfrak{F}$ is compact and the restriction of the weighted norm $\Vert \cdot \VertWHt{k}{\beta}{}$ to $\hat{Y}_t \setminus \hat{\mathcal{V}}_{\kappa}^t$ is uniformly (in $t\in (0,T)$) equivalent to the ordinary Hölder norm on $Y_0 \setminus \mathtt{J}(\mathcal{V}_{\kappa})$, we have $\Vert \tilde{a}_{t,\mathfrak{f}} \VertWHt{k}{-1}{(\hat{Y}_t \setminus \hat{\mathcal{V}}_{\kappa}^t)} \leq c$. Together with~\eqref{eq:weighted_estimate_family_1-form} this implies $\Vert \tilde{a}_{t,\mathfrak{f}} \VertWHt{k}{-1}{} \leq c$ and finishes the estimate on $L_{\tilde{A}_t + \tilde{a}_{t,\mathfrak{f}}}$. Since $L_{\tilde{A}_t + \tilde{a}_{t,\mathfrak{f}}}^* = *L_{\tilde{A}_t + \tilde{a}_{t,\mathfrak{f}}}*$ and $\Vert * \VertWHt{k}{0}{} < c$, this also implies the estimate on $L_{\tilde{A}_t + \tilde{a}_{t,\mathfrak{f}}}^*$.
\end{proof}

\section{Linear analysis} \label{sec: linear analysis}

This section establishes \autoref{bul:linearEstimate} of \autoref{theo:perturbing_almost_instantons} and \autoref{bul:infinitesimally_irreducible} of \autoref{prop:infinitesimal_irreducibility} for the approximate instantons constructed in \autoref{prop:pregluing} and \autoref{prop:pregluing-family}.

\subsection{A model operator on $\mathbb{R}\times\text{ALE}$}

Let $(\hat{Z},\tau,\hat{\omega},\hat{\Omega})$ be a Calabi--Yau ALE $3$-fold, $\pi \col \hat{P} \to \hat{Z}$ be a locally framed principal $G$-bundle together with a Hermitian Yang--Mills connection $A \in \mathcal{A}_{-5}^{\textup{HYM}}(\hat{P},A_\infty)$. 

\begin{definition}\label{def:anticomplex_HodgeStar}
For any $q\in \{0,1,2,3\}$ define $*_{\hat{\Omega}} \col \Lambda^{0,q}T^*_\mathbb{C} \hat{Z}\to \Lambda^{0,3-q}T^*_\mathbb{C} \hat{Z}$ by demanding that \[ \eta_1 \wedge *_{\hat{\Omega}} \eta_2 = \tfrac{1}{\sqrt{8}} \langle \eta_1, \eta_2 \rangle_{\mathbb{C}} \hat{\bar{\Omega}} \] for any $\eta_1,\eta_2 \in \Lambda^{0,q}T^*_\mathbb{C} \hat{Z}$.
\end{definition}

\begin{remark}
The map $*_{\hat{\Omega}}$ is an anticomplex version of the Hodge star operator and appeared, for example, in \cite[Section~2]{DonaldsonThomas-HigherDimGaugeTheory}.
\end{remark}
The following identities can be proven either by wedging with a $(0,q)$-form and inserting $\vol = \frac{i}{8} \hat{\Omega}\wedge\hat{\bar{\Omega}}$ or as for the ordinary Hodge star operator.
\begin{lemma}\label{lem:anticomplex-HodgeStar}
This operator satisfies the following identities:
\begin{align*}
*\bar{\eta}&= \tfrac{(-1)^{(3-q)+1}i}{\sqrt{8}} \hat{\Omega} \wedge *_{\hat{\Omega}} \eta\\
*_{\hat{\Omega}}(i\eta) &= -i *_{\hat{\Omega}}\eta \\
*_{\hat{\Omega}}^2 \eta &= (-1)^{q(3-q)} \eta\\
\langle *_{\hat{\Omega}} \eta_1,*_{\hat{\Omega}} \eta_2 \rangle&= \overline{\langle  \eta_1,\eta_2 \rangle}\\
({*_{\hat{\Omega}} \bar{\partial} *_{\hat{\Omega}}}) \eta &= (-1)^q \bar{\partial}^* \eta
\end{align*}
where $\eta,\eta_1,\eta_2\in\Omega^{0,q}(\hat{Z},\mathbb{C})$.
\end{lemma}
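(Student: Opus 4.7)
The plan is to dispatch the five identities using the two techniques hinted at in the statement: wedging with an arbitrary $(0,q)$-form and applying the defining property of $*_{\hat\Omega}$ together with the substitution $\vol = \tfrac{i}{8}\hat\Omega \wedge \hat{\bar\Omega}$ for the algebraic identities, and an integration-by-parts argument mirroring the classical proof of the Hodge adjoint formula for identity (5). Identity (2) is immediate: the defining identity applied to $i\eta$ produces an extra factor of $-i$ because $\langle \cdot,\cdot\rangle_\mathbb{C}$ is conjugate-linear in its second slot, and since this holds for every test $\eta_1$ the identity follows.

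For identity (1) I would test both sides against an arbitrary $\eta_1 \in \Omega^{0,q}$. The left-hand side gives $\eta_1 \wedge *\bar\eta = \langle \eta_1,\eta\rangle_\mathbb{C}\,\vol$ by the definition of the complex-linearly extended Hodge star. For the right-hand side, I would commute $\eta_1$ past $\hat\Omega$ (producing $(-1)^{3q}=(-1)^q$), apply the defining property of $*_{\hat\Omega}$, and substitute $\hat\Omega \wedge \hat{\bar\Omega} = -8i\,\vol$. The accumulated sign and constant factors simplify to $+1$ because the exponent $(3-q)+1+3q$ is always even, giving the stated equality.

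With (1) in hand, identities (3) and (4) follow without much pain. For (3), one may either iterate (1) (using $\partial\hat{\bar\Omega}=0$ on the Calabi--Yau background and the standard $**= \pm\id$ for the real Hodge star on a $6$-manifold), or work in a local Hermitian orthonormal coframe $e^1,e^2,e^3$ for $\Lambda^{0,1}$ normalised so that $\hat{\bar\Omega} = \sqrt{8}\, e^1\wedge e^2\wedge e^3$; a direct calculation then shows $*_{\hat\Omega}$ sends each basis monomial to a signed complementary monomial, and squaring returns the original form (consistent with $q(3-q)$ being even for all $q\in\{0,1,2,3\}$). Identity (4) is then a formal consequence of (3) and the definition: writing
\[ \tfrac{1}{\sqrt 8}\langle *_{\hat\Omega}\eta_1, *_{\hat\Omega}\eta_2\rangle_\mathbb{C}\, \hat{\bar\Omega} = *_{\hat\Omega}\eta_1 \wedge *_{\hat\Omega}^{2}\eta_2 = \eta_2 \wedge *_{\hat\Omega}\eta_1 = \tfrac{1}{\sqrt 8}\langle \eta_2,\eta_1\rangle_\mathbb{C}\, \hat{\bar\Omega} \]
immediately gives $\langle *_{\hat\Omega}\eta_1,*_{\hat\Omega}\eta_2\rangle_\mathbb{C} = \overline{\langle\eta_1,\eta_2\rangle_\mathbb{C}}$, since the sign from commuting $\eta_2$ past $*_{\hat\Omega}\eta_1$ is $(-1)^{q(3-q)}=1$.

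For the adjoint relation (5) I would mirror the standard Kähler proof of $\bar\partial^* = -*\partial *$. Starting from $(\bar\partial\beta,\eta)_{L^2} = \int \bar\partial\beta \wedge *\bar\eta$ for $\beta \in \Omega^{0,q-1}_{\textup{cpt}}$, substituting (1) to replace $*\bar\eta$ by a constant multiple of $\hat\Omega\wedge *_{\hat\Omega}\eta$, using $\bar\partial\hat\Omega=0$ (holomorphicity of $\hat\Omega$), and integrating by parts via Stokes' theorem, I transfer the derivative onto $*_{\hat\Omega}\eta$ at the cost of a sign $(-1)^q$ from the degree of $\beta$. Comparing with $(\beta,\bar\partial^*\eta)_{L^2} = \int \beta\wedge *\overline{\bar\partial^*\eta}$ and re-applying (1) on the right-hand side isolates $*_{\hat\Omega}\bar\partial *_{\hat\Omega}\eta$ against $(-1)^q \bar\partial^*\eta$ fibrewise. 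The main obstacle throughout is the sign bookkeeping: the $(3-q)$-dependent exponent in (1), the $(-1)^{3q}$ from commuting $\hat\Omega$ past $(0,q)$-forms, the factor $-8i$ from the volume normalisation, and the Stokes sign in (5) must all conspire correctly, so a single miscounted swap would flip the final identity.
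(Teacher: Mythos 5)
Your proposal follows precisely the method the paper indicates for this lemma: wedging with a test $(0,q)$-form, inserting $\vol = \tfrac{i}{8}\hat\Omega\wedge\hat{\bar\Omega}$, and, for the adjoint formula, mimicking the classical Stokes-based proof of the Hodge-star identity for $\bar\partial^*$. The calculations check out, though in (5) a small bookkeeping correction: $\beta$ has degree $q-1$, so the $(-1)^q$ actually arises from combining the Leibniz sign $(-1)^{q-1}$ with the $(-1)^3=-1$ accumulated when $\bar\partial$ passes the degree-$3$ form $\hat\Omega$, rather than from the degree of $\beta$ alone -- but the final sign is what you state.
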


We now consider the product $\hat{Y}\coloneqq \mathbb{R}\times \hat{Z}$ equipped with its $\G_2$-structure~\eqref{eq:modelG2Structure} and with the pullback bundle (again denoted by) $\pi \col \hat{P} \to \hat{Y}$. By \autoref{ex:HYM_is_G2-instanton}, the pullback of $A$ to this bundle is a $\G_2$-instanton which we will again denote by $A$. 

\begin{lemma}
Consider the isomorphisms of vector bundles over $\hat{Y}$ (where we suppress any pullbacks in our notation) given by
\begin{align*}
T^*\mathbb{R}\otimes \mathfrak{g}_{\hat{P}} \oplus T^*\hat{Z}\otimes \mathfrak{g}_{\hat{P}} \oplus \Lambda^7T^* \hat{Y}\otimes \mathfrak{g}_{\hat{P}} &\to \Lambda^{0,1}T^*_\mathbb{C}\hat{Z}\otimes \mathfrak{g}_{\hat{P}} \oplus \Lambda^{0,3}T^*_\mathbb{C}\hat{Z}\otimes \mathfrak{g}_{\hat{P}} \\
\big(\diff s \otimes \xi_1, a, \vol_{\hat{Y}} \otimes \xi_2 \big) & \mapsto \big(\sqrt{2} a^{0,1},*_{\hat{\Omega}}(\xi_1+i\xi_2) \big)
\end{align*}
and
\begin{align*}
T^*\mathbb{R}\otimes \Lambda^5T^*\hat{Z}\otimes \mathfrak{g}_{\hat{P}} \oplus \Lambda^6
T^*\hat{Z}\otimes \mathfrak{g}_{\hat{P}} \oplus \mathfrak{g}_{\hat{P}} &\to  \mathfrak{g}_{\hat{P}}^\mathbb{C} \oplus \Lambda^{0,2}T^*_\mathbb{C}\hat{Z}\otimes \mathfrak{g}_{\hat{P}} \\
\big(\diff s \otimes b, \vol_{\hat{Z}} \otimes \xi_1, \xi_2  \big) & \mapsto \big(\xi_2-i\xi_1, \tfrac{1}{2} i_{\hat{\Omega}^\sharp} b \big),
\end{align*}
where $i_{\hat{\Omega}^\sharp}$ denotes the insertion of $\hat{\Omega}^\sharp \in \Lambda^{3,0}T_\mathbb{C}\hat{Z}$.

Under the induced isomorphisms 
\begin{align*}
\Omega^1(\hat{Y},\mathfrak{g}_{\hat{P}}) \oplus \Omega^7(\hat{Y},\mathfrak{g}_{\hat{P}}) &\cong \Omega^0(\hat{Y},\Lambda^{0,\textup{odd}}T^*_\mathbb{C}\hat{Z} \otimes \mathfrak{g}_{\hat{P}}) \\
\Omega^6(\hat{Y},\mathfrak{g}_{\hat{P}}) \oplus \Omega^0(\hat{Y},\mathfrak{g}_{\hat{P}}) &\cong \Omega^0(\hat{Y},\Lambda^{0,\textup{even}}T^*_\mathbb{C}\hat{Z} \otimes \mathfrak{g}_{\hat{P}})
\end{align*}
the linearised instanton operator and its adjoint become \begin{align*}
L_A &= 
\begin{pmatrix}
0 &  - *_{\hat{\Omega}}\partial_s  \\
*_{\hat{\Omega}} \partial_s  & 0
\end{pmatrix}
+ \sqrt{2} 
\begin{pmatrix}
\bar{\partial}_A^* & 0 \\
\bar{\partial}_A &  \bar{\partial}_A^*
\end{pmatrix}\\
L_A^* & = 
\begin{pmatrix}
0 & - *_{\hat{\Omega}}\partial_s \\
 *_{\hat{\Omega}} \partial_s & 0 
\end{pmatrix}
+ \sqrt{2} 
\begin{pmatrix}
\bar{\partial}_A & \bar{\partial}_A^* \\
0 &  \bar{\partial}_A
\end{pmatrix}.
\end{align*}
These satisfy \[ L_A L_A^* = - \partial_s^2 +2 {\Delta_{\bar{\partial}_A}}_{\vert \Omega^{0,\textup{even}}}.\]

\end{lemma}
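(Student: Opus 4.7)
The plan is to verify each of the three identities by direct computation under the product decomposition $\hat{Y} = \mathbb{R} \times \hat{Z}$, using the substitutions $\diff_A = \diff s \wedge \partial_s + \partial_A + \bar{\partial}_A$, $\psi = \tfrac{1}{2}\hat{\omega} \wedge \hat{\omega} + \diff s \wedge \re \hat{\Omega}$, and the product form of the Hodge star arising from $g_{\hat{Y}} = \diff s^2 + g_{\hat{\omega}}$. The $\sqrt{2}$ factors in the source isomorphism reflect the isometry $a' \mapsto \sqrt{2}\,a'^{0,1}$ from real $1$-forms on $\hat{Z}$ into $\Lambda^{0,1}$, since $\vert a'\vert^2_\mathbb{R} = 2\vert a'^{0,1}\vert^2_\mathbb{C}$. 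All the algebraic dictionary needed to relate $*_{\hat{Y}}$, $*_{\hat{\Omega}}$, conjugation, and the Dolbeault operators is collected in \autoref{lem:anticomplex-HodgeStar}.

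For the matrix of $L_A$, I would substitute $a = \diff s \otimes \xi_1 + a'$ and $\xi = \vol_{\hat{Y}} \otimes \xi_2$ into $L_A(a,\xi) = (\psi \wedge \diff_A a - \diff_A^*\xi,\, \diff_A^* a)$ and separate the output according to whether it carries a factor of $\diff s$ or not. Wedging $\tfrac{1}{2}\hat{\omega}\wedge\hat{\omega}$ with $\diff_A^{\hat{Z}} a'$ annihilates the $(2,0)\oplus(0,2)$ components and contracts the $(1,1)$ component via a Lefschetz-type identity; using that $A$ is HYM (so the trace part of its curvature vanishes) and the Kähler identities, the surviving Dolbeault operators acting on $a'^{0,1}$ are exactly $\bar{\partial}_A$ and $\bar{\partial}_A^*$, reproducing the first column of the claimed matrix. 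The pieces involving $\diff s$ couple $\partial_s$ to $\xi_1, \xi_2$ through the identity $*\bar{\eta} = \tfrac{(-1)^{(3-q)+1}i}{\sqrt{8}}\hat{\Omega}\wedge *_{\hat{\Omega}}\eta$ of \autoref{lem:anticomplex-HodgeStar}, which translates $\re\hat{\Omega}\,\wedge$ on the real side into $*_{\hat{\Omega}}$ on the complex side and produces the second column. The matrix for $L_A^*$ then follows from $L_A^* = *_{\hat{Y}} L_A *_{\hat{Y}}$ (see \autoref{prop:DecompositionNonlinearMap}) by conjugating through the same dictionary.

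For the composition, I would multiply the two $2\times 2$ block matrices formally. The product of the two $\partial_s$-blocks contributes $-(*_{\hat{\Omega}})^2 \partial_s^2 = -\partial_s^2$ on the diagonal, since $(*_{\hat{\Omega}})^2 = (-1)^{q(3-q)} = +1$ on both $\Omega^{0,0}$ and $\Omega^{0,2}$. The product of the Dolbeault blocks gives $2(\bar{\partial}_A\bar{\partial}_A^* + \bar{\partial}_A^*\bar{\partial}_A) = 2\Delta_{\bar{\partial}_A}$ after applying $\bar{\partial}_A^2 = (\bar{\partial}_A^*)^2 = 0$. The four mixed cross terms, each of the form $\partial_s$ times a Dolbeault operator pre- or post-composed with $*_{\hat{\Omega}}$, cancel in pairs: inserting $\bar{\partial}_A^* = (-1)^q *_{\hat{\Omega}}\bar{\partial}_A *_{\hat{\Omega}}$ from \autoref{lem:anticomplex-HodgeStar} and using $(*_{\hat{\Omega}})^2 = 1$ on even-degree forms yields the identities $*_{\hat{\Omega}}\bar{\partial}_A\eta + \bar{\partial}_A^* *_{\hat{\Omega}}\eta = 0$ on $\Omega^{0,2}$, $*_{\hat{\Omega}}\bar{\partial}_A b + \bar{\partial}_A^* *_{\hat{\Omega}}b = 0$ on $\Omega^{0,0}$, and $*_{\hat{\Omega}}\bar{\partial}_A^* = \bar{\partial}_A *_{\hat{\Omega}}$, each of which produces the required cancellation.

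The main obstacle is the first step: carefully tracking all coefficients, signs, and insertions of $\hat{\omega}$ and $\hat{\Omega}$ needed to translate the real operator $\psi\wedge\diff_A - \diff_A^*$ into Dolbeault form, and checking that the HYM condition on $A$ is precisely what eliminates the spurious terms that would otherwise spoil the stated matrix. Once the matrix expressions for $L_A$ and $L_A^*$ are established, the composition identity is an essentially mechanical consequence of \autoref{lem:anticomplex-HodgeStar} and $\bar{\partial}_A^2 = 0$.
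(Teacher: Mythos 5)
Your overall strategy — direct computation via the product decomposition of $\hat{Y}=\mathbb{R}\times\hat{Z}$, the Lefschetz contractions, and the identities of \autoref{lem:anticomplex-HodgeStar}, followed by formal matrix multiplication for the composition — is essentially the one the paper uses. The paper inserts an explicit intermediate $3\times3$ matrix form of $L_A$ (in the identification $(\xi_1,\xi_2,a)\mapsto(\xi_1',\xi_2',b')$) before passing to the Dolbeault picture, which you skip, but that is a cosmetic difference; your cross-term cancellations in $L_AL_A^*$ are exactly right.

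There is one conceptual misattribution worth correcting. You claim that the HYM condition $\Lambda_{\hat\omega}F_A=0$ (the ``trace part of the curvature vanishes'') is what eliminates spurious terms in deriving the matrix form of $L_A$. It is not, and cannot be: $L_A$ is a first-order operator built from $A$ alone, and expressing $\psi\wedge\diff_A$ and $\diff_A^*$ in Dolbeault terms uses only the Lefschetz identities $\tfrac12*_{\hat Z}(\hat\omega^2\wedge\cdot)$ and the (twisted) K\"ahler identities $[\Lambda,\bar\partial_A]=-i\partial_A^*$, $[\Lambda,\partial_A]=i\bar\partial_A^*$, which hold for \emph{any} unitary connection $A$. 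Concretely, one finds $\diff_A^*a'-i\Lambda_{\hat\omega}\diff_Aa'=2\bar\partial_A^*a'^{0,1}$ and $\tfrac12 i_{\hat\Omega^\sharp}(\re\hat\Omega\wedge\diff_Aa')=2\bar\partial_Aa'^{0,1}$ with no curvature term appearing. The HYM condition enters this lemma only through the integrability part $F_A^{0,2}=0$ (equivalently $\bar\partial_A^2=(\bar\partial_A^*)^2=0$), which you invoke correctly in the composition step to kill the off-diagonal entries of the Dolbeault block product and reduce the diagonal to $\Delta_{\bar\partial_A}$. So your proof is sound as $A$ is indeed HYM, but the reason you give for the first step is wrong, and anyone relying on it might draw an incorrect conclusion about where HYM is actually needed.
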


\begin{proof}
As a first step, we identifiy
\begin{align*}
T^*\mathbb{R}\otimes \mathfrak{g}_{\hat{P}} \oplus T^*\hat{Z}\otimes \mathfrak{g}_{\hat{P}} \oplus \Lambda^7T^* \hat{Y}\otimes \mathfrak{g}_{\hat{P}} &\to \mathfrak{g}_{\hat{P}} \oplus \mathfrak{g}_{\hat{P}} \oplus T^*\hat{Z} \otimes \mathfrak{g}_{\hat{P}} \\
\big(\diff s \otimes \xi_1, a,  \vol_{\hat{Y}} \otimes \xi_2 \big) & \mapsto \big(\xi_1,\xi_2, a \big)
\end{align*}
and
\begin{align*}
T^*\mathbb{R}\otimes \Lambda^5T^*\hat{Z}\otimes \mathfrak{g}_{\hat{P}} \oplus \Lambda^6
T^*\hat{Z}\otimes \mathfrak{g}_{\hat{P}} \oplus \mathfrak{g}_{\hat{P}} &\to \mathfrak{g}_{\hat{P}} \oplus \mathfrak{g}_{\hat{P}} \oplus \Lambda^5 T^*\hat{Z} \otimes \mathfrak{g}_{\hat{P}} \\
\big(\diff s \otimes b,  \vol_{\hat{Z}} \otimes \xi_1, \xi_2  \big) & \mapsto \big(\xi_1,\xi_2, b \big).
\end{align*}
The operator then becomes
\begin{align*}
L_A = \begin{pmatrix}
0 & \partial_s & \Lambda_{\hat{\omega}}\diff_A \\
-\partial_s & 0 & \diff_A^*  \\
- *_{\hat{Z}}\hat{I}^*\diff_A & -*_{\hat{Z}} \diff_A & *_{\hat{Z}} \hat{I}^* \partial_s + \re \hat{\Omega} \wedge \diff_A
\end{pmatrix}
\end{align*}
where both $\diff_A$ and $\diff_A^*$ are taken over $\hat{Z}$. This makes use of the identities $\frac{1}{2} *_{\hat{Z}} (\hat{\omega}\wedge\hat{\omega}\wedge b) = \Lambda_{\hat{\omega}} b$ for $b \in \Omega^2(\hat{Z})$
and $\frac{1}{2} *_{\hat{Z}}(\hat{\omega}\wedge\hat{\omega}\wedge a)= -I^*a$ for $a\in \Omega^1(\hat{Z})$ (cf. \cite[Chapter~1.2]{Huybrechts-Complex}). Going through the other identifications is straight forward and uses the identities given in the previous lemma.
\end{proof}

The following lemma immediately follows from \cite[Lemma~7.10]{Walpuski-InstantonsKummer}:
\begin{lemma}
Identify \begin{align*}
\Omega^6(\hat{Y},\mathfrak{g}_{\hat{P}}) \oplus \Omega^0(\hat{Y},\mathfrak{g}_{\hat{P}}) &\cong \Omega^0(\hat{Y},\Lambda^{0,\textup{even}}T^*_\mathbb{C}\hat{Z} \otimes \mathfrak{g}_{\hat{P}})
\end{align*} 
as in the previous lemma and assume that $\underline{b} \in \Omega^0(\hat{Y},\Lambda^{0,\textup{even}}T^*_\mathbb{C}\hat{Z} \otimes \mathfrak{g}_{\hat{P}})$ satisfies $L_AL_A^* \underline{b}=0$ and $\Vert \underline{b}\VertC{0}{} < \infty$. Then $\underline{b}$ is constant in the $\mathbb{R}$ direction.
\end{lemma}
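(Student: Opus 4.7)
Applying the preceding lemma we have $L_AL_A^*=-\partial_s^{\,2}+2\Delta_{\bar\partial_A}|_{\Omega^{0,\textup{even}}}$, so the hypothesis becomes
\[
\partial_s^{\,2}\,\underline{b}\;=\;2\,\Delta_{\bar\partial_A}\,\underline{b}
\]
on $\hat Y=\mathbb{R}\times\hat Z$, with $\Delta_{\bar\partial_A}$ acting fibrewise on $\Omega^{0,\textup{even}}(\hat Z,\mathfrak{g}_{\hat P}^\mathbb{C})$.  Since $\Delta_{\bar\partial_A}$ is formally self-adjoint and non-negative, any non-trivial $s$-oscillation of $\underline{b}$ (heuristically a Fourier mode $\underline{b}=e^{i\xi s}v(z)$ with $\xi\neq0$) would force $v$ to be an eigensection of $2\Delta_{\bar\partial_A}$ with negative eigenvalue $-\xi^2$, which cannot exist.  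The plan is to make this spectral picture rigorous given only $\Vert\underline{b}\VertC{0}{}<\infty$.

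As a first step I would identify $2\Delta_{\bar\partial_A}$ with the rough Laplacian $\nabla_A^*\nabla_A$ on $\Omega^{0,\textup{even}}(\hat Z,\mathfrak{g}_{\hat P}^\mathbb{C})$ via the Weitzenböck formula, exactly as in the proof of \autoref{lem:delbar_harmonic_with_decay}: the bundle- and Ricci-curvature contractions in the Bochner-Kodaira-Nakano identity drop out because $\hat Z$ is Ricci-flat and $A$ satisfies $\Lambda_{\hat\omega}F_A=0$.  Combined with the $-\partial_s^{\,2}$ factor this yields $\nabla_{\hat Y,A}^*\nabla_{\hat Y,A}\underline{b}=0$ on the product, and Bochner gives
\[
\Delta\,\vert\underline{b}\vert^{2}\;=\;-2\,\vert\nabla_A\underline{b}\vert^{2}\;\le\;0,
\]
so $\vert\underline{b}\vert^{2}$ is bounded and subharmonic on the complete Ricci-flat manifold $\hat Y$.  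The operator $L_AL_A^*$ is $s$-translation invariant, so interior Schauder estimates upgrade the $C^0$-bound on $\underline{b}$ to $C^0$-bounds on every $\partial_s^{\,k}\underline{b}$; each derivative solves the same equation and hence also satisfies $\nabla_A^*\nabla_A(\partial_s^{\,k}\underline{b})=0$.

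To conclude $\partial_s\underline{b}=0$, my plan is to take the Fourier transform of $\underline{b}$ in the $s$-variable in the tempered-distributional sense (admissible since $\underline{b}$ is bounded in $s$).  The equation then becomes $\xi^{2}\widehat{\underline{b}}(\xi,z)+2\Delta_{\bar\partial_A}\widehat{\underline{b}}(\xi,z)=0$; non-negativity of $\Delta_{\bar\partial_A}$ forces the $z$-section-valued distribution $\widehat{\underline{b}}(\xi,\cdot)$ to be supported at $\xi=0$, which means $\underline{b}(s,z)$ is polynomial in $s$, and the $C^0$-bound reduces the polynomial to degree zero.  The main obstacle is making this last step rigorous: the cylinder $\hat Y=\mathbb{R}\times\hat Z^{6}$ has seven-dimensional Euclidean volume growth, so a $C^0$-bounded section is never $L^{2}$ and the direct Liouville argument of \autoref{lem:delbar_harmonic_with_decay} does not transfer.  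One therefore either has to push through the distributional Fourier/spectral argument above (using the absence of $L^\infty$ positive-eigenvalue sections of $\Delta_{\bar\partial_A}$ on the ALE manifold $\hat Z$) or, alternatively, adapt the energy-type argument of \cite[Lemma~7.10]{Walpuski-InstantonsKummer} — written there for $\mathbb{R}\times\textup{ALE}_{4}$ — with carefully chosen product cutoffs in $[-T,T]\times K_R\subset\hat Y$, showing that the cutoff errors die as $T,R\to\infty$ thanks to the $C^0$-bounds on $\underline{b}$ and all its $s$-derivatives.
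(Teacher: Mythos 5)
The paper does not actually prove this lemma — it is dispatched entirely by the citation to \cite[Lemma~7.10]{Walpuski-InstantonsKummer} — so there is no "paper's proof" to compare against. Judged on its own, your reduction is correct and on the right track: the identity $L_AL_A^*=-\partial_s^2+2\Delta_{\bar\partial_A}$ together with the Weitzenb\"ock formula (valid on all of $\Omega^{0,q}(\hat Z,\mathfrak g_{\hat P}^{\mathbb C})$ because $\hat Z$ is Ricci-flat and $A$ is HYM, not just for $q=0,3$) gives $\nabla_A^*\nabla_A\underline b=0$ on the product, the Bochner formula shows $\lvert\underline b\rvert^2$ is bounded and subharmonic, and $\partial_s^k\underline b$ again solve the equation and are uniformly bounded.

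However, neither of the two routes you sketch for the remaining step closes, and you are right to flag both. The Fourier-transform argument fails for the reason you identify: $\widehat{\underline b}(\cdot,z)$ is only a tempered distribution, and the spectral positivity of $\Delta_{\bar\partial_A}$ on $L^2(\hat Z)$ does not exclude ``generalised eigensections'' of negative eigenvalue that are merely bounded, so one cannot conclude support at $\xi=0$ without further input. The naive energy-cutoff version also fails quantitatively: with a product cutoff $\chi_T(s)\rho_R(z)$ the identity $\Delta\lvert\underline b\rvert^2=-2\lvert\nabla_A\underline b\rvert^2$ leads to
$\int\chi_T^2\rho_R^2\lvert\nabla_A\underline b\rvert^2\lesssim T^{-2}\cdot T R^6+ T R^{-2}\cdot R^6$, which is $O(R^5)$ after optimising $T\sim R$, and this diverges --- the seven-dimensional volume growth really is an obstruction here, not just a nuisance. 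So even the fall-back you propose needs more than ``carefully chosen product cutoffs''.

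The missing ingredient is the improved radial decay of the gradient. Because $\hat Z$ is ALE, at a point $p$ with $r(z)=R$ one can run the scale-invariant interior Schauder estimate on a ball of radius comparable to $R$, not of fixed size, and since $\underline b$ is uniformly bounded this gives $\lvert\nabla_A\underline b\rvert\lesssim(1+r)^{-1}$, uniformly in $s$. In particular $c:=\partial_s\underline b$ is a bounded solution of $\nabla_A^*\nabla_A c=0$ with $\lvert c\rvert\lesssim(1+r)^{-1}$, hence $\lvert c\rvert^2$ is a bounded subharmonic function that decays to zero as $r\to\infty$ uniformly in $s$. If $M:=\sup\lvert c\rvert>0$, the decay forces any maximising sequence $(s_n,z_n)$ to have $z_n$ confined to a compact subset of $\hat Z$; translating $\underline b$ by $s_n$ and passing to a $C^2_{\mathrm{loc}}$-limit via Arzel\`a--Ascoli produces a solution $\underline b_\infty$ (still bounded, still with $\lvert\partial_s\underline b_\infty\rvert\lesssim(1+r)^{-1}$) for which $\lvert\partial_s\underline b_\infty\rvert^2$ attains its supremum $M^2$ at an interior point. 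The strong maximum principle then forces $\lvert\partial_s\underline b_\infty\rvert^2\equiv M^2$, contradicting the radial decay. Hence $M=0$ and $\partial_s\underline b\equiv 0$. That translation-compactness plus maximum-principle-at-infinity mechanism is the step your proposal leaves open, and it is what the cited \cite[Lemma~7.10]{Walpuski-InstantonsKummer} supplies.
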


Next, we introduce the following weight functions on $\hat{Y} = \mathbb{R} \times \hat{Z}$:  \[w(s,z) \coloneqq 1 + r(z) \quad \text{and} \quad w((s_1,z_1),(s_2,z_2))\coloneqq \min\{w(s_1,z_1),w(s_2,z_2)\},\] and for $k\in \mathbb{N}_0$ and $\alpha\in(0,1)$ the following weighted norms:
\begin{align*}
[f\WsH{0}{\beta}{(U)} &\coloneqq \sup_{d(x,y)\leq w(x,y)} w(x,y)^{\alpha-\beta} \frac{\vert f(x)-f(y) \vert}{d(x,y)^\alpha}  \\
\Vert f \VertWC{0}{\beta}{(U)} & \coloneqq \big\Vert w^{-\beta} f \big{\VertC{0}{(U)}} \\
\Vert f \VertWH{k}{\beta}{(U)} & \coloneqq \sum_{j=0}^k \left\Vert \nabla^j f \right\VertWC{0}{\beta-j}{(U)} + \left[\nabla^j f\right\WsH{0}{\beta-j}{(U)}.
\end{align*} 

\begin{proposition}\label{prop:instanton_deformations_come_from_HYM-deformations}
Identify \begin{align*}
\Omega^6(\hat{Y},\mathfrak{g}_{\hat{P}}) \oplus \Omega^0(\hat{Y},\mathfrak{g}_{\hat{P}}) &\cong \Omega^0(\hat{Y},\Lambda^{0,\textup{even}}T^*_\mathbb{C}\hat{Z} \otimes \mathfrak{g}_{\hat{P}})
\end{align*}
as in the previous lemmas and assume that $\underline{b}\coloneqq (\xi,\eta) \in C^{k,\alpha}_{\ell oc}(\Lambda^{0,\textup{even}}T^*_\mathbb{C}\hat{Z} \otimes \mathfrak{g}_{\hat{P}})$ satisfies $L_AL_A^* \underline{b}=0$ and $\Vert \underline{b}\VertWC{0}{\beta}{} < \infty$ for any $\beta <0$. Then $\xi=0$ and $\eta$ is the pullback of an element in $*_{\hat{\Omega}}\mathcal{H}^1_{A,-5}$ (as in \autoref{def:HYM_infinitesimal_deformations}).
\end{proposition}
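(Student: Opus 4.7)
My plan is as follows. Since $w\geq 1$ and $\beta<0$, the weighted bound $\Vert\underline{b}\VertWC{0}{\beta}{}<\infty$ in particular forces $\Vert\underline{b}\VertC{0}{}<\infty$, so the preceding lemma applies and shows that $\underline{b}$ is constant in the $\mathbb{R}$-direction: there exists $b_0\in C^{k,\alpha}_{\ell oc}(\hat{Z},\Lambda^{0,\textup{even}}T^*_\mathbb{C}\hat{Z}\otimes \mathfrak{g}_{\hat{P}})$ with $\underline{b}(s,z)=b_0(z)$. Writing $b_0=(\xi_0,\eta_0)$ with $\xi_0\in\Omega^{0,0}_\beta(\hat{Z},\mathfrak{g}_{\hat{P}}^\mathbb{C})$ and $\eta_0\in\Omega^{0,2}_\beta(\hat{Z},\mathfrak{g}_{\hat{P}}^\mathbb{C})$, the formula $L_AL_A^*=-\partial_s^2+2\Delta_{\bar{\partial}_A}|_{\Omega^{0,\textup{even}}}$ reduces $L_AL_A^*\underline{b}=0$ to $\Delta_{\bar{\partial}_A}\xi_0=0$ and $\Delta_{\bar{\partial}_A}\eta_0=0$ on $\hat{Z}$.

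The $\xi_0$-component is then immediate: $\bar{\partial}_A^*\bar{\partial}_A\xi_0=\Delta_{\bar{\partial}_A}\xi_0=0$ together with the decay of $\xi_0$ gives $\xi_0=0$ by \autoref{lem:delbar_harmonic_with_decay}, and hence $\xi=0$ on $\hat{Y}$.

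For the $\eta_0$-component, the strategy is to pass to $\alpha_0\coloneqq *_{\hat{\Omega}}\eta_0\in\Omega^{0,1}_\beta(\hat{Z},\mathfrak{g}_{\hat{P}}^\mathbb{C})$ and apply \autoref{prop:kernel-HYM-decay}. A short computation using the identities of \autoref{lem:anticomplex-HodgeStar} (which carry over with $\bar{\partial}_A$ in place of $\bar{\partial}$ since $A$ is compatible with the Hermitian structure) and the involutivity $*_{\hat{\Omega}}^2=\id$ on $\Omega^{0,1}\oplus\Omega^{0,2}$ yields
\begin{equation*}
\bar{\partial}_A\alpha_0=*_{\hat{\Omega}}\bar{\partial}_A^*\eta_0\qquad\text{and}\qquad \bar{\partial}_A^*\alpha_0=-*_{\hat{\Omega}}\bar{\partial}_A\eta_0,
\end{equation*}
so matters are reduced to showing $\bar{\partial}_A\eta_0=\bar{\partial}_A^*\eta_0=0$. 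For this I would first upgrade the decay of $\eta_0$ using weighted elliptic regularity on the ALE manifold $\hat{Z}$: squaring the indicial-root analysis for the twisted Dirac operator $2(\bar{\partial}_A+\bar{\partial}_A^*)$ carried out in the proof of \autoref{prop:HYMConnection improve decay} shows that the critical rates of $\Delta_{\bar{\partial}_A}$ on $(0,2)$-forms avoid the interval $(-5,0)$, so the weight of $\eta_0$ can be shifted iteratively down to $-5$ in the sense of~\cite[Theorem~1.17]{Bartnik-MassofALF}. In particular $\eta_0\in L^2(\hat{Z})$, integration by parts is fully justified, and
\begin{equation*}
0=\langle\Delta_{\bar{\partial}_A}\eta_0,\eta_0\rangle_{L^2}=\Vert\bar{\partial}_A\eta_0\Vert_{L^2}^2+\Vert\bar{\partial}_A^*\eta_0\Vert_{L^2}^2.
\end{equation*}
Hence $\bar{\partial}_A\eta_0=\bar{\partial}_A^*\eta_0=0$, so $\alpha_0\in\ker(\bar{\partial}_A+\bar{\partial}_A^*)\cap\Omega^{0,1}(\hat{Z},\mathfrak{g}_{\hat{P}}^\mathbb{C})$ decays to zero at infinity and \autoref{prop:kernel-HYM-decay} gives $\alpha_0\in\mathcal{H}^1_{A,-5}$. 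Applying $*_{\hat{\Omega}}$ once more yields $\eta_0=*_{\hat{\Omega}}\alpha_0\in*_{\hat{\Omega}}\mathcal{H}^1_{A,-5}$, and interpreted as the $\mathbb{R}$-independent pullback to $\hat{Y}$ this is exactly the asserted description of $\eta$.

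The chief obstacle in the argument is the decay-upgrade step for $\Delta_{\bar{\partial}_A}$ on $(0,2)$-forms; once the indicial roots are shown to miss $(-5,0)$, every remaining step is either algebraic bookkeeping with \autoref{lem:anticomplex-HodgeStar} or a direct appeal to a lemma already established earlier in \autoref{sec: Background}.
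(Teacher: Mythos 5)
Your proof follows essentially the same route as the paper's primary argument: reduce to $\hat{Z}$ via the constancy lemma (which applies since $w\geq 1$ and $\beta<0$ give $\Vert\underline{b}\VertC{0}{}<\infty$), dispatch the $\xi$-component via \autoref{lem:delbar_harmonic_with_decay}, upgrade the decay of $\eta_0$ using the absence of indicial roots on the ALE end, deduce $\bar{\partial}_A\eta_0=\bar{\partial}_A^*\eta_0=0$ by an integration-by-parts argument, and transport to $*_{\hat{\Omega}}\eta_0\in\mathcal{H}^1_{A,-5}$ via the identities of \autoref{lem:anticomplex-HodgeStar} and \autoref{prop:kernel-HYM-decay}. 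Your algebraic manipulations with the anticomplex Hodge star are correct.

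However, the decay-improvement step contains a wrong intermediate claim. You assert that ``squaring'' the indicial-root analysis for the twisted Dirac operator $\bar{\partial}_A+\bar{\partial}_A^*$ (whose roots avoid $(-5,0)$) shows that the critical rates of $\Delta_{\bar{\partial}_A}$ on $(0,2)$-forms also avoid $(-5,0)$, and you push the decay of $\eta_0$ to $r^{-5}$. The indicial roots of $D^2$ are \emph{not} controlled by those of $D$ in this way: the kernel of $D^2$ on the model cone is strictly larger than that of $D$. Concretely, on a six-dimensional ALE end the Laplacian already has an indicial root at $2-n=-4$, since $r^{-4}$ is harmonic on $\mathbb{R}^6$, even though $-4$ is not an indicial root of the Dirac operator. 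The correct gap for $\Delta_{\bar{\partial}_A}$ is therefore $(-4,0)$, as the paper states, and the improved decay is $O(r^{-4-k})$, not $O(r^{-5-k})$. Fortunately this does not break your argument: decay $r^{-4}$ still places $\eta_0$ in $L^2(\hat{Z})$ (as $\int r^{-8}\cdot r^5\,dr<\infty$) and the boundary term in the integration by parts is $O(R^{-4})\to 0$, so the conclusion $\bar{\partial}_A\eta_0=\bar{\partial}_A^*\eta_0=0$ survives. You should simply cite the gap $(-4,0)$ rather than $(-5,0)$.
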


\begin{proof}
First note that $\underline{b}$ is smooth by elliptic regularity. The previous lemma implies that $\underline{b}$ is constant in $\mathbb{R}$ direction and therefore $\Delta_{\bar{\partial}_A}\underline{b}=0$. This means \[\Delta_{\bar{\partial}_A} \xi = 0 \quad \textup{and} \quad \Delta_{\bar{\partial}_A} \eta = 0\] and therefore $\xi=0$ by \autoref{lem:delbar_harmonic_with_decay}.

One can now argue as in the proof of \autoref{prop:HYMConnection improve decay}: Since there are no indicial roots of $\Delta_{\bar{\partial}_A}$ contained in the interval $(-4,0)$, we have $\vert \nabla^k \eta \vert = \mathcal{O}(r^{-4-k})$ (cf.~\cite[Proposition~1.14]{Bartnik-MassofALF} 
or~\cite{LockhartMcOwen-ellipticOperators_nonCompactMfds}). Integration by parts implies $(\bar{\partial}_A+\bar{\partial}_A^*)\eta=0$ and by the last identity in \autoref{lem:anticomplex-HodgeStar} \[ (\bar{\partial}_A+\bar{\partial}_A^*)(*_{\hat{\Omega}}\eta) = *_{\hat{\Omega}} \bar{\partial}_A^* \eta - *_{\hat{\Omega}} \bar{\partial}_A \eta = 0. \] By \autoref{prop:kernel-HYM-decay}, $*_{\hat{\Omega}} \eta \in \mathcal{H}^1_{A,-5}$ or, equivalently, $\eta \in *_{\hat{\Omega}}\mathcal{H}^1_{A,-5}$.

Somewhat alternatively, if one already has $\Vert \underline{b} \VertWC{1}{\beta}{}<\infty$ (which holds in the proof of \autoref{prop:LinearEstimate} where we apply this proposition), one can also argue as follows: First note that by $\Delta_{\bar{\partial}_A} \eta = 0$, $\bar{\partial}_A^2=0$ and \autoref{lem:delbar_harmonic_with_decay} we have $\bar{\partial}_A \eta =0$. This implies that $\bar{\partial}_A^*\eta$ satisfies $(\bar{\partial}_A + \bar{\partial}_A^*) \bar{\partial}_A^*\eta =0$ and therefore (by \autoref{prop:kernel-HYM-decay}) $\bar{\partial}_A^*\eta \in \mathcal{H}^1_{A,-5}$. In particular, $\vert \bar{\partial}_A^*\eta \vert$ decays like $r^{-5}$ and since $\vert \eta\vert$ decays like $r^{\beta}$ with $\beta<0$, we have \[ 0 = \int_{\hat{Z}} \langle \bar{\partial}_A\bar{\partial}_A^*\eta ,\eta\rangle = \int_{\hat{Z}} \vert \bar{\partial}_A^* \eta \vert^2. \] Thus, $(\bar{\partial}_A + \bar{\partial}_A^*) \eta = 0$ and as above one shows that $\eta \in *_{\hat{\Omega}} \mathcal{H}^1_{A,-5}$.
\end{proof}

Assume now that the ALE $3$-fold $\hat{Z}\equiv \hat{Z}_S$, the bundle ${\hat{P}}\equiv\hat{P}_S$, and the HYM connection $A\equiv\hat{A}_S$ are part of resolution and gluing data for a flat $\G_2$-orbifold. As before we denote by $\hat{Y}_t$ the manifold defined in \autoref{def:OrbifoldResolution} and by $(\hat{P}_t,\tilde{A}_t)$ the bundle and connection constructed in \autoref{prop:pregluing}.

Next we want to compare the linearised instanton operator $L_{\tilde{A}_t}$ over $\hat{\mathcal{V}}_{S,\kappa}^t$ with the model operator $L_{\hat{A}_S}$. For this we define for $t\in (0,T_K)$ and $s_0\in \mathbb{R}$ the following dilation map
\begin{align*}
\delta_{t,s_0} \col \mathbb{R} \times \hat{Z}_S &\to  (\mathbb{R}\times \hat{Z}_S)/\Lambda_S \\
(s,x) &\mapsto [ts+s_0,x]
\end{align*}
and for $\beta \in \mathbb{R}$ and any $a \in \Omega^{\ell}(\hat{Y}_t,\mathfrak{g}_{\hat{P}_t})$ the following rescaling:
\begin{align}\label{eq:rescaling_ALE}
\mathrm{s}_{\beta,t,s_0}(a) \coloneqq t^{-\beta-\ell} \delta_{t,s_0}^*a_{\vert \hat{\mathcal{V}}_{S,\kappa}^t}.
\end{align}
We will denote the induced map on any sum of the form $\Omega^k(\hat{Y}_t,\mathfrak{g}_{\hat{P}_t})\oplus \Omega^{\ell}(\hat{Y}_t,\mathfrak{g}_{\hat{P}_t})$ by $\mathrm{s}_{\beta,t,s_0}$ as well.

\begin{lemma}[{cf.~\cite[Proposition~7.7]{Walpuski-InstantonsKummer}}]\label{lem:ALE-rescaling-comparison}
For fixed $\beta\in \mathbb{R}$, $k\in \mathbb{N}_0$, and $\alpha\in(0,1)$ there exists a constant $C$ such that for every $t \in (0,T_K)$, $a \in \Omega^{\ell}(\hat{Y}_t,\mathfrak{g}_{\hat{P}_t})$, $\underline{a} \in \Omega^{1}(\hat{Y}_t,\mathfrak{g}_{\hat{P}_t}) \oplus \Omega^{7}(\hat{Y}_t,\mathfrak{g}_{\hat{P}_t})$, and $\underline{b} \in \Omega^{6}(\hat{Y}_t,\mathfrak{g}_{\hat{P}_t}) \oplus \Omega^{0}(\hat{Y}_t,\mathfrak{g}_{\hat{P}_t})$ the following holds:
\[C^{-1} \Vert a \VertWHt{k}{\beta}{(\hat{\mathcal{V}}_{S,\kappa}^t)} \leq \Vert \mathrm{s}_{\beta,t,s_0} (a) \VertWH{k}{\beta}{(\mathbb{R}\times (t \tau_S)^{-1}(B_\kappa(0)/\Gamma_S))} \leq C\Vert a \VertWHt{k}{\beta}{(\hat{\mathcal{V}}_{S,\kappa}^t)} 
\]
\[
\Vert (L_{\tilde{A}_t} - \mathrm{s}_{\beta-1,t,s_0}^{-1} L_{\hat{A}_S} \mathrm{s}_{\beta,t,s_0})(\underline{a}) \VertWHt{k}{\beta-1}{(\hat{\mathcal{V}}_{S,\kappa}^t)}\leq C t^{1/2} \Vert \underline{a} \VertWHt{k+1}{\beta}{(\hat{\mathcal{V}}_{S,\kappa}^t)}
\]
and 
\[
\Vert (L_{\tilde{A}_t}^* - \mathrm{s}_{\beta-1,t,s_0}^{-1} L_{\hat{A}_S}^* \mathrm{s}_{\beta,t,s_0})(\underline{b}) \VertWHt{k}{\beta-1}{(\hat{\mathcal{V}}_{S,\kappa}^t)}\leq C t^{1/2} \Vert \underline{b} \VertWHt{k+1}{\beta}{(\hat{\mathcal{V}}_{S,\kappa}^t)}.
\]
\end{lemma}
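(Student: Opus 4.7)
All three inequalities follow the same template of a change-of-variables analysis under $\delta_{t,s_0}$, exploiting the fact that over $\hat{\mathcal{V}}_{S,\kappa}^t$ the pulled-back metric, $\G_2$-structure, and connection are close to the Calabi--Yau model rescaled by a factor of $t$. For the first inequality, I would start by verifying the two basic scalings $\delta_{t,s_0}^* r_t = tr$ (hence $\delta_{t,s_0}^* w_t = tw$) and $\delta_{t,s_0}^* \tilde{g}_t = t^2 (\diff s^2 + g_{\hat{Z}_S}) + \mathfrak{e}_g$, where the error $\mathfrak{e}_g$ comes from the $\chi_t \cdot (t\tau_S)^*\sigma_S^t$ correction in $\tilde{\phi}_t$ together with the refined Joyce estimate of \autoref{rem:improvedEstimates} and is small in weighted norm. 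Since an $\ell$-form has pointwise norm $|\cdot|_{t^2 g} = t^{-\ell} |\cdot|_g$, the normalisation $t^{-\beta-\ell}$ in the definition~\eqref{eq:rescaling_ALE} of $\mathrm{s}_{\beta,t,s_0}$ is chosen precisely so that
\[
w^{-\beta}\,\bigl|\mathrm{s}_{\beta,t,s_0}(a)\bigr|_{g_{\mathrm{model}}} \;=\; \delta_{t,s_0}^*\!\bigl(w_t^{-\beta}\,|a|_{\tilde{g}_t}\bigr) + O(t^{\frac{1}{2}})
\]
pointwise. The higher-order terms in the weighted Hölder norm are treated analogously: each covariant derivative picks up a factor of $t^{-1}$ under $\delta_{t,s_0}$ (from the rescaling of coordinates), which matches exactly the shift in the weight exponent in $\VertWHt{k}{\beta}{}$, while the difference between the Levi--Civita connection of $\tilde{g}_t$ (plus $\tilde{A}_t$) and the Levi--Civita connection of $g_{\mathrm{model}}$ (plus $\hat{A}_S$) contributes additional Christoffel-type terms bounded by pre-gluing data.

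For the second inequality, I would use that $\delta_{t,s_0}^*(L_{\tilde{A}_t}\underline{a}) = L^{(t)}_{\delta_{t,s_0}^*\tilde{A}_t}(\delta_{t,s_0}^*\underline{a})$, where $L^{(t)}$ denotes the instanton operator computed with respect to $\delta_{t,s_0}^* \tilde{g}_t$ and $\delta_{t,s_0}^* \tilde{\psi}_t$. With the exact model data $(\diff s^2 + g_{\hat{Z}_S}, \phi_{\hat{Z}_S}, \hat{A}_S)$ in place of the pulled-back data, a direct computation -- using that $\diff_A$ is natural and that $\diff_A^*$ and wedging with $\psi$ transform according to the metric -- shows that $L_{\hat{A}_S} \circ \mathrm{s}_{\beta,t,s_0} = \mathrm{s}_{\beta-1,t,s_0} \circ L_{\tilde{A}_t}$ exactly. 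The difference with the true operator $L_{\tilde{A}_t}$ decomposes into three algebraic/first-order error terms, arising from
\[
\delta_{t,s_0}^*\tilde{A}_t - \hat{A}_S,\qquad \delta_{t,s_0}^*\tilde{g}_t - t^2 g_{\mathrm{model}},\qquad \delta_{t,s_0}^*\tilde{\psi}_t - t^4 \psi_{\hat{Z}_S}.
\]
By the pre-gluing construction of \autoref{prop:pregluing} the first is supported in the gluing annulus and has weighted size $O(t^4)$, and by \autoref{theo:torsionfree_G2_structure} (with \autoref{rem:improvedEstimates}) the latter two are bounded in $\VertWHt{k+1}{0}{}$ by $O(t^{1/2})$. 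Combined with the norm equivalence of the first inequality, this produces the $t^{1/2}$ factor in the estimate.

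The third inequality, concerning $L_{\tilde{A}_t}^*$, can be obtained either by repeating the argument above for the adjoint operator or, more economically, by invoking $L_{\tilde{A}_t}^* = \tilde{*}_t L_{\tilde{A}_t} \tilde{*}_t$ from \autoref{prop:DecompositionNonlinearMap} and controlling $\tilde{*}_t - *_{g_{\mathrm{model}}}$ in a weighted norm using the same metric-approximation estimate as above. The main obstacle throughout is the combinatorial bookkeeping of scaling factors: the $\ell$-form degree, the number of covariant derivatives, the weight exponent $\beta$, and the shift $\beta \mapsto \beta - 1$ coming from the first-order nature of $L$ must all combine consistently for the equality ``up to $O(t^{1/2})$'' to hold, which is essentially the codimension-$6$ analogue of \cite[Proposition~7.7]{Walpuski-InstantonsKummer}.
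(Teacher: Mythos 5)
Your proof plan follows the same route as the paper: pull back along $\delta_{t,s_0}$, observe that the rescaling $\mathrm{s}_{\beta,t,s_0}$ turns the first inequality into an exact equality when all structures are of product form, and then bound the deviation of the true structures from the product ones. Writing out the explicit matrix expressions for $L_{\hat{A}_S}\mathrm{s}_{\beta,t,s_0}$ and $\mathrm{s}_{\beta-1,t,s_0}L_{\tilde{A}_t}$, as the paper does, is exactly your ``direct computation''; the reduction of the third inequality to the second via $L^*_{\tilde{A}_t} = \tilde{*}_t L_{\tilde{A}_t}\tilde{*}_t$ matches \autoref{prop:DecompositionNonlinearMap}. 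So the architecture is identical.

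Where you need to tighten up is the bookkeeping of which error term is responsible for which power of $t$. For the first inequality the weighted H\"older norm is computed solely with respect to $\tilde{g}_t$, and $\delta_{t,s_0}^*\tilde{g}_t$ differs from $t^2(g_{\mathbb{R}}\oplus g_{\hat{Z}_S})$ only through the cut-off correction $\diff(\chi_t (t\tau_S)^*\sigma_S^t)$, which contributes an error of size $O(t^6)$; \autoref{rem:improvedEstimates} plays no role here since $\phi_t-\tilde{\phi}_t$ is not involved in the definition of the norm. Your ``$+\,O(t^{1/2})$'' pointwise estimate is a coarser-than-needed claim that happens to be harmless but misattributes its origin. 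More importantly, your error list for the second inequality reads $\{\delta^*\tilde{A}_t-\hat{A}_S,\ \delta^*\tilde{g}_t-t^2 g_{\text{model}},\ \delta^*\tilde{\psi}_t-t^4\psi_{\hat{Z}_S}\}$; all three are $O(t^4)$ or $O(t^6)$. If those were really all the error sources, the lemma would hold with $t^4$ in place of $t^{1/2}$ and the citation of \autoref{theo:torsionfree_G2_structure} would be spurious. The $t^{1/2}$ in the statement comes from the fact that $L_{\tilde{A}_t}$ is defined with the \emph{torsion-free} form $\psi_t = *_{\phi_t}\phi_t$ in the $\psi_t\wedge\diff_A$ entry (while the adjoints are taken with $\tilde{*}_t$; cf.~\autoref{subsec: gauge theory on Kummer}). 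You therefore need the additional comparison $\psi_t-\tilde{\psi}_t = O(t^{1/2})$, which is exactly what \autoref{theo:torsionfree_G2_structure} supplies, before passing from $\tilde{\psi}_t$ to the model. Your proposal cites the right theorem and lands on the right power of $t$, but without this extra step the chain of estimates does not actually close; I would make the role of $\psi_t$ versus $\tilde{\psi}_t$ explicit.
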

\begin{proof}
The proof is the same as \cite[Proposition~7.7]{Walpuski-InstantonsKummer}: $\delta_{t,s_0}$ pulls back the metric $g_\mathbb{R}\oplus t^2 g_{\hat{Z}_S}$ to $t^2(g_\mathbb{R} \oplus g_{\hat{Z}_S})$. Thus, if we would calculate the norm on $\hat{\mathcal{V}}_{S,\kappa}^t$ with respect to the product metric $g_\mathbb{R}\oplus t^{2}g_{\hat{Z}_S}$, then the first chain of inequalities would actually hold as an equality. However, the norm on $\hat{\mathcal{V}}_{S,\kappa}^t$ is calculated with respect to the metric $\tilde{g}_t$ which only agrees with $g_\mathbb{R}\oplus t^2 g_{\hat{Z}_S}$ up to $\mathcal{O}(t^{6})$; thus the inequality.

For the second, we calculate 
\begin{align*}
L_{\hat{A}_S} s_{\beta,t,s_0} (a,\xi) = t^{-\beta-7} \begin{pmatrix}
t^6\hat{\psi}_{S} \wedge \diff_{\hat{A}_S} \delta_{t,s_0}^* a & -\diff_{\hat{A}_S}^* \delta_{t,s_0}^*\xi \\
t^6 \diff_{\hat{A}_S}^* \delta_{t,s_0}^* a & 0
\end{pmatrix}
\end{align*}
(where $\hat{\psi}_S$ denotes the product $\G_2$-structure on $\mathbb{R}\times \hat{Z}_S$ and $*_S$ its associated Hodge-star operator) and 
\begin{align*}
s_{\beta-1,t,s_0} L_{\tilde{A}_t} (a,\xi) = t^{-\beta-5} \begin{pmatrix}
\delta_{t,s_0}^* \big(\psi_t \wedge \diff_{\tilde{A}_t} a\big) & - \delta_{t,s_0}^* \diff_{\tilde{A}_t}^* \xi \\
t^6\delta_{t,s_0}^* \diff_{\tilde{A}_t}^*  a & 0
\end{pmatrix}.
\end{align*}
Again, if everything in the definition of $L_{\tilde{A}_t}$ came from a product structure (i.e. on $\hat{\mathcal{V}}_{S,\kappa}^t$: $\tilde{A}_t =\hat{A}_S$ and $\tilde{\psi}_t = \hat{\psi}_{S,t}$ where $\hat{\psi}_{S,t}$ is associated to $g_\mathbb{R}\oplus t^2 g_{\hat{Z}_S}$), then both operators would agree. As in the previous situation one needs to estimate how far all entries in $L_{\tilde{A}_t}$ are from those defined from product structures. This uses \autoref{theo:torsionfree_G2_structure} which explains the factor $t^{1/2}$ in the estimate. The same holds true for $L_{\tilde{A}_t}^*$. 
\end{proof}

For the rest of this section we make the following assumption:

\begin{assumption}\label{ass: very simple family of gluing data}
Let $\mathfrak{F} \subset \mathbb{R}$ be a compact interval and $(\tilde{A}_t+\tilde{a}_{t,\mathfrak{f}})_{\mathfrak{f} \in \mathfrak{F}}$ be the family of connections over $\hat{P}_t$ that arises from the construction in \autoref{prop:pregluing-family}. Assume that over $S\in \mathcal{S}$ the family of 1-forms $\hat{a}_{S,\mathfrak{f}} \in \Omega^1((\mathbb{R}\times \hat{Z}_S)/\Lambda_S,\mathfrak{g}_{\hat{P}_S})$ in the construction of \autoref{prop:pregluing-family} is of the form $\hat{a}_{S,\mathfrak{f}} = \hat{\xi}_{S,\mathfrak{f}} \diff s$ where both $\hat{\xi}_{S,\mathfrak{f}} \in \Omega^0((\mathbb{R}\times\hat{Z}_S)/\Lambda_S,\mathfrak{g}_{\hat{P}_S})$ and $(\tilde{\tau}_S)_*\hat{\xi}_{S,\mathfrak{f}}$ are constant.
\end{assumption}
\begin{remark}
As in \autoref{rem:perturbing almost instantons weakening assumptions} and \autoref{rem: pregluing families weakening assumptions}, these assumptions can be weakened at the cost of a heavier notation.
\end{remark}

 For $t>0$ and $\mathfrak{f} \in \mathfrak{F}$ we define $L_{\hat{A}_S+t\hat{a}_{S,\mathfrak{f}}}$ to be the linearised instanton operator associated to the connection $\hat{A}_S+t \hat{a}_{S,\mathfrak{f}} \in \mathcal{A}(\hat{P}_S)$ over $\mathbb{R}\times \hat{Z}_S$.
\begin{proposition}\label{prop:ALE-operator-comparison-family} 
For fixed $\beta\in \mathbb{R}$, $k\in \mathbb{N}_0$, and $\alpha\in(0,1)$ there exists a constant $C$ such that for every $t \in (0,T_K)$, $\mathfrak{f} \in \mathfrak{F}$, $\underline{a} \in \Omega^{1}(\hat{Y}_t,\mathfrak{g}_{\hat{P}_t}) \oplus \Omega^{7}(\hat{Y}_t,\mathfrak{g}_{\hat{P}_t})$, and $\underline{b} \in \Omega^{6}(\hat{Y}_t,\mathfrak{g}_{\hat{P}_t}) \oplus \Omega^{0}(\hat{Y}_t,\mathfrak{g}_{\hat{P}_t})$ the following estimates hold:
\begin{align*}
\Vert (L_{\tilde{A}_t+\tilde{a}_{t,\mathfrak{f}}} - \mathrm{s}_{\beta-1,t,s_0}^{-1} L_{\hat{A}_S+t \hat{a}_{S,\mathfrak{f}}} \mathrm{s}_{\beta,t,s_0})(\underline{a}) \VertWHt{k}{\beta-1}{(\hat{\mathcal{V}}_{S,\kappa}^t)}&\leq C t^{1/2} \Vert \underline{a} \VertWHt{k+1}{\beta}{(\hat{\mathcal{V}}_{S,\kappa}^t)}\\
\Vert (L_{\tilde{A}_t+\tilde{a}_{t,\mathfrak{f}}}^* - \mathrm{s}_{\beta-1,t,s_0}^{-1} L_{\hat{A}_S+t \hat{a}_{S,\mathfrak{f}}}^* \mathrm{s}_{\beta,t,s_0})(\underline{b}) \VertWHt{k}{\beta-1}{(\hat{\mathcal{V}}_{S,\kappa}^t)}&\leq C t^{1/2} \Vert \underline{b} \VertWHt{k+1}{\beta}{(\hat{\mathcal{V}}_{S,\kappa}^t)}.
\end{align*}
\end{proposition}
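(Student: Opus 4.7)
The strategy is to reduce to \autoref{lem:ALE-rescaling-comparison} by writing both operators as the sum of the unperturbed operator and a zeroth-order algebraic perturbation. Since $L_A$ depends on $A$ only through $\diff_A$ and $\diff_A^*$, the difference $L_{A+b}-L_A$ is algebraic in $b$, with coefficients built from the ambient coassociative form and Hodge star. Denoting this operator by $M_t(b)$ on $\hat{\mathcal{V}}_{S,\kappa}^t \subset (\hat{Y}_t,\tilde{\phi}_t)$ and by $M_S(b)$ on $(\mathbb{R}\times\hat{Z}_S,\hat{\phi}_S)$, one decomposes
\[
L_{\tilde{A}_t+\tilde{a}_{t,\mathfrak{f}}} - \mathrm{s}^{-1}_{\beta-1,t,s_0} L_{\hat{A}_S+t\hat{a}_{S,\mathfrak{f}}} \mathrm{s}_{\beta,t,s_0}
= \bigl(L_{\tilde{A}_t}-\mathrm{s}^{-1}_{\beta-1,t,s_0} L_{\hat{A}_S}\mathrm{s}_{\beta,t,s_0}\bigr) + \bigl(M_t(\tilde{a}_{t,\mathfrak{f}}) - \mathrm{s}^{-1}_{\beta-1,t,s_0} M_S(t\hat{a}_{S,\mathfrak{f}})\mathrm{s}_{\beta,t,s_0}\bigr).
\]
The first bracket is bounded by $Ct^{1/2}\Vert\underline{a}\VertWHt{k+1}{\beta}{(\hat{\mathcal{V}}_{S,\kappa}^t)}$ by \autoref{lem:ALE-rescaling-comparison}, so the task reduces to establishing the same bound for the second bracket.

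The key observation is that, by \autoref{ass: very simple family of gluing data}, $\tilde{a}_{t,\mathfrak{f}} = \hat{\xi}_{S,\mathfrak{f}}\diff s$ on $\hat{\mathcal{V}}_{S,\kappa}^t$ with coefficient constant in the $\mathbb{R}$-direction, so that $\delta_{t,s_0}^*\tilde{a}_{t,\mathfrak{f}} = t\,\hat{a}_{S,\mathfrak{f}}$ exactly. This is precisely why the $t$ factor appears on the model side: if the $\G_2$-structure on the gluing region were exactly the pullback (under $\delta_{t,s_0}$, up to the scaling $t^2$ on the $\hat{Z}_S$-factor) of the product structure on $\mathbb{R}\times\hat{Z}_S$, then $M_t(\tilde{a}_{t,\mathfrak{f}})$ and $\mathrm{s}^{-1}M_S(t\hat{a}_{S,\mathfrak{f}})\mathrm{s}$ would coincide identically, because the weights $t^{-\beta-\ell}$ in the definition of $\mathrm{s}_{\beta,t,s_0}$ encode exactly the conformal change of the Hodge star under the rescaling. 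Hence the entire discrepancy comes from comparing $\tilde{g}_t$ and $\tilde{\psi}_t$ on $\hat{\mathcal{V}}_{S,\kappa}^t$ to the product structure, and this comparison is $\mathcal{O}(t^{1/2})$ in the weighted Hölder norms by the same combination of the explicit pre-gluing estimate (giving an $\mathcal{O}(t^{6})$ error) and \autoref{theo:torsionfree_G2_structure} (giving the $\mathcal{O}(t^{1/2})$ error) that was used in the proof of \autoref{lem:ALE-rescaling-comparison}.

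The estimate for $L_{\tilde{A}_t+\tilde{a}_{t,\mathfrak{f}}}^*$ then follows from the identity $L^*_A = *_t L_A *_t$ noted in \autoref{prop:DecompositionNonlinearMap}, together with $\Vert *_t \VertWHt{k}{0}{} \leq c$ from \autoref{rem:definition_UpsilonA_different_adjoints}, since conjugation by $*_t$ commutes with the rescaling up to the same $\mathcal{O}(t^{1/2})$ error and the same bound therefore transfers to the adjoint. The only technical point to watch is keeping the weight factors $t^{-\beta-\ell}$ consistent with the form-degree $\ell$ being rescaled on either side of the decomposition; this is entirely parallel to the bookkeeping in \autoref{lem:ALE-rescaling-comparison} and needs no new ideas. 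The main (mild) conceptual hurdle is thus just to verify the identity $\delta_{t,s_0}^*\tilde{a}_{t,\mathfrak{f}} = t\,\hat{a}_{S,\mathfrak{f}}$, which is forced by the constancy hypothesis in \autoref{ass: very simple family of gluing data}; once this is in hand, the rest of the argument is a direct extension of the proof of the corresponding model lemma.
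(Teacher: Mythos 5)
Your proof is correct and takes essentially the same approach as the paper, which simply says "proven as in \autoref{lem:ALE-rescaling-comparison}" and then records the one new observation $\delta_{t,s_0}^*(\hat{\xi}_{S,\mathfrak{f}}\diff s) = t\hat{\xi}_{S,\mathfrak{f}}\diff s$ that you also identified as the crux. Your decomposition into the unperturbed difference (already covered by \autoref{lem:ALE-rescaling-comparison}) plus the algebraic perturbation difference is a clean way to organise the same computation, and your remarks on the adjoint and on the weight bookkeeping match the level of detail given in the paper.
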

\begin{proof}
This is proven as in \autoref{lem:ALE-rescaling-comparison}. The $t$-dependence of the model connection $\hat{A}_S +t \hat{a}_{S,\mathfrak{f}}$ over $\mathbb{R}\times\hat{Z}_S$ is a consequence of $\delta_{t,s_0}^* (\hat{\xi}_{S,f}\diff s) = t \hat{\xi}_{S,f}\diff s$.
\end{proof}
\begin{proposition}\label{prop:Schauder-estimate-families-over-ALE}
For fixed $\beta\in \mathbb{R}$, $k\in \mathbb{N}_0$, and $\alpha \in (0,1)$, there exists a $c_{SI}>0$ and $0<T_{SI}<T_K$ such that for every $t \in (0,T_{SI})$, $\mathfrak{f} \in \mathfrak{F}$, and $\underline{b}\in \Omega^6(\mathbb{R}\times\hat{Z}_S,\mathfrak{g}_{\hat{P}_S}) \oplus \Omega^0(\mathbb{R}\times\hat{Z}_S,\mathfrak{g}_{\hat{P}_S})$ the estimate 
\begin{align*}
\Vert \underline{b} \VertWH{k+2}{\beta}{(\mathbb{R}\times\tau^{-1}(B_{t^{-1}\kappa/2}(0)))} &\leq c_{SI} \big(\Vert L_{\hat{A}_S+t \hat{a}_{S,\mathfrak{f}}} L_{\hat{A}_S+t \hat{a}_{S,\mathfrak{f}}}^* \underline{b} \VertWH{k}{\beta-2}{(\mathbb{R}\times\tau^{-1}(B_{t^{-1}\kappa}(0)))} \\
& \qquad \qquad \qquad \qquad\qquad\qquad \qquad \quad + \Vert \underline{b} \VertWC{0}{\beta}{(\mathbb{R}\times\tau^{-1}(B_{t^{-1}\kappa}(0)))} \big)  
\end{align*} 
holds over $\mathbb{R}\times\tau_S^{-1}(B_{t^{-1}\kappa}(0)) \subset \mathbb{R}\times \hat{Z}_S$.
\end{proposition}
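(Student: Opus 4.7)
The plan is to prove the estimate directly by the standard scale-invariant covering argument on the ALE cylinder $\mathbb{R}\times\hat{Z}_S$, taking care that all constants come out uniform in $t\in(0,T_{SI})$ and $\mathfrak{f}\in\mathfrak{F}$. First I note that $L_{\hat{A}_S+t\hat{a}_{S,\mathfrak{f}}}L_{\hat{A}_S+t\hat{a}_{S,\mathfrak{f}}}^*$ is a non-negative second-order elliptic operator whose principal symbol is a positive multiple of the identity (the same as the rough Laplacian), so the classical interior Schauder estimate is available in local charts.

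Concretely, I would cover $\mathbb{R}\times\tau_S^{-1}(B_{t^{-1}\kappa/2}(0))$ by a locally finite family of metric balls $B_i\coloneqq B(x_i,\rho_i)$ with $\rho_i\coloneqq w(x_i)/N$ for a sufficiently large fixed $N$, arranged so that the doubled balls $B(x_i,2\rho_i)$ still lie inside $\mathbb{R}\times\tau_S^{-1}(B_{t^{-1}\kappa}(0))$; this is precisely where the buffer between the two radii is spent. On each doubled ball I rescale by $\rho_i$ to a unit ball. The ALE decay of $g_{\hat{\omega}_S}$, the rate $-5$ HYM decay of $\hat{A}_S$, and the assumption that $\hat{\xi}_{S,\mathfrak{f}}$ is (asymptotically) constant together with the compactness of $\mathfrak{F}$ together ensure that on each rescaled unit ball the metric is uniformly $C^{k+2,\alpha}$-equivalent to the Euclidean metric, and the rescaled coefficients of both $\hat{A}_S$ and the perturbation $t\hat{\xi}_{S,\mathfrak{f}}\diff s$ are $C^{k+2,\alpha}$-bounded uniformly in $i$, $t$, and $\mathfrak{f}$. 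The classical interior Schauder estimate on the unit ball therefore yields
\begin{equation*}
\Vert\underline{b}\Vert_{C^{k+2,\alpha}(B_i)}\leq c\bigl(\Vert L_{\hat{A}_S+t\hat{a}_{S,\mathfrak{f}}}L_{\hat{A}_S+t\hat{a}_{S,\mathfrak{f}}}^*\underline{b}\Vert_{C^{k,\alpha}(B(x_i,2\rho_i))}+\Vert\underline{b}\Vert_{C^0(B(x_i,2\rho_i))}\bigr)
\end{equation*}
with a constant $c$ independent of $i$, $t$, and $\mathfrak{f}$.

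To finish, I reassemble these local bounds into the weighted global estimate: multiplying each local inequality by $w(x_i)^{-\beta}$ and using that $w$ is comparable to $w(x_i)$ throughout $B(x_i,2\rho_i)$ converts the local Euclidean norms into the local contributions to $\Vert\underline{b}\Vert_{C^{k+2,\alpha}_\beta(B_i)}$ and to the corresponding weighted norms with weights $\beta-2$ and $\beta$ on the right-hand side; taking a supremum over $i$ and handling the Hölder seminorms in the usual way (by inserting intermediate points when two points lie in overlapping balls) then gives the desired estimate. The main obstacle is the uniformity in $t$, which reduces to checking that the rescaled perturbation $t\hat{\xi}_{S,\mathfrak{f}}\diff s\vert_{B_i}$ has $C^{k+2,\alpha}$-norm bounded independently of $t\in(0,T_{SI})$, $i$, and $\mathfrak{f}$; this follows from the assumed asymptotic constancy of $\hat{\xi}_{S,\mathfrak{f}}$ together with the compactness of $\mathfrak{F}$, and shrinking $T_{SI}$ if necessary preserves uniform strong ellipticity of the full operator throughout.
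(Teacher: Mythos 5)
Your overall strategy---a scale-invariant covering of the domain by balls $B(x_i,\rho_i)$ with $\rho_i\sim w(x_i)$, rescaling to unit balls, applying the classical interior Schauder estimate, and reassembling into a weighted bound---is exactly the mechanism the paper uses, split there between a fixed compact region $\mathbb{R}\times K$ and its complement, so the route is the same. However, the justification you give for the crucial uniformity in $t$ is incorrect, and the gap sits precisely on the point that the proposition is designed to manage.

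You claim the $C^{k+2,\alpha}$-boundedness of the rescaled perturbation $t\hat{\xi}_{S,\mathfrak{f}}\diff s\vert_{B_i}$ ``follows from the assumed asymptotic constancy of $\hat{\xi}_{S,\mathfrak{f}}$ together with the compactness of $\mathfrak{F}$''. Those two facts control $\Vert\hat{\xi}_{S,\mathfrak{f}}\Vert$ uniformly, but they do not control what happens after rescaling. Pulling the $1$-form $t\hat{\xi}_{S,\mathfrak{f}}\diff s$ back by the dilation $\rho_i^{-1}$ from $B(x_i,2\rho_i)$ to a unit ball multiplies its coefficient by $\rho_i$, so the quantity that actually enters the rescaled operator is $t\rho_i\,\hat{\xi}_{S,\mathfrak{f}}$. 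On the full product $\mathbb{R}\times\hat{Z}_S$ the radii $\rho_i\sim w(x_i)$ are unbounded, and $t\rho_i$ is not bounded for any fixed $t>0$; constancy of $\hat{\xi}_{S,\mathfrak{f}}$ and compactness of $\mathfrak{F}$ do nothing to fix this. The missing (and essential) observation is that you are working only over $\mathbb{R}\times\tau_S^{-1}(B_{t^{-1}\kappa}(0))$, so that $r(z_i)\lesssim t^{-1}\kappa$, hence $\rho_i\lesssim (1+t^{-1}\kappa)/N$ and therefore $t\rho_i\lesssim (T_{SI}+\kappa)/N$. This is precisely why the proposition restricts the estimate to a ball of radius $t^{-1}\kappa$ rather than stating it over all of $\mathbb{R}\times\hat{Z}_S$, and it is the point the paper highlights explicitly (``If now $p \in (\mathbb{R}\times \tau_S^{-1}(B_{t^{-1}\kappa}(0)))$, then $tR\leq T_{SI}+\kappa$''). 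Your closing remark that ``shrinking $T_{SI}$ if necessary preserves uniform strong ellipticity'' does not repair this: shrinking $T_{SI}$ has essentially no effect on the bound $t\rho_i\lesssim\kappa$, which persists for all small $t$. Once you insert the correct reason---the domain restriction bounding $t\rho_i$---the rest of your argument goes through.
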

\begin{proof}[Proof sketch]
The Schauder estimate for the fixed translation- and asymptotically dilation-invariant operator $L_{\hat{A}_S}L_{\hat{A}_S}^*$ is standard and proven in \cite[Proposition~7.11]{Walpuski-InstantonsKummer} (see also \cite[Theorem~3.1]{NirenbergWalker-Null_space_of_elliptic_operator} and \cite[Proposition~1.6]{Bartnik-MassofALF}). The proof for $L_{\hat{A}_S+t \hat{a}_{S,\mathfrak{f}}} L_{\hat{A}_S+t \hat{a}_{S,\mathfrak{f}}}^*$ (which is not asymptotically dilation-invariant anymore due to the $t \hat{a}_{S,\mathfrak{f}}$-term) and the observation that $c_{SI}$ can be chosen independently of $\mathfrak{f}$ is similar and we only sketch the argument:

When restricted to $\mathbb{R}\times K$ for a fixed compact set $K\subset \hat{Z}_S$, the operator $L_{\hat{A}_S+t \hat{a}_{S,\mathfrak{f}}} L_{\hat{A}_S+t \hat{a}_{S,\mathfrak{f}}}^*\in \textup{Lin}(C^{k+2,\alpha}_\beta,C^{k,\alpha}_{\beta-2})$ is for small $t< T_{SI}$ an arbitrarily small perturbation of $L_{\hat{A}_S} L_{\hat{A}_S}^*$. The estimate for $L_{\hat{A}_S+t \hat{a}_{S,\mathfrak{f}}} L_{\hat{A}_S+t \hat{a}_{S,\mathfrak{f}}}^*$ over $\mathbb{R}\times K$ follows therefore from the $\mathfrak{f}$-independent Schauder estimate of $L_{\hat{A}_S}L^*_{\hat{A}_S}$.

Choosing $K \subset \hat{Z}_S$ in the previous step sufficiently large, we may assume that over $(\mathbb{R}\times \hat{Z}_S)\setminus (\mathbb{R}\times K)$ the connections $\hat{A}_S + t\hat{a}_{S,\mathfrak{f}}$ and $(\tilde{\tau}_S)^*A_\infty + t \hat{a}_{S,\mathfrak{f}}$ are as close as we wish. For any $p\coloneqq (s,z) \in (\mathbb{R}\times \hat{Z}_S)\setminus (\mathbb{R}\times K)$ and $R\coloneqq \tfrac{1}{8}(1+r(z))$ we can now prove a local estimate \[ \Vert \underline{b} \VertWH{k+2}{\beta}{(B_R(p))} \leq c \big(\Vert L_{\hat{A}_S+t \hat{a}_{S,\mathfrak{f}}} L_{\hat{A}_S+t \hat{a}_{S,\mathfrak{f}}}^* \underline{b} \VertWH{k}{\beta-2}{(B_{2R}(p))} + \Vert \underline{b} \VertWC{0}{\beta}{(B_{2R}(p))} \big) \] over the ball $B_R(p)$ as follows: After rescaling (and identifying $\hat{Z}_S\setminus K \cong (\mathbb{C}^3/\Gamma_S) \setminus \tau_S(K)$) this estimate becomes equivalent to  
\begin{align}\label{equ: auxiliary rescaled Schauder estimate}
\Vert \underline{b} \VertH{k+2}{(B_1(0))} \leq c \big( \Vert  L_{A_\infty +tR(\tau_S)_* \hat{a}_{S,\mathfrak{f}}}L_{A_\infty +tR(\tau_S)_* \hat{a}_{S,\mathfrak{f}}}^* \underline{b} \VertH{k}{(B_2(0)} + \Vert \underline{b} \VertC{0}{(B_2(0))} \big)
\end{align}
over the fixed balls $B_1(0),B_2(0) \subset \mathbb{R}\times \mathbb{C}^3$ (see also \autoref{lem:C^3-rescaling-comparison} below). If now $p \in (\mathbb{R}\times \tau_S^{-1}(B_{t^{-1}\kappa}(0)))$, then $tR\leq T_{SI}+\kappa$ and $L_{A_\infty +tR(\tau_S)_* \hat{a}_{S,\mathfrak{f}}}L_{A_\infty +tR(\tau_S)_* \hat{a}_{S,\mathfrak{f}}}^*$ differs from $L_{A_\infty}L_{A_\infty}^*$ only by lower order terms which are uniformly bounded. The estimate \eqref{equ: auxiliary rescaled Schauder estimate} follows therefore from the standard interior Schauder estimate.

The estimate over $\mathbb{R}\times K$ and over each $B_R(p)$ for $p \in (\mathbb{R}\times \tau_S^{-1}(B_{t^{-1}\kappa}(0)))\setminus (\mathbb{R}\times K)$ imply the estimate over $(\mathbb{R}\times \tau_S^{-1}(B_{t^{-1}\kappa}(0)))$.
\end{proof}

We end this section by comparing the family of operators $L_{\hat{A}_S+t\hat{a}_{S,\mathfrak{f}}}$ to its asymptotic limit $L_{A_\infty+t (\tau_S)_*\hat{a}_{S,\mathfrak{f}}}$ on $\mathbb{R}\times \mathbb{C}^3/\Gamma_S$. For this recall that $\hat{Z}_S$ comes equipped with a resolution map $\tau_S \col \hat{Z}_S \to \mathbb{C}^3/\Gamma_S$ which is covered by the framing $\tilde{\tau}_S \col \hat{P}_{\vert \hat{Z}_S\setminus \tau_S^{-1}(0)} \to (P_\infty)_{\vert \mathbb{C}^3\setminus\{0\}/\Gamma_S}$ such that $\vert \nabla^k ((\tilde{\tau}_S)_*A - A_\infty)\vert = \mathcal{O}(r^{-5-k})$. 

\noindent For $R>0$ let 
\begin{align*}
\tilde{\delta}_R \col \mathbb{R}\times \mathbb{C}^3/\Gamma &\to \mathbb{R}\times \mathbb{C}^3/\Gamma  \\
(s,z) & \mapsto (Rs,Rz)
\end{align*}
and with this dilation at hand we define the following rescaling:
\begin{definition}\label{def:scaling_on_C^3}
Let $\beta,R_1,R_2\in \mathbb{R}$ be constants with $R_1, R_2>0$. For $a \in \Omega^{\ell}(\mathbb{R}\times \hat{Z},\mathfrak{g}_{\hat{P}})$ we define \[\tilde{\mathrm{s}}_{\beta,R_1,R_2}(a) \in \Omega^{\ell}\big(\mathbb{R}\times (\mathbb{C}^3\setminus B_{R_1/R_2}(0))/\Gamma),\mathfrak{g}_{P_\infty}\big)  \]
as 
\begin{align*}
\tilde{\mathrm{s}}_{\beta,R_1,R_2}(a) \coloneqq R_2^{-\beta-\ell} \tilde{\delta}_{R_2}^*\tilde{\tau}(a_{\vert \hat{Z}\setminus \tau^{-1}(B_{R_1})})
\end{align*}
where we use parallel transport with respect to $A_\infty$ in radial direction to identify different fibers of $\mathfrak{g}_{P_\infty}$. As above, we will denote the induced map on any sum of the form $\Omega^k(\mathbb{R}\times\hat{Z},\mathfrak{g}_{\hat{P}})\oplus \Omega^{\ell}(\mathbb{R}\times\hat{Z},\mathfrak{g}_{\hat{P}})$ by $\tilde{\mathrm{s}}_{\beta,R_1,R_2}$ as well.
\end{definition}
\begin{lemma}\label{lem:C^3-rescaling-comparison} 
For fixed $\beta\in \mathbb{R}$, $k\in \mathbb{N}_0$, and $\alpha\in(0,1)$ there exists a constant $C$ such that for every $R_1, R_2 \geq 1$: 
\[C^{-1} \Vert \underline{a} \VertWH{k}{\beta}{(\mathbb{R}\times (\hat{Z}\setminus B_{R_1}))} \leq \Vert \tilde{\mathrm{s}}_{\beta,R_1,R_2} \underline{a} \VertWH{k}{\beta}{(\mathbb{R}\times (\mathbb{C}^3\setminus B_{R_1/R_2})/\Gamma)} \leq C\Vert \underline{a} \VertWH{k}{\beta}{(\mathbb{R}\times (\hat{Z}\setminus B_{R_1}))} 
\]
\[
\Vert (L_{\hat{A}_S+t\hat{a}_{S,\mathfrak{f}}} - \tilde{\mathrm{s}}_{\beta-1,R_1,R_2}^{-1} L_{A_\infty+tR_2 \tau_*\hat{a}_{S,\mathfrak{f}}} \tilde{\mathrm{s}}_{\beta,R_1,R_2})\underline{a} \VertWH{k}{\beta-1}{}\leq C R_1^{-4} \Vert \underline{a} \VertWH{k+1}{\beta}{}
\]
\[
\Vert (L_{\hat{A}_S+t\hat{a}_{S,\mathfrak{f}}}^* - \tilde{\mathrm{s}}_{\beta-1,R_1,R_2}^{-1} L_{A_\infty+tR_2 \tau_*\hat{a}_{S,\mathfrak{f}}}^* \tilde{\mathrm{s}}_{\beta,R_1,R_2})\underline{a} \VertWH{k}{\beta-1}{}\leq C R_1^{-4} \Vert \underline{a} \VertWH{k+1}{\beta}{}
\]
where the family of connection $(\hat{A}_S+\hat{a}_{S,\mathfrak{f}})_{\mathfrak{f} \in \mathfrak{F}}$ is as in \autoref{prop:ALE-operator-comparison-family}.
Furthermore, the weighted $C^{k,\alpha}_\beta$-norm on $\mathbb{R}\times (\mathbb{C}^3\setminus B_{R_1}(0))/\Gamma_S$ is defined as on $\mathbb{R}\times \hat{Z}$ but with the weight function $w(s,z) \coloneqq \vert z \vert$. 
\end{lemma}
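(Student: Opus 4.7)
The plan is to mimic \autoref{lem:ALE-rescaling-comparison} and \autoref{prop:ALE-operator-comparison-family}, but comparing the ALE space $\hat{Z}$ to its asymptotic model $\mathbb{C}^3/\Gamma$ rather than to $\hat{\mathcal{V}}_{S,\kappa}^t$. The essential new input is the scale-invariance of the flat $\G_2$-structure on $\mathbb{R}\times \mathbb{C}^3/\Gamma$ under the dilations $\tilde{\delta}_{R_2}$, which satisfy $\tilde{\delta}_{R_2}^* g_0 = R_2^2 g_0$ and $\tilde{\delta}_{R_2}^* w = R_2 w$, together with the known decay rates of the ALE Calabi--Yau structure and of the HYM connection at infinity.

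For the first estimate (norm equivalence) I would first verify that on $(\mathbb{R}\times \mathbb{C}^3/\Gamma, g_0)$ the rescaling $\tilde{\mathrm{s}}_{\beta,R_1,R_2}$ preserves the weighted Hölder norm \emph{exactly}. Indeed, for an $\ell$-form one has $|\tilde{\delta}_{R_2}^*a|_{g_0} = R_2^{\ell}\tilde{\delta}_{R_2}^*|a|_{g_0}$ and analogously for $\nabla^j$ (using the scale-invariance of the flat Levi--Civita connection), so that the prefactor $R_2^{-\beta-\ell}$ in the definition of $\tilde{\mathrm{s}}_{\beta,R_1,R_2}$ cancels with $\tilde{\delta}_{R_2}^*w = R_2 w$ term by term in $\Vert\cdot\Vert_{C^{k,\alpha}_\beta}$. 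Transporting this identity back to $\hat{Z}\setminus\tau^{-1}(B_{R_1}(0))$ via $\tilde{\tau}$ introduces a distortion controlled by $\tilde{\tau}^*g_0 - g_{\hat{\omega}} = O(r^{-6}) \leq O(R_1^{-6})$ from \eqref{eq:ALE-Kählerform_decay}; for $R_1\geq 1$ this is absorbed into the uniform constant $C$.

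For the two operator estimates, the key observation is that $L_{A_\infty + tR_2\tau_*\hat{a}_{S,\mathfrak{f}}}$ on $(\mathbb{R}\times \mathbb{C}^3/\Gamma, g_0)$ is precisely the $\tilde{\delta}_{R_2}$-conjugate of $L_{A_\infty + t\tau_*\hat{a}_{S,\mathfrak{f}}}$ on the rescaled domain: this uses $\tilde{\delta}_{R_2}^*(\hat{\xi}_{S,\mathfrak{f}}\,\diff s) = R_2\,\hat{\xi}_{S,\mathfrak{f}}\,\diff s$ from \autoref{ass: very simple family of gluing data} together with the scale-covariance of $L_A$ when the connection and the $\G_2$-structure are rescaled in the matched way. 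Modulo this conjugation, it then remains to estimate $L_{\hat{A}_S + t\hat{a}_{S,\mathfrak{f}}} - \tilde{\tau}^* L_{A_\infty + t\tau_*\hat{a}_{S,\mathfrak{f}}}$ on $\hat{Z}\setminus\tau^{-1}(B_{R_1}(0))$. By \autoref{prop:HYMConnection improve decay} and \eqref{eq:ALE-Kählerform_decay} the connection difference $\tilde{\tau}_*\hat{A}_S - A_\infty$ decays like $r^{-5-k}$ and the metric/$\G_2$-structure difference like $r^{-6-k}$. For $\underline{a}$ of unit $C^{k+1,\alpha}_\beta$-norm the zeroth-order connection contribution is bounded pointwise by $Cr^{-5}w^\beta \leq CR_1^{-4}w^{\beta-1}$ and the first-order metric contribution by $Cr^{-6}w^{\beta-1} \leq CR_1^{-5}w^{\beta-1}$, so the $C^{k,\alpha}_{\beta-1}$-norm of the error is $O(R_1^{-4})$ after the standard Leibniz bookkeeping of the Hölder seminorm and the higher-derivative terms. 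The estimate for $L_A^*$ follows verbatim since $L_A^* = *L_A*$ and the two Hodge stars also differ by $O(r^{-6})$. The only technical obstacle is careful bookkeeping of the $R_2$-exponents through the pull-backs of forms, the $\G_2$-structure, and the connections; the underlying analysis is otherwise routine.
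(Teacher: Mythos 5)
Your proposal is correct and follows essentially the same route as the paper: the paper's proof is a one-line remark that the argument is identical to \autoref{lem:ALE-rescaling-comparison} and \autoref{prop:ALE-operator-comparison-family}, with the $R_1^{-4}$ factor arising from $\Vert \hat{A}_S - A_\infty \VertWH{k}{-1}{(\mathbb{R}\times(\hat{Z}\setminus B_{R_1}))} \leq c R_1^{-4}$ (i.e.\ from the $O(r^{-5})$ decay of the connection). Your expansion — exact scale-invariance of the weighted norm under $\tilde{\delta}_{R_2}$ on the flat model, the $R_2$-matching of $t\hat{a}_{S,\mathfrak{f}}$ via \autoref{ass: very simple family of gluing data}, and the decay bookkeeping yielding $r^{-5}w^\beta \leq R_1^{-4}w^{\beta-1}$ on $r \geq R_1$ — is exactly the content the paper is leaving implicit.
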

This is proven as \autoref{lem:ALE-rescaling-comparison} and \autoref{prop:ALE-operator-comparison-family}. The factor of $R_1^{-4}$ on the right-hand side of the second and third estimate comes from $\Vert A - A_\infty \VertWH{k}{-1}{(\mathbb{R}\times(\hat{Z}\setminus B_{R_1}))} \leq c R_1^{-4}$.

\begin{proposition}\label{prop:instanton_deformations_vanish_over_euklidean_space}
Let $\beta < 0$ and $\underline{b} \in C^{k,\alpha}_{\ell oc} (\mathfrak{g}_{P_\infty}\oplus\Lambda^6T^*(\mathbb{R}\times \mathbb{C}^3/\Gamma)\otimes \mathfrak{g}_{P_\infty})$ satisfy $\Vert \underline{b} \VertWC{0}{\beta}{} < \infty$ and $L_{A_\infty}L_{A_\infty}^* \underline{b}=0$. Then $\underline{b} = 0.$
\end{proposition}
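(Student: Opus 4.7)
The plan is to parallel the proof of \autoref{prop:instanton_deformations_come_from_HYM-deformations}, simplified by the Liouville-type rigidity available over the flat Euclidean model. First, elliptic regularity of $L_{A_\infty}L_{A_\infty}^*$ will upgrade $\underline{b}$ to a smooth section. Second, the identification $L_{A_\infty}L_{A_\infty}^* = -\partial_s^2 + 2\Delta_{\bar{\partial}_{A_\infty}}$ on $\Omega^{0,\textup{even}}$ remains valid here: the algebraic identifications and the computation from the earlier lemmas depend only on the Kähler structure on the transverse factor and on the HYM condition for the connection, both of which hold over $\mathbb{C}^3/\Gamma$ with the trivial Calabi--Yau structure.

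The next step is to pass to the universal cover, lifting $\underline{b}$ to a $\Gamma$-equivariant section of the trivialised bundle over $\mathbb{R}\times\mathbb{C}^3$. Under the flat trivialisation of $P_\infty$ the operator $\bar{\partial}_{A_\infty}$ becomes the ordinary Dolbeault operator, so the Weitzenböck formula reduces to $2\Delta_{\bar{\partial}} = \nabla^*\nabla$ acting componentwise. Each scalar component of the lifted section is therefore an ordinary harmonic function on $\mathbb{R}^7 = \mathbb{R}_s \times \mathbb{C}^3$ that is bounded and, in fact, decays like $(1+|z|)^\beta$ with $\beta<0$ in the $\mathbb{C}^3$-directions. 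Liouville's theorem forces each component to be constant, and the decay forces the constant to vanish; hence $\underline{b} = 0$.

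The main thing to check is that the flat trivialisation and the lift to the universal cover preserve both the weighted decay bound and the equation $L_{A_\infty}L_{A_\infty}^*\underline{b} = 0$. Both are routine because $A_\infty$ is the flat asymptotic model and $\Gamma$ acts freely on $\mathbb{C}^3\setminus\{0\}$, so the orbifold point causes no trouble once $\underline{b}$ is smooth. In contrast to the argument in \autoref{prop:instanton_deformations_come_from_HYM-deformations}, no intermediate $s$-independence step or appeal to $\mathcal{H}^1_{A,-5}$ is required: Liouville applies directly to the bounded harmonic lift on $\mathbb{R}^7$.
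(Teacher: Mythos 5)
Your proof is correct, and it takes a genuinely different route from the paper's. The paper first invokes the lemma preceding \autoref{prop:instanton_deformations_come_from_HYM-deformations} (which rests on \cite[Lemma~7.10]{Walpuski-InstantonsKummer}) to conclude that any bounded solution of $L_{A_\infty}L_{A_\infty}^*\underline{b}=0$ must be constant in the $\mathbb{R}$-direction, and only then reduces to the equation $\Delta_{\bar{\partial}_{A_\infty}}\underline{b}=0$ on the six-dimensional transverse slice $\mathbb{C}^3/\Gamma$, where the decay together with the subharmonicity of $\vert\underline{b}\vert^2$ and the maximum principle (exactly as in \autoref{lem:delbar_harmonic_with_decay}) forces $\underline{b}=0$. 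You instead lift to the universal cover $\mathbb{R}\times\mathbb{C}^3=\mathbb{R}^7$, use $L_{A_\infty}L_{A_\infty}^*=-\partial_s^2+2\Delta_{\bar{\partial}_{A_\infty}}$ together with the Weitzenböck formula $2\Delta_{\bar{\partial}}=\nabla^*\nabla$ (valid here because both the base and the asymptotic connection are flat), observe that the lift solves $\Delta_{\mathbb{R}^7}=0$ componentwise in the trivialisation, and appeal directly to Liouville's theorem for bounded harmonic functions on $\mathbb{R}^7$, after which the decay in the $\mathbb{C}^3$-directions kills the constant. The payoff of your approach is that it bypasses the $s$-independence lemma entirely and handles the $\mathbb{R}$-direction and the $\mathbb{C}^3$-direction in one stroke, making it more elementary and self-contained for this flat model; the paper's approach, on the other hand, reuses the $s$-independence machinery that it needs anyway for \autoref{prop:instanton_deformations_come_from_HYM-deformations} (where the transverse manifold is not flat and Liouville would not directly apply), so it keeps the two propositions structurally parallel. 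One small point worth making explicit in a final write-up: since $\Gamma$ fixes the origin, the lift is only a priori defined away from $\{0\}\subset\mathbb{C}^3$, but the hypothesis $\Vert\underline{b}\Vert_{C^0_\beta}<\infty$ with $\beta<0$ keeps the lift bounded there, so the singularity at the origin is removable for the harmonic lift; alternatively, if (as in the paper's usage in \autoref{prop:LinearEstimate}) $\underline{b}$ is already $C^{k,\alpha}_{loc}$ across the orbifold point, this is automatic.
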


\begin{proof}
As in the proof of \autoref{prop:instanton_deformations_come_from_HYM-deformations} we obtain that (under an analogous identification) $\underline{b}$ is constant in $\mathbb{R}$-direction and satisfies $\Delta_{\bar{\partial}_{A_\infty}} \underline{b}=0$ over $\mathbb{C}^3/\Gamma$. By the maximum principle $\underline{b}=0$.
\end{proof}

\subsection{The linear estimate}

This section verifies the second point of \autoref{theo:perturbing_almost_instantons} and \autoref{bul:infinitesimally_irreducible} in \autoref{prop:infinitesimal_irreducibility} for a simple case of connections constructed in \autoref{prop:pregluing-family}. 
\begin{assumption}\label{ass:invertible_linearisations}
Assume that we are in the set-up of \autoref{prop:pregluing-family} where for every $S\in \mathcal{S}$ and $\mathfrak{f} \in \mathfrak{F}$, the 1-form $\hat{a}_{S,\mathfrak{f}}\in \Omega^1((\mathbb{R}\times\hat{Z}_S)/\Lambda_S,\mathfrak{g}_{\hat{P}_S})$ is as in \autoref{ass: very simple family of gluing data}. Furthermore, assume that
\begin{enumerate}
\item  for every $\mathfrak{f} \in \mathfrak{F}$ the only $H$-invariant element in $\ker(L_{A_0+a_{0,\mathfrak{f}}}^*)$ is $0$,
\item  for each Hermitian Yang--Mills connection $\hat{A}_S\in \mathfrak{G}$ one of the following holds: 
\begin{itemize}
\item either $*_{\hat{\Omega}_S}\mathcal{H}^1_{\hat{A}_S}$ is $0$, where $\mathcal{H}^1_{\hat{A}_S}$ was defined in \autoref{def:HYM_infinitesimal_deformations} and $*_{\hat{\Omega}_S}$ in \autoref{def:anticomplex_HodgeStar},
\item alternatively, let $H_S<H$ consist of all elements $h \in H$ with $\lambda_0(h)_{\vert S}=\id$ and assume that the action $\hat{\tilde{\lambda}}_{[S]} \col H_S \to N_{\textup{Isom}(\mathbb{R}\times\hat{P}_{[S]})}(\Lambda_{[S]})/\Lambda_{[S]}$ (where $\hat{\tilde{\lambda}}_{[S]}$ is as in \autoref{rem:equiv_gluing_data}) lifts to $\hat{\tilde{\lambda}}_{[S]} \coloneqq  \id \times \hat{\tilde{\vartheta}} \col H_S \to \textup{Isom}(\mathbb{R}\times\hat{P}_{[S]})$. We assume that the only $H_S$-invariant element in $*_{\hat{\Omega}_S}\mathcal{H}^1_{\hat{A}_S}$ is $0$.
\end{itemize}
\end{enumerate}
\end{assumption}

\begin{proposition}\label{prop:LinearEstimate}
For $t\in (0,T_K)$ let $\pi_t \col \hat{P}_t \to \hat{Y}_t$ and $(\tilde{A}_t+\tilde{a}_{t,\mathfrak{f}})_{\mathfrak{f} \in \mathfrak{F}} \subset \mathcal{A}(\hat{P}_t)$ be the bundle and connections constructed in \autoref{prop:pregluing-family}. If \autoref{ass:invertible_linearisations} holds, then there exists for any fixed $k \in \mathbb{N}_0,$ $\alpha\in(0,1)$, and $\beta\in(-4,0)$ two constants $0<T_{le}<T_K$ and $c_{le}>0$ such that any $H$-invariant $\underline{b} \in \Omega^1(\hat{Y}_t,\mathfrak{g}_{P_t}) \oplus \Omega^7(\hat{Y}_t, \mathfrak{g}_{P_t})$ satisfies \[ \Vert \underline{b} \VertWHt{k+2}{\beta}{} \leq c_{le} \Vert L_{\tilde{A}_t+\tilde{a}_{t,\mathfrak{f}}}L_{\tilde{A}_t+\tilde{a}_{t,\mathfrak{f}}}^* \underline{b} \VertWHt{k}{\beta-2}{} \]  for any $t\in (0,T_{le})$ and $\mathfrak{f} \in \mathfrak{F}$.
\end{proposition}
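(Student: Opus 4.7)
The plan is to argue by contradiction via a standard concentration/blow-up argument, combining the three model-operator vanishing results (\autoref{prop:instanton_deformations_come_from_HYM-deformations}, \autoref{prop:instanton_deformations_vanish_over_euklidean_space}, and clause~(i) of \autoref{ass:invertible_linearisations}) with the rescaling comparisons \autoref{lem:ALE-rescaling-comparison}, \autoref{prop:ALE-operator-comparison-family}, and \autoref{lem:C^3-rescaling-comparison}, together with the uniform Schauder estimate \autoref{prop:Schauder-estimate-families-over-ALE}. Suppose the claimed estimate fails. Then there exist sequences $t_n \to 0$, $\mathfrak{f}_n \in \mathfrak{F}$, and $H$-invariant $\underline{b}_n$ with $\Vert \underline{b}_n \VertWHt{k+2}{\beta}{} = 1$ and $\Vert L_n L_n^* \underline{b}_n \VertWHt{k}{\beta-2}{} \to 0$, where $L_n \coloneqq L_{\tilde{A}_{t_n} + \tilde{a}_{t_n, \mathfrak{f}_n}}$. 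By compactness of $\mathfrak{F}$ I pass to a subsequence with $\mathfrak{f}_n \to \mathfrak{f}_\infty$ and pick base points $x_n \in \hat{Y}_{t_n}$ at which $w_{t_n}(x_n)^{-\beta}\vert \underline{b}_n(x_n)\vert \geq \tfrac{1}{4}$. A further subsequence falls into exactly one of three scenarios: $w_{t_n}(x_n)$ stays bounded below (bulk), $w_{t_n}(x_n) \to 0$ with $w_{t_n}(x_n)/t_n$ bounded (ALE), or $w_{t_n}(x_n)/t_n \to \infty$ (Euclidean).

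In the bulk scenario the restriction of $\underline{b}_n$ to compacta of $Y_0 \setminus \mathcal{S}$ is uniformly bounded in $C^{k+2,\alpha}$, while $\tilde{A}_{t_n} + \tilde{a}_{t_n, \mathfrak{f}_n}$ converges smoothly to the flat $A_0 + a_{0, \mathfrak{f}_\infty}$ there. Arzelà--Ascoli and a diagonal argument extract an $H$-invariant limit $\underline{b}_\infty$ on $Y_0 \setminus \mathcal{S}$ solving $L_{A_0 + a_{0,\mathfrak{f}_\infty}} L^*_{A_0 + a_{0,\mathfrak{f}_\infty}} \underline{b}_\infty = 0$. The weighted bound with $\beta < 0$ forces decay at $\mathcal{S}$, elliptic regularity extends $\underline{b}_\infty$ across the orbifold singularities, and integration by parts places it in $\ker L^*_{A_0 + a_{0, \mathfrak{f}_\infty}}$; clause~(i) of \autoref{ass:invertible_linearisations} then yields $\underline{b}_\infty = 0$, contradicting $\vert \underline{b}_\infty(x_\infty)\vert \geq \tfrac{1}{4} r_\infty^\beta > 0$.

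In the ALE scenario write $x_n = [s_n, z_n] \in \hat{\mathcal{V}}^{t_n}_{S, \kappa}$ and apply the rescaling $\mathrm{s}_{\beta, t_n, s_n}$ of \eqref{eq:rescaling_ALE}. \autoref{lem:ALE-rescaling-comparison}, \autoref{prop:ALE-operator-comparison-family}, and \autoref{prop:Schauder-estimate-families-over-ALE} provide uniform $C^{k+2,\alpha}_\beta$ bounds for the rescaled sequence on compacta of $\mathbb{R} \times \hat{Z}_S$, while $L_{\hat{A}_S + t_n \hat{a}_{S, \mathfrak{f}_n}} L^*_{\hat{A}_S + t_n \hat{a}_{S, \mathfrak{f}_n}}$ applied to it tends to zero in $C^{k,\alpha}_{\beta-2}$ up to $O(t_n^{1/2})$. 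Passing to a limit and letting $t_n \to 0$ produces a $C^{0}_\beta$-bounded $\underline{b}_\infty$ on $\mathbb{R} \times \hat{Z}_S$ with $L_{\hat{A}_S} L^*_{\hat{A}_S} \underline{b}_\infty = 0$, which by \autoref{prop:instanton_deformations_come_from_HYM-deformations} is the pullback of an element of $*_{\hat{\Omega}_S} \mathcal{H}^1_{\hat{A}_S}$. Because Assumption~\ref{ass:invertible_linearisations} forces the isotropy subgroup $H_S$ to lift as $\id \times \hat{\tilde{\vartheta}}$ on $\mathbb{R} \times \hat{P}_S$, this limit is genuinely $H_S$-invariant and clause~(ii) forces $\underline{b}_\infty = 0$, contradicting the pointwise lower bound. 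In the Euclidean scenario the additional rescaling combining $\mathrm{s}_{\beta, t_n, s_n}$ with $\tilde{\mathrm{s}}_{\beta, \cdot\,, R_n}$ of \autoref{def:scaling_on_C^3} at scale $R_n \coloneqq w_{t_n}(x_n)/t_n \to \infty$ lands the analysis on $\mathbb{R} \times \mathbb{C}^3/\Gamma_S$ with asymptotic operator $L_{A_\infty} L^*_{A_\infty}$ by \autoref{lem:C^3-rescaling-comparison}, and \autoref{prop:instanton_deformations_vanish_over_euklidean_space} supplies the final contradiction.

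The main difficulty will be organising the multi-scale rescaling so that the pointwise lower bound at the base point persists in the limit in each case, and so that the $H$-equivariance degenerates to the correct isotropy subgroup under rescaling near a stratum; the product structure of the lift demanded in clause~(ii) of \autoref{ass:invertible_linearisations} is essential here, because a generic lift of $\hat{\lambda}_{[S]}$ need not commute with the $\mathbb{R}$-translation freedom that survives in the ALE limit. Once these book-keeping points are in place, the three model results close each case with no further work, and a completely parallel argument (using \autoref{lem:delbar_harmonic_with_decay} on the ALE limit and $\xi \in \mathfrak{z}^{\perp, L^2}$ on the bulk limit) verifies the hypothesis of \autoref{prop:infinitesimal_irreducibility} needed for infinitesimal irreducibility of the perturbed family.
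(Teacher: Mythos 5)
Your proposal follows the same blow-up-by-contradiction strategy as the paper, with the same three scales of rescaling (bulk, ALE, Euclidean) and the same model-operator vanishing results closing each case. The paper organises it in two steps: it first proves a uniform Schauder-type estimate
\[
\Vert \underline{b} \VertWHt{k+2}{\beta}{} \leq c_{SI}^\prime \big(\Vert L_{\tilde{A}_t+\tilde{a}_{t,\mathfrak{f}}}L_{\tilde{A}_t+\tilde{a}_{t,\mathfrak{f}}}^* \underline{b} \VertWHt{k}{\beta-2}{} + \Vert \underline{b} \VertWCt{0}{\beta}{} \big),
\]
and then runs the contradiction argument with the normalisation $\Vert \underline{b}_n \VertWCt{0}{\beta}{}=1$; you instead normalise the full $C^{k+2,\alpha}_{\beta,t}$-norm and then claim a base point with $w_{t_n}(x_n)^{-\beta}\vert\underline{b}_n(x_n)\vert\geq \tfrac14$. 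That lower bound does not follow from your normalisation alone --- all the mass could sit in the higher-order seminorms --- so you implicitly need exactly the displayed Schauder estimate (over the ALE regions this is \autoref{prop:Schauder-estimate-families-over-ALE}; over the bulk it comes from interior Schauder estimates on $Y_0$) to conclude that $\Vert \underline{b}_n \VertWCt{0}{\beta}{}$ is bounded away from zero for large $n$. Making this step explicit would close the only real gap in the write-up. One further bookkeeping point: your trichotomy ``bounded below / $w_{t_n}/t_n$ bounded / $w_{t_n}/t_n\to\infty$'' is not disjoint (the first and third overlap since $t_n\to 0$); the second and third cases should both be restricted to subsequences with $w_{t_n}(x_n)\to 0$, as in the paper's Case~2a/2b split. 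Your observation about why the product structure of the lift $\id\times\hat{\tilde{\vartheta}}$ is needed in the ALE scenario --- so that $H_S$-equivariance survives the $\mathbb{R}$-translations $\delta_{t,s_n}$ used in the rescaling --- is the correct and genuinely subtle point, and matches the paper.
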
 

The proof of this proposition is very similar to \cite[Proposition~8.1]{Walpuski-InstantonsKummer}:

\begin{proposition}
For $t\in (0,T_K)$ let $\pi_t \col \hat{P}_t \to \hat{Y}_t$ and $(\tilde{A}_t+\tilde{a}_{t,\mathfrak{f}})_{\mathfrak{f}\in\mathfrak{F}} \subset \mathcal{A}(\hat{P}_t)$ be the bundle and connections constructed in \autoref{prop:pregluing-family}. Assume further, that the 1-forms $\hat{a}_{S,\mathfrak{f}}$ in the construction of $\tilde{a}_{t,\mathfrak{f}}$ are of the form described in \autoref{ass: very simple family of gluing data}. Then there exists for any $\beta \in \mathbb{R}$, $k\in \mathbb{N}_0$, and $\alpha\in(0,1)$ a $c_{SI}^\prime>0$ and $0<T_{SI}^\prime<T_K$ such that for all $t \in (0,T^\prime_{SI})$, $\mathfrak{f} \in \mathfrak{F}$, and $\underline{b} \in \Omega^6(\hat{Y}_t,\mathfrak{g}_{\hat{P}_t}) \oplus \Omega^0(\hat{Y}_t,\mathfrak{g}_{\hat{P}_t})$ we have \[ \Vert \underline{b} \VertWHt{k+2}{\beta}{} \leq c_{SI}^\prime \big(\Vert L_{\tilde{A}_t+\tilde{a}_{t,\mathfrak{f}}}L_{\tilde{A}_t+\tilde{a}_{t,\mathfrak{f}}}^* \underline{b} \VertWHt{k}{\beta-2}{} + \Vert \underline{b} \VertWCt{0}{\beta}{} \big). \]
\end{proposition}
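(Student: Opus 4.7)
The plan is to patch together a uniform-in-$t$ weighted interior Schauder estimate on the ``orbifold region'' of $\hat{Y}_t$ with the ALE Schauder estimate of \autoref{prop:Schauder-estimate-families-over-ALE} on each bubble, using the rescaling machinery of \autoref{lem:ALE-rescaling-comparison} and the operator comparison of \autoref{prop:ALE-operator-comparison-family} to bridge the two scales. Set $U_{\mathrm{orb}}' \coloneqq \hat{Y}_t \setminus \hat{\mathcal{V}}_{\kappa/2}^t$ and $U_{\mathrm{orb}} \coloneqq \hat{Y}_t \setminus \hat{\mathcal{V}}_{\kappa/4}^t$.

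On $U_{\mathrm{orb}}$ the weight $w_t$ is bounded below uniformly in $t$, so $\VertWHt{\cdot}{\beta}{(U_{\mathrm{orb}})}$ is uniformly equivalent to the unweighted Hölder norm on a fixed compact subset of $Y_0 \setminus \mathtt{J}(\mathcal{V}_{\kappa/8})$. On this region $\tilde{g}_t$ converges to the flat orbifold metric and, using the compactness of $\mathfrak{F}$ together with the explicit form of $\tilde{a}_{t,\mathfrak{f}}$ coming from $a_{0,\mathfrak{f}}$, the connection $\tilde{A}_t + \tilde{a}_{t,\mathfrak{f}}$ is bounded in every Hölder norm uniformly in $t$ and $\mathfrak{f}$. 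Standard interior Schauder estimates for the second order elliptic operator $L_{\tilde{A}_t+\tilde{a}_{t,\mathfrak{f}}}L_{\tilde{A}_t+\tilde{a}_{t,\mathfrak{f}}}^*$ therefore yield
\[
\Vert \underline{b}\VertWHt{k+2}{\beta}{(U_{\mathrm{orb}}')} \leq c\bigl( \Vert L_{\tilde{A}_t+\tilde{a}_{t,\mathfrak{f}}}L_{\tilde{A}_t+\tilde{a}_{t,\mathfrak{f}}}^* \underline{b}\VertWHt{k}{\beta-2}{(U_{\mathrm{orb}})} + \Vert \underline{b}\VertWCt{0}{\beta}{(U_{\mathrm{orb}})} \bigr)
\]
with a constant $c$ independent of $t$ and $\mathfrak{f}$.

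Next, fix $S \in \mathcal{S}$ and cover $\hat{\mathcal{V}}^t_{S,\kappa/2}$ by finitely many translates indexed by base points $s_0 \in \mathbb{R}/\Lambda_S$. For each $s_0$ the rescaling $\mathrm{s}_{\beta,t,s_0}$ of \eqref{eq:rescaling_ALE} converts $\VertWHt{\cdot}{\beta}{(\hat{\mathcal{V}}^t_{S,\kappa})}$ into $\VertWH{\cdot}{\beta}{(\mathbb{R}\times \tau_S^{-1}(B_{t^{-1}\kappa}(0)))}$ up to $t$-independent constants (\autoref{lem:ALE-rescaling-comparison}). Feeding $\mathrm{s}_{\beta,t,s_0}(\underline{b})$ into \autoref{prop:Schauder-estimate-families-over-ALE}, using the operator comparison
\[
\bigl\| L_{\tilde{A}_t+\tilde{a}_{t,\mathfrak{f}}}^{(\ast)} - \mathrm{s}_{\beta-1,t,s_0}^{-1} L_{\hat{A}_S + t\hat{a}_{S,\mathfrak{f}}}^{(\ast)}\mathrm{s}_{\beta,t,s_0}\bigr\|_{C^{k+1,\alpha}_{\beta,t}\to C^{k,\alpha}_{\beta-1,t}} \leq c\,t^{1/2}
\]
of \autoref{prop:ALE-operator-comparison-family} (applied twice to estimate $L L^*$), and unscaling, one arrives at
\[
\Vert \underline{b}\VertWHt{k+2}{\beta}{(\hat{\mathcal{V}}^t_{S,\kappa/2})} \leq c\bigl( \Vert L_{\tilde{A}_t+\tilde{a}_{t,\mathfrak{f}}} L_{\tilde{A}_t+\tilde{a}_{t,\mathfrak{f}}}^* \underline{b}\VertWHt{k}{\beta-2}{(\hat{\mathcal{V}}^t_{S,\kappa})} + \Vert \underline{b}\VertWCt{0}{\beta}{(\hat{\mathcal{V}}^t_{S,\kappa})} + t^{1/2}\Vert \underline{b}\VertWHt{k+2}{\beta}{(\hat{\mathcal{V}}^t_{S,\kappa})} \bigr).
\]

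Since $U_{\mathrm{orb}}'$ and $\{\hat{\mathcal{V}}^t_{S,\kappa/2}\}_{S \in \mathcal{S}}$ cover $\hat{Y}_t$, summing the two families of estimates gives
\[
\Vert \underline{b}\VertWHt{k+2}{\beta}{} \leq c\bigl(\Vert L_{\tilde{A}_t+\tilde{a}_{t,\mathfrak{f}}}L_{\tilde{A}_t+\tilde{a}_{t,\mathfrak{f}}}^* \underline{b}\VertWHt{k}{\beta-2}{} + \Vert \underline{b}\VertWCt{0}{\beta}{}\bigr) + c\,t^{1/2}\Vert \underline{b}\VertWHt{k+2}{\beta}{}.
\]
Shrinking $T_{SI}'$ so that $c\,t^{1/2} \leq \tfrac{1}{2}$ for $t \in (0,T_{SI}')$ lets the last term be absorbed on the left, producing the claim. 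The main technical point is keeping all constants uniform in $\mathfrak{f} \in \mathfrak{F}$: this rests on the compactness of $\mathfrak{F}$ combined with the dilation- and translation-invariance of the $\hat{a}_{S,\mathfrak{f}}$ built into \autoref{ass: very simple family of gluing data}, which is precisely what allows \autoref{prop:Schauder-estimate-families-over-ALE} and \autoref{prop:ALE-operator-comparison-family} to be applied with $\mathfrak{f}$-independent constants.
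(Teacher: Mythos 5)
Your proof takes essentially the same approach as the paper's: the paper also decomposes $\hat{Y}_t$ into the region $r_t^{-1}([\kappa/2,\kappa])$ (your $U_{\mathrm{orb}}'$), handled by the ordinary interior Schauder estimate together with $C^\infty$-convergence of $\tilde{g}_t$ and $\tilde{A}_t+\tilde{a}_{t,\mathfrak{f}}$ to the flat orbifold data and compactness of $\mathfrak{F}$, and the region $r_t^{-1}([0,\kappa/2])$, handled by combining \autoref{lem:ALE-rescaling-comparison}, \autoref{prop:ALE-operator-comparison-family}, and \autoref{prop:Schauder-estimate-families-over-ALE} with an absorption of the resulting $O(t^{1/2})$ error term for $t$ small. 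You merely spell out the absorption step that the paper abbreviates as "for sufficiently small $t$."
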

\begin{proof}
As in the proof of \autoref{prop:Schauder-estimate-families-over-ALE} it suffices to prove this over the two regions $r_t^{-1}([0,\kappa/2])$ and $r_t^{-1}[\kappa/2,\kappa]$ separately. Over $r_t^{-1}([0,\kappa/2])$ the estimate follows for sufficiently small $t$ from \autoref{lem:ALE-rescaling-comparison}, \autoref{prop:ALE-operator-comparison-family}, and \autoref{prop:Schauder-estimate-families-over-ALE}. 

The metric $\tilde{g}_t$ and the family of connections $(\tilde{A}_t+\tilde{a}_{t,\mathfrak{f}})_{\mathfrak{f} \in \mathfrak{F}}$ converges in $C^{\infty}(r_t^{-1}[\varepsilon,\kappa])$ for any fixed $\varepsilon>0$ to $g_0$ and $(A_0 + a_{0,\mathfrak{f}})_{\mathfrak{f} \in \mathfrak{F}}$. Since $\mathfrak{F}$ is compact, the coefficients in $L_{A_0+a_{0,\mathfrak{f}}}L_{A_0+a_{0,\mathfrak{f}}}^*$ are uniformly bounded and the estimate over $r_t^{-1}([\kappa/2,\kappa])$ follows therefore from the ordinary interior Schauder estimate on balls in $Y_0$ of a fixed size.
\end{proof}

The previous proposition reduces the proof of \autoref{prop:LinearEstimate} to the following:
\begin{proposition}
Assume that we are in the setup of \autoref{prop:LinearEstimate}. For any $\alpha\in (0,1)$ and $\beta \in (-4,0)$ there are constants $0<T_{le}^\prime <T_K$ and $c_{le}^\prime>0$ such that for all $t \in (0,T^\prime_{le})$, $\mathfrak{f} \in \mathfrak{F}$ and $H$-invariant $\underline{b} \in \Omega^6(\hat{Y}_t,\mathfrak{g}_{\hat{P}_t}) \oplus \Omega^0(\hat{Y}_t,\mathfrak{g}_{\hat{P}_t})$ we have \[ \Vert \underline{b} \VertWCt{0}{\beta}{} \leq c \Vert L_{\tilde{A}_t+\tilde{a}_{t,\mathfrak{f}}}L_{\tilde{A}_t+\tilde{a}_{t,\mathfrak{f}}}^* \underline{b} \VertWHt{0}{\beta-2}{} \]
\end{proposition}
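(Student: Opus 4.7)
The plan is a blow-up and rescaling contradiction argument, in the spirit of \cite[Section~8]{Walpuski-InstantonsKummer} but adapted to the family and equivariant setting. Assume the estimate fails; then there are sequences $t_i\to 0$, $\mathfrak{f}_i\in\mathfrak{F}$ and $H$-invariant $\underline{b}_i$ with $\Vert\underline{b}_i\VertWCt{0}{\beta}{}=1$ and $\Vert L_iL_i^*\underline{b}_i\VertWHt{0}{\beta-2}{}\to 0$, where $L_i\coloneqq L_{\tilde{A}_{t_i}+\tilde{a}_{t_i,\mathfrak{f}_i}}$. Pick concentration points $x_i\in\hat{Y}_{t_i}$ at which $w_{t_i}(x_i)^{-\beta}\vert\underline{b}_i(x_i)\vert\geq 1/2$. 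Using compactness of $\mathfrak{F}$ and finiteness of $\mathcal{S}$, pass to a subsequence with $\mathfrak{f}_i\to\mathfrak{f}_\infty\in\mathfrak{F}$ and, whenever $x_i$ enters some $\hat{\mathcal{V}}^{t_i}_{S,\kappa}$, with the stratum $S$ fixed. After one more subsequence, exactly one of three regimes occurs: (a) $r_{t_i}(x_i)$ stays bounded below; (b) $r_{t_i}(x_i)\to 0$ but $R_i\coloneqq r_{t_i}(x_i)/t_i\to\infty$; (c) $R_i$ is bounded. In each regime I would perform a suitable rescaling, invoke the preceding Schauder-type proposition together with Arzelà--Ascoli to extract a non-trivial limit $\underline{b}_\infty\in C^{k+2,\alpha}_{\ell oc}$ of the limiting operator with $\Vert\underline{b}_\infty\VertWC{0}{\beta}{}\leq 1$ and $\vert\underline{b}_\infty(x_\infty)\vert\geq 1/2$, and then contradict a rigidity result.

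In regime (a) the limit is an $H$-invariant solution of $L_{A_0+a_{0,\mathfrak{f}_\infty}}L_{A_0+a_{0,\mathfrak{f}_\infty}}^*\underline{b}_\infty=0$ on $Y_0\setminus\textup{Sing}(Y_0)$ of growth $O(w^\beta)$. Since $\beta\in(-4,0)$ avoids the indicial roots of this operator at the codimension-$6$ cone singularities, standard asymptotic regularity (\cite[Theorem~1.17]{Bartnik-MassofALF}, \cite[Section~I.5]{LockhartMcOwen-ellipticOperators_nonCompactMfds}) extends $\underline{b}_\infty$ across $\textup{Sing}(Y_0)$; integration by parts on the compact orbifold $Y_0$ then yields $L_{A_0+a_{0,\mathfrak{f}_\infty}}^*\underline{b}_\infty=0$, contradicting the first condition of \autoref{ass:invertible_linearisations}. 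In regime (b) I would combine $\mathrm{s}_{\beta,t_i,s_i}$ from~\eqref{eq:rescaling_ALE} with the ALE-to-cone rescaling $\tilde{\mathrm{s}}_{\beta,R_i,R_i}$ of \autoref{def:scaling_on_C^3} so as to rescale distances by $r_{t_i}(x_i)^{-1}$; both the $\Lambda_S$-lattice and the exceptional set of $\hat{Z}_S$ escape to infinity, and \autoref{prop:ALE-operator-comparison-family} together with \autoref{lem:C^3-rescaling-comparison} identify the limit operator with $L_{A_\infty}L_{A_\infty}^*$ on $\mathbb{R}\times(\mathbb{C}^3\setminus\{0\})/\Gamma_S$. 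Since $\beta>-4$, the origin is a removable singularity of $\underline{b}_\infty$, and this then contradicts \autoref{prop:instanton_deformations_vanish_over_euklidean_space}.

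Regime (c) is the most delicate. The rescaling $\mathrm{s}_{\beta,t_i,s_i}$ blows up the $\mathbb{R}$-factor by $t_i^{-1}$, making $\Lambda_S$ infinitely fine, so the limit lives on the universal cover $\mathbb{R}\times\hat{Z}_S$. \autoref{prop:ALE-operator-comparison-family} (with $t_i\hat{a}_{S,\mathfrak{f}_i}\to 0$, since $\hat{a}_{S,\mathfrak{f}}$ is bounded and $\mathfrak{F}$ is compact) shows that the limit solves $L_{\hat{A}_S}L_{\hat{A}_S}^*\underline{b}_\infty=0$. By \autoref{prop:instanton_deformations_come_from_HYM-deformations}, $\underline{b}_\infty$ is the pullback of an element of $*_{\hat{\Omega}_S}\mathcal{H}^1_{\hat{A}_S}$. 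Under the first alternative of the second condition in \autoref{ass:invertible_linearisations} this immediately gives $\underline{b}_\infty=0$; under the second alternative, the splitting $\hat{\tilde{\lambda}}_{[S]}=\id\times\hat{\tilde{\vartheta}}$ guarantees that $H_S$ acts trivially in the $\mathbb{R}$-direction, so $H_S$-invariance of the $\underline{b}_i$ passes in the limit to $\hat{\tilde{\vartheta}}$-invariance of $\underline{b}_\infty$ on $\hat{Z}_S$, and the assumption again forces $\underline{b}_\infty=0$. Either way this contradicts $\vert\underline{b}_\infty(x_\infty)\vert\geq 1/2$.

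The hard part will be regime (c), and specifically preserving $H_S$-invariance in the blow-up: without the splitting $\hat{\tilde{\lambda}}_{[S]}=\id\times\hat{\tilde{\vartheta}}$, any finite $\mathbb{R}$-translation encoded in an element of $H_S$ would be magnified by $t_i^{-1}$ under $\mathrm{s}_{\beta,t_i,s_i}$ and the invariance would be lost in the limit. This is exactly why the splitting hypothesis is built into the second condition of \autoref{ass:invertible_linearisations}, and the equivariance of each convergence step must be tracked carefully in order to invoke it.
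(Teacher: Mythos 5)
Your overall strategy is exactly the paper's: a blow-up contradiction argument with the same three regimes, the same limit operators, and the same appeals to \autoref{ass:invertible_linearisations}, \autoref{prop:instanton_deformations_come_from_HYM-deformations}, and \autoref{prop:instanton_deformations_vanish_over_euklidean_space}. Your regimes (a), (b), (c) correspond to the paper's Cases 1, 2b, 2a respectively, and your explanation in regime (c) of why the splitting $\hat{\tilde{\lambda}}_{[S]}=\id\times\hat{\tilde{\vartheta}}$ is needed to carry $H_S$-invariance through the $\mathbb{R}$-blow-up correctly makes explicit what the paper leaves implicit.

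There is, however, a genuine gap in regime (b): you rescale with $\tilde{\mathrm{s}}_{\beta,R_i,R_i}$. From \autoref{def:scaling_on_C^3} the rescaled section is defined on $\mathbb{R}\times(\mathbb{C}^3\setminus B_{R_1/R_2}(0))/\Gamma_S$; with $R_1=R_2=R_i$ this is the fixed exterior domain $\mathbb{R}\times(\mathbb{C}^3\setminus B_1(0))/\Gamma_S$ for every $i$. Two things then break. First, the concentration point $z_i'=\tau(z_i)/|\tau(z_i)|$ lies on the boundary sphere $\partial B_1$, where interior Schauder estimates do not give convergence. Second, the limit lives only on this exterior domain, so there is no small punctured ball around the origin over which the removable-singularity argument could operate, and \autoref{prop:instanton_deformations_vanish_over_euklidean_space}, which requires the section to be defined on all of $\mathbb{R}\times\mathbb{C}^3/\Gamma_S$, cannot be invoked. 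The cut-off radius $R_1$ must be decoupled from the dilation parameter $R_2$: one needs $R_1\to\infty$ so that the ALE-to-cone error $CR_1^{-4}$ of \autoref{lem:C^3-rescaling-comparison} vanishes, and simultaneously $R_1/R_2\to 0$ so that the rescaled domains exhaust the punctured cone with the concentration point in their interior. The paper accomplishes this with $\tilde{\mathrm{s}}_{\beta,\sqrt{r_n},r_n}$. With that correction the rest of your regime (b) argument goes through as described. (Your phrase that the exceptional set ``escapes to infinity'' should also read that it shrinks to the origin after the dilation; this is harmless but is exactly the point that forces the removable-singularity step.) In regime (a), the paper uses the more elementary route of observing that the growth bounds and $\beta>-4$ give the equation distributionally across the codimension-$6$ singular set via a cut-off argument, followed by ordinary elliptic regularity on the compact orbifold; your appeal to indicial-root asymptotics would also work but is heavier than needed.
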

\begin{proof}
In various places throughout the proof it will be necessary to take a subsequence of a certain sequence. By abuse of notation we will tacitly denote this subsequence by the same letter as the original sequence. 

Assume that such a constant does not exist to produce a contradiction. Then there exist sequences $(\underline{b}_n)_{n\in \mathbb{N}}$ of $H$-invariant sections, $(t_n)_{n\in \mathbb{N}} \subset (0,T_K)$, and $(\mathfrak{f}_n)_{n\in\mathbb{N}}\subset \mathfrak{F}$ such that \[ t_n \to 0, \quad \Vert \underline{b}_n \VertWCt{0}{\beta}{} = 1, \quad \textup{and} \quad \Vert L_{\tilde{A}_{t_n}+\tilde{a}_{t_n,\mathfrak{f}_n}}L_{\tilde{A}_{t_n}+\tilde{a}_{t_n,\mathfrak{f}_n}}^* \underline{b}_n \VertWHt{0}{\beta-2}{} \to 0. \] First, note that because $\mathfrak{F}$ is compact, $\mathfrak{f}_n \to \mathfrak{f} \in \mathfrak{F}$. Furthermore, the previous proposition implies 
\begin{equation}\label{eq:linearEstimate_HölderBound}
\Vert \underline{b}_n \VertWHt{2}{\beta}{} \leq 2c.
\end{equation}
For each $n\in \mathbb{N}$ pick a $y_n \in \hat{Y}_{t_n}$ such that $w_{t_n}(y_n)^{-\beta}\vert \underline{b}_n (y_n)\vert = 1$.

\noindent \textbf{Case 1:} There exists an $\varepsilon>0$ such that $\limsup r_{t_n}(y_n)= \varepsilon$.

For each $t\in (0,T_K)$ we use $t\tau \col \hat{\mathcal{V}}_{\kappa}^t \to \mathcal{V}_{\kappa}$ (as defined prior to \autoref{def:OrbifoldResolution}) to identify $\hat{Y}_t \setminus (t\tau)^{-1}(\cup_{S\in \mathcal{S}} S )$ with $Y_0 \setminus \cup_{S \in \mathcal{S}} S$ and analogously \[\hat{P}_{t\vert \hat{Y}_t \setminus (t\tau)^{-1}(\cup_{S\in \mathcal{S}} S)} \cong {P_0}_{\vert Y_0 \setminus \cup_{S \in \mathcal{S}} S}.\] Note that both identifications are $H$-equivariant. 

We can therefore regard the restrictions ${\underline{b}_n}_{\vert Y_t \setminus (t\tau)^{-1}(\cup_{S\in \mathcal{S}} S)}$ as a sequence on $Y_0\setminus \cup_{S \in \mathcal{S}} S$. Pick an exhaustion \[ \overline{Y_0\setminus U} \coloneqq K_1 \subset K_2 \subset K_3 \subset \dots \subset Y_0 \setminus \cup_{S \in \mathcal{S}} S \] by $H$-invariant compact subsets. Since on every $K_i$ the metric $\tilde{g}_t$ converges in $C^\infty$ to $g_0$ (by the construction of $\tilde{\phi}_t$), \eqref{eq:linearEstimate_HölderBound} implies that $\Vert {\underline{b}_n} \VertH{2}{(K_i)}$ is uniformly bounded. The Arzelà--Ascoli Theorem gives therefore rise to a converging subsequence on $K_i$. Using a diagonal sequence, we obtain an $H$-invariant $\underline{b} \in \Omega^{6}(Y_0 \setminus \cup_{S\in \mathcal{S}} S,\mathfrak{g}_{P_0})\oplus \Omega^{0}(Y_0 \setminus \cup_{S\in \mathcal{S}} S,\mathfrak{g}_{P_0})$ such that $\underline{b}_n \to \underline{b}$ in $C^{2}_{\textup{loc}}$. This limit satisfies 
\begin{equation}\label{eq:linearEstimate_GrowthControl}
\vert \underline{b} \vert \leq c \cdot d(\cdot,S)^{\beta} \quad \textup{and} \quad \vert \nabla \underline{b} \vert \leq c \cdot d(\cdot,S)^{\beta-1}
\end{equation}
and since $(\tilde{A}_{t_n}+\tilde{a}_{t_n,\mathfrak{f}_n})\vert_{K_i} \to ({A_0}+a_{0,\mathfrak{f}})\vert_{K_i}$, \[L_{A_0+a_{0,\mathfrak{f}}}L_{A_0+a_{0,\mathfrak{f}}}^* \underline{b} = 0.\] The assumption $\beta >-4$ and~\eqref{eq:linearEstimate_GrowthControl} imply that $L_{A_0+a_{0,\mathfrak{f}}}L_{A_0+a_{0,\mathfrak{f}}}^* \underline{b} =0$ in the sense of distributions over the entire $Y_0$. By elliptic regularity $\underline{b}$ is smooth. Furthermore, \autoref{ass:invertible_linearisations} and integration by parts implies that the only $H$-invariant element in the kernel of $L_{A_0+a_{0,\mathfrak{f}}}L_{A_0+a_{0,\mathfrak{f}}}^*$ is the trivial section, therefore $\underline{b}=0$. However, since $\limsup r_{t_n}(y_n) = \varepsilon$, there exists a converging subsequence $y_n \to y \in Y_0\setminus \cup_{S\in \mathcal{S}} S$ with $\textup{d}(x,S)=\varepsilon$. At this point \[ \vert\underline{b}\vert(x) = \varepsilon^{\beta}>0\] which is a contradiction.

\noindent \textbf{Case 2:} $\limsup r_{t_n}(y_n)=0$.

We can assume that $(y_n)_{n\in \mathbb{N}}$ is only contained inside one $\hat{\mathcal{V}}^{t}_{S,\kappa}$. We write each $y_n$ as $[s_n,z_n]\in (\mathbb{R}\times \hat{Z}_S)/\Lambda_S$ and distinguish the following two scenarios:

\noindent \textbf{Case 2a:} $\liminf r(z_n) < \infty$.

First, pick a subsequence $(y_n)_{n\in \mathbb{N}}$ such that $z_n \to z \in \hat{Z}_S$. For each $n\in \mathbb{N}$ we define $\tilde{\underline{b}}_n \coloneqq \mathrm{s}_n(\underline{b}_n)$ where $\mathrm{s}_n \coloneqq \mathrm{s}_{\beta,t_n,s_n}$ is the rescaling map defined in~\eqref{eq:rescaling_ALE}. By~\eqref{eq:linearEstimate_HölderBound} and \autoref{lem:ALE-rescaling-comparison} these satisfy \[  \Vert \tilde{\underline{b}}_n \VertWH{2}{\beta}{(\mathbb{R}\times (t\tau_S)^{-1}(B_\kappa(0)/\Gamma_S))} \leq 2c^2  \quad \textup{and} \quad \vert w^{-\beta} \tilde{\underline{b}}_n \vert (0,z_n)  \geq c^{-1}.\] As in the first case we find an element $\underline{b} \in C^{2}(\Lambda^6T^*(\mathbb{R} \times \hat{Z}_S)\otimes \mathfrak{g}_{\hat{P}_S} \oplus \mathfrak{g}_{\hat{P}_S})$ with $\Vert \underline{b}\VertWH{1}{\beta}{}< \infty$ such that $\tilde{\underline{b}}_n \to \underline{b}$ in $C^{2}_{\textup{loc}}$. Because $t_n \to 0$, \autoref{prop:ALE-operator-comparison-family} implies that $L_{\hat{A}_S}L_{\hat{A}_S}^* \underline{b} = 0$ and by \autoref{prop:instanton_deformations_come_from_HYM-deformations}, $\underline{b}$ is pulled back from an element in $*_{\hat{\Omega}_S}\mathcal{H}^1_{\hat{A}_S}$. 

By \autoref{ass:invertible_linearisations}, either $*_{\hat{\Omega}_S} \mathcal{H}^1_{\hat{A}_S}= 0$, or $H_S$ acts on $\mathbb{R}\times \hat{Z}$ via $\textup{Id} \times \hat{\vartheta}_S$ and the maps $\delta_{t,s_n} \col \mathbb{R}\times \hat{Z} \to \hat{U}^t_S$ in the definition of the rescaling map~\eqref{eq:rescaling_ALE} are all $H_S$-equivariant. Each $\tilde{\underline{b}}_n$ is therefore $H_S$-invariant and the same holds therefore true for $\underline{b}$. In either case, \autoref{ass:invertible_linearisations} implies that $\underline{b}=0$ which again contradicts $\vert \underline{b}\vert (0,z) \geq c^{-1}w^\beta(z) >0$.

\noindent \textbf{Case 2b:} $\liminf r(z_n) = \infty$.

We define $\tilde{\underline{b}}_n$ as in the previous case and rescale a second time: \[\tilde{\underline{b}}^\prime_n \coloneqq \tilde{\mathrm{s}}_{\beta,\sqrt{r_n},r_n}(\tilde{\underline{b}}_n) \] where $r_n \coloneqq r(z_n)$ and $\tilde{\mathrm{s}}_{\beta,\sqrt{r_n},r_n}$ is the map defined in \autoref{def:scaling_on_C^3}. By \autoref{lem:C^3-rescaling-comparison}, this sequence satisfies \[  \Vert \tilde{\underline{b}}^\prime_n \VertWH{2}{\beta}{} \leq 2c^2  \quad \textup{and} \quad \vert \tilde{\underline{b}}_n^\prime \vert (0, z^\prime_n)  \geq c^{-1}\] where $z^\prime_n \coloneqq \frac{\tau(z_n)}{\vert \tau(z_n) \vert}$. As before, we obtain a subsequence $(\tilde{\underline{b}}^\prime_n)_{n\in \mathbb{N}}$ and $\underline{b} \in C^{2}(\Lambda^6T^*(\mathbb{R}\times (\mathbb{C}^3\setminus \{0\})/\Gamma_S)\otimes \mathfrak{g}_{P_\infty}\oplus\mathfrak{g}_{P_\infty})$ with $\Vert \underline{b} \VertWH{1}{\beta}{}< \infty$ such that $\tilde{\underline{b}}^\prime_n \to \underline{b}$ in $C^2_\textup{loc}$. Since $r_nt_n = r_{t_n}(y_n) \to 0$, \autoref{lem:C^3-rescaling-comparison} implies that this limit satisfies $L_{A_\infty}L_{A_\infty}^* \underline{b} =0$ (strongly) over $\mathbb{R}\times (\mathbb{C}^3\setminus \{0\})/\Gamma_S$. Because of the assumption $\beta>-4$ we have, in fact, $L_{A_\infty}L_{A_\infty}^* \underline{b} =0$ over the entire $\mathbb{R}\times \mathbb{C}^3/\Gamma_S$ in the sense of distributions. Elliptic regularity implies that $\underline{b}$ is smooth and \autoref{prop:instanton_deformations_vanish_over_euklidean_space} that it vanishes everywhere. However, the sequence $z_n^\prime$ has an accumulation point $z$ over which $\vert \underline{b} \vert (0,z) \geq c^{-1} >0$ which is a contradiction.
\end{proof}

The following establishes \autoref{bul:infinitesimally_irreducible} of \autoref{prop:infinitesimal_irreducibility}:
\begin{proposition}\label{prop:linearEstimate-for-rigidity}
Assume that we are in the setup of \autoref{prop:pregluing-family} and assume that $\ker(\diff_{A_{0}+a_{0,\mathfrak{f}}}\col \Omega^0(Y_0,\mathfrak{g}_{P_0}) \to \Omega^1(Y_0,\mathfrak{g}_{P_0})) = 0$ for every $\mathfrak{f}\in \mathfrak{F}$. Then there exists for $\beta\in(-4,0)$ and $\alpha\in(0,1)$ two constants $0<T_{iL}<T_K$ and $c_{iL}>0$ such that for $t\in(0,T_{iL})$ and $\mathfrak{f} \in \mathfrak{F}$ any $\xi \in \Omega^0(\hat{Y}_t,\mathfrak{g}_{\hat{P}_t})$ satisfies \[\Vert \xi \VertWHt{2}{\beta}{} \leq c_{iL} \Vert \diff_{\tilde{A}_t+\tilde{a}_{t,\mathfrak{f}}}^*\diff_{\tilde{A}_t+\tilde{a}_{t,\mathfrak{f}}} \xi \VertWHt{0}{\beta-2}{}.\]
\end{proposition}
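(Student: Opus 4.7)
The plan is to adapt the proof of Proposition~\ref{prop:LinearEstimate} almost verbatim, replacing the operator $L_{\tilde A_t+\tilde a_{t,\mathfrak{f}}}L^*_{\tilde A_t+\tilde a_{t,\mathfrak{f}}}$ by the connection Laplacian $\diff^*_{\tilde A_t+\tilde a_{t,\mathfrak{f}}}\diff_{\tilde A_t+\tilde a_{t,\mathfrak{f}}}$ on sections of $\mathfrak{g}_{\hat P_t}$, and replacing the cokernel assumption of \autoref{ass:invertible_linearisations} by the hypothesis $\ker(\diff_{A_0+a_{0,\mathfrak{f}}})=0$. First I would establish, via the patching argument of Proposition~\ref{prop:Schauder-estimate-families-over-ALE}, the uniform local Schauder-type inequality
\[\Vert \xi\VertWHt{2}{\beta}{}\leq c\bigl(\Vert \diff^*_{\tilde A_t+\tilde a_{t,\mathfrak{f}}}\diff_{\tilde A_t+\tilde a_{t,\mathfrak{f}}}\xi\VertWHt{0}{\beta-2}{}+\Vert \xi\VertWCt{0}{\beta}{}\bigr),\]
observing that the statements of \autoref{lem:ALE-rescaling-comparison}, \autoref{prop:ALE-operator-comparison-family}, and \autoref{lem:C^3-rescaling-comparison} remain valid when $L_A,L_A^*$ are replaced by $\diff_A,\diff_A^*$, with essentially identical proofs. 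This reduces the problem to the uniform $C^0_{\beta,t}$ bound $\Vert \xi\VertWCt{0}{\beta}{}\leq c\Vert \diff^*_A\diff_A\xi\VertWHt{0}{\beta-2}{}$.

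I would prove this $C^0_{\beta,t}$ bound by the three-case blow-up contradiction used in the proof of \autoref{prop:LinearEstimate}. Suppose the estimate fails and extract $(\xi_n,t_n,\mathfrak{f}_n)$ with $t_n\to 0$, $\mathfrak{f}_n\to \mathfrak{f}\in \mathfrak{F}$, $\Vert \xi_n\VertWCt{0}{\beta}{}=1$, and $\Vert \diff^*_n\diff_n\xi_n\VertWHt{0}{\beta-2}{}\to 0$, together with $y_n\in \hat Y_{t_n}$ realising the weighted maximum. In Case~1, where $r_{t_n}(y_n)$ stays bounded below, Arzel\`a--Ascoli on a compact exhaustion of $Y_0\setminus \bigsqcup_{S\in \mathcal{S}} S$ produces a $C^2_{\mathrm{loc}}$-limit $\xi$ with $|\xi|\leq c\,d(\cdot,\mathcal{S})^\beta$ and $\diff^*_{A_0+a_{0,\mathfrak{f}}}\diff_{A_0+a_{0,\mathfrak{f}}}\xi=0$ on the smooth part of $Y_0$. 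Since the singular strata have codimension~$6$ and $\beta>-4$, the equation extends across $\bigsqcup_{S\in\mathcal{S}} S$ as a distribution on the closed orbifold $Y_0$; elliptic regularity and integration by parts then yield $\diff_{A_0+a_{0,\mathfrak{f}}}\xi=0$, whence $\xi\equiv 0$ by the hypothesis of the proposition, contradicting $|\xi|(y)>0$ at the limit $y$ of $y_n$.

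In Case~2a, where $y_n=[s_n,z_n]\in \hat{\mathcal{V}}_{S,\kappa}^{t_n}$ with $z_n$ bounded in $\hat Z_S$, the rescaling $\mathrm{s}_{\beta,t_n,s_n}$ of \autoref{lem:ALE-rescaling-comparison} together with the $\diff^*\diff$-analogue of \autoref{prop:ALE-operator-comparison-family} produces a limit $\xi$ on $\mathbb{R}\times \hat Z_S$ with $|\xi|\leq c w^\beta$ satisfying $\diff^*_{\hat A_S}\diff_{\hat A_S}\xi=0$. The translation-invariance argument preceding \autoref{prop:instanton_deformations_come_from_HYM-deformations} forces $\xi$ to be $s$-independent; viewed as a section on $\hat Z_S$ it satisfies $\nabla^*_{\hat A_S}\nabla_{\hat A_S}\xi=\diff^*_{\hat A_S}\diff_{\hat A_S}\xi=0$, and the Bochner identity $\tfrac12\Delta |\xi|^2=-|\nabla_{\hat A_S}\xi|^2\leq 0$ together with $|\xi|\to 0$ at infinity forces $\xi\equiv 0$ via the maximum principle, exactly as in \autoref{lem:delbar_harmonic_with_decay}. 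This contradicts $|\xi|(0,z)>0$ at the limit of $z_n$. Case~2b is identical after the second rescaling $\tilde{\mathrm{s}}_{\beta,\sqrt{r_n},r_n}$ of \autoref{lem:C^3-rescaling-comparison}, which produces $\xi$ on $\mathbb{R}\times \mathbb{C}^3/\Gamma_S$ satisfying $\diff^*_{A_\infty}\diff_{A_\infty}\xi=0$; extension over $\{0\}$ (legal because $\beta>-4$) and the same Bochner-subharmonicity argument finish the contradiction.

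The argument requires no new analytic input beyond what is used for Proposition~\ref{prop:LinearEstimate}; indeed, the main simplification is that on $0$-forms one has tautologically $\diff^*_A\diff_A=\nabla^*_A\nabla_A$, so the Bochner-subharmonicity trick of \autoref{lem:delbar_harmonic_with_decay} kills the limiting sections in Cases~2a and~2b directly, without any appeal to the Hermitian Yang--Mills structure or to \autoref{prop:instanton_deformations_come_from_HYM-deformations} and \autoref{prop:instanton_deformations_vanish_over_euklidean_space}. The only step requiring care is checking that the hypothesis $\ker(\diff_{A_0+a_{0,\mathfrak{f}}})=0$ really closes the argument in Case~1 after the distributional extension across the singular strata of codimension~$6$; this is where the lower bound $\beta>-4$ is used.
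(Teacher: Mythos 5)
Your proof is correct, and it expands the route the paper mentions in only one sentence (``This can be proven directly as \autoref{prop:LinearEstimate}''). The paper's detailed argument instead takes the ``somewhat alternatively'' shortcut: rerun the blow-up of \autoref{prop:LinearEstimate} for $L_{\tilde{A}_t+\tilde{a}_{t,\mathfrak{f}}}L_{\tilde{A}_t+\tilde{a}_{t,\mathfrak{f}}}^*$ applied only to sections of the form $\underline{b}=(0,\xi)$ (no $H$-invariance is required, and the hypothesis $\ker(\diff_{A_0+a_{0,\mathfrak{f}}})=0$ now closes Case~1), which gives $\Vert \xi\VertWHt{2}{\beta}{}\leq c\Vert L_{\tilde{A}_t+\tilde{a}_{t,\mathfrak{f}}}L_{\tilde{A}_t+\tilde{a}_{t,\mathfrak{f}}}^*(0,\xi)\VertWHt{0}{\beta-2}{}$. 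Since $L_{\tilde{A}_t+\tilde{a}_{t,\mathfrak{f}}}L_{\tilde{A}_t+\tilde{a}_{t,\mathfrak{f}}}^*(0,\xi)=(\psi_t\wedge[F_{\tilde{A}_t+\tilde{a}_{t,\mathfrak{f}}},\xi],\diff_{\tilde{A}_t+\tilde{a}_{t,\mathfrak{f}}}^*\diff_{\tilde{A}_t+\tilde{a}_{t,\mathfrak{f}}}\xi)$ and $\Vert F_{\tilde{A}_t+\tilde{a}_{t,\mathfrak{f}}}\wedge\psi_t\VertWHt{0}{-2}{}\leq c t^{1/2}$ by \autoref{prop:pregluing-family}, the curvature term can be absorbed for small $t$ and the $\diff^*\diff$-estimate follows. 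The advantage of the paper's route is that it reuses the Schauder and model-comparison estimates already established for $LL^*$ (\autoref{prop:Schauder-estimate-families-over-ALE}, \autoref{lem:ALE-rescaling-comparison}, \autoref{prop:ALE-operator-comparison-family}, \autoref{lem:C^3-rescaling-comparison}), whereas your route requires re-deriving all of them for $\diff^*\diff$. What your route buys back, as you observe, is a cleaner Liouville step: on $\Omega^0$ one has $\diff_A^*\diff_A=\nabla_A^*\nabla_A$, so the Bochner/maximum-principle argument of \autoref{lem:delbar_harmonic_with_decay} disposes of the limits in Cases~2a and~2b directly, without appealing to \autoref{prop:instanton_deformations_come_from_HYM-deformations} or \autoref{prop:instanton_deformations_vanish_over_euklidean_space}. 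One step you should make explicit in Case~2a is why the $s$-independence lemma preceding \autoref{prop:instanton_deformations_come_from_HYM-deformations}---stated for $L_AL_A^*$ on $\Omega^{0,\textup{even}}$---applies to $\diff_A^*\diff_A$: by the Weitzenb\"ock identity $\nabla_A^*\nabla_A=2\bar{\partial}_A^*\bar{\partial}_A$ on $\Omega^{0,0}$, the operator $\diff_A^*\diff_A$ over $\mathbb{R}\times\hat{Z}$ is precisely the $\Omega^{0,0}$-component of $L_AL_A^*$, so the lemma transfers verbatim.
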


\begin{proof}
This can be proven directly as \autoref{prop:LinearEstimate}. Somewhat alternatively, note that $L_{\tilde{A}_t+\tilde{a}_{t,\mathfrak{f}}}L_{\tilde{A}_t+\tilde{a}_{t,\mathfrak{f}}}^*$ acts on $(0,\xi)\in\Omega^6(\hat{Y}_t,\mathfrak{g}_{\hat{P}_t})\oplus \Omega^0(\hat{Y}_t,\mathfrak{g}_{\hat{P}_t})$ as $(\psi_t \wedge [F_{\tilde{A}_t+\tilde{a}_{t,\mathfrak{f}}},\xi],\diff_{\tilde{A}_t+\tilde{a}_{t,\mathfrak{f}}}^*\diff_{\tilde{A}_t+\tilde{a}_{t,\mathfrak{f}}} \xi)$. Going through the proof of \autoref{prop:LinearEstimate} using only (not necessarily $H$-invariant) $\underline{b}_n \in \Omega^6(\hat{Y}_t,\mathfrak{g}_{\hat{P}_t})\oplus \Omega^0(\hat{Y}_t,\mathfrak{g}_{\hat{P}_t})$ of the form $(0,\xi_n)$ and the new condition, one can check that there exist $T,c>0$ such that for every $\underline{b}=(0,\xi)$, $\mathfrak{f} \in \mathfrak{F}$ and $t\in(0,T)$ \begin{align*}
\Vert \xi \VertWHt{2}{\beta}{}&=\Vert \underline{b} \VertWHt{2}{\beta}{} \leq c \Vert L_{\tilde{A}_t+\tilde{a}_{t,\mathfrak{f}}}L_{\tilde{A}_t+\tilde{a}_{t,\mathfrak{f}}}^*\underline{b} \VertWHt{0}{\beta-2}{} \\
& \leq c \big(\Vert \diff^*_{\tilde{A}_t+\tilde{a}_{t,\mathfrak{f}}}\diff_{\tilde{A}_t+\tilde{a}_{t,\mathfrak{f}}}\xi \VertWHt{0}{\beta-2}{} + \Vert F_{\tilde{A}_t+\tilde{a}_{t,\mathfrak{f}}}\wedge\psi_t \VertWHt{0}{-2}{}\Vert \xi \VertWHt{0}{\beta}{}\big). 
\end{align*} 
Since $\Vert F_{\tilde{A}_t+\tilde{a}_{t,\mathfrak{f}}}\wedge\psi_t \VertWHt{0}{-2}{} \leq c t^{1/2}$ by \autoref{prop:pregluing-family}, the second term can be absorbed to the left-hand side once $t$ is sufficiently small.
\end{proof}

\section{Examples}\label{sec:Example18}

This sections finds examples of $\G_2$-instantons over a generalised Kummer construction resolving the $\G_2$-orbifold discovered in~\cite[Example~18]{Joyce-GeneralisedKummer2}. For this, consider the orbifold $Y_0 \coloneqq \mathbb{R}^7/\Gamma$ where $\Gamma< \G_2 \ltimes \mathbb{R}^7$ is the crystallographic group generated by 
\begin{align*}
\alpha(y_1,\dots,y_7) &= (y_2,y_3,y_7,-y_6,-y_4,y_1,y_5) \\
\beta(y_1,\dots,y_7) &= (\tfrac{1}{2} - y_1, \tfrac{1}{2} - y_2, -y_3, -y_4, \tfrac{1}{2} + y_5, \tfrac{1}{2} + y_6, y_7)\\
\tau_1 (y_1,\dots,y_7) &= (y_1+1,y_2,y_3,y_4,y_5,y_6,y_7).
\end{align*}
It is not difficult to verify that the flat $\G_2$-structure $\phi_0\in \Lambda^3(\mathbb{R}^7)^*$ (associated to $(\mathbb{C}^3,\omega_0,\Omega_0)$ via \autoref{ex_G2Model}) is preserved by $\Gamma$ and induces therefore a $\G_2$-structure on $Y_0$.

\begin{proposition}\label{prop:presentation__of_piorb}
$\Gamma$ is isomorphic to the finitely presented group $\langle\, a,b,t \,\vert\, R\, \rangle$ with relations $R$ listed below. This isomorphism is defined by mapping $a,b,t$ to $\alpha,\beta,\tau_1$, respectively. In order to state the relations $R$, we define $b_i \coloneqq a^iba^{-i}$ and $t_i \coloneqq a^i t a^{-i}$ (note that $t_i$ does in general not correspond to the map on $\mathbb{R}^7$ which translates by $e_i$). Then $R$ is given by:
\begin{enumerate}
\item\label{rel:ord7} $a^7=1$
\item\label{rel:b2t}$b^2 = t_1t_3$
\item\label{rel:[b,bi]} $[b,b_{1}] = t_6t_2^{-1}t_3t_1$, $[b,b_{2}] = tt_5^{-1}t_2^{-1}t_1$, $[b,b_{3}] = t_6^{-1}t_2^{-1}t_3t_1$, $[b,b_{4}] = t^{-1}t_6t_5^{-1}t_3$, $[b,b_{5}] = tt_6^{-1}t_5^{-1}t_3$, $[b,b_{6}] = t^{-1}t_5^{-1}t_2^{-1}t_1$
\item\label{rel:b0b1b3} $b_0b_1=t_1t_2^{-1} b_3$
\item\label{rel:[b,ti]} $[b,t]=t^{-2}$, $[b,t_1]=1$, $[b,t_2]=t_2^{-2}$, $[b,t_3]=1$, $[b,t_4]=1$, $[b,t_5]=t_5^{-2}$, $[b,t_6]=t_6^{-2}$
\item\label{rel:[ti,tj]} $[t_i,t_j]=1$ for all $i,j$
\end{enumerate}
\end{proposition}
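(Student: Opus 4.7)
The plan is to prove the isomorphism in two stages: first show that the map $\Phi\colon\langle a,b,t\mid R\rangle\to\Gamma$ sending $a\mapsto\alpha$, $b\mapsto\beta$, $t\mapsto\tau_1$ is well-defined, then establish bijectivity. Well-definedness is a direct verification: each of the relations in \eqref{rel:ord7}--\eqref{rel:[ti,tj]} can be checked by composing the corresponding affine transformations in $\G_2\ltimes\mathbb{R}^7$ and comparing outputs on a generic point of $\mathbb{R}^7$. The bookkeeping is eased by the observation that $\alpha_{\mathrm{lin}}$ acts on $\{\pm e_1,\ldots,\pm e_7\}$ via a single $7$-cycle on the orbit of $e_1$, namely $e_1\to e_6\to -e_4\to e_5\to e_7\to e_3\to e_2\to e_1$; in particular $\alpha^7=\mathrm{id}$ is immediate, and the $\tau$-relations can be read off by tracking how this orbit reshuffles the translation parts.

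Surjectivity is then a direct consequence of this orbit computation: it shows that every translation $\tau_i$ by a standard basis vector arises as $\alpha^{j_i}\tau_1^{\pm1}\alpha^{-j_i}$ for appropriate $j_i$, so $\alpha,\beta,\tau_1$ generate all of $\Gamma$ as presented in~\cite[Example~18]{Joyce-GeneralisedKummer2}.

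The main difficulty is injectivity, for which I will compare two short exact sequences. Let $T\subset\langle a,b,t\mid R\rangle$ be the subgroup generated by $\{t_0,\ldots,t_6\}$. Relations~\eqref{rel:[ti,tj]} make $T$ abelian, and relations~\eqref{rel:ord7},~\eqref{rel:[b,ti]} together with $at_ia^{-1}=t_{i+1\bmod 7}$ make $T$ normal. Denote the quotient by $\bar Q\coloneqq\langle a,b,t\mid R\rangle/T$ and the translation lattice of $\Gamma$ by $\Lambda\cong\mathbb{Z}^7$. Then $\Phi$ restricts to a surjection $T\twoheadrightarrow\Lambda$, and since $T$ is a quotient of $\mathbb{Z}^7$ which surjects onto $\mathbb{Z}^7$, $\Phi|_T$ must be an isomorphism (a surjective endomorphism of a finitely generated abelian group is bijective).

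It remains to show that the induced surjection $\bar Q\twoheadrightarrow Q=\Gamma/\Lambda$ onto the point group is an isomorphism. The quotient $\bar Q$ is presented on $\bar a,\bar b$ by $\bar a^7=1$, $\bar b^2=1$, $[\bar b,\bar b_i]=1$ for $i=1,\ldots,6$, and $\bar b_0\bar b_1=\bar b_3$; the last relation, conjugated by powers of $\bar a$, yields $\bar b_i\bar b_{i+1}=\bar b_{i+3}$ for all $i$ modulo $7$. Since the $\bar b_i$'s commute pairwise and each is an involution, $\bar Q$ is an extension of $\langle\bar a\rangle\cong\mathbb{Z}/7$ by an elementary abelian $2$-group generated by $\bar b_0,\ldots,\bar b_6$. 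I will finish by producing an explicit normal form for elements of $\bar Q$, thereby bounding $|\bar Q|$ from above, and by an independent computation of the order of the point group $Q=\langle\alpha_{\mathrm{lin}},\beta_{\mathrm{lin}}\rangle\subset\G_2$. Matching these orders forces the surjection $\bar Q\twoheadrightarrow Q$ to be an isomorphism, and the five-lemma applied to the two short exact sequences $1\to T\to\langle a,b,t\mid R\rangle\to\bar Q\to 1$ and $1\to\Lambda\to\Gamma\to Q\to 1$ completes the argument. This last step---computing the orders accurately and confirming they agree---is where the main work lies.
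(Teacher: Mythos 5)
Your proposal is correct and, at bottom, shares the same engine as the paper's proof: a normal form for elements of $\langle a,b,t\mid R\rangle$ and a comparison with the known order of $\Gamma/\mathbb{Z}^7 = 56$ from Joyce's Example~18. Where you genuinely add value is in the organization. The paper's proof compresses everything into the single sentence that every element can be put in the form $t_0^{i_0^t}\cdots t_6^{i_6^t}\,b_0^{i_0^b}b_1^{i_1^b}b_2^{i_2^b}a^{i^a}$ and then appeals to ``direct inspection'' to conclude injectivity; this tacitly requires (i) that the seven translations $\alpha^{j}\tau_1\alpha^{-j}$, $j=0,\dots,6$, form a basis of the lattice, and (ii) that the 56 candidate cosets really are distinct in $\Gamma$. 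You make (i) into a clean lemma — $T$ is an abelian quotient of $\mathbb{Z}^7$ that surjects onto $\mathbb{Z}^7$, hence is free of rank $7$ and $\Phi|_T$ is an isomorphism — and you make (ii) into a separate order count $|\bar Q|\le 56 = |Q|$, glued together by the five lemma across the two short exact sequences $1\to T\to\langle a,b,t\mid R\rangle\to\bar Q\to 1$ and $1\to\Lambda\to\Gamma\to Q\to 1$. This buys modularity and avoids the paper's somewhat opaque ``direct inspection'': for the $T$-part the ``surjective endomorphism of $\mathbb{Z}^7$ is bijective'' observation replaces any case check, and for the $\bar Q$-part the bound $|\bar Q|\le 56$ follows transparently from the $\mathbb{F}_2$-linear relation $\bar b_i\bar b_{i+1}=\bar b_{i+3}$, which as you note collapses $V=\langle\bar b_0,\dots,\bar b_6\rangle$ to $\langle\bar b_0,\bar b_1,\bar b_2\rangle$. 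Two small points you should make explicit when you fill this in: first, the surjectivity of $\Phi|_T$ onto $\Lambda$ is exactly the orbit statement $e_1\to e_6\to -e_4\to e_5\to e_7\to e_3\to e_2\to e_1$, since $\Phi(t_i)$ translates by $\alpha^i(e_1)$ and these seven vectors span $\mathbb{Z}^7$ up to sign; second, the computation of $|Q|$ can simply cite Joyce as the paper does — deriving it from scratch is not necessary for the five-lemma step, and promising an ``independent computation'' is optional rigor.
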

\begin{proof}
First, we define a map on the free group $F \col \langle \, a,b,t \, \rangle \to \Gamma$ by mapping $a,b,t$ to $\alpha, \beta$, $\tau_1$, respectively. Since $\alpha$, $\beta$ and $\tau_1$ generate $\Gamma$, this map is surjective. The relations are chosen such that $F$ descends to the quotient $\langle \, a,b,t \,\vert\, R\, \rangle$ and we only have to prove that this map is also injective. Due to the relations, each element in $\langle\, a,b,t \,\vert\, R\, \rangle$ is equivalent to one of the form \[t_0^{i^t_0}t_1^{i^t_1}t_2^{i^t_2}t_3^{i^t_3}t_4^{i^t_4}t_5^{i^t_5}t_6^{i^t_6}b_0^{i^b_0}b_1^{i^b_1}b_2^{i^b_2}a^{i^a} \] with $i^t_0,\dots,i^t_6 \in \mathbb{Z}$, $i^b_0,i^b_1,i^b_2 \in \{0,1\},$ and $i^a \in \{0,\dots,6\}$. This is also in accordance with~\cite[Example~18]{Joyce-GeneralisedKummer2} which states that the quotient of $\Gamma$ by the lattice $\mathbb{Z}^7<\Gamma$ is of order 56. Direct inspection of the maps involved in the definition of $\Gamma$ reveals that such an element is only mapped to $1$ under $F$ if and only if all exponents equal zero. This is again in accordance with the quotient of $\Gamma$ by $\mathbb{Z}^7$ being of order 56 as stated in~\cite[Example~18]{Joyce-GeneralisedKummer2}.
\end{proof}

\subsection{A family of flat bundles}

\begin{proposition}\label{prop:example18-definition-monodromy-representation}
For each $\theta \in S^1 \subset \mathbb{C}$, define the representation \[f_\theta \col \langle\, a,b,t \, \rangle \to \SO(14)\cong \SO_{\mathbb{R}}(\mathbb{C}^7)\] by 
\begin{align*}
f_\theta(a) \col (z_1,\dots,z_7) &\mapsto (z_2,z_3,z_7,\overline{z}_6,\overline{z}_4,z_1,z_5) \\
f_\theta(b) \col (z_1,\dots,z_7) &\mapsto (\theta \overline{z}_1,\theta \overline{z}_2, \overline{z}_3, \overline{z}_4, \theta z_5, \theta z_6, z_7) \\
f_\theta(t) \col (z_1,\dots,z_7) &\mapsto (\theta^2 z_1,z_2, \dots, z_7).
\end{align*}
Then $f_\theta$ descends to $\Gamma \cong \langle a,b,t \, \vert \, R \rangle$.
\end{proposition}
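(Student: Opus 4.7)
The plan is to verify directly that the map $f_\theta$, regarded as a homomorphism from the free group $\langle a,b,t\rangle$ into $\SO(14)$, sends each of the six families of defining relations $R$ listed in \autoref{prop:presentation__of_piorb} to the identity. Since $\Gamma\cong\langle a,b,t\mid R\rangle$, once all of these identities are established, $f_\theta$ automatically factors through $\Gamma$. No geometric input is required; this is a purely computational verification.

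The efficient way to organise the computation is to note that each of $f_\theta(a)$, $f_\theta(b)$, $f_\theta(t)$ is \emph{monomial}: it sends $z\in\mathbb{C}^7$ to the vector whose $j$th coordinate is $\phi_j c_j(z_{\sigma(j)})$, for some permutation $\sigma\in S_7$, some phases $\phi_j\in S^1$, and some factors $c_j\in\{\id,\bar{\,\cdot\,}\}$. These monomial operators form a subgroup of $\SO_{\mathbb{R}}(\mathbb{C}^7)$, so every element in the image of $f_\theta$ has this form, and two such operators coincide if and only if their permutation, their conjugation pattern, and their phase vector agree. Each relation then reduces to a comparison of these three invariants on both sides.

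With this set-up: the permutation of $f_\theta(a)$ is the $7$-cycle $(1\,2\,3\,7\,5\,4\,6)$, complex conjugation occurs in exactly two slots (slots $4$ and $5$), and all phases are trivial, so relation~(1) ($f_\theta(a)^7=\id$) is immediate. Each conjugate $f_\theta(t_i)=f_\theta(a)^if_\theta(t)f_\theta(a)^{-i}$ is diagonal, multiplying a single coordinate $z_{T_i}$ by $\theta^2$ or $\bar\theta^2$; the index $T_i$ is $(\sigma^{-1})^i(1)$, and the sign is controlled by the parity of conjugations accumulated by $f_\theta(a)^{-i}$ at slot~$1$. Tabulating the seven operators $f_\theta(t_i)$ gives relation~(6) for free (diagonals commute), relation~(2) by direct inspection (both sides act as multiplication by $\theta^2$ on $z_5$ and $z_6$ and as the identity elsewhere), and relation~(5) after one short phase computation for $[f_\theta(b),f_\theta(t_i)]$.

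For the remaining relations~(3) and~(4) I would compute each $f_\theta(b_i)=f_\theta(a)^if_\theta(b)f_\theta(a)^{-i}$ in monomial form; these retain the trivial slot permutation of $f_\theta(b)$ but have conjugation pattern and phase vector permuted by $\sigma^i$. Matching the two sides of each of relations~(3) and~(4) then reduces to a collection of scalar identities, all of which follow from the single fact $\theta\bar\theta=1$. The main obstacle is not conceptual but combinatorial: one must keep careful track, at every intermediate step, of whether a given $\theta$ factor has passed through an even or odd number of complex conjugations, and hence survives as $\theta$ or becomes $\bar\theta$. Since \autoref{prop:presentation__of_piorb} already provides the full set of relations, no further input is needed.
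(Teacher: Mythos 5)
Your proof is correct and takes essentially the paper's route: both verify directly that $f_\theta(a)$, $f_\theta(b)$, $f_\theta(t)$ satisfy the relations $R$ from \autoref{prop:presentation__of_piorb}, so that $f_\theta$ factors through $\Gamma$. The paper compresses the bookkeeping into a short dictionary (coordinate reflection $\leftrightarrow$ complex conjugation, translation by $k/2$ $\leftrightarrow$ multiplication by $\theta^k$), which is the same content your monomial-operator framework organizes more explicitly.
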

\begin{proof}
The proof reduces to checking that $f_\theta(a)$, $f_\theta(b)$, and $f_\theta(t)$ satisfy the relations in the previously given presentation of $\Gamma$. Relations~\ref{rel:ord7},~\ref{rel:b2t},~\ref{rel:b0b1b3}, and~\ref{rel:[ti,tj]} are easily verified. Here is a heuristic as to why the Relations~\ref{rel:[b,bi]} and~\ref{rel:[b,ti]} hold: Note that $f_\theta(b)$ and $f_\theta(t)$ originate from $\beta$ and $\tau_1$ as follows: A reflection of a coordinate is replaced by complex conjugation of the same coordinate and translation by $\frac{k}{2}$ is replaced by multiplication with $\theta^k$. The relations given in~\ref{rel:[b,bi]} and~\ref{rel:[b,ti]} all originate from commuting reflections with translations. Commuting complex conjugation and multiplication by an element in $S^1$ behaves (multiplicatively) analogous. 
\end{proof}

\begin{proposition}\label{prop:example18-monodromies-not-conjugated}
If $\theta_1,\theta_2 \in S^1$ are distinct, then there is no $R \in \SO_\mathbb{R}(\mathbb{C}^7)$ such that $R f_{\theta_1}(\gamma) R^{-1} = f_{\theta_2}(\gamma)$ for all $\gamma\in \Gamma$. Moreover, if $\theta \in S^1 \setminus \{\pm 1 \} $, then the only elements $R\in \SO_\mathbb{R}(\mathbb{C}^7)$ with $Rf_{\theta}(\gamma) R^{-1} =f_{\theta} (\gamma)$ for every $\gamma \in \Gamma$ are $R = \pm \id$.
\end{proposition}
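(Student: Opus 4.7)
My plan is to first compute the centralizer of $f_\theta(\Gamma)$ in $\GL_\mathbb{R}(\mathbb{C}^7)$ and then to exploit a natural $\O_\mathbb{R}(\mathbb{C}^7)$-intertwiner between $f_\theta$ and $f_{\bar\theta}$ to deduce the non-conjugacy statement. Throughout, I decompose $\mathbb{C}^7 = \bigoplus_{k=1}^{7} V_k$ with $V_k = \mathbb{C}\cdot e_k$ regarded as a $2$-dimensional real subspace. The backbone of the argument is that $f_\theta(a)$ permutes these factors along the $7$-cycle
\[
V_1 \to V_6 \to V_4 \to V_5 \to V_7 \to V_3 \to V_2 \to V_1,
\]
and is $\mathbb{C}$-linear on every transition except $V_6 \to V_4$ and $V_4 \to V_5$ (the two ``barred'' entries in the defining formula for $f_\theta(a)$), where it is anti-$\mathbb{C}$-linear. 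Consequently each $f_\theta(t_i) = f_\theta(a)^i f_\theta(t) f_\theta(a)^{-i}$ acts as the identity on six of the seven factors and as multiplication by $\theta^2$ (or by $\bar\theta^2$ in the single case $i=2$, where the orbit path from $V_1$ to $V_4$ crosses exactly one anti-$\mathbb{C}$-linear transition) on the remaining factor $V_{k(i)}$; moreover $i \mapsto k(i)$ is a bijection between $\{0,\ldots,6\}$ and $\{1,\ldots,7\}$.

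For the centralizer statement I would let $R \in \GL_\mathbb{R}(\mathbb{C}^7)$ commute with every $f_\theta(\gamma)$ for $\theta \in S^1 \setminus \{\pm 1\}$. Since each $f_\theta(t_i)$ has non-trivial eigenvalue $\theta^{\pm 2} \ne 1$ precisely on $V_{k(i)}$, the operator $R$ must preserve every $V_k$ and split as $R = \bigoplus_k R_k$. Writing each $R_k$ as the general real-linear map $z \mapsto p_k z + q_k \bar z$, commutation with $f_\theta(a)$ propagates $(p_k, q_k)$ around the cycle above and complex-conjugates the coefficients at each of the two anti-$\mathbb{C}$-linear transitions, yielding $(p_k, q_k) = (p, q)$ for $k \ne 4$ and $(p_4, q_4) = (\bar p, \bar q)$. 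Finally, commutation with $f_\theta(b)$ on $V_5$ (where $f_\theta(b)$ is $\mathbb{C}$-linear multiplication by $\theta$) yields $q(\bar\theta - \theta) = 0$, hence $q = 0$ as $\theta \notin \mathbb{R}$; while on $V_1$ (where $f_\theta(b)$ is the anti-$\mathbb{C}$-linear map $z \mapsto \theta \bar z$) it yields $p = \bar p$. Thus the centralizer in $\GL_\mathbb{R}(\mathbb{C}^7)$ equals $\mathbb{R}^\ast \cdot \id$, whose intersection with $\SO_\mathbb{R}(\mathbb{C}^7)$ is $\{\pm \id\}$, proving the second claim.

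For the non-conjugacy statement I would introduce the complex-conjugation map $C \col (z_1,\ldots,z_7) \mapsto (\bar z_1,\ldots,\bar z_7)$. A direct check on the generators gives $C f_\theta(\gamma) C^{-1} = f_{\bar\theta}(\gamma)$ for every $\gamma \in \Gamma$, and $\det C = (-1)^7 = -1$. Computing the traces of $f_\theta(t)$ and $f_\theta(b)$ (which equal $12 + 2\Re\theta^2$ and $2 + 4\Re\theta$ respectively) shows that any $\GL_\mathbb{R}(\mathbb{C}^7)$-conjugation from $f_{\theta_1}$ to $f_{\theta_2}$ forces $\theta_2 \in \{\theta_1, \bar\theta_1\}$. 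Assuming $\theta_1 \ne \theta_2$, either $\theta_1 \in \{\pm 1\}$ and $\theta_2 = -\theta_1$, in which case a direct spectral computation gives the eigenvalue $+1$ of $f_1(b)$ and of $f_{-1}(b)$ with multiplicities $10$ and $6$ respectively, ruling out any conjugation; or $\theta_1 \notin \mathbb{R}$ and $\theta_2 = \bar\theta_1$, in which case any hypothetical $R \in \SO_\mathbb{R}(\mathbb{C}^7)$ with $R f_{\theta_1} R^{-1} = f_{\theta_2} = C f_{\theta_1} C^{-1}$ would make $RC^{-1}$ centralize $f_{\theta_1}(\Gamma)$; by the centralizer computation $RC^{-1} = \lambda \id$ for some $\lambda \in \mathbb{R}^\ast$, whence $\det R = \lambda^{14} \det C = -\lambda^{14} < 0$, contradicting $R \in \SO_\mathbb{R}(\mathbb{C}^7)$.

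I expect the main obstacle to be the bookkeeping in the first paragraph: one has to correctly identify which transitions along the $7$-cycle are anti-$\mathbb{C}$-linear and then correctly propagate the pattern of complex conjugations through the coefficients $(p_k, q_k)$ in the centralizer computation. Once this combinatorics is nailed down, the algebraic constraints coming from $f_\theta(b)$ and from $\det R = 1$ close both parts of the proposition cleanly.
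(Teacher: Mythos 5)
Your proof is correct. The backbone is the same as the paper's --- the $t_i$'s force $R$ to be block-diagonal with respect to the $V_k$'s, the $a$-relation propagates the blocks around a $7$-cycle, and the $b$-relation closes the argument --- but your organization of the final step is a genuinely cleaner route. Where the paper reduces $\theta_2\in\{\theta_1,\bar\theta_1\}$ by complexifying and comparing eigenvalue multiplicities of $f_{\theta_i}(b)$ and then directly solves the sixth component of $Rf_{\theta_1}(b)R^{-1}=f_{\bar\theta_1}(b)$ to extract a block of determinant $-1$, you instead compute the full $\GL_\mathbb{R}(\mathbb{C}^7)$-centralizer of $f_\theta(\Gamma)$ (which you need anyway for the second claim) and then exploit the intertwiner $C\colon z\mapsto \bar z$ with $Cf_\theta C^{-1}=f_{\bar\theta}$ and $\det C=-1$. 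This factors the determinant obstruction through a conceptual identity rather than a coordinate computation, and it makes the logical dependence between the two halves of the proposition visible: the second claim (centralizer $=\pm\id$) is exactly what feeds the determinant contradiction in the first. The real-trace comparison is an equally valid but slightly more elementary substitute for the paper's complexified eigenvalue count.

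Two small blemishes, neither fatal. First, from $Rf_{\theta_1}R^{-1}=Cf_{\theta_1}C^{-1}$ what centralizes $f_{\theta_1}(\Gamma)$ is $C^{-1}R$, not $RC^{-1}$; the latter conjugates $f_{\theta_1}(\gamma)$ to $Rf_{\bar\theta_1}(\gamma)R^{-1}$, which is not obviously $f_{\theta_1}(\gamma)$. However, once you conclude $C^{-1}R=\lambda\id$ you get $R=\lambda C$, at which point $RC^{-1}=\lambda\id$ too, so the downstream computation $\det R = \lambda^{14}\det C<0$ is unaffected; still, the stated operator is the wrong one. Second, the case "$\theta_1\in\{\pm1\}$ and $\theta_2=-\theta_1$" never arises: your own trace computation forces $\Re\theta_1=\Re\theta_2$, hence $\theta_2\in\{\theta_1,\bar\theta_1\}$, and if $\theta_1\in\{\pm1\}$ this set is $\{\theta_1\}$, contradicting $\theta_1\ne\theta_2$ directly. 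The eigenvalue-multiplicity argument ($10$ vs.\ $6$ for the $+1$-eigenspace of $f_{1}(b)$ and $f_{-1}(b)$) is correct but superfluous.
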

\begin{proof}
Note that under the inclusion $\SO_\mathbb{R}(\mathbb{C}^7) \subset \U(\mathbb{C}^7\otimes_\mathbb{R}\mathbb{C})$, all $f_{\theta_i}(\gamma)$ for $\gamma\in \Gamma$ are diagonalisable. Furthermore, if $f_{\theta_2}(\gamma) = R f_{\theta_1}(\gamma) R^{-1}$ for some $R\in \SO_\mathbb{R}(\mathbb{C}^7)$, then all eigenvalues and the dimensions of their respective eigenspaces must agree. One can see that the eigenvalues of $f_{\theta_i}(b)$ are $1,-1,\theta_i$ and $\overline{\theta_i}$ and that the complex dimension of their respective eigenspaces are 6, 4, 2, 2. Thus, we can only have $f_{\theta_2}(b) = R f_{\theta_1}(b) R^{-1}$ if $\theta_2 = \theta_1$ or $\theta_2 = \overline{\theta_1}$. This proves the claim for $\theta_1 \in \{\pm1\}$.

We therefore assume $\theta_1, \theta_1 \notin \{\pm 1\}$. Direct inspection of the involved (block-) matrices reveals that we can only have $Rf_{\theta_1}(t) = f_{\theta_2}(t)R$ if 
\begin{align*}
R = \begin{pmatrix}
A_1 & 0 \\
0 & \tilde{A}
\end{pmatrix}
\end{align*}
where $A_1 \in \O_{\mathbb{R}}(\mathbb{C})$ and $\tilde{A}\in \O_{\mathbb{R}}(\mathbb{C}^6)$. Similarly, $R$ can only satisfy $R f_{\theta_1}(a^ita^{-i}) = f_{\theta_2}(a^ita^{-i}) R$ for all $i=0,\dots,6$ if it is of the form $R = \textup{diag}(A_1,\dots,A_7)$ with $A_i \in \O_{\mathbb{R}}(\mathbb{C})$.

The condition $R f_{\theta_1}(a) R^{-1} = f_{
\theta_2}(a)$ implies that all $A_i$ have the same determinant. Furthermore, recall from the first step of the proof that either $\theta_2=\theta_1$ or $\theta_2= \overline{\theta_1}$. If $\theta_2= \overline{\theta_1}$, then the sixth equation of $R f_{\theta_1}(b) R^{-1}  =f_{\overline{\theta_1}}(b)$ implies $\det(A_7)=-1$. However, this would lead to $\det(R) = \det(A_7)^7=-1$ which contradicts the fact that $R$ lies in $\SO_\mathbb{R}(\mathbb{C}^7)$. This shows that $f_{\theta_1}$ and $f_{\theta_2}$ are for $\theta_1 \neq \theta_2$ not conjugated in $\SO_\mathbb{R}(\mathbb{C}^7)$.

In order to prove the second claim we note that for $\theta \notin\{\pm 1\}$, $Rf_{\theta}(a^{i}ta^{-i}) R^{-1} =f_\theta (a^{i}ta^{-i})$ for every $i=0,\dots,6$ again implies that $R= \diag(A_1,\dots,A_7)$ with $A_i \in \O_{\mathbb{R}}(\mathbb{C}^7)$. The third equation in $R f_\theta(b) R^{-1} =f_{\theta}(b)$ implies that $A_3$ is diagonal. Finally, $Rf_\theta(a) R^{-1} =f_{\theta}(a)$ implies that either all $A_i = \id$ or all $A_i = -\id$.
\end{proof}

\begin{remark}
We will see later that $f_{\theta}$ and $f_{\overline{\theta}}$ are conjugated in $\O_\mathbb{R}(\mathbb{C}^7)$ (cf. \autoref{rem:example18_bundles_are_isomorphic}). Again, since the eigenvalues of any two conjugated $f_{\theta_1}(b)$ and $f_{\theta_2}(b)$ must agree, these are the only two possibilities of conjugated homomorphisms.
\end{remark}

\begin{lemma}\label{lem:Example18_fixpoint_set_flambda}
The representation $f_\theta \col \Gamma \to \SO_\mathbb{R}(\mathbb{C}^7)$ given in the previous proposition induces an action on $\mathfrak{so}_\mathbb{R}(\mathbb{C}^7)$ (via the adjoint representation $\Ad \circ f_\theta$) and on $\mathbb{R}^7 \otimes \mathfrak{so}_\mathbb{R}(\mathbb{C}^7)$ (via $\pr_1 \otimes (\Ad \circ f_\theta)$ where $\pr_1 \col \Gamma < \G_2\ltimes\mathbb{R}^7 \to \G_2$ is the projection to the first entry). For $\theta \notin \{1,-1\}$, the fixpoint-set of these actions are $\mathfrak{so}_\mathbb{R}(\mathbb{C}^7)^{\Gamma} = 0 $ and \[(\mathbb{R}^7 \otimes \mathfrak{so}_\mathbb{R}(\mathbb{C}^7))^{\Gamma} = \left\langle e_1 \otimes e_1 \wedge f_1 + \dots + e_7 \otimes e_7 \wedge f_7 \right\rangle_\mathbb{R}\] where we denote by $e_1,\dots,e_7$ and $f_1,\dots,f_7$ the canonical basis of $\mathbb{R}^7$ and $i\mathbb{R}^7$, respectively, and write $\mathbb{C}^7 = \mathbb{R}^7+i\mathbb{R}^7$.
\end{lemma}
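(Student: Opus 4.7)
The plan is to exploit the large normal subgroup of ``translation-type'' elements $t_i := a^i t a^{-i}$ ($i = 0,\dots,6$) of $\Gamma$. A direct computation from the formulas in \autoref{prop:example18-definition-monodromy-representation} shows that each $f_\theta(t_i)$ preserves every complex line $\mathbb{C} e_j\subset \mathbb{C}^7$, and, after tracking the $\mathbb{C}$-linear/antilinear sign picked up along the $7$-cycle $\sigma=(1\,6\,4\,5\,7\,3\,2)$ induced by $a$, acts on the single line $\mathbb{C} e_{\sigma^i(1)}$ as a non-trivial rotation $R^{\pm 1}$ with $R = f_\theta(t)|_{\mathbb{C} e_1}$ and fixes all other lines. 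Since $\theta\notin\{\pm 1\}$, the map $R^{\pm 1}-\id$ is invertible on $\mathbb{C}$; as $i$ varies, the ``active'' line runs through all seven $\mathbb{C} e_j$. I would also record that $f_\theta(a)$ permutes the lines via $\sigma$ with antilinear transitions precisely at $\mathbb{C} e_6\to\mathbb{C} e_4$ and $\mathbb{C} e_4\to\mathbb{C} e_5$, and that $f_\theta(b)$ preserves each line, acting antilinearly on $\mathbb{C} e_j$ for $j\leq 4$ and linearly (as multiplication by $\theta$ resp. $1$) on the remaining lines.

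For the first claim, let $M\in \mathfrak{so}_\mathbb{R}(\mathbb{C}^7)^\Gamma$. Writing $M$ in block form relative to $\mathbb{C}^7=\bigoplus_j\mathbb{C} e_j$, invariance under all $f_\theta(t_i)$ forces every off-diagonal block $M_{k\ell}$ to vanish (by invertibility of $R^{\pm 1}-\id$), and the antisymmetry of $M$ then pins each diagonal block to $M_{jj}=iy_j$ for some $y_j\in\mathbb{R}$. Conjugation by $f_\theta(a)$ transports $iy_j$ to $\pm iy_j$ on $\mathbb{C} e_{\sigma(j)}$, with sign $+$ for $\mathbb{C}$-linear transitions and $-$ for the two antilinear ones, so tracing around the cycle gives $y_1=y_2=y_3=y_5=y_6=y_7=-y_4$. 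Finally, since $b$ acts antilinearly on $\mathbb{C} e_j$ for $j\leq 4$, conjugation by $f_\theta(b)$ negates the corresponding $iy_j$, forcing $y_j=0$ for $j\in\{1,2,3,4\}$; combined with the $a$-relation this gives $M=0$.

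For the second claim, view $\phi\in\mathbb{R}^7\otimes \mathfrak{so}_\mathbb{R}(\mathbb{C}^7)$ as a linear map $\phi\col\mathbb{R}^7\to\mathfrak{so}_\mathbb{R}(\mathbb{C}^7)$; $\Gamma$-invariance reads $\phi(R_\gamma v)=\Ad(f_\theta(\gamma))\phi(v)$, where $R_\gamma\in\G_2$ is the linear part of $\gamma\in\G_2\ltimes\mathbb{R}^7$. The translations act trivially on $\mathbb{R}^7$, so $t_i$-invariance applied pointwise in $v$ yields, by the argument above, that $\phi(v)$ is block-diagonal with $\phi(v)_j = iy_j(v)$ for some $y_j\in(\mathbb{R}^7)^*$. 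Invariance under $a$ becomes the iteration $y_{\sigma(j)}\circ R_a=\pm y_j$ with the same signs as before, which determines all $y_j$ from $y_1$. Invariance under $b$, using $R_b=\textup{diag}(-1,-1,-1,-1,1,1,1)\in\G_2$ together with the antilinearity pattern of $f_\theta(b)$, gives $y_j\circ R_b=-y_j$ for $j\leq 4$ and $y_j\circ R_b=y_j$ for $j\geq 5$; hence $y_j\in\textup{span}(e_1^*,\dots,e_4^*)$ for $j\leq 4$ and $y_j\in\textup{span}(e_5^*,e_6^*,e_7^*)$ for $j\geq 5$. Starting from a general $y_1=\sum_{k\leq 4}c_k e_k^*$ and propagating through the $a$-iteration, the requirement that each subsequent $y_j$ land in the correct subspace kills $c_k$ for $k\neq 1$; the outcome is $y_j=c_1 e_j^*$ for every $j$ and thus $\phi = c_1\sum_j e_j\otimes (e_j\wedge f_j)$.

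The main obstacle is purely bookkeeping: carefully tracking the two independent sign systems -- the antilinearity signs produced by $f_\theta(a)$ and $f_\theta(b)$ on the complex lines, and the permutation signs of $R_a$ and $R_b$ on the standard basis of $\mathbb{R}^7$ -- and then verifying that after one full revolution around the $7$-cycle $\sigma$ the resulting linear system on the $y_j$'s is consistent and has exactly the claimed solution space. No representation-theoretic input beyond elementary Schur-type reasoning is needed.
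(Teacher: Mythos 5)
Your proposal is correct and follows essentially the same strategy as the paper's (admittedly terse) proof: first exploit all the $f_\theta(t_i)$, which act by a nontrivial rotation on exactly one complex line and trivially on the rest, to force any invariant element of $\mathfrak{so}_\mathbb{R}(\mathbb{C}^7)$ to be block-diagonal of the form $\diag(iy_1,\dots,iy_7)$; then use the remaining generators to pin down the $y_j$. The only difference from the paper is cosmetic ordering -- the paper applies the $b_i$ conditions before $a$, whereas you apply $a$ first (obtaining $y_1=y_2=y_3=y_5=y_6=y_7=-y_4$) and then $b$ (forcing $y_j=0$ for $j\le 4$) -- and you supply the explicit sign-bookkeeping that the paper leaves as ``not difficult to verify.'' Your derived sign pattern (antilinear transitions at $6\to 4$ and $4\to 5$, and $R_b$ negating the first four real coordinates) is consistent, and the propagation argument for the second claim correctly kills the coefficients $c_2,c_3$ at the $j=6$ step and $c_4$ at the $j=4$ step, leaving only the multiple of $\sum_j e_j\otimes e_j\wedge f_j$.
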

\begin{proof}
It is not difficult to verify that once $\theta \notin \{1,-1\}$, the only elements invariant under all $f_\theta(t_i)$ for $i=0,\dots,6$ are \[\left\langle e_1\wedge f_1, \dots, e_7\wedge f_7 \right\rangle_\mathbb{R} \subset \mathfrak{so}_\mathbb{R}(\mathbb{C}^7) \] and \[\mathbb{R}^7 \otimes \left\langle e_1\wedge f_1, \dots, e_7\wedge f_7 \right\rangle_\mathbb{R} \subset \mathbb{R}^7\otimes \mathfrak{so}_\mathbb{R}(\mathbb{C}^7).\]
Of these elements, none in $\mathfrak{so}_\mathbb{R}(\mathbb{C}^7)$ is additionally invariant under all $f_\theta(b_i)$ and the only ones in $\mathbb{R}^7 \otimes \mathfrak{so}_\mathbb{R}(\mathbb{C}^7)$ are \[ \left\langle e_1 \otimes e_1 \wedge f_1, \dots, e_7 \otimes e_7 \wedge f_7 \right\rangle_\mathbb{R}. \] Ultimately, $f_\theta(a)$ reduces this further to \[ (\mathbb{R}^7 \otimes \mathfrak{so}_\mathbb{R}(\mathbb{C}^7))^{\Gamma} = \left\langle e_1 \otimes e_1 \wedge f_1 + \dots + e_7 \otimes e_7 \wedge f_7 \right\rangle_\mathbb{R}. \qedhere \]
\end{proof}

\begin{proposition}\label{prop:example18_no_fixpoints_quantitative_version}
Denote by $a,b,t$ the three generators of $\Gamma$ as in \autoref{prop:presentation__of_piorb}. For every $\theta \in S^1 \setminus \{\pm 1\}$ there exists a $c_\theta >0$ depending continuously on $\theta$, such that for every $\xi \in \mathfrak{so}_{\mathbb{R}}(\mathbb{C}^7)$ there exists a $\gamma \in \{a,b,t \} \subset \Gamma$ such that \[ \vert \xi \vert \leq c_\theta \vert (\id - \Ad_{f_\theta(\gamma)}) (\xi) \vert. \]
\end{proposition}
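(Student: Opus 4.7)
The plan is to combine the qualitative vanishing statement $\mathfrak{so}_\mathbb{R}(\mathbb{C}^7)^\Gamma = 0$ from \autoref{lem:Example18_fixpoint_set_flambda} with a compactness argument on the finite-dimensional space $\mathfrak{so}_\mathbb{R}(\mathbb{C}^7)$. Define
\[ F \col (S^1 \setminus \{\pm 1\}) \times \mathfrak{so}_\mathbb{R}(\mathbb{C}^7) \to \mathbb{R}, \qquad F(\theta,\xi) \coloneqq \max_{\gamma \in \{a,b,t\}} \bigl|(\id - \Ad_{f_\theta(\gamma)})\xi\bigr|. \]
By the explicit formulas in \autoref{prop:example18-definition-monodromy-representation}, each $f_\theta(\gamma)$ depends continuously (in fact, real-analytically) on $\theta \in S^1$, so $F$ is jointly continuous in $(\theta,\xi)$, and positively homogeneous in $\xi$.

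First I would establish the pointwise estimate. Fix $\theta \in S^1 \setminus \{\pm 1\}$ and suppose $F(\theta,\xi) = 0$. Then $\xi$ is fixed by $\Ad_{f_\theta(\gamma)}$ for each of the three generators $\gamma \in \{a,b,t\}$ of $\Gamma$, hence by every $\Ad_{f_\theta(\gamma)}$ for $\gamma \in \Gamma$; by \autoref{lem:Example18_fixpoint_set_flambda} this forces $\xi = 0$. Consequently $F(\theta,\cdot)$ is strictly positive on the unit sphere $S \subset \mathfrak{so}_\mathbb{R}(\mathbb{C}^7)$, which is compact, so the minimum
\[ m(\theta) \coloneqq \min_{\xi \in S} F(\theta,\xi) \]
is strictly positive. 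Setting $c_\theta \coloneqq m(\theta)^{-1}$ and using homogeneity of $F(\theta,\cdot)$ yields $|\xi| \leq c_\theta F(\theta,\xi)$ for every $\xi$, which is exactly the claimed estimate (the $\max$ over $\{a,b,t\}$ can then be replaced by the existence of a single $\gamma$).

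Next I would check continuous dependence of $c_\theta$ on $\theta$. Joint continuity of $F$ and compactness of $S$ imply that $\theta \mapsto F(\theta,\cdot) \in C(S)$ is continuous, so $m(\theta) = \min_S F(\theta,\cdot)$ is continuous in $\theta$ by a standard uniform-convergence argument; since $m(\theta) > 0$ on $S^1 \setminus \{\pm 1\}$, inversion gives continuity of $c_\theta = m(\theta)^{-1}$. I do not expect a genuine obstacle: the only non-trivial input is the qualitative fixed-point statement, already available, and everything else is an application of compactness together with the explicit continuous dependence of $f_\theta$ on $\theta$.
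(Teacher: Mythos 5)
Your proof is correct and takes essentially the same route as the paper, which observes that the linear map $\xi \mapsto \bigl((\id-\Ad_{f_\theta(a)})\xi, (\id-\Ad_{f_\theta(b)})\xi, (\id-\Ad_{f_\theta(t)})\xi\bigr)$ is injective by \autoref{lem:Example18_fixpoint_set_flambda} and then invokes finite-dimensionality; you simply unpack the compactness step and the continuous dependence on $\theta$ explicitly, while the paper leaves both implicit.
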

\begin{proof}
The previous proposition implies that the map 
\begin{align*}
\mathfrak{so}_{\mathbb{R}}(\mathbb{C}^7) &\to (\mathfrak{so}_{\mathbb{R}}(\mathbb{C}^7))^3 \\
\xi &\mapsto ((\id -\Ad_{f_{\theta}(a)}) (\xi),(\id -\Ad_{f_{\theta}(b)}) (\xi),(\id -\Ad_{f_{\theta}(t)}) (\xi)))
\end{align*}
is injective. Since $\mathfrak{so}_{\mathbb{R}}(\mathbb{C}^7)$ is finite dimensional, this implies the claim.
\end{proof}

\begin{definition}\label{def:example18_flat_bundles}
For each $\theta \in S^1$, let $E_{0,\theta} \coloneqq \mathbb{R}^7 \times_{\Gamma, f_\theta} \mathbb{C}^7$ be the (flat) oriented Euclidean vector bundle associated to $f_\theta$. Furthermore, let $\SO(E_{0,\theta})$ and $\O(E_{0,\theta})$ be the principal bundles of positively oriented and non-oriented frames, respectively.
\end{definition}

\begin{remark}\label{rem:example18_bundles_are_isomorphic}
The bundles $E_{0,\theta}$ for $\theta\in S^1$ are all isometric (because they lie in a smooth 1-parameter family of bundles). However, \autoref{prop:example18-monodromies-not-conjugated} proves that such an isomorphism cannot identify the flat connections on $\SO(E_{\theta_1,0})$ and $\SO(E_{\theta_2,0})$ whenever $\theta_1 \neq \theta_2$ (cf.~\cite[Proposition~2.2.3]{DonaldsonKronheimer}). We therefore obtain an $S^1$-family of flat $\SO(14)$-bundles. 

The situation is slightly different when going to the structure group $\O(14)$: Here, the isomorphism 
\begin{align*}
E_{0,\theta} &\to E_{\bar{\theta},0} \\
[y,w]&\mapsto[y,R_0w]
\end{align*}
where $R_0 \in \O_\mathbb{R}(\mathbb{C}^7)$ maps $(z_1,\dots,z_7) \mapsto (\overline{z}_1,\dots,\overline{z}_7)$ identifies the flat connections on $\O(E_{0,\theta})$ and $\O(E_{\bar{\theta},0})$.
\end{remark}

\begin{proposition}\label{prop:Example18_Z2-action}
There exists a $\mathbb{Z}_2$ action $\lambda_0\col \mathbb{Z}_2 \to \textup{Isom}(Y_0,g_0)$ which is induced by the map $\mathbb{R}^7 \to \mathbb{R}^7, y \mapsto -y$. This map is orientation reversing and preserves $\psi_0$. This action lifts to $E_{0,\theta}$ via $\tilde{\lambda}_0(-1) [y,w] \coloneqq [-y, R_0w]$ where $R_0 \in \O_{\mathbb{R}}(\mathbb{C}^7)$ is given by $R_0(z_1,\dots,z_7) \coloneqq (\overline{z}_1,\dots,\overline{z}_7)$. This action is isometric, orientation reversing and fixes the flat connection on $E_{0,\theta}$. We therefore obtain a connection preserving action (also denoted by $\tilde{\lambda}_0$) on $\O(E_{0,\theta})$.
\end{proposition}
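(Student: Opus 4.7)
Plan:

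First I would verify that $\sigma \coloneqq -\id_{\mathbb{R}^7}$ normalises $\Gamma$, so that it descends to an involution $\lambda_0(-1)$ on $Y_0=\mathbb{R}^7/\Gamma$. A direct check on the three generators yields $\sigma\alpha\sigma^{-1}=\alpha$ (since $\alpha$ is linear), $\sigma\tau_1\sigma^{-1}=\tau_1^{-1}$, and $\sigma\beta\sigma^{-1}=\tau_v\circ\beta$ with $v=(-1,-1,0,0,-1,-1,0)$; the last translation lies in $\Gamma$ because the translation lattice $\mathbb{Z}^7$ is contained in $\Gamma$ (cf.~\autoref{prop:presentation__of_piorb} and its proof). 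The induced involution is orthogonal, hence isometric, and orientation reversing as $\det(\sigma)=(-1)^7=-1$. Since $\psi_0\in\Lambda^4(\mathbb{R}^7)^*$ has even degree, $\sigma^*\psi_0=(-1)^4\psi_0=\psi_0$; note that $\phi_0$ itself is not preserved, consistent with \autoref{rem: stabiliser 4-form}.

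The substantive step is to verify that $\tilde{\lambda}_0(-1)\colon [y,w]\mapsto [-y,R_0w]$ is well-defined on $E_{0,\theta}$. This reduces to showing
\[
R_0 \, f_\theta(\gamma)\, R_0^{-1} = f_\theta(\sigma\gamma\sigma^{-1}) \qquad\text{for every } \gamma\in\Gamma,
\]
and, both sides being homomorphisms, it suffices to check this on the generators $a,b,t$. The cases of $a$ and $t$ are immediate from the explicit formulas in \autoref{prop:example18-definition-monodromy-representation}: $R_0 f_\theta(a) R_0^{-1}=f_\theta(a)$ (the complex conjugations appearing in $f_\theta(a)$ commute with $R_0$), and $R_0 f_\theta(t) R_0^{-1}=f_{\overline{\theta}}(t)=f_\theta(t^{-1})$, matching $\sigma t\sigma^{-1}=\tau_1^{-1}$. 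The main case is $b$: a direct calculation gives $R_0 f_\theta(b) R_0^{-1}=f_{\overline{\theta}}(b)$, so one must check that $f_\theta(\sigma b\sigma^{-1})=f_{\overline{\theta}}(b)$ as well. Since $\sigma b\sigma^{-1}=\tau_v b$, and since $f_\theta$ restricted to the lattice is given explicitly by $f_\theta(\tau_{e_j})=\diag(1,\dots,\theta^2,\dots,1)$ with $\theta^2$ in the $j$-th slot, one computes $f_\theta(\tau_v)=\diag(\theta^{-2},\theta^{-2},1,1,\theta^{-2},\theta^{-2},1)$, and multiplying this into $f_\theta(b)$ replaces each occurrence of $\theta$ by $\theta^{-1}=\overline{\theta}$, yielding $f_{\overline{\theta}}(b)$. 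This bookkeeping, where the fractional translations hidden in $\beta$ conspire precisely with the complex conjugation in $R_0$ to interchange $\theta\leftrightarrow\overline{\theta}$, is the only step with any real content and constitutes the main obstacle.

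The remaining assertions are immediate. $R_0$ is orthogonal on $\mathbb{C}^7\cong\mathbb{R}^{14}$, hence the lift is fiberwise an isometry; its real determinant is $(-1)^7=-1$, so it reverses the orientation of the fibers, which is precisely why the action descends to the non-oriented frame bundle $\O(E_{0,\theta})$ rather than $\SO(E_{0,\theta})$. Finally, the flat connection on $E_{0,\theta}$ is the quotient of the trivial connection on the universal cover $\mathbb{R}^7\times\mathbb{C}^7$, and the lift $(y,w)\mapsto (-y,R_0 w)$ is fiberwise the constant linear map $R_0$, so it manifestly preserves the trivial connection and hence, after passing to the quotient, the flat connection on $\O(E_{0,\theta})$.
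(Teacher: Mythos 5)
Your proof is correct and follows the same route as the paper: conjugate the generators $\alpha,\beta,\tau_1$ by $-\id_{\mathbb{R}^7}$ to see that it normalises $\Gamma$, then verify that $R_0$ conjugates $f_\theta$ of each generator in the analogous way. You simply spell out the bookkeeping that the paper leaves implicit, in particular the observation that $R_0 f_\theta(b) R_0^{-1}=f_{\overline{\theta}}(b)$ matches $f_\theta(\tau_1^{-1}\tau_2^{-1}\tau_5^{-1}\tau_6^{-1}\beta)$ because both effect the substitution $\theta\mapsto\overline{\theta}$.
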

\begin{proof}
First note, that $(-1,0) \in N_{\O(7)\ltimes \mathbb{R}^7}(\Gamma)$ because \[(-1,0)\tau_1 (-1,0) = \tau_1^{-1} \quad (-1,0) \alpha (-1,0) = \alpha \quad (-1,0) \beta (-1,0) = \tau_1^{-1}\tau_2^{-1}\tau_5^{-1}\tau_6^{-1} \beta\] (where $\tau_i$ for $i=1,\dots,7$ denote the map that translates be $e_i$). This induces the action on $Y_0$. Next, we note that $R_0, f_\theta(\alpha), f_\theta(\beta),$ and $f_\theta(\tau_1)$ satisfy the analogous relations \[R_0 f_\theta(\tau_1) R_0 = f_\theta(\tau_1^{-1}) \quad R_0 f_\theta(\alpha) R_0 = f_\theta(\alpha) \quad R_0 f_\theta(\beta) R_0 = f_\theta(\tau_1^{-1}\tau_2^{-1}\tau_5^{-1}\tau_6^{-1} \beta) \] which implies that the $\mathbb{Z}_2$ action lifts to $E_{0,\theta}$. The rest of the statement is easily verified.
\end{proof}

\begin{proposition}\label{prop:example18_ker(L_A)}
Let $\theta\in S^1$ and denote by $A_{0,\theta}$ the canonical (flat) $\G_2$-instanton on $\O(E_{0,\theta})$. If $\theta \notin\{1,-1\}$, then $A_{0,\theta}$ is infinitesimally irreducible and $\ker L_{A_{0,\theta}}^*$ is 1-dimensional and consists of elements that are anti-invariant under the $\mathbb{Z}_2$-action described in \autoref{prop:Example18_Z2-action}. 
\end{proposition}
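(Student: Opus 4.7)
The plan is to reduce $\ker L_{A_{0,\theta}}$ to a computation of $\Gamma$-invariant parallel sections on the flat orbifold and then apply \autoref{lem:Example18_fixpoint_set_flambda}. Write $A \coloneqq A_{0,\theta}$. I would first show that the $\Omega^7$-component $\xi$ of any $(a,\xi) \in \ker L_A$ vanishes: applying $\diff_A$ to the equation $\psi_0 \wedge \diff_A a = \diff_A^* \xi$ and using $\diff_A^2 = 0$ (flatness of $A$) yields $\diff_A \diff_A^* \xi = 0$, which pairs with $\xi$ in $L^2$ to give $\diff_A^* \xi = 0$. Equivalently, $*\xi \in \Omega^0(\mathfrak{g}_{\O(E_{0,\theta})})$ is $A$-parallel and so corresponds to a $\Gamma$-invariant element of $\mathfrak{so}_\mathbb{R}(\mathbb{C}^7)$, which vanishes by \autoref{lem:Example18_fixpoint_set_flambda} when $\theta \notin \{\pm 1\}$.

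For the remaining $\Omega^1$-component, I would invoke the standard identification of the $\G_2$-instanton linearisation with a twisted real Dirac operator $D_A$ on the spinor bundle $\mathbb{S}(Y_0) \otimes \mathfrak{g}_{\O(E_{0,\theta})}$, where the parallel $\G_2$-spinor induces an isomorphism $\mathbb{S}(Y_0) \cong \underline{\mathbb{R}} \oplus T^*Y_0$. Since the metric on $Y_0$ is flat (so has zero scalar curvature) and $F_A = 0$, the Lichnerowicz formula reduces to $D_A^2 = \nabla_A^* \nabla_A$, so $\ker L_A = \ker D_A$ consists exactly of the $A$-parallel sections of $\mathbb{S}(Y_0) \otimes \mathfrak{g}_{\O(E_{0,\theta})}$. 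Lifted to $\mathbb{R}^7$ these are the $\Gamma$-invariant elements of $(\mathbb{R} \oplus (\mathbb{R}^7)^*) \otimes \mathfrak{so}_\mathbb{R}(\mathbb{C}^7)$, and by \autoref{lem:Example18_fixpoint_set_flambda} this space is one-dimensional for $\theta \notin \{\pm 1\}$, spanned by $\eta_\theta \coloneqq \sum_{i=1}^7 e_i^\flat \otimes (e_i \wedge f_i)$, which sits in the $\Omega^1$-summand. The identity $L_A^* = *L_A *$ from \autoref{prop:DecompositionNonlinearMap} then gives $\ker L_A^* = * \ker L_A = \langle (*\eta_\theta, 0)\rangle$, which is one-dimensional. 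Infinitesimal irreducibility is immediate from the same Lemma: $\ker \diff_A = \mathfrak{so}_\mathbb{R}(\mathbb{C}^7)^\Gamma = 0 = \mathfrak{z}(\mathfrak{so}(14))$, since $\mathfrak{so}(14)$ is semisimple.

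Finally, to verify the anti-invariance of $*\eta_\theta$, I would compute the $\mathbb{Z}_2$-action on the generator directly. The pullback $\tilde{\lambda}_0(-1)^*$ acts on $\eta_\theta$ as the tensor product of $\lambda_0(-1)^*$ on the $1$-form factor and $\Ad_{R_0^{-1}}$ on the $\mathfrak{g}$-valued coefficient; the former multiplies the constant-coefficient $1$-form $e_i^\flat$ by $-1$, while $\Ad_{R_0}$ negates $e_i \wedge f_i$ (because $R_0$ fixes $e_i$ and sends $f_i \mapsto -f_i$), so the two signs cancel and $\eta_\theta$ is $\mathbb{Z}_2$-invariant. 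On the Hodge dual $*\eta_\theta \in \Omega^6(\mathfrak{g}_{\O(E_{0,\theta})})$, however, $\lambda_0(-1)^*$ now multiplies the constant-coefficient $6$-form $*e_i^\flat$ by $(-1)^6 = +1$, while $\Ad_{R_0}$ still contributes $-1$; the total sign is therefore $-1$. Conceptually this is simply that $\lambda_0(-1)$ is orientation-reversing on the seven-dimensional $Y_0$, so the Hodge star swaps $\mathbb{Z}_2$-equivariance types between $\ker L_A$ and $\ker L_A^*$. The main obstacle in this plan is the Dirac identification together with the ensuing Lichnerowicz vanishing that delivers $\dim \ker L_A = 1$; everything else is bookkeeping with the representation-theoretic data of \autoref{lem:Example18_fixpoint_set_flambda}.
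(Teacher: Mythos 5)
Your proof is correct and follows essentially the same route as the paper: the Weitzenböck formula (cited by the paper from \cite{Walpuski-InstantonsKummer}) identifying $\ker L_{A_{0,\theta}}$ with parallel sections is exactly what your Dirac/Lichnerowicz argument on $\mathbb{S}(Y_0)\otimes\mathfrak{g}_{\O(E_{0,\theta})}\cong(\Omega^0\oplus\Omega^1)(\mathfrak{g}_{\O(E_{0,\theta})})$ delivers, after which both proofs apply \autoref{lem:Example18_fixpoint_set_flambda}, deduce infinitesimal irreducibility from $\mathfrak{so}_\mathbb{R}(\mathbb{C}^7)^\Gamma=0$, and obtain anti-invariance of $\ker L^*_{A_{0,\theta}} = *\ker L_{A_{0,\theta}}$ from the orientation-reversal of $\lambda_0(-1)$. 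Your preliminary integration-by-parts step showing $\xi=0$ is redundant (the Lichnerowicz vanishing already forces the $\Omega^0$-component, i.e.\ $*\xi$, to be a parallel section, which the Lemma kills), but this is a minor inefficiency rather than a gap.
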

\begin{remark}\label{rem:example18-explanation O(14)-bundle I}
In the following we work with the bundle $\O(E_{0,\theta})$ because the $\mathbb{Z}_2$-action is orientation reversing. The $\G_2$-instantons that we will construct using \autoref{theo:perturbing_almost_instantons}, will therefore a-priori also lie on (a glued) $\O_{\mathbb{R}}(\mathbb{C}^7)$-bundle $\O(\hat{E}_{\theta})$. However, this $\O_{\mathbb{R}}(\mathbb{C}^7)$-bundle will admit an $\SO_{\mathbb{R}}(\mathbb{C}^7)$-reduction $\SO(E_{0,\theta})\subset \O(E_{0,\theta})$ and since $\mathfrak{o}_{\mathbb{R}}(\mathbb{C}^7)=\mathfrak{so}_{\mathbb{R}}(\mathbb{C}^7)$, these $\G_2$-instantons will restrict to $\SO(E_{0,\theta})$. Note that although $\mathbb{Z}_2$ does not act on $\SO(E_{0,\theta})$, it still acts on the adjoint bundle $\mathfrak{so}(E_{0,\theta})$. In the light of \autoref{bul: perturbing for more general H-actions} of \autoref{rem:perturbing almost instantons weakening assumptions}, one could therefore deform the $\SO_{\mathbb{R}}(\mathbb{C}^7)$-connection directly, without the diversion to $\O(E_{0,\theta})$.
\end{remark}
\begin{proof}
Since $Y_0$ and $A_{0,\theta}$ are flat, the Weitzenböck-formula implies that any element in the kernel of $L_{A_{0,\theta}}$ is parallel and therefore corresponds to a fixed-point of the representation of $\Gamma$ on $(\mathbb{R}^7\otimes \mathfrak{so}_\mathbb{R}(\mathbb{C}^7) \oplus \Lambda^7\mathbb{R}^7\otimes \mathfrak{so}_\mathbb{R}(\mathbb{C}^7))$ described in \autoref{lem:Example18_fixpoint_set_flambda} (cf.~\cite[Proposition~9.2]{Walpuski-InstantonsKummer}). Furthermore, the connection is infinitesimally irreducible if and only if the representation of $\Gamma$ on $\mathfrak{o}_\mathbb{R}(\mathbb{C}^7) = \mathfrak{so}_\mathbb{R}(\mathbb{C}^7)$ only fixes the center $\mathfrak{z} = 0$ of $\mathfrak{so}_\mathbb{R}(\mathbb{C}^7)$. \autoref{lem:Example18_fixpoint_set_flambda} implies therefore, that for $\theta\notin \{1,-1\}$ $A_{0,\theta}$ is infinitesimally irreducible and, furthermore, $\ker(L_A) = \mathbb{R}a$ where $0 \neq a \in \Omega^1(Y_0,\mathfrak{so}({E_{0,\theta}}))$. The explicit form of $a$ given in~\autoref{lem:Example18_fixpoint_set_flambda} (or the fact that the entire $1$-parameter family of connections  determined by $f_\theta$ for $\theta\in S^1$ is $\mathbb{Z}_2$-invariant by \autoref{prop:Example18_Z2-action}) implies that $a$ is invariant under the $\mathbb{Z}_2$-action. Since $L_{A_{0,\theta}}^* = * L_{A_{0,\theta}} *$ we have $\ker(L_{A_{0,\theta}}^*)=\mathbb{R}(*a)$ and since the $\mathbb{Z}_2$-action is orientation reversing, we obtain $\tilde{\lambda}_0(-1)^* (*a) = -a$.
\end{proof}

\begin{proposition}\label{prop:example18_Poincare-type-estimate}
For every $\theta\in S^1 \setminus \{\pm 1\}$ there exists a constant $c_\theta>0$, which depends continuously on $\theta$, with the following property: Let $U\subset Y_0$ be an open set such that its preimage $\check{U} \subset \mathbb{R}^7$ under the quotient map $\mathbb{R} \to \mathbb{R}^7/\Gamma=Y_0$ is connected and for any $y_1,y_2 \in \check{U}\subset \mathbb{R}^7$ we have $\textup{dist}_{\check{U}}(y_1,y_2)< 10 (\vert y_1-y_2 \vert + 1)$ (where $\textup{dist}_{\check{U}}(y_1,y_2)$ measures the distance between $y_1$ and $y_2$ with respect to paths contained in $\check{U}$). Then every $\xi \in \Omega^0(U,\mathfrak{so}(E_{0,\theta}))$ satisfies \[ \Vert \xi \VertC{0}{(U)} \leq c_\theta \Vert \diff_{A_{0,\theta}} \xi \VertC{0}{(U)}. \]
\end{proposition}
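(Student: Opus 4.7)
The plan is to reduce the claim to the quantitative no-fixed-point statement of \autoref{prop:example18_no_fixpoints_quantitative_version} by lifting $\xi$ to a $\Gamma$-equivariant map on $\check U$ and connecting pairs of $\Gamma$-related points via short paths supplied by the thick-path hypothesis.

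First I would lift. Since $E_{0,\theta} = \mathbb{R}^7 \times_{\Gamma, f_\theta} \mathbb{C}^7$ is associated to the universal orbifold cover $\mathbb{R}^7 \to Y_0$ and the connection $A_{0,\theta}$ is the canonical flat connection induced from that covering, a section $\xi \in \Omega^0(U, \mathfrak{so}(E_{0,\theta}))$ is equivalent to a $\Gamma$-equivariant map $\tilde\xi \colon \check U \to \mathfrak{so}_\mathbb{R}(\mathbb{C}^7)$, i.e.\ $\tilde\xi(\gamma \cdot y) = \Ad_{f_\theta(\gamma)}(\tilde\xi(y))$. Since $\Ad_{f_\theta(\gamma)}$ is an isometry, this gives $\Vert\xi\Vert_{C^0(U)} = \Vert\tilde\xi\Vert_{C^0(\check U)}$ and $\Vert\diff_{A_{0,\theta}}\xi\Vert_{C^0(U)} = \Vert d\tilde\xi\Vert_{C^0(\check U)}$.

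Next I would normalise the base-point. Let $\Lambda \subset \Gamma$ be the translation sublattice (a rank-$7$ lattice in $\mathbb{R}^7$) and fix a bounded fundamental domain $\overline F$ for $\Lambda$. Set
\[
D \coloneqq \sup_{y \in \overline F}\ \max_{\gamma \in \{a,b,t\}} |\gamma \cdot y - y|,
\]
which is finite because $\overline F$ is compact and $a,b,t$ are affine maps. Given an arbitrary $y \in \check U$, use $\Lambda$-invariance of $\check U$ and of $|\tilde\xi|$ to replace $y$ by a $\Lambda$-translate $y_0 \in \overline F \cap \check U$ without changing $|\tilde\xi(y_0)|$. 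Then for every generator $\gamma \in \{a,b,t\}$ the image $\gamma y_0$ still lies in $\check U$ (which is $\Gamma$-invariant), and $|\gamma y_0 - y_0| \leq D$.

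The heart of the argument is now to select the right generator. Applying \autoref{prop:example18_no_fixpoints_quantitative_version} to $\tilde\xi(y_0) \in \mathfrak{so}_\mathbb{R}(\mathbb{C}^7)$ yields some $\gamma \in \{a,b,t\}$ with
\[
|\tilde\xi(y_0)| \;\leq\; c_\theta\,\bigl|(\id - \Ad_{f_\theta(\gamma)})\tilde\xi(y_0)\bigr| \;=\; c_\theta\,|\tilde\xi(y_0) - \tilde\xi(\gamma y_0)|,
\]
the equality coming from $\Gamma$-equivariance of $\tilde\xi$. By the connectedness and thick-path hypothesis on $\check U$, there is a path in $\check U$ from $y_0$ to $\gamma y_0$ of length at most $10(|\gamma y_0 - y_0| + 1) \leq 10(D+1)$, so integrating $d\tilde\xi$ along this path gives $|\tilde\xi(y_0) - \tilde\xi(\gamma y_0)| \leq 10(D+1)\Vert \diff_{A_{0,\theta}}\xi\Vert_{C^0(U)}$. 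Taking the supremum over $y \in \check U$ yields the claim with $c_\theta$ replaced by $10\,c_\theta(D+1)$, which depends continuously on $\theta$ since $c_\theta$ does and $D$ is $\theta$-independent.

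The only mildly subtle point is the reduction to $\overline F$: one really needs it because $a$ and $b$ are not translations, so $|\gamma y - y|$ is unbounded for unbounded $y$. Once one notices that both $\check U$ and the norm $|\tilde\xi|$ are $\Lambda$-invariant, this step is immediate and the rest of the argument is a direct combination of the pointwise inequality from \autoref{prop:example18_no_fixpoints_quantitative_version} with the fundamental theorem of calculus along a controlled path.
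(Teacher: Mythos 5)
Your proof is correct and follows essentially the same route as the paper: lift to a $\Gamma$-equivariant map on $\check U$, translate by the lattice $\Lambda$ into a bounded fundamental domain, invoke \autoref{prop:example18_no_fixpoints_quantitative_version} to pick a good generator, and close with the fundamental theorem of calculus along a short path supplied by the thick-path hypothesis. The paper simply specialises your fundamental domain to $[0,1]^7$ and records the explicit bound $D \le 2\sqrt{7}+1$, so the two arguments are the same modulo that cosmetic choice.
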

\begin{proof}
As in the statement we denote by $\check{U}\subset \mathbb{R}^7$ the preimage of $U$ under the quotient map $\mathbb{R}^7 \to \mathbb{R}^7/\Gamma =Y_0$. Furthermore, let $\check{\xi}\in \Omega^0(\check{U},\mathbb{R}^7\times \mathfrak{so}_{\mathbb{R}}(\mathbb{C}^7))^{\Gamma,f_\theta}$ be the $\Gamma$-equivariant lift of $\xi$. We will prove in the following that \[ \Vert \check{\xi} \VertC{0}{(\check{U})} \leq c_\theta \Vert \diff \check{\xi} \VertC{0}{(\check{U})} \] for a constant $c_\theta>0$ that depends continuously on $\theta$. Since $A_{0,\theta}$ is induced by the trivial connection on $\mathbb{R}^7 \times \mathfrak{so}_{\mathbb{R}}(\mathbb{C}^7)$ and the quotient map $\mathbb{R}^7\to Y_0$ is isometric, this implies the statement.

For this let $\alpha,\beta,\tau_1 \in \Gamma$ be the set of generators described in the beginning of this section. For every $y\in \check{U}$ there exists by \autoref{prop:example18_no_fixpoints_quantitative_version} a $\gamma \in \{\alpha,\beta,\tau_1\}$ such that \[ \big\vert \check{\xi}(y) \big\vert \leq c_\theta \big\vert(\id -\Ad_{f_\theta(\gamma)})(\check{\xi}(y)) \big\vert = c_\theta \big\vert \check{\xi}(y) - \check{\xi}(\gamma(y)) \big\vert\] where $c_\theta>0$ only depends on $\theta$ (and this dependence is continuous). Our assumptions on $\check{U}$ and direct inspection of the maps $\alpha,\beta,\tau_1$ show that whenever $y \in \check{U} \cap [0,1]^7$, then we can always find a path connecting $y$ and $\gamma(y)$ for $\gamma \in \{\alpha,\beta,\tau_1\}$ of length less than $10 \cdot (2\sqrt{7}+1)$. The Fundamental Theorem of Calculus implies therefore that whenever $y \in \check{U}\cap [0,1]^7$ \[ \vert \check{\xi} \vert(y) \leq c_{\theta} \vert \check{\xi}(y) - \check{\xi}(\gamma(y)) \vert \leq 70 c_{\theta} \Vert \diff \check{\xi} \VertC{0}{(\check{U})}. \] Note that a general $y \in \check{U}$ can always be translated via elements in $\Gamma$ to an element in $\check{U}\cap [0,1]^7$. Since $\check{\xi}$ is $\Gamma$-equivariant and the action of $\Gamma$ on $\mathfrak{so}_{\mathbb{R}}(\mathbb{C}^7)$ preserves the norm, the previous inequality actually holds for any $y \in \check{U}$ and the claim follows.
\end{proof}

\subsection{A family of flat connections over a fixed bundle}\label{subsec:family of flat connections}

Next, we fix the bundle $E_0 \coloneqq E_{0,1}$ and connection $A_0 \coloneqq A_{0,1}$ and express the family of flat bundles $(E_{0,\theta},A_{0,\theta})_{\theta \in S^1}$ discovered in the previous section as a family of flat connections $(A_0 +a_{0,\mathfrak{f}})_{\mathfrak{f}\in \mathbb{R}} $ on the fixed bundle $\O(E_0)$ (and the subbundle $\SO(E_0)$), where $A_0+a_{0,\mathfrak{f}}$, $A_0+a_{0,\mathfrak{f}+2\pi}$, and $A_0+a_{0,-\mathfrak{f}}$ will turn out to be gauge equivalent (and $A_0+a_{0,\mathfrak{f}}$, $A_0+a_{0,\mathfrak{f}+2\pi}$ in the case of $\SO(E_0)$):

\begin{proposition}\label{prop:example18-identifying_flat_bundles-I}
Fix $(E_0,A_0) \coloneqq (E_{0,1}, A_{0,1})$ and let $\theta \coloneqq \e^{i \mathfrak{f}} \in S^1$ for $\mathfrak{f} \in \mathbb{R}$. Define 
\begin{align*}
F_{\mathfrak{f}}^\prime \col \mathbb{R}^7 \times \mathbb{C}^7 &\to \mathbb{R}^7 \times \mathbb{C}^7 \\
(y,w) &\mapsto (y , \textup{diag}(\e^{i y_1 \mathfrak{f}}, \dots, \e^{i y_7 \mathfrak{f}} ) \cdot w)
\end{align*} 
where $y=(y_1,\dots,y_7) \in \mathbb{R}^7$. Then $F_{\mathfrak{f}}^\prime$ descends to an (isometric and orientation preserving) $\mathbb{Z}_2$-equivariant isomorphism \[\overline{F_{\mathfrak{f}}^\prime} \col E_{0}\equiv E_{0,1} \to E_{0,\theta}\] that pulls back the flat connection $A_{0,\theta} \in \mathcal{A}(\SO(E_{0,\theta})) = \mathcal{A}(\O(E_{0,\theta}))$ to \[(\overline{F_{\mathfrak{f}}^\prime})^*A_{0,\theta} - A_{0} = \textup{diag} (i \mathfrak{f} \diff y^1, \dots , i \mathfrak{f} \diff y^7) \in \Omega^1(\mathbb{R}^7, \mathfrak{so}_{\mathbb{R}}(\mathbb{C}^7))^{\Gamma,f_1} \cong \Omega^1(Y_0,\mathfrak{so}(E_{0,1})).\] 
\end{proposition}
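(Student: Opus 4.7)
The plan is to verify each assertion by direct computation on the universal cover $\mathbb{R}^7 \times \mathbb{C}^7$, using the explicit forms of the representations $f_1$ and $f_\theta$ from \autoref{prop:example18-definition-monodromy-representation}. Throughout, write $D(y) \coloneqq \textup{diag}(\e^{iy_1\mathfrak{f}},\dots,\e^{iy_7\mathfrak{f}}) \in \U(\mathbb{C}^7)$, so that $F_{\mathfrak{f}}^\prime(y,w) = (y,D(y)w)$ is fiberwise multiplication by $D(y)$.

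First, I would check that $F_{\mathfrak{f}}^\prime$ descends to a well-defined bundle map $\overline{F_{\mathfrak{f}}^\prime} \col E_0 \to E_{0,\theta}$. Since $E_0 = \mathbb{R}^7 \times_{\Gamma,f_1} \mathbb{C}^7$ and $E_{0,\theta} = \mathbb{R}^7 \times_{\Gamma,f_\theta} \mathbb{C}^7$, this is equivalent to the intertwining identity
\begin{equation*}
D(\gamma y) \cdot f_1(\gamma) = f_\theta(\gamma) \cdot D(y) \qquad \text{for every } \gamma \in \Gamma \text{ and } y\in \mathbb{R}^7.
\end{equation*}
By \autoref{prop:presentation__of_piorb}, it suffices to verify this on the three generators $\alpha,\beta,\tau_1$. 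For each generator, both sides are explicit diagonal or monomial $7\times 7$-matrices, and the check amounts to matching, coordinate by coordinate, the $\theta$-factors produced by $f_\theta$ with the phase shifts $\e^{i(\gamma y)_j \mathfrak{f}}/\e^{i y_j\mathfrak{f}}$ coming from the change of base point. This is the main step and where any normalization subtlety lives; once set up correctly, it reduces to book-keeping as in the proof of \autoref{prop:example18-definition-monodromy-representation}.

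Second, the fiberwise isometry, orientation preservation, and $\mathbb{Z}_2$-equivariance are immediate once $\overline{F_{\mathfrak{f}}^\prime}$ is well-defined. Indeed $D(y) \in \U(\mathbb{C}^7) \subset \SO_{\mathbb{R}}(\mathbb{C}^7)$ preserves both the real inner product and the orientation that $E_0$ and $E_{0,\theta}$ inherit from the standard Hermitian structure on $\mathbb{C}^7$. For $\mathbb{Z}_2$-equivariance recall from \autoref{prop:Example18_Z2-action} that $\tilde{\lambda}_0(-1)$ lifts to $(y,w)\mapsto (-y,R_0 w)$ with $R_0$ coordinatewise complex conjugation. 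Since $R_0$ conjugates $\e^{i y_j \mathfrak{f}}$ to $\e^{-iy_j\mathfrak{f}}$, we obtain $R_0\, D(y) = D(-y)\, R_0$, which gives $F_{\mathfrak{f}}^\prime(-y,R_0 w) = \tilde{\lambda}_0(-1)\, F_{\mathfrak{f}}^\prime(y,w)$.

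Finally, for the pullback of the flat connection I would pass to the universal cover, where both $A_0$ and $A_{0,\theta}$ correspond to the trivial connection $\diff$ on $\mathbb{R}^7\times\mathbb{C}^7$. Since $F_{\mathfrak{f}}^\prime$ acts as the fiberwise gauge transformation $D(y)$, the pullback satisfies $(F_{\mathfrak{f}}^\prime)^*\diff - \diff = D(y)^{-1}\, \diff D(y)$, and a direct computation gives
\begin{equation*}
D(y)^{-1}\, \diff D(y) = \textup{diag}(i\mathfrak{f}\diff y^1,\dots,i\mathfrak{f}\diff y^7) \in \Omega^1(\mathbb{R}^7,\mathfrak{u}(\mathbb{C}^7)) \subset \Omega^1(\mathbb{R}^7,\mathfrak{so}_{\mathbb{R}}(\mathbb{C}^7)).
\end{equation*}
The $\Gamma$-equivariance from the first step ensures that this 1-form descends to the asserted element of $\Omega^1(Y_0,\mathfrak{so}(E_{0,1}))$. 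The only place where real work is needed is the generator-by-generator equivariance check; once that is handled, the remaining assertions drop out of the fact that $D(y)$ is a smoothly varying unitary.
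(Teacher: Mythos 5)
Your proposal reproduces the paper's own strategy almost exactly: reduce well-definedness to the intertwining identity $D(\gamma y)\,f_1(\gamma) = f_\theta(\gamma)\,D(y)$, which by \autoref{prop:presentation__of_piorb} only needs to be checked on the generators $\alpha,\beta,\tau_1$; get isometry and orientation-preservation from $D(y)\in\U(\mathbb{C}^7)$; get $\mathbb{Z}_2$-equivariance from $R_0 D(y) = D(-y)R_0$; and obtain the pullback connection from $D^{-1}\diff D$. These are the right moves and the last three are all correct as you present them.

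The gap is in the step you explicitly single out as ``the main step and where any normalization subtlety lives'' but then do not carry out. If you actually run the check with $D(y)=\diag(\e^{iy_j\mathfrak{f}})$ and the $f_\theta$ of \autoref{prop:example18-definition-monodromy-representation}, the identity fails already for $\gamma=t=\tau_1$: one has $D(\tau_1 y)\,f_1(\tau_1) = \diag(\e^{i\mathfrak{f}}\e^{iy_1\mathfrak{f}},\e^{iy_2\mathfrak{f}},\dots)$, whereas $f_\theta(\tau_1)\,D(y) = \diag(\theta^2\e^{iy_1\mathfrak{f}},\e^{iy_2\mathfrak{f}},\dots) = \diag(\e^{2i\mathfrak{f}}\e^{iy_1\mathfrak{f}},\dots)$, which differ by a factor of $\theta=\e^{i\mathfrak{f}}$. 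The same mismatch appears for $\beta$ in every coordinate it translates by $\tfrac{1}{2}$. This is consistent with the heuristic in \autoref{prop:example18-definition-monodromy-representation} (translation by $\tfrac{k}{2}$ corresponds to $\theta^k$): to make the intertwining hold one needs $D(y)=\diag(\theta^{2y_j}) = \diag(\e^{2iy_j\mathfrak{f}})$, and then the Maurer--Cartan computation gives $\diag(2i\mathfrak{f}\diff y^j)$, not $\diag(i\mathfrak{f}\diff y^j)$. So either the statement of $F_{\mathfrak{f}}^\prime$ (and the conclusion) is off by a factor of $2$, or $f_\theta$ is to be normalized with $\theta$ in place of $\theta^2$ for $t$; the family $(A_0+a_{0,\mathfrak{f}})_{\mathfrak{f}}$ and the constants in \autoref{prop:example18-a_(0,f)-in-gluing-region} depend on which normalization one keeps, so this needs to be pinned down rather than waved past. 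Do the generator-by-generator verification in full — it is short and it is exactly where the content of the proposition lives.
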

\begin{proof}
For the first point, one chooses a set of generators $\{a,b,t\}$ as in \autoref{prop:presentation__of_piorb} and checks directly that $F_{\mathfrak{f}}^\prime \circ (\gamma, f_1(\gamma)) = (\gamma, f_\theta(\gamma)) \circ F_{\mathfrak{f}}^\prime$ for every generator $\gamma \in \{a,b,t \}$. It is not difficult to see that the same equivariance holds true for the generator of the $\mathbb{Z}_2$-action. Furthermore, note that $F_{\mathfrak{f}}^\prime$ pulls back the trivial connection $A_{\textup{trivial}}$ on $\mathbb{R}^7\times \O_{\mathbb{R}}(\mathbb{C}^7)$ to \[ (F_{\mathfrak{f}}^\prime)^* A_{\textup{trivial}} - A_{\textup{trivial}} = \textup{diag}(i\mathfrak{f} \diff y^1 ,\dots , i\mathfrak{f} \diff y^7) \in \Omega^1(\mathbb{R}^7,\mathfrak{so}_{\mathbb{R}}(\mathbb{C}^7)). \] Since 
\begin{equation*}
\begin{tikzcd}
\big(\mathbb{R}^7 \times \O_{\mathbb{R}}(\mathbb{C}^7),A_{\textup{trivial}}\big) \arrow[r, "F_{\mathfrak{f}}^\prime"] \arrow[d] & \big(\mathbb{R}^7 \times \O_{\mathbb{R}}(\mathbb{C}^7),A_{\textup{trivial}}\big) \arrow[d] \\
\big(\O(E_{0,1}),A_{0,1}\big) \arrow[r, "\overline{F_{\mathfrak{f}}^\prime}"] & \big(\O(E_{0,\theta}),A_{0,\theta}\big)
\end{tikzcd}
\end{equation*} commutes, this implies the second statement.
\end{proof}

Next, we will modify $\overline{F_{\mathfrak{f}}^\prime}$ so that the family of connections $((\overline{F_{\mathfrak{f}}^\prime})^*A_{0,\e^{i\mathfrak{f}}})_{\mathfrak{f}\in \mathbb{R}}$ satisfies the assumptions in \autoref{prop:pregluing-family} and \autoref{ass: very simple family of gluing data}. For this we first need the following local description of $Y_0$ near its singular set:

\begin{proposition}[{\cite[Example~18]{Joyce-GeneralisedKummer2}}]\label{prop:example18_description_of_singularset}
The set $\mathcal{S}$ of connected components of $\textup{sing}(Y_0)$ consist of one element $S \cong S^1$ and (in the notation of \autoref{ass:invertible_linearisations}) there exists an open embedding $\mathtt{J}\col (\mathbb{R} \times B_\kappa(0)/\mathbb{Z}_7)/\mathbb{Z} \to Y_0$ with $\mathtt{J}^*\phi_0 = \phi_{S}$. The group $\mathbb{Z}_7$ acts hereby on $B_\kappa(0)\subset \mathbb{C}^3$ via 
\begin{equation}\label{eq:example18_action_of_Z7}
\big(z_1,z_2,z_3\big) \mapsto \big(e^{2\pi i/7}z_1,e^{4\pi i/7}z_2,e^{8\pi i/7}z_3\big) 
\end{equation} 
and $\mathbb{Z}$ only acts on $\mathbb{R}$ via translations by multiples of $\sqrt{7}$.

In fact, $\mathtt{J}$ can be constructed as follows: There exists an orthonormal basis $\{b_1,\dots,b_7\}$ of $\mathbb{R}^7$ with $b_1 = \tfrac{1}{\sqrt{7}} (1,1,1,-1,1,1,1)$ such that (under the identification $\mathbb{R}^7 \cong \mathbb{R}\oplus \mathbb{C}^3$ via $(b_1,b_2+ib_3,b_4+ib_5,b_6+ib_7)$) $\alpha$ attains the form described in \eqref{eq:example18_action_of_Z7} and $\phi_0\in \Lambda^3(\mathbb{R}^7)^*$ is given by \eqref{eq:modelG2Structure} (cf. \cite[Theorem~8.1]{SalamonWalpuski-NotesOnOctonions}). Then $\mathtt{J}$ is induced by the map 
\begin{align*}
\mathbb{R} \times B_{\kappa}(0) &\to \mathbb{R}^7 \\
(s,x_1+iy_1,\dots,x_3+iy_3) &\mapsto s b_1 +x_1 b_2 + y_1 b_3+ \dots + y_3 b_7. \qedhere
\end{align*}
\end{proposition}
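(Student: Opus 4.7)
The singular set of $Y_0$ arises from fixed points of nontrivial elements of $\Gamma$, so I would begin there. Direct inspection shows that $\alpha$ is a signed permutation matrix in $\SO(7)$ whose underlying permutation is the $7$-cycle $(1\,6\,4\,5\,7\,3\,2)$ with two sign changes; in particular $\alpha$ has order $7$, characteristic polynomial $x^7-1$, and one-dimensional $+1$-eigenspace $\mathbb{R}\cdot b_1$ with $b_1 = \tfrac{1}{\sqrt{7}}(1,1,1,-1,1,1,1)$. I would then verify that no other element of the order-$56$ point group $\Gamma/\mathbb{Z}^7$ admits a $\Gamma$-representative with fixed points. For $\beta$ itself this is immediate: the linear part $\beta_0 = \diag(-1,-1,-1,-1,1,1,1)$ has image $(I-\beta_0)\mathbb{R}^7 = \langle e_1,\ldots,e_4\rangle$, but the translational part $v_\beta = (\tfrac12,\tfrac12,0,0,\tfrac12,\tfrac12,0)$ has nonzero entries in positions $5,6$ where $I-\beta_0$ vanishes, so no $v\in\mathbb{Z}^7$ can cancel them; the same mechanism rules out fixed points for the remaining cosets of $\langle\alpha_0\rangle$ in the point group after a finite case analysis that is essentially the content of \cite[Example~18]{Joyce-GeneralisedKummer2}.

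Second, I would show that all fixed lines of $\alpha^k\tau_v$ project to a single circle $S\subset Y_0$. When nonempty, such a fixed set is $y_0 + \mathbb{R}\cdot b_1$ with $(I-\alpha^k)y_0 = v$ and $v \in (\mathbb{R}\cdot b_1)^\perp\cap\mathbb{Z}^7$. Since $(1,1,1,-1,1,1,1)$ is a primitive integer vector of norm $\sqrt{7}$, the lattice of translations in $\Gamma$ parallel to $b_1$ is $\mathbb{Z}\sqrt{7}\cdot b_1$, so $\Lambda_S = \mathbb{Z}\sqrt{7}$ and $S \cong \mathbb{R}/\Lambda_S \cong S^1$. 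The computation $\det\bigl((I-\alpha)|_{(\mathbb{R}\cdot b_1)^\perp}\bigr)=\Phi_7(1)=7$ partitions the candidate lines into $7$ cosets modulo $\mathbb{Z}^7$-translation, and the transitive action of the $\beta$-coset of the point group on this finite set collapses them to a single orbit, again as in Joyce's example.

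To construct $\mathtt{J}$, I would decompose $(\mathbb{R}\cdot b_1)^\perp$ into the three $\alpha$-invariant orthogonal $2$-planes corresponding to the conjugate-eigenvalue pairs $\{\zeta,\zeta^{-1}\},\{\zeta^2,\zeta^{-2}\},\{\zeta^3,\zeta^{-3}\}$ of $\alpha$, with $\zeta = e^{2\pi i/7}$, and pick oriented orthonormal bases $(b_2,b_3),(b_4,b_5),(b_6,b_7)$ of them so that the induced identification $(\mathbb{R}\cdot b_1)^\perp \cong \mathbb{C}^3$ via $(b_2+ib_3,\,b_4+ib_5,\,b_6+ib_7)$ makes $\alpha$ act by $\diag(\zeta,\zeta^2,\zeta^4)$; this is possible because $1+2+4\equiv 0\pmod 7$ places the action in $\SU(3)<\G_2$. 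By \cite[Theorem~8.1]{SalamonWalpuski-NotesOnOctonions}, one can further adjust this basis within $\SU(3)$ so that under $\mathbb{R}^7\cong\mathbb{R}\oplus\mathbb{C}^3$ the constant $\G_2$-structure $\phi_0$ takes the model form $\diff s\wedge\omega_0 + \im\Omega_0$ of \eqref{eq:modelG2Structure}. The linear map $(s,z_1,z_2,z_3)\mapsto s\,b_1 + \sum_{j=1}^3(\Re z_j\,b_{2j} + \Im z_j\,b_{2j+1})$ is then $\mathbb{Z}_7$- and $\Lambda_S$-equivariant, so for $\kappa$ small enough that its image is disjoint from any other $\Gamma$-translate of $\mathbb{R}\cdot b_1$ it descends to the required open embedding $\mathtt{J}$; the identity $\mathtt{J}^*\phi_0=\phi_S$ is then built in.

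The main expected obstacle is the finite-group bookkeeping of the first two steps: checking that no coset of $\langle\alpha_0\rangle$ in the point group outside $\langle\alpha_0\rangle$ itself contributes fixed points, and that the $7$ cosets of fixed lines are collapsed to one by the point-group action. Both are essentially carried out in \cite[Example~18]{Joyce-GeneralisedKummer2}, which I would cite rather than reproduce in full.
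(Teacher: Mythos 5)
Your proposal is correct and follows essentially the same route as the paper, which simply cites Joyce's Example~18 for the classification of the singular set and presents the explicit construction of $\mathtt{J}$ via the eigenbasis of $\alpha$; you fill in more of the verification (the signed-permutation cycle structure of $\alpha$, the ruling out of fixed points for $\beta$, the lattice computation giving $\Lambda_S = \mathbb{Z}\sqrt{7}$, and the $\SU(3)$ normalisation via \cite[Theorem~8.1]{SalamonWalpuski-NotesOnOctonions}) but ultimately defer, as the paper does, to \cite[Example~18]{Joyce-GeneralisedKummer2} for the remaining finite-group bookkeeping.
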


By shrinking the positive constant $\kappa>0$ in the previous proposition if necessary, we may assume that $\mathtt{J}$ extends to an open embedding $\mathtt{J}\col (\mathbb{R}\times B_{2\kappa}(0)/\mathbb{Z}_7)/\mathbb{Z} \to Y_0$. The following gives a description of $\overline{F_\mathfrak{f}^\prime}$ in the neighbourhood $\mathtt{J}(\mathcal{V}_{2\kappa})$ (where $\mathcal{V}_{2\kappa}$ is defined analogously to $\mathcal{V}_\kappa$ prior to \autoref{def:OrbifoldResolution}).
\begin{proposition}
There exists $\mathbb{R}$-linear maps $\varphi_i \col \mathbb{C}^3 \to \mathbb{R}$ for $i=1,\dots,7$ with the following properties: 
\begin{itemize}
\item $\vert \varphi_i(z_1,z_2,z_3)\vert \leq \vert (z_1,z_2,z_3) \vert$ and $ (\varphi_1,\dots,\varphi_7) \circ \alpha = (\varphi_2,\varphi_3,\varphi_7,-\varphi_6,-\varphi_4,\varphi_1,\varphi_5)$ (where $\alpha$ acts on $\mathbb{C}^3$ via \eqref{eq:example18_action_of_Z7}).
\item Over $(\mathbb{R}\times B_{2\kappa}(0)/\mathbb{Z}_7)/\mathbb{Z}$ the isomorphism $\mathtt{J}^*\overline{F_{\mathfrak{f}}^\prime} \col \mathtt{J}^*E_0 \to \mathtt{J}^*E_{0,\theta}$ is given by \[ \big[(s,z),w \big] \mapsto \big[(s,z),M_\mathfrak{f}(s,z) \cdot w \big] \] where \[ M_\mathfrak{f}(s,z) \coloneqq \textup{diag}\Big(\e^{i\mathfrak{f}\tfrac{s}{\sqrt{7}}}\cdot\e^{i\mathfrak{f}\varphi_1(z_1,z_2,z_3)},\dots,\e^{i\mathfrak{f}\tfrac{s}{\sqrt{7}}}\cdot\e^{i\mathfrak{f}\varphi_7(z_1,z_2,z_3)} \Big) \] (where in the 4-th entry there is a minus in the exponent of $\e^{-i\mathfrak{f}\tfrac{s}{\sqrt{7}}}$).
\end{itemize}
\end{proposition}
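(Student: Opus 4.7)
The plan is to take $\varphi_j$ to be the slice of $\mathtt{J}$ at $s = 0$. Concretely, using the orthonormal basis $\{b_1, \dots, b_7\}$ of $\mathbb{R}^7$ supplied by \autoref{prop:example18_description_of_singularset} and writing $b_i^j$ for the $j$-th component of $b_i \in \mathbb{R}^7$, I would set
\begin{equation*}
\varphi_j(z_1, z_2, z_3) \coloneqq \bigl(\mathtt{J}(0, z_1, z_2, z_3)\bigr)_j = \sum_{k=1}^3 \bigl(\Re(z_k)\, b_{2k}^j + \Im(z_k)\, b_{2k+1}^j\bigr).
\end{equation*}
Each of the three listed properties then reduces to a short computation.

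For the pointwise norm bound, I would observe that the matrix $B \coloneqq [b_1 \mid \cdots \mid b_7]$ lies in $\O(7)$, so its rows also form an orthonormal basis of $\mathbb{R}^7$. In particular, the truncated $j$-th row $(b_2^j, b_3^j, \dots, b_7^j) \in \mathbb{R}^6$ has Euclidean norm at most $1$, and Cauchy--Schwarz yields $|\varphi_j(z)| \leq |z|$.

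For the equivariance identity, I would invoke the defining feature of $\{b_i\}$ recalled in \autoref{prop:example18_description_of_singularset}: the map $\mathtt{J}$ intertwines the $\alpha$-action on $\mathbb{C}^3$ given by \eqref{eq:example18_action_of_Z7} with the $\alpha$-action $(y_1, \dots, y_7) \mapsto (y_2, y_3, y_7, -y_6, -y_4, y_1, y_5)$ on $\mathbb{R}^7$. Restricting this intertwining identity to $s = 0$ (noting that $b_1$ is $\alpha$-fixed) and reading off the $j$-th entry gives exactly $(\varphi_1, \dots, \varphi_7) \circ \alpha = (\varphi_2, \varphi_3, \varphi_7, -\varphi_6, -\varphi_4, \varphi_1, \varphi_5)$.

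For the formula for $M_\mathfrak{f}(s, z)$, I would expand
\begin{equation*}
\bigl(\mathtt{J}(s, z)\bigr)_j = s\, b_1^j + \varphi_j(z),
\end{equation*}
and use $b_1 = \tfrac{1}{\sqrt{7}}(1, 1, 1, -1, 1, 1, 1)$, so that $b_1^j = \tfrac{1}{\sqrt{7}}$ for $j \neq 4$ and $b_1^4 = -\tfrac{1}{\sqrt{7}}$. Since $\overline{F_\mathfrak{f}^\prime}$ multiplies the $j$-th coordinate of $w$ by $e^{i\mathfrak{f} y_j}$, its pullback along $\mathtt{J}$ multiplies the $j$-th coordinate by $\exp\bigl(i\mathfrak{f}(\pm s/\sqrt{7} + \varphi_j(z))\bigr)$, which is precisely the $j$-th diagonal entry of $M_\mathfrak{f}(s, z)$ with the sign switch at $j = 4$. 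The argument is essentially bookkeeping of indices and signs; the only mild nuisance I anticipate is aligning the sign conventions of $\alpha$ on $\mathbb{C}^3$ versus $\mathbb{R}^7$ in the equivariance step, and no substantive analytic obstacle arises.
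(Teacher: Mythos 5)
Your proposal is correct and follows essentially the same route as the paper: define $\varphi_j$ by projecting the $s=0$ slice of the lift of $\mathtt{J}$ to the $j$-th coordinate, deduce the norm bound from orthonormality of $\{b_2,\dots,b_7\}$ (via the orthogonal matrix and Cauchy--Schwarz), read off the $\alpha$-equivariance from the intertwining property built into \autoref{prop:example18_description_of_singularset}, and obtain the formula for $M_\mathfrak{f}$ by combining $(\mathtt{J}(s,z))_j = s\,b_1^j + \varphi_j(z)$ with the explicit diagonal form of $F_\mathfrak{f}^\prime$. The sign at $j=4$ is handled exactly as the paper intends via $b_1^4 = -1/\sqrt{7}$.
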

\begin{proof}
Let $\{b_1,\dots,b_7\} \subset \mathbb{R}^7$ be the orthogonal basis described in \autoref{prop:example18_description_of_singularset} and define $\varphi_i$ for $(z_1,z_2,z_3) \coloneqq (x_1+iy_1,\dots,x_3+iy_3)$ as \[ \varphi_i(z_1,z_2,z_3) \coloneqq \pr_{i} (x_1b_2 + y_1 b_3+x_2 b_4+y_2 b_5+ x_3 b_6 + y_3 b_7)\] where $\pr_i \col \mathbb{R}^7 \to \mathbb{R}$ denotes the i-th coordinate projection. The proposition then follows from the fact that $\{b_2,\dots,b_7\}$ are orthonormal, the construction of $\mathtt{J}$ in \autoref{prop:example18_description_of_singularset}, and the explicit form of $F_\mathfrak{f}^\prime$ given in \autoref{prop:example18-identifying_flat_bundles-I}.
\end{proof}

We will now modify the isomorphism $\overline{F_\mathfrak{f}^\prime}$ over $\mathcal{V}_{2\kappa}/\mathcal{V}_\kappa$ so that $(\overline{F_\mathfrak{f}^\prime})^*A_{0,\theta}$ attains over the gluing region $\mathcal{V}_\kappa$ the form required in \autoref{prop:pregluing-family} and \autoref{ass: very simple family of gluing data}.

\begin{definition}\label{def:example18_identifying_flat_bundles-II}
For $\mathfrak{f}\in \mathbb{R}$ let $\overline{F_\mathfrak{f}^\prime} \col E_0 \to E_{0,\e^{i\mathfrak{f}}}$ be the isomorphism constructed in \autoref{prop:example18-identifying_flat_bundles-I}. We now define the following (orientation preserving and isometric) isomorphism $\overline{F_\mathfrak{f}}\col E_0 \to E_{0,\e^{i\mathfrak{f}}}$:
\begin{align*}
\overline{F_\mathfrak{f}}([y,w]) = \begin{cases}
\overline{F_\mathfrak{f}^\prime}([y,w]) \textup{ for $[y] \in Y_0 \setminus \mathtt{J}(\mathcal{V}_{2\kappa})$} \\
\big[y,M_\mathfrak{f}^\prime(s,z)w\big] \textup{ for $y=s b_1+\dots+ \im (z_3) b_7 \in \mathtt{J}(\mathcal{V}_{2\kappa})$}
\end{cases}
\end{align*}
where $\{b_1,\dots,b_7\}$ is as in \autoref{prop:example18_description_of_singularset} and  \[ M_\mathfrak{f}^\prime(s,z) \coloneqq \textup{diag}\Big(\e^{i\xi\tfrac{s}{\sqrt{7}}}\cdot\e^{i\xi \chi(\vert(z_1,z_2,z_3)\vert)\varphi_1(z_1,z_2,z_3)},\dots,\e^{i\xi\tfrac{s}{\sqrt{7}}}\cdot\e^{i\xi\chi(\vert(z_1,z_2,z_3)\vert)\varphi_7(z_1,z_2,z_3)} \Big) \] with $\varphi_1,\dots,\varphi_7$ as in the previous proposition (and where in the 4-th entry there is a minus in the exponent of $\e^{-i\mathfrak{f}\tfrac{s}{\sqrt{7}}}$) and $\chi \col [0,2\kappa]\to [0,1]$ a non-decreasing function satisfying 
\begin{align*}
\chi(s) = \begin{cases}
0 \textup{ for $s \leq 5\kappa/4$} \\
1 \textup{ for $s \geq 7\kappa/4$.}
\end{cases}
\end{align*}
Note that this isomorphism is well-defined and smooth by the previous proposition. With this we define the following family of flat connections on $\O(E_0)$ (and $\SO(E_0)$): \[(A_0+a_{0,\mathfrak{f}})_{\mathfrak{f}\in \mathbb{R}} \quad \textup{where} \quad a_{0,\mathfrak{f}} \coloneqq A_0 - (\overline{F_\mathfrak{f}})^*(A_{0,\e^{i\mathfrak{f}}}).\]
\end{definition}

\begin{remark}\label{rem:example18-flat_family_is_Z_2-invariant}
Since all of the $\varphi_i$ in the previous definition are $\mathbb{R}$-linear, one can easily see that the corresponding isomorphisms $\overline{F_\mathfrak{f}}$ are still $\mathbb{Z}_2$-equivariant. Consequently, all $A_0 +a_{0,\mathfrak{f}}$ are $\mathbb{Z}_2$-invariant connections.
\end{remark}

We saw in \autoref{prop:example18-monodromies-not-conjugated} that $A_0+a_{0,\mathfrak{f}_1}$ and $A_0 + a_{0+\mathfrak{f}_2}$ are gauge equivalent in $\SO(E_0)$ if and only if $\mathfrak{f}_1 = \mathfrak{f}_2 +2\pi m$ for some $m\in \mathbb{Z}$ (and are gauge equivalent in $\O(E_0)$ if and only if $\mathfrak{f}_1 = \pm \mathfrak{f}_2 +2\pi m$; cf. \autoref{rem:example18_bundles_are_isomorphic}). The following proposition shows that the family $(A_0+a_{0,\mathfrak{f}})_{\mathfrak{f}\in \mathbb{R}}$ is not even tangent to the gauge orbits.

\begin{proposition}\label{prop:example18_family_not_tangent_to_gaugeorbig}
Let $(A_0+a_{0,\mathfrak{f}})_{\mathfrak{f}\in \mathbb{R}}$ be the family of flat connections on $E_0$ defined in \autoref{def:example18_identifying_flat_bundles-II}. Then there does not exist an $\mathfrak{f} \in \mathbb{R}$ and $\xi \in \Omega^0(Y_0,\mathfrak{so}(E_{0}))$ such that \[ \diff_{A_0+a_{0,\mathfrak{f}}} \xi = (\partial_{\mathbb{R}} a_{0,\, \cdot\, })(\mathfrak{f}),\] where $\partial_{\mathbb{R}} a_{0,\, \cdot\, }$ denotes the derivative of the map $\mathfrak{f} \mapsto a_{0,\mathfrak{f}}$.
\end{proposition}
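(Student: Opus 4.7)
The plan is to argue by contradiction, pulling the equation back to the universal cover $\mathbb{R}^7 \to Y_0$ and restricting to the open set $U \coloneqq Y_0 \setminus \mathtt{J}(\mathcal{V}_{2\kappa})$ on which, by \autoref{def:example18_identifying_flat_bundles-II}, the bundle isomorphism $\overline{F_\mathfrak{f}}$ reduces to the simple map $\overline{F_\mathfrak{f}^\prime}$ of \autoref{prop:example18-identifying_flat_bundles-I}.

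Suppose for contradiction that there exist $\mathfrak{f}_0\in\mathbb{R}$ and $\xi\in\Omega^0(Y_0,\mathfrak{so}(E_0))$ with $\diff_{A_0+a_{0,\mathfrak{f}_0}}\xi = (\partial_{\mathbb{R}}a_{0,\cdot\,})(\mathfrak{f}_0)$. Let $\check{U}\subset\mathbb{R}^7$ denote the preimage of $U$ under the quotient map and $\tilde{\xi}\col\check{U}\to\mathfrak{so}_\mathbb{R}(\mathbb{C}^7)$ the lift of $\xi$; by construction $\tilde{\xi}(\gamma y) = \Ad_{f_1(\gamma)}\tilde{\xi}(y)$ for every $\gamma\in\Gamma$. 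In the flat trivialisation of $E_0$ coming from $\mathbb{R}^7\times\mathbb{C}^7$, the 1-forms $a_{0,\mathfrak{f}_0}$ and $(\partial_{\mathbb{R}}a_{0,\cdot\,})(\mathfrak{f}_0)$ restrict to $\check{U}$ (up to a common sign that does not affect the argument) as
\[ \tilde{a}_{0,\mathfrak{f}_0} = \mathfrak{f}_0\sum_{j=1}^7 J_j\,\diff y_j \qquad\text{and}\qquad (\partial_{\mathbb{R}}\tilde{a}_{0,\cdot\,})(\mathfrak{f}_0) = \sum_{j=1}^7 J_j\,\diff y_j, \]
where $J_j\in\mathfrak{so}_\mathbb{R}(\mathbb{C}^7)$ denotes the rotation $e_j\wedge f_j$ (in the notation of \autoref{lem:Example18_fixpoint_set_flambda}), i.e.\ complex multiplication by $i$ on the $j$-th factor of $\mathbb{C}^7$ and by $0$ on the others. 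The lifted gauge equation on $\check{U}$ is therefore $\diff\tilde{\xi} + [\tilde{a}_{0,\mathfrak{f}_0},\tilde{\xi}] = \sum_{j=1}^7 J_j\,\diff y_j$.

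The crux of the argument is that $\mathfrak{h}\coloneqq\operatorname{span}_\mathbb{R}(J_1,\ldots,J_7)$ is an (abelian) Cartan subalgebra of $\mathfrak{so}_\mathbb{R}(\mathbb{C}^7)\cong\mathfrak{so}(14)$; consequently, with respect to any $\Ad$-invariant inner product, the orthogonal complement $\mathfrak{h}^\perp$ is $\ad(\mathfrak{h})$-invariant. Decomposing $\tilde{\xi} = \sum_j\tilde{\xi}^{\,j}J_j + \tilde{\xi}^\perp$ and projecting the lifted equation onto the $\mathfrak{h}$-factor, the commutator term drops out entirely: since $\tilde{a}_{0,\mathfrak{f}_0}$ takes values in $\mathfrak{h}$, $[\mathfrak{h},\mathfrak{h}]=0$ kills its contribution on the $\mathfrak{h}$-part of $\tilde{\xi}$, and $[\mathfrak{h},\mathfrak{h}^\perp]\subset\mathfrak{h}^\perp$ kills its contribution to the $\mathfrak{h}$-component coming from $\tilde{\xi}^\perp$. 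The equation therefore decouples into the seven scalar equations $\diff\tilde{\xi}^{\,j} = \diff y_j$ on $\check{U}$.

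Since the singular set of $Y_0$ lifts to a union of codimension-$6$ affine lines in $\mathbb{R}^7$, the complement $\check{U}$ of a (sufficiently small) tubular neighbourhood of these lines remains connected, and integration yields $\tilde{\xi}^{\,j}(y) = y_j + c_j$ on $\check{U}$ for constants $c_j\in\mathbb{R}$. Now $\tau_1\in\Gamma$ preserves $\check{U}$ and, by \autoref{prop:example18-definition-monodromy-representation}, $f_1(\tau_1) = \id\in\SO_\mathbb{R}(\mathbb{C}^7)$, so the $(\Gamma,f_1)$-equivariance of $\tilde{\xi}$ forces $\tilde{\xi}^{\,1}(y+e_1) = \tilde{\xi}^{\,1}(y)$, which is incompatible with the explicit formula and gives the contradiction $1 = 0$. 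The main technical step is the algebraic decoupling onto the Cartan $\mathfrak{h}$; everything else follows immediately from this and from the accidental triviality of the monodromy of $\tau_1$ under $f_1$, which produces the periodicity constraint that cannot be reconciled with the linear growth of $\tilde{\xi}^{\,1}$.
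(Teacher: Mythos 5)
Your proof is correct, but it takes a genuinely different route from the paper's. The paper's proof is cohomological: it writes $(\partial_{\mathbb{R}}a_{0,\,\cdot\,})(\mathfrak{f}) = \diag(i\,\diff y^1,\dots,i\,\diff y^7) + \diff_{A_0+a_{0,\mathfrak{f}}}\diag(h_1,\dots,h_7)$ globally (absorbing the compactly-supported correction from $\mathcal{V}_{2\kappa}$ into an exact term, using that everything is diagonal and hence commutes), then observes that the harmonic summand $\diag(i\,\diff y^1,\dots,i\,\diff y^7)$ is co-closed with respect to $\diff^*_{A_0+a_{0,\mathfrak{f}}}$, hence $L^2$-perpendicular to $\operatorname{im}(\diff_{A_0+a_{0,\mathfrak{f}}})$, and is nonzero. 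You instead restrict to $\check{U}$, project the equation onto the abelian Cartan $\mathfrak{h}=\operatorname{span}(J_1,\dots,J_7)$ (which decouples because $\tilde{a}_{0,\mathfrak{f}_0}$ is $\mathfrak{h}$-valued and $[\mathfrak{h},\mathfrak{h}]=0$, $[\mathfrak{h},\mathfrak{h}^\perp]\subset\mathfrak{h}^\perp$), integrate the resulting scalar equations $\diff\tilde{\xi}^j=\diff y_j$ on the connected set $\check{U}$, and contradict periodicity via the trivial monodromy $f_1(\tau_1)=\id$. Both arguments ultimately exploit the fact that the fixed 1-form $\sum_j J_j\,\diff y_j$ identified in \autoref{lem:Example18_fixpoint_set_flambda} is a nonzero obstruction. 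The paper's argument is shorter and purely structural, avoiding the choice of a trivialisation and the connectedness bookkeeping for $\check{U}$; your argument is lower-tech, does not invoke the codifferential or $L^2$-orthogonality, and makes visible the number-theoretic content (the linear growth of $\tilde{\xi}^1$ versus the $\mathbb{Z}$-periodicity forced by $\tau_1$). Two minor points worth flagging: the decoupling onto $\mathfrak{h}$ only needs $\mathfrak{h}$ to be abelian, not a Cartan, since skew-symmetry of $\operatorname{ad}_h$ for $h\in\mathfrak{h}$ already gives $[\mathfrak{h},\mathfrak{h}^\perp]\subset\mathfrak{h}^\perp$; and for the final step one should note explicitly that both $y$ and $y+e_1$ can be chosen in $\check{U}$, which is immediate since the removed tubular neighbourhoods of the codimension-6 lines do not cover any full $e_1$-translate.
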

\begin{proof}
\autoref{prop:example18-identifying_flat_bundles-I} and the construction in \autoref{def:example18_identifying_flat_bundles-II} imply that \[a_{0,\mathfrak{f}} = (\overline{F_{\mathfrak{f}}})^*A_{0,\e^{i\mathfrak{f}}} - A_{0} = \diag(i\mathfrak{f} \diff y^1,\dots, i \mathfrak{f} \diff y^7) + \tilde{a}_{\mathfrak{f}} \] where $\tilde{a}_{\mathfrak{f}} \in \Omega^1(Y_0,\mathfrak{so}(E_{0}))$ is supported in $\mathtt{J}(\mathcal{V}_{2\kappa})$ and is given in the (local) coordinates $\mathtt{J} \col (\mathbb{R}\times B_{2\kappa}(0)/\mathbb{Z}_7)/\mathbb{Z} \to Y_0$ by
\begin{align*}
\mathtt{J}^* \tilde{a}_{\mathfrak{f}} &= \mathfrak{f} \diff_{A_{0}} \Big( i\  \diag\big((-1+\chi(\vert z_1,z_2,z_3\vert) \varphi_1(z_1,z_2,z_3),\dots,(-1+\chi(\vert z_1,z_2,z_3\vert) \varphi_7(z_1,z_2,z_3)\big)\Big).
\end{align*}
The section \begin{align*}
i\ \diag((-1+\chi \varphi_1),\dots,(-1+\chi \varphi_7)) & \in  \Omega^0(\mathbb{R}\times B_{2\kappa}(0),\mathfrak{so}_\mathbb{R}(\mathbb{C}^7))^{\mathbb{Z}\times \mathbb{Z}_7}\\
& \equiv \Omega^0((\mathbb{R}\times B_{2\kappa}(0)/\mathbb{Z}_7)/\mathbb{Z},\mathfrak{so}(E_{0}))
\end{align*}  
can be extended by zero to a section over the entire orbifold which we schematically write as $\diag(h_1,\dots,h_7) \in \Omega^0(\mathbb{R}^7,\mathfrak{so}_{\mathbb{R}}(\mathbb{C}^7))^\Gamma \equiv \Omega^0(Y_0,\mathfrak{so}(E_{0}))$ for functions $h_1,\dots,h_7 \col \mathbb{R}^7 \to i\mathbb{\mathbb{R}}$.

We therefore have for every $\mathfrak{f} \in \mathbb{R}$: 
\begin{align*}
(\partial_{\mathbb{R}} a_{0,\, \cdot\, })(\mathfrak{f}) &=  \diag(i \diff y^1,\dots,i \diff y^7) +  \diff_{A_{0}} \diag(h_1,\dots,h_7) \\
&=  \diag(i \diff y^1,\dots,i \diff y^7) + \diff_{A_0+a_{0,\mathfrak{f}}} \diag(h_1,\dots,h_7)
\end{align*}
where the last equality comes from the fact that both $a_{0,\mathfrak{f}}$ and $\diag(h_1,\dots,h_7)$ are diagonal matrices which therefore commute. The first term of the right-hand side satisfies \[ \diff_{A_0+a_{0,\mathfrak{f}}}^* (\diag(i \diff y^1,\dots, i \diff y^7)) = 0 \] and lies therefore $L^2$-perpendicular to the image of $\diff_{A_0+a_{0,\mathfrak{f}}}$. This implies the claim.
\end{proof}

\subsection{Augmenting the flat connections to a family of $\mathbb{Z}_2$-equivariant gluing data}

Next, we augment the $\mathbb{Z}_2$-invariant family of flat connection $(A_0+a_{0,\mathfrak{f}})_{\mathfrak{f}\in \mathbb{R}} \subset \mathcal{A}(\SO(E_0)) = \mathcal{A}(\O(E_0))$ constructed in \autoref{def:example18_identifying_flat_bundles-II} to a set of $\mathbb{Z}_2$-equivariant gluing data. We begin by finding a set of $\mathbb{Z}_2$-equivariant resolution data for the $\G_2$-orbifold $(Y_0,\phi_0)$.

\begin{proposition}\label{prop:example18_equivariant_resolutiondata_1}
Define a $\mathbb{Z}_2$ action on $(\mathbb{R}\times \mathbb{C}^3/\mathbb{Z}_7)/\mathbb{Z}$ by $(-1)\cdot [s,z]\coloneqq[-s,-z]$. With respect to this action and $\lambda_0$, as defined in \autoref{prop:Example18_Z2-action}, $\mathtt{J}$ as defined in \autoref{prop:example18_description_of_singularset} becomes $\mathbb{Z}_2$-equivariant.
\end{proposition}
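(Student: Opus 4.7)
The plan is to verify two things: first, that the formula $(-1)\cdot[s,z]\coloneqq[-s,-z]$ indeed defines a $\mathbb{Z}_2$-action on the quotient $(\mathbb{R}\times \mathbb{C}^3/\mathbb{Z}_7)/\mathbb{Z}$, and second, that $\mathtt{J}$ intertwines this action with $\lambda_0$. Both steps are essentially direct computations based on the explicit description of $\mathtt{J}$ given in \autoref{prop:example18_description_of_singularset}.

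For well-definedness, I would check that the involution $(s,z)\mapsto(-s,-z)$ on $\mathbb{R}\times \mathbb{C}^3$ commutes (up to the quotient) with the two actions generating the quotient. The $\mathbb{Z}_7$-action from~\eqref{eq:example18_action_of_Z7} is $\mathbb{C}$-linear and hence commutes literally with scalar multiplication by $-1$ on $\mathbb{C}^3$. The $\mathbb{Z}$-action is translation by $\sqrt{7}$ in the $s$-coordinate, and $(-s-\sqrt{7},-z)$ lies in the same $\mathbb{Z}$-orbit as $(-s,-z)$ (apply the generator with the opposite sign). Hence the involution descends to $(\mathbb{R}\times \mathbb{C}^3/\mathbb{Z}_7)/\mathbb{Z}$.

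For equivariance, recall from \autoref{prop:example18_description_of_singularset} that $\mathtt{J}$ is induced by the $\mathbb{R}$-linear map
\begin{equation*}
\mathbb{R}\times B_{2\kappa}(0) \to \mathbb{R}^7,\qquad (s,x_1+iy_1,\dots,x_3+iy_3)\mapsto sb_1+x_1b_2+y_1b_3+\dots+y_3b_7,
\end{equation*}
where $\{b_1,\dots,b_7\}$ is an orthonormal basis of $\mathbb{R}^7$. Since this map is $\mathbb{R}$-linear, it carries $(-s,-z)$ to $-(sb_1+x_1b_2+\dots+y_3b_7)$, which is precisely the image of $\mathtt{J}([s,z])$ under the lift $y\mapsto -y$ of $\lambda_0(-1)$ to $\mathbb{R}^7$. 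Passing to the quotient $Y_0=\mathbb{R}^7/\Gamma$ then yields $\mathtt{J}((-1)\cdot[s,z])=\lambda_0(-1)(\mathtt{J}([s,z]))$, which is exactly the equivariance claim.

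There is no real obstacle here; the entire content of the statement is that the identification $\mathtt{J}$ is built from an $\mathbb{R}$-linear map, so the antipodal involution on the transverse model matches the antipodal involution on $\mathbb{R}^7$ tautologically. The only thing to be mindful of is the bookkeeping on the two quotients, which is handled by the observation that the $\mathbb{Z}_7$- and $\mathbb{Z}$-actions both commute with $-\id$ modulo the quotient.
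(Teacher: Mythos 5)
Your proof is correct and follows the same approach the paper takes—the paper simply asserts the result is "immediate" from the explicit $\mathbb{R}$-linear description of $\mathtt{J}$ in \autoref{prop:example18_description_of_singularset}, which is exactly the observation you spell out. The additional well-definedness check of the $\mathbb{Z}_2$-action (that $-\mathrm{id}$ commutes with the $\mathbb{Z}_7$-action and normalises the $\mathbb{Z}$-action) is correct and is a detail the paper leaves implicit.
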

\begin{proof}
This follows immediately from the description of $\mathtt{J}$ given in \autoref{prop:example18_description_of_singularset}.
\end{proof}

Now, recall from \autoref{sec:Ricci-Flat ALE metrics on crepant resolutions} that for each generic $\zeta \in \mathfrak{z}^* \cap (\mathfrak{pu}(\mathbb{C}[\mathbb{Z}_7])^{\mathbb{Z}_7})_\mathbb{Q}^*$, there exists an ALE Calabi--Yau $3$-fold $(\hat{Z}_\zeta,\tau_\zeta,\hat{\omega}_\zeta,\hat{\Omega}_\zeta)$ asymptotic to $\mathbb{C}^3/\mathbb{Z}_7$. Furthermore, if $\Ad_{\textup{conj}_{-1}}^*\zeta = \zeta$, then there exists a lift of the induced $\mathbb{Z}_2$-action on $\mathbb{C}^3/\mathbb{Z}_7$ to $(\hat{Z}_\zeta,\tau_\zeta,\hat{\omega}_\zeta,\hat{\Omega}_\zeta)$.
\begin{lemma}\label{lem:Example18_Z2-action_on_ALE}
We have that $\Ad_{\textup{conj}_{-1}}^* = 1$ so that the $\mathbb{Z}_2$-action lifts to $\hat{Z}_\zeta$ for any $\zeta \in \mathfrak{z}^*$. Every generic choice of $\zeta \in \mathfrak{z}^*\cap (\mathfrak{pu}(\mathbb{C}[\mathbb{Z}_7])^{\mathbb{Z}_7})_\mathbb{Q}^*$ leads therefore to a $\mathbb{Z}_2$-equivariant resolution data of $(Y_0,\phi_0)$.
\end{lemma}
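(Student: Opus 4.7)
The first assertion rests on a single observation: $-\id \in \U(\mathbb{C}^3)$ lies in the center of $\U(\mathbb{C}^3)$, so conjugation by $-\id$ on the abelian group $\Gamma = \mathbb{Z}_7$ is trivial. Hence the induced map $\textup{conj}_{-1} \in \U(\mathbb{C}[\mathbb{Z}_7])$, which is obtained by complex-linear extension of the conjugation action on $\mathbb{Z}_7$, is the identity. This makes $\Ad_{\textup{conj}_{-1}}$ trivial on $\mathfrak{pu}(\mathbb{C}[\mathbb{Z}_7])^{\mathbb{Z}_7}$ and consequently $\Ad_{\textup{conj}_{-1}}^* = 1$ on the dual space. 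Thus $\Ad_{\textup{conj}_{-1}}^*\zeta = \zeta$ for every $\zeta \in \mathfrak{z}^*$, and by \autoref{bul: lifting group actions to resolution} of \autoref{sec:Ricci-Flat ALE metrics on crepant resolutions} we obtain a holomorphic isometry $\widehat{-1} \in \textup{Isom}(\hat{Z}_\zeta, \hat{\omega}_\zeta)$ lifting the $-\id$ action on $\mathbb{C}^3/\mathbb{Z}_7$, satisfying $\widehat{-1}^*\hat{\omega}_\zeta = \hat{\omega}_\zeta$.

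The remaining task is to verify that a generic $\zeta \in \mathfrak{z}^* \cap (\mathfrak{pu}(\mathbb{C}[\mathbb{Z}_7])^{\mathbb{Z}_7})^*_\mathbb{Q}$ gives rise to resolution data satisfying the three conditions of \autoref{rem:EquivariantGeneralisedKummer}. Since $\mathcal{S} = \{S\}$ consists of a single stratum and $\lambda_0(-1)$ preserves $S \cong S^1$, we have $hS = S$ for all $h \in \mathbb{Z}_2$, so conditions (i) and (ii) are automatic; condition (i) on $\mathtt{J}_S$ is precisely the content of \autoref{prop:example18_equivariant_resolutiondata_1}.

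For condition (iii), the candidate lift is $\hat{\tilde{\lambda}}_S(-1) \coloneqq (s \mapsto -s) \times \widehat{-1}$ on $(\mathbb{R}\times \hat{Z}_\zeta)/\Lambda_S$ (well-defined since $\Lambda_S = \sqrt{7}\mathbb{Z}$ is normalised by $s \mapsto -s$ and $\hat{\rho}_S$ is trivial). To check that this action preserves $\psi_{[S]} = \tfrac{1}{2}\hat{\omega}_\zeta \wedge \hat{\omega}_\zeta + \diff s \wedge \re \hat{\Omega}_\zeta$, I need to show $\widehat{-1}^* \hat{\Omega}_\zeta = -\hat{\Omega}_\zeta$; then the two sign flips (from $\diff s \mapsto -\diff s$ and from $\re \hat{\Omega}_\zeta \mapsto -\re \hat{\Omega}_\zeta$) cancel, while $\hat{\omega}_\zeta \wedge \hat{\omega}_\zeta$ is preserved.

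To establish $\widehat{-1}^* \hat{\Omega}_\zeta = -\hat{\Omega}_\zeta$, I will use the normalisation $(\tau_\zeta)_* \hat{\Omega}_\zeta = \Omega_0$ noted after \autoref{def:RData} together with the equivariance $\tau_\zeta \circ \widehat{-1} = (-\id) \circ \tau_\zeta$. On $\mathbb{C}^3/\mathbb{Z}_7$ we have $(-\id)^* \Omega_0 = (-1)^3 \Omega_0 = -\Omega_0$, so both $\widehat{-1}^* \hat{\Omega}_\zeta$ and $-\hat{\Omega}_\zeta$ are holomorphic volume forms on $\hat{Z}_\zeta$ whose pushforward along $\tau_\zeta$ equals $-\Omega_0$. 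Since $\hat{Z}_\zeta$ is a connected Calabi--Yau $3$-fold, holomorphic volume forms are unique up to a multiplicative constant, and matching the pushforward at infinity pins this constant to $1$, giving $\widehat{-1}^* \hat{\Omega}_\zeta = -\hat{\Omega}_\zeta$. No real obstacle is expected; the lemma is essentially a bookkeeping exercise with the one genuine input being the triviality of $\textup{conj}_{-1}$ due to $\mathbb{Z}_7$ being abelian.
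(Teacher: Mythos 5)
Your proof is correct and uses the same key observation as the paper for the first assertion: since $-\id$ is central in $\U(\mathbb{C}^3)$, conjugation by it fixes $\mathbb{Z}_7$ pointwise, so $\textup{conj}_{-1} = 1$ and hence $\Ad^*_{\textup{conj}_{-1}} = 1$. Where you go further is in the second half. The paper's own proof stops after establishing $\Ad^*_{\textup{conj}_{-1}} = 1$, treating the statement about $\mathbb{Z}_2$-equivariant resolution data as an immediate consequence of \autoref{prop:example18_equivariant_resolutiondata_1} and the general framework of \autoref{bul: lifting group actions to resolution} in \autoref{sec:Ricci-Flat ALE metrics on crepant resolutions}. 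You instead spell out the verification of condition (iii) of \autoref{rem:EquivariantGeneralisedKummer}: you construct the candidate lift $(s\mapsto -s)\times\widehat{-1}$, observe that $\widehat{-1}^*\hat\omega_\zeta = \hat\omega_\zeta$ by uniqueness of the Ricci-flat metric in its Kähler class, and then show $\widehat{-1}^*\hat\Omega_\zeta = -\hat\Omega_\zeta$ via the normalisation $(\tau_\zeta)_*\hat\Omega_\zeta = \Omega_0$, the equivariance $\tau_\zeta\circ\widehat{-1} = (-\id)\circ\tau_\zeta$, and $(-\id)^*\Omega_0 = -\Omega_0$. The two sign flips ($\diff s$ and $\re\hat\Omega_\zeta$) then cancel in $\psi_{[S]}$. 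This is the nontrivial sign computation the paper leaves implicit, and it is worth writing out; your argument is sound and adds genuine content beyond the paper's terse proof.
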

\begin{proof}
Recall from \autoref{bul: lifting group actions to resolution} in \autoref{sec:Ricci-Flat ALE metrics on crepant resolutions} that $\Ad^*_{\textup{conj}_{-1}}$ was the coadjoint action of the element $\textup{conj}_{-1} \in \U(\mathbb{C}[\mathbb{Z}_7])$ defined as the complex linear extension of the conjugation map $\textup{conj}_{-1} \col \mathbb{Z}_7 \to \mathbb{Z}_7$ (where $\mathbb{Z}_7$ is considered as a subgroup of $\SU(3)$ via~\eqref{eq:example18_action_of_Z7}). Since $-1$ lies in the center of $\U(\mathbb{C}^3)$, conjugation leaves each element in $\mathbb{Z}_7 \subset \U(\mathbb{C}^3)$ invariant. Thus $\textup{conj}_{-1} =1 $ and, consequently, $\Ad_{\textup{conj}_{-1}}^* = 1$.
\end{proof}

\begin{remark}
In~\cite[Example~18]{Joyce-GeneralisedKummer2} the $\G_2$-orbifold $(Y_0,\phi_0)$ is resolved using resolution data consisting of a crepant resolution $\hat{Z}$ of $\mathbb{C}^3/\mathbb{Z}_7$ found in~\cite[Example~3]{Markushevich-examples-resolutions} and the ALE Calabi--Yau metric on $\hat{Z}$ 
discovered in~\cite{TianYau-Complete-RicciFlat-Kaehler-Mfds}. While~\autoref{bul:ALE-Crepant-Resolutions-via-KählerQuotients} of \autoref{sec:Ricci-Flat ALE metrics on crepant resolutions} argues that $\hat{Z}$ is biholomorphic to one of the manifolds constructed in~\autoref{sec:Ricci-Flat ALE metrics on crepant resolutions}, it is not clear to the author whether the Calabi--Yau metric 
in~\cite{TianYau-Complete-RicciFlat-Kaehler-Mfds} is isometric to any of the metrics in~\autoref{sec:Ricci-Flat ALE metrics on crepant resolutions}. The Kummer construction which we construct in the following is (for a suitable choice of $\zeta \in \mathfrak{z}^*\cap (\mathfrak{pu}(\mathbb{C}[\mathbb{Z}_7])^{\mathbb{Z}_7})_\mathbb{Q}^*$) therefore diffeomorphic to the manifold in~\cite[Example~18]{Joyce-GeneralisedKummer2}. However, the $\G_2$-structures may be different.
\end{remark}

We now fix one choice of generic $\zeta \in \mathfrak{z}^*\cap (\mathfrak{pu}(\mathbb{C}[\mathbb{Z}_7])^{\mathbb{Z}_7})_\mathbb{Q}^*$ and consider the corresponding set of $\mathbb{Z}_2$-equivariant resolution data consisting of $\mathtt{J}$ defined in \autoref{prop:example18_description_of_singularset}, the ALE-space $(\hat{Z}_\zeta,\tau_\zeta,\hat{\omega}_\zeta,\hat{\Omega}_\zeta)$, and the $\mathbb{Z}_2$-actions induced by $\lambda_0$ and the $\mathbb{Z}_2$-action on $\hat{Z}_\zeta$ described in \autoref{bul: lifting group actions to resolution} of \autoref{sec:Ricci-Flat ALE metrics on crepant resolutions}.

\begin{proposition}\label{prop:Example18_bundle_around_singular_set}
Recall from \autoref{prop:example18_description_of_singularset} that there exists an $R\in \SO(\mathbb{R}^7)$ such that $R^{-1}\alpha R$ is of the form \eqref{eq:example18_action_of_Z7}. Define the isomorphism \[F_1 \col (z_1,\dots,z_7)\mapsto (z_1,z_2,z_3,-\bar{z}_4,z_5 , z_6 ,z_7) \in \O_{\mathbb{R}}(\mathbb{C}^7)\] and with this the map
\begin{equation*}
\begin{tikzcd}
F \col \mathbb{C}\oplus (\mathbb{C}^3\otimes_{\mathbb{R}} \mathbb{C}) \equiv (\mathbb{R}\oplus \mathbb{C}^3) \otimes_{\mathbb{R}} \mathbb{C} \arrow[r, "R"] & \mathbb{R}^7 \otimes_{\mathbb{R}} \mathbb{C} \equiv \mathbb{C}^7 \arrow[r, "F_1"] &  \mathbb{C}^7.
\end{tikzcd}
\end{equation*}
Furthermore, define the flat $\SO_\mathbb{R}(\mathbb{C}^7)$-bundle \[(\mathbb{R}\times B_\kappa(0)) \times_{\mathbb{Z}\times \mathbb{Z}_7} (\mathbb{C}\oplus \mathbb{C}^3 \otimes_\mathbb{R}\mathbb{C}) \to (\mathbb{R}\times B_\kappa(0)/\mathbb{Z}_7)/\mathbb{Z} \] where $\mathbb{Z}$ acts trivially on $\mathbb{C}\oplus \mathbb{C}^3\otimes_\mathbb{R}\mathbb{C}$ and $n\in \mathbb{Z}_7$ acts via 
\begin{align*}
f_1(n) \big(w,v_1\otimes &w_1,v_2\otimes w_2,v_3\otimes w_3\big) = \big(w,e^{2n\pi i/7}v_1 \otimes  w_1,e^{4n\pi i/7}v_2 \otimes w_2,e^{8n\pi i/7}v_3\otimes w_3\big).
\end{align*}
The map 
\begin{align*}
\tilde{\mathtt{J}} \col (\mathbb{R}\times B_\kappa(0)) \times_{\mathbb{Z}\times \mathbb{Z}_7} (\mathbb{C}\oplus \mathbb{C}^3 \otimes_\mathbb{R}\mathbb{C}) &\to \mathtt{J}^*E_0 \\
[(s,z),w] &\mapsto [y,F(w)]
\end{align*}
where $y = s b_1+ \re(z_1) b_2 + \dots + \im(z_3) b_7$ for the orthonormal basis $\{b_1,\dots,b_7\} \subset \mathbb{R}^7$ of \autoref{prop:example18_description_of_singularset} is an isomorphism that identifies $\mathtt{J}^*A_0$ with the canonical flat connection on $(\mathbb{R}\times B_\kappa(0)) \times_{\mathbb{Z}\times \mathbb{Z}_7} (\mathbb{C}\oplus \mathbb{C}^3 \otimes_\mathbb{R}\mathbb{C})$. If we define a $\mathbb{Z}_2$-action on $(\mathbb{R}\times B_\kappa(0)) \times_{\mathbb{Z}\times \mathbb{Z}_7} (\mathbb{C}\oplus \mathbb{C}^3 \otimes_\mathbb{R}\mathbb{C})$ via \[ (-1) \cdot [(s,z),(w,z_1\otimes w_1,z_2\otimes w_2,z_3 \otimes w_3)] = [(-s,-z),(\overline{w},z_1\otimes \overline{w}_1,z_2\otimes \overline{w}_2,z_3 \otimes \overline{w}_3)], \] then $\tilde{\mathtt{J}}$ becomes $\mathbb{Z}_2$-equivariant.
\end{proposition}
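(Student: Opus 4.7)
My plan is to check three points in sequence: (a) the formula $\tilde{\mathtt{J}}[(s,z),w] = [y, F(w)]$ descends to a well-defined bundle map on the quotient, (b) it intertwines the flat connections, and (c) it is $\mathbb{Z}_2$-equivariant for the prescribed actions.

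For (a), bijectivity of $F = F_1\circ R$ is immediate, so the issue is equivariance under $\mathbb{Z}\times \mathbb{Z}_7$. The $\mathbb{Z}$-factor acts on the base by translation by $\sqrt{7}$ in the $\mathbb{R}$-direction and trivially on the fiber; under $\mathtt{J}$ this corresponds to the translation by $\sqrt{7} b_1 = (1,1,1,-1,1,1,1)$ in $\mathbb{R}^7$, i.e.\ to the element $\tau_1\tau_2\tau_3\tau_4^{-1}\tau_5\tau_6\tau_7 \in \Gamma$. The defining formula $f_\theta(t) = \textup{diag}(\theta^2,1,\dots,1)$ evaluates to $f_1(t) = \textup{Id}$, and hence $f_1(t_i) = \textup{Id}$ for every conjugate $t_i = a^i t a^{-i}$. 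Consequently, the fiberwise $\mathbb{Z}$-action on the right-hand bundle is also trivial, matching the one on the left.

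For the $\mathbb{Z}_7$-factor, it suffices to show $F\circ f_1(1) = f_1(\alpha)\circ F$ on the typical fiber. Since $R$ is the $\mathbb{C}$-linear extension of the real-linear isomorphism $\mathbb{R}\oplus \mathbb{C}^3 \to \mathbb{R}^7$ that diagonalises $\alpha$ on the $\mathbb{C}^3$-summand, one has $R\circ f_1(1) = \alpha_\mathbb{C}\circ R$, where $\alpha_\mathbb{C}(z_1,\dots,z_7) = (z_2,z_3,z_7,-z_6,-z_4,z_1,z_5)$ is the $\mathbb{C}$-linear extension of $\alpha \in \SO(\mathbb{R}^7)$. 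The claim therefore reduces to the matrix identity $F_1\alpha_\mathbb{C} F_1^{-1} = f_1(\alpha)$, which is a direct inspection of a few entries: the conjugations that $F_1$ introduces in the fourth slot account exactly for the conjugations $\bar z_4, \bar z_6$ appearing in the defining formula of $f_1(\alpha)$, whereas the signs on $z_4, z_6$ in $\alpha_\mathbb{C}$ are absorbed by the sign in $F_1$.

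For (b), both bundles carry their canonical flat connections, induced from the trivial connections on their respective trivial covers by the quotient construction. Since $F$ is a fiberwise-constant linear isomorphism, the induced bundle map $\tilde{\mathtt{J}}$ automatically intertwines these flat connections.

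For (c), the $\mathbb{Z}_2$-equivariance condition reduces to $F\circ c = R_0\circ F$, where $c$ denotes the $\mathbb{R}$-linear involution on $\mathbb{C}\oplus \mathbb{C}^3\otimes_\mathbb{R}\mathbb{C}$ that conjugates the $\mathbb{C}$-summand and the second tensor factor. Because $R$ is the complexification of a real-linear map, it intertwines the natural conjugations on its domain and codomain, giving $R\circ c = R_0\circ R$. The statement therefore reduces to $F_1\circ R_0 = R_0\circ F_1$, which is trivial since $F_1$ differs from the identity only in the fourth coordinate via $z_4\mapsto -\bar z_4$, and this obviously commutes with coordinatewise conjugation. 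The only genuine bookkeeping obstacle is that $\mathbb{C}\oplus \mathbb{C}^3\otimes_\mathbb{R}\mathbb{C}$ carries two distinct complex structures coming from its two tensor factors: one must consistently recognise that the $\mathbb{Z}_7$-action uses the structure from the $\mathbb{C}^3$-factor while the complex conjugation defining the $\mathbb{Z}_2$-action uses the second $\mathbb{C}$-factor. Beyond that, the proof is essentially three linear-algebra computations.
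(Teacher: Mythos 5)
Your proof is correct and follows essentially the same route as the paper: both arguments reduce to checking that $F=F_1\circ R$ conjugates the monodromy generator $f_1(\alpha)$ (with the $\mathbb{Z}$-action trivial on both sides since $f_1(t_i)=\id$) to the prescribed $\mathbb{Z}_7$-action $f_1(1)$, and then verify the $\mathbb{Z}_2$-equivariance by the same direct computation with $R_0$. Your write-up is merely a bit more explicit about separating well-definedness, connection-matching, and equivariance, and about the two distinct complex structures on $\mathbb{C}\oplus(\mathbb{C}^3\otimes_{\mathbb{R}}\mathbb{C})$, which is a useful clarification but not a different method.
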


\begin{proof}
We will prove that $F$ conjugates between the monodromy representations of $\mathtt{J}^*A_0$ and the flat connection on $(\mathbb{R}\times B_\kappa(0)) \times_{\mathbb{Z}\times \mathbb{Z}_7} (\mathbb{C}\oplus \mathbb{C}^3 \otimes_\mathbb{R}\mathbb{C})$. Since $\mathbb{R}\times B_\kappa(0)$ is the orbifold covering space of $(\mathbb{R}\times B_\kappa(0)/\mathbb{Z}_7)/\mathbb{Z}$ this implies the first part of the proposition.

The singular set is precisely the image of the fixed-point set of $\alpha$ in $Y_0$. Thus, the action of $\mathbb{Z}_7$ is generated by $f_1(\alpha)$. Similarly, the action of $\mathbb{Z}$ is generated by $f_1(\tau_1\tau_2\tau_3\tau_4^{-1}\tau_5\tau_6\tau_7) = \id$. Conjugating these two actions by $F_1$ and identifying $\mathbb{C}^7= \mathbb{R}^7 \otimes_\mathbb{R}\mathbb{C}$ leads to \[ F_1^{-1} f_\theta(\alpha) F_1 = \alpha \otimes 1_\mathbb{C} \qquad \textup{and} \qquad F_1^{-1} f_\theta(\tau_1\tau_2\tau_3\tau_4^{-1}\tau_5\tau_6\tau_7) F_1 = 1_{\mathbb{R}^7} \otimes 1_\mathbb{C}. \] Conjugating this a second time by $R$ leads (under the identification $\mathbb{R}^7 \cong \mathbb{R}\oplus \mathbb{C}^3$) to the action of $f_1(1)$ as described above. This concludes the first part of the proposition. The $\mathbb{Z}_2$-equivariance of $\tilde{\mathtt{J}}$ follows from \[(F^{-1}\circ \lambda_0(-1) \circ F)(w,v_1\otimes w_1, v_2 \otimes w_2, v_3 \otimes w_3) = (\overline{w},v_1\otimes \overline{w}_1, v_2 \otimes \overline{w}_2, v_3 \otimes \overline{w}_3). \qedhere\]
\end{proof}

\begin{proposition}\label{prop:example18-a_(0,f)-in-gluing-region}
Let $\tilde{\mathtt{J}} \col (\mathbb{R}\times B_\kappa(0)) \times_{\mathbb{Z}\times \mathbb{Z}_7} (\mathbb{C}\oplus \mathbb{C}^3\otimes_{\mathbb{R}}\mathbb{C}) \to \mathtt{J}^*E_0$ be the isomorphism of the previous proposition and $(A_0+a_{0,\mathfrak{f}})_{\mathfrak{f}\in \mathbb{R}} \subset \mathcal{A}(\SO(E_0)) = \mathcal{A}(\O(E_0))$ be the family of flat connections defined in \autoref{def:example18_identifying_flat_bundles-II}. Then $\tilde{\mathtt{J}}^*a_{0,\mathfrak{f}}$ is given (under the identification $(\mathbb{C}\oplus \mathbb{C}^3\otimes_\mathbb{R} \mathbb{C} \cong (\mathbb{R}\oplus \mathbb{C}^3)\otimes_{\mathbb{R}} \mathbb{C}$) by \[ \tilde{\mathtt{J}}^*a_{0,\mathfrak{f}} = \tfrac{\mathfrak{f}}{\sqrt{7}} (\textup{id}_{\mathbb{R} \oplus \mathbb{C}^3} \otimes_\mathbb{R} i  )  \diff s. \] 
\end{proposition}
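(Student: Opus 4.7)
The plan is to restrict to the region in which the cutoff in $\overline{F_\mathfrak{f}}$ vanishes, compute $a_{0,\mathfrak{f}}$ in the flat trivialisation coming from $\mathtt{J}$, and then push the result through the change of frame $F = F_1\circ R$ that defines $\tilde{\mathtt{J}}$. The principal observation I will exploit is that the extra conjugation $F_1$ in the fourth complex coordinate converts the diagonal matrix with a single minus sign (which is what $a_{0,\mathfrak{f}}$ looks like in the $\mathbb{R}^7$-frame) into plain multiplication by $i$.

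First I would localise to $(\mathbb{R}\times B_\kappa(0))\times_{\mathbb{Z}\times\mathbb{Z}_7}(\mathbb{C}\oplus\mathbb{C}^3\otimes_{\mathbb{R}}\mathbb{C})$ and observe that there $|z|\le\kappa<5\kappa/4$, so $\chi(|(z_1,z_2,z_3)|)=0$. Consequently the matrix $M_\mathfrak{f}'(s,z)$ from \autoref{def:example18_identifying_flat_bundles-II} collapses to
\[ M_\mathfrak{f}'(s,z)=\diag\bigl(\e^{i\mathfrak{f} s/\sqrt{7}},\e^{i\mathfrak{f} s/\sqrt{7}},\e^{i\mathfrak{f} s/\sqrt{7}},\e^{-i\mathfrak{f} s/\sqrt{7}},\e^{i\mathfrak{f} s/\sqrt{7}},\e^{i\mathfrak{f} s/\sqrt{7}},\e^{i\mathfrak{f} s/\sqrt{7}}\bigr), \]
depending only on $s$. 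Since $\mathtt{J}^*A_0$ is the trivial connection on $\mathbb{R}^7\times\mathbb{C}^7$ and $\overline{F_\mathfrak{f}}$ pulls back $A_{0,\e^{i\mathfrak{f}}}$ via gauge transformation by $M_\mathfrak{f}'$, the identity $M^{-1}\diff M=\diff f$ for $M=\e^{f(s)}$ immediately gives
\[ \mathtt{J}^*a_{0,\mathfrak{f}} = \frac{\mathfrak{f}}{\sqrt{7}}\, D\, \diff s, \qquad D\coloneqq \diag(i,i,i,-i,i,i,i)\in\mathfrak{so}_\mathbb{R}(\mathbb{C}^7). \]
(This is fully compatible with $(\overline{F_\mathfrak{f}^\prime})^*A_{0,\theta}-A_0=\diag(i\mathfrak{f}\,\diff y^1,\dots,i\mathfrak{f}\,\diff y^7)$ and the chain rule applied to $s=y\cdot b_1=\tfrac{1}{\sqrt{7}}(y^1+y^2+y^3-y^4+y^5+y^6+y^7)$, remembering that the change from $\overline{F_\mathfrak{f}^\prime}$ to $\overline{F_\mathfrak{f}}$ introduces the minus sign in the fourth slot through the diagonal factor $\e^{-i\mathfrak{f} s/\sqrt{7}}$.)

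Next I would push this through $\tilde{\mathtt{J}}$, which acts fibrewise by $w\mapsto F(w)$ with $F=F_1\circ R$, so that $\tilde{\mathtt{J}}^*\eta=F^{-1}\eta F$ on the adjoint bundle. The rotation $R$ extends $\mathbb{C}$-linearly and therefore commutes with any $\mathbb{C}$-scalar map; the heart of the proof is thus the direct verification that
\[ F_1\circ D\circ F_1 = i\cdot \textup{id}_{\mathbb{C}^7}, \]
which I would obtain by applying both sides to $(z_1,\dots,z_7)$: the two applications of $F_1$ conjugate the fourth coordinate and produce an additional sign that cancels the $-i$ in $D$, yielding $iz_j$ in every slot. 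Combined with $R^{-1}(i\cdot\textup{id})R=i\cdot\textup{id}$, this gives $F^{-1}DF=i\cdot\textup{id}_{\mathbb{C}^7}$.

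Finally, under the identification $\mathbb{C}^7\cong(\mathbb{R}\oplus\mathbb{C}^3)\otimes_\mathbb{R}\mathbb{C}\cong\mathbb{C}\oplus\mathbb{C}^3\otimes_\mathbb{R}\mathbb{C}$ used in \autoref{prop:Example18_bundle_around_singular_set}, multiplication by $i$ is precisely $\textup{id}_{\mathbb{R}\oplus\mathbb{C}^3}\otimes_\mathbb{R} i$, so
\[ \tilde{\mathtt{J}}^*a_{0,\mathfrak{f}} = \frac{\mathfrak{f}}{\sqrt{7}}\bigl(\textup{id}_{\mathbb{R}\oplus\mathbb{C}^3}\otimes_\mathbb{R} i\bigr)\,\diff s. \]
I expect the only real obstacle to be bookkeeping: keeping careful track of which maps are $\mathbb{C}$-linear (everything except $F_1$) and which are only $\mathbb{R}$-linear, and verifying that the fourth-coordinate conjugation in $F_1$ meshes correctly with the minus sign in the fourth entry of $D$. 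No further analytic input is required beyond the explicit formulas already recorded.
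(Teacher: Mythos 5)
Your proof is correct and follows the same route as the paper: restrict to $\mathcal{V}_\kappa$ where $\chi=0$ to get $\mathtt{J}^*a_{0,\mathfrak{f}}=\tfrac{\mathfrak{f}}{\sqrt{7}}\diag(i,i,i,-i,i,i,i)\,\diff s$, then conjugate by $F=F_1\circ R$. The paper states the conjugation without working it out; your explicit check that $F_1DF_1=i\,\id$ (using that $F_1$ is an involution and that the anti-linear flip in the fourth slot cancels the sign of $-i$) and that $R$, being the $\mathbb{C}$-linear extension of a real map, commutes with $i\,\id$, is exactly the missing calculation.
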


\begin{proof}
From the construction of $a_{0,\mathfrak{f}}$ in \autoref{def:example18_identifying_flat_bundles-II} and \autoref{prop:example18-identifying_flat_bundles-I} follows that over $\mathcal{V}_\kappa$ the 1-form $\mathtt{J}^*a_{0,\mathfrak{f}}$ is given by \begin{align*}
\mathtt{J}^*a_{0,\mathfrak{f}} = \tfrac{\mathfrak{f}}{\sqrt{7}} \diag\big(i,i,i,-i,i,i,i\big) \diff s &\in \Omega^1(\mathbb{R}\times B_\kappa(0),\mathfrak{so}_\mathbb{R}(\mathbb{C}^7))^{\mathbb{Z}\times \mathbb{Z}_7} \\
& \equiv \Omega^1((\mathbb{R}\times B_{\kappa}(0)/\mathbb{Z}_7)/\mathbb{Z},\mathtt{J}^*E_0). 
\end{align*}
Conjugating $\mathtt{J}^*a_{0,\mathfrak{f}}$ by $F$ defined in \autoref{prop:Example18_bundle_around_singular_set} gives \[ \tilde{\mathtt{J}}^*a_{0,\mathfrak{f}} = \tfrac{\mathfrak{f}}{\sqrt{7}} (\textup{id}_{\mathbb{R} \oplus \mathbb{C}^3} \otimes_\mathbb{R} i  )  \diff s. \qedhere \]
\end{proof}

Denote by $\nu \col \mathbb{Z}_7 \to \U(3)$ the representation generated by $\textup{diag}(\e^{2\pi i/7},\e^{4\pi i/7},\e^{8\pi i/7}) \in \U(3).$ Recall from \autoref{prop:irred_HYM_on_tautological_bundles} that there exists a unitary vector bundle $\pi_\nu \col \hat{E}_\nu \to \hat{Z}_\zeta$ together with a local framing $\tilde{\tau}_\nu \col \hat{E}_{\nu\vert \hat{Z}_\zeta \setminus \tau^{-1}(0)} \to (\mathbb{C}^3\setminus\{0\}) \times_{\mathbb{Z}_7, \nu} \mathbb{C}^3$.

Next, let $\pi \col \hat{E}_\zeta \to \hat{Z}_\zeta$ be the vector bundle with \[\hat{E}_\zeta \coloneqq \underline{\mathbb{C}}\oplus \hat{E}_\nu \otimes_\mathbb{R} \underline{\mathbb{C}} \cong \underline{\mathbb{C}} \oplus \hat{E}_{\nu} \oplus \hat{E}_{\bar{\nu}}.\] The local framing on $\hat{E}_\nu$ and the canonical framing on the trivial bundle combine to \[\tilde{\tau} \col \hat{E}_{\zeta\vert \hat{Z}_\zeta \setminus \tau^{-1}(0)} \to (\mathbb{C}^3\setminus\{0\}) \times_{\mathbb{Z}_7}(\mathbb{C} \oplus \mathbb{C}^3 \otimes_\mathbb{R}\mathbb{C})\] (where $\mathbb{Z}_7$ acts on $\mathbb{C}^3 \times (\mathbb{C} \oplus \mathbb{C}^3 \otimes_\mathbb{R}\mathbb{C})$ as in \autoref{prop:Example18_bundle_around_singular_set}). Recall from \autoref{prop:irred_HYM_on_tautological_bundles} and \autoref{prop:rigid_SO(n)-connections} that $\SO(\hat{E}_\zeta)$ carries an infinitesimally rigid Hermitian Yang--Mills connection $\hat{A}_\zeta$ which is asymptotic to the flat connection on $(\mathbb{C}^3\setminus\{0\}) \times_{\mathbb{Z}_7}(\mathbb{C} \oplus \mathbb{C}^3 \otimes_\mathbb{R}\mathbb{C})$ with rate $-5$. Since for $E_0$ the group $\mathbb{Z}$ acts trivially on $\mathbb{C}\oplus \mathbb{C}^3\otimes_\mathbb{R}\mathbb{C}$ (cf. \autoref{prop:example18_description_of_singularset}), the following proposition is immediate:
\begin{proposition}
Denote by $\hat{\tilde{\rho}}_\zeta \col \mathbb{Z} \to \textup{Isom}(\hat{E}_\zeta)$ the trivial $\mathbb{Z}$-action. Since $\mathbb{Z}$ acts trivially in the monodromy representation of $\mathtt{J}^*A_0$ (cf. \autoref{prop:example18_description_of_singularset}), $\hat{\tilde{\rho}}_\zeta$ is a lift of the (trivial) $\mathbb{Z}$-action on $\mathbb{C}^3\times_{\mathbb{Z}_7}(\mathbb{C}\oplus \mathbb{C}^3\otimes_\mathbb{R}\mathbb{C})$ to $\hat{E}_\zeta$ (in the sense of \autoref{def:gluing_data}). This action trivially preserves $\hat{A}_\zeta$.
\end{proposition}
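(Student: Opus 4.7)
The plan is to verify directly that the two conditions in \autoref{def:gluing_data} for a lift of the $\Lambda_S = \mathbb{Z}$-action are satisfied trivially in our situation. Recall from \autoref{prop:example18_description_of_singularset} that the lattice $\Lambda_S = \mathbb{Z}$ acts on $Y_S = (\mathbb{R} \times \mathbb{C}^3/\mathbb{Z}_7)/\mathbb{Z}$ only through translations on the $\mathbb{R}$-factor, so the induced action $\hat{\rho}_\zeta \col \mathbb{Z} \to \textup{Isom}(\hat{Z}_\zeta)$ on the ALE 3-fold is the trivial action. Consequently the trivial bundle automorphism $\hat{\tilde{\rho}}_\zeta(n) = \id_{\hat{E}_\zeta}$ automatically covers $\hat{\rho}_\zeta(n)$ for every $n \in \mathbb{Z}$.

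For the first condition in \autoref{def:gluing_data}, we need the equality $\tilde{\tau}^{-1}\circ \tilde{\rho}_\zeta \circ \tilde{\tau} = \hat{\tilde{\rho}}_\zeta$, where $\tilde{\rho}_\zeta \col \mathbb{Z} \to \textup{Isom}((\mathbb{C}^3\setminus\{0\}) \times_{\mathbb{Z}_7} (\mathbb{C}\oplus \mathbb{C}^3\otimes_\mathbb{R} \mathbb{C}))$ is the action induced by the monodromy representation of $\mathtt{J}^*A_0$ restricted to $\Lambda_S$. By the computation carried out in the proof of \autoref{prop:Example18_bundle_around_singular_set}, the generator of $\mathbb{Z} \subset \Gamma$ corresponds to $\tau_1\tau_2\tau_3\tau_4^{-1}\tau_5\tau_6\tau_7$, which is mapped to the identity under $F_1^{-1}f_1(\,\cdot\,)F_1$. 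Hence $\tilde{\rho}_\zeta$ is already the trivial action on the local model bundle, and both sides of the required equality are equal to the identity.

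For the second condition, since $\hat{\tilde{\rho}}_\zeta$ is the trivial action, it tautologically preserves the connection $\hat{A}_\zeta$, which is exactly the final assertion. All verifications are immediate once one notes that $\mathbb{Z}$ acts trivially both on $\hat{Z}_\zeta$ and on the fibers of the local model bundle; there is no substantive obstacle to overcome, and the proposition follows essentially by inspection.
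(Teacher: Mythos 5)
The paper states this proposition without proof, treating it as "immediate" from the observation that $\mathbb{Z}$ acts trivially on $\mathbb{C}\oplus\mathbb{C}^3\otimes_\mathbb{R}\mathbb{C}$; your proposal simply spells out the verification of the two conditions in \autoref{def:gluing_data}, and the computation $f_1(\tau_1\tau_2\tau_3\tau_4^{-1}\tau_5\tau_6\tau_7)=\id$ that you cite is precisely the one carried out in the proof of \autoref{prop:Example18_bundle_around_singular_set}. Your argument is correct and matches the paper's (implicit) reasoning.
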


\begin{proposition}\label{prop:example18-families-over-resolution}
For $\mathfrak{f}\in \mathbb{R}$ define the following endomorphism-valued 1-form: \[ \hat{a}_{\zeta,\mathfrak{f}} \coloneqq \tfrac{\mathfrak{f}}{\sqrt{7}}(\id_{\mathbb{R}\oplus \hat{E}_\nu} \otimes_\mathbb{R} i) \diff s \in \Omega^1((\mathbb{R}\times \hat{Z}_\zeta)/\mathbb{Z},\mathfrak{so}(\hat{E}_\zeta)). \] This satisfies $(\tilde{\tau}_\zeta)_* \hat{a}_{\zeta,\mathfrak{f}} = \tilde{\mathtt{J}}^*a_{0,\mathfrak{f}}$. 
\end{proposition}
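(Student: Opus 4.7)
The plan is to verify this identity by a direct computation, unwinding the definitions of all the bundles and framings involved. First, I will recast $\hat{E}_\zeta = \underline{\mathbb{C}} \oplus \hat{E}_\nu \otimes_\mathbb{R} \underline{\mathbb{C}}$ in the more symmetric form $(\underline{\mathbb{R}} \oplus \hat{E}_\nu) \otimes_\mathbb{R} \underline{\mathbb{C}}$, where $\underline{\mathbb{R}}$ is the trivial real line bundle on $\hat{Z}_\zeta$. Under this identification, the expression $\id_{\underline{\mathbb{R}}\oplus \hat{E}_\nu} \otimes_\mathbb{R} i$ makes sense as multiplication by $i$ on the $\mathbb{C}$-factor; it is a globally defined parallel endomorphism of the real rank-$14$ bundle underlying $\hat{E}_\zeta$, and since $i \in \U(1) \subset \SO(2)$ acts orthogonally, it defines a section of $\mathfrak{so}(\hat{E}_\zeta)$. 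Consequently $\hat{a}_{\zeta,\mathfrak{f}} = \tfrac{\mathfrak{f}}{\sqrt{7}}(\id_{\underline{\mathbb{R}}\oplus \hat{E}_\nu} \otimes_\mathbb{R} i)\, \diff s$ is a well-defined element of $\Omega^1((\mathbb{R}\times\hat{Z}_\zeta)/\mathbb{Z}, \mathfrak{so}(\hat{E}_\zeta))$, where $\mathbb{Z}$-invariance is automatic from the trivial $\mathbb{Z}$-action on $\hat{E}_\zeta$ described above.

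Next, I will trace through the local framing $\tilde{\tau}_\zeta$. By construction it is the direct sum of the canonical framing of $\underline{\mathbb{C}}$ with the framing $\tilde{\tau}_\nu$ of $\hat{E}_\nu$ provided by \autoref{prop:irred_HYM_on_tautological_bundles}, and hence on the real underlying bundle it is the direct sum of the canonical framing of $\underline{\mathbb{R}}$ with the real framing underlying $\tilde{\tau}_\nu$. Since $\tilde{\tau}_\nu$ is $\mathbb{C}$-linear, it commutes with the complex structure $i$ acting on $\hat{E}_\nu\otimes_\mathbb{R}\underline{\mathbb{C}}$ (and similarly on the trivial summand). Therefore the framing $\tilde{\tau}_\zeta$ maps the endomorphism $\id_{\underline{\mathbb{R}}\oplus \hat{E}_\nu} \otimes_\mathbb{R} i$ to the endomorphism $\id_{\mathbb{R}\oplus \mathbb{C}^3} \otimes_\mathbb{R} i$ on $(\mathbb{C}^3 \setminus\{0\}) \times_{\mathbb{Z}_7}(\mathbb{C}\oplus \mathbb{C}^3 \otimes_\mathbb{R}\mathbb{C})$.

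Finally, comparing with \autoref{prop:example18-a_(0,f)-in-gluing-region}, which asserts $\tilde{\mathtt{J}}^* a_{0,\mathfrak{f}} = \tfrac{\mathfrak{f}}{\sqrt{7}} (\id_{\mathbb{R}\oplus \mathbb{C}^3} \otimes_\mathbb{R} i)\, \diff s$, the identity $(\tilde{\tau}_\zeta)_* \hat{a}_{\zeta,\mathfrak{f}} = \tilde{\mathtt{J}}^* a_{0,\mathfrak{f}}$ follows immediately from the preceding paragraph together with the $\mathbb{Z}_7$-invariance of $\id_{\mathbb{R}\oplus \mathbb{C}^3}\otimes_\mathbb{R} i$ (which holds because $\mathbb{Z}_7$ acts $\mathbb{C}$-linearly on $\mathbb{C}\oplus \mathbb{C}^3\otimes_\mathbb{R}\mathbb{C}$, so commutes with the complex structure $i$ of the $\mathbb{C}$-factor). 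There is no real obstacle here; the main point, which I would emphasize, is the conceptual observation that $\id \otimes_\mathbb{R} i$ is defined intrinsically on any real vector bundle tensored with $\mathbb{C}$, so is preserved by any $\mathbb{C}$-linear bundle map such as $\tilde{\tau}_\zeta$.
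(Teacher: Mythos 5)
Your argument is correct and follows essentially the same line as the paper's own (brief) proof: both unwind the framing $\tilde\tau_\zeta = \id_{\underline{\mathbb{C}}}\oplus(\tilde\tau_\nu\otimes_\mathbb{R}\id_{\mathbb{C}})$, observe it carries $\id\otimes_\mathbb{R} i$ to $\id\otimes_\mathbb{R} i$, and conclude by comparison with \autoref{prop:example18-a_(0,f)-in-gluing-region}. One small conceptual correction worth making: the commutation with $\id\otimes_\mathbb{R} i$ holds because the framing is of the form $\phi\otimes_\mathbb{R}\id_{\mathbb{C}}$ and thus automatically commutes with multiplication on the auxiliary $\mathbb{C}$-factor, \emph{not} because $\tilde\tau_\nu$ is $\mathbb{C}$-linear with respect to the intrinsic complex structure $I$ on $\hat{E}_\nu$ --- the two complex structures $\id\otimes i$ and $I\otimes\id$ on $\hat{E}_\nu\otimes_\mathbb{R}\underline{\mathbb{C}}$ are distinct, and the one appearing in $\hat{a}_{\zeta,\mathfrak{f}}$ is the former, which any $\mathbb{R}$-linear $\tilde\tau_\nu$ would preserve after tensoring with $\id_{\mathbb{C}}$.
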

\begin{proof}
the local framing $\tilde{\tau}_\nu \col \hat{E}_\nu \to (\mathbb{C}^3\setminus \{0\})\times_{\mathbb{Z}_7,\nu} \mathbb{C}^3$ identifies the respective identity-automorphisms. Similarly, the trivial framing on $\hat{Z}_\zeta \times \underline{\mathbb{C}}$ maps multiplication by $i$ onto multiplication by $i$ on $(\mathbb{C}^3\setminus \{0\}/\mathbb{Z}_7) \times \underline{\mathbb{C}}$. The proposition follows now from the description of $\tilde{\mathtt{J}}^*a_{0,\mathfrak{f}}$ given in \autoref{prop:example18-a_(0,f)-in-gluing-region}.
\end{proof}

The following proposition follows directly from the previous discussion:
\begin{proposition}\label{prop:Example18_gluing_data}
The set\footnote{Note that since $\tilde{\mathtt{J}}$ is orientation reversing, we need to equip $\hat{E}_\zeta$ with the negative of its canonical orientation when considering the bundle of positively oriented orthonormal frames $\SO(\hat{E}_\zeta)$.} $\{(\pi_0\col \SO(E_{0})\to Y_0,A_0),(\tilde{\mathtt{J}},\pi\col \SO(\hat{E}_\zeta) \to \hat{Z}_\zeta,\hat{A}_\zeta,\hat{\tilde{\rho}}_\zeta\col \mathbb{Z}\to \textup{Isom}(\SO(\hat{E}_\zeta)))\}$ is gluing data in the sense of \autoref{def:gluing_data} compatible with the resolution data consisting of $\mathtt{J}$ and $(\hat{Z}_\zeta,\tau_\zeta,\hat{\omega}_\zeta,\hat{\Omega}_\zeta)$. Similarly, $\{(\pi_0 \col \O(E_0)\to Y_0, A_0),(\tilde{\mathtt{J}},\pi\col \O(\hat{E}_\zeta) \to \hat{Z}_\zeta,\hat{A}_\zeta,\hat{\tilde{\rho}}_\zeta\col \mathbb{Z}\to \textup{Isom}(\O(\hat{E}_\zeta)))\}$ consists of compatible gluing data for the flat bundle $\O(E_{0,\theta})$. Both sets of gluing data together with the families $(A_0+a_{0,\mathfrak{f}},\hat{A}_\zeta+\hat{a}_{\zeta,\mathfrak{f}})_{\mathfrak{f}\in \mathfrak{F}}$ defined in \autoref{def:example18_identifying_flat_bundles-II} and \autoref{prop:example18-families-over-resolution} where $\mathfrak{F}\subset \mathbb{R}$ is any compact interval satisfy the conditions of \autoref{prop:pregluing-family}.
\end{proposition}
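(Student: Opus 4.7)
The plan is to verify, in turn, each condition of \autoref{def:gluing_data} and then each hypothesis of \autoref{prop:pregluing-family}; since every piece of data has already been constructed in the preceding propositions, this amounts essentially to a bookkeeping exercise.

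First I would check the gluing data conditions. Flatness of $A_0$ on $\SO(E_0)$ is immediate from \autoref{def:example18_flat_bundles}. The isomorphism $\tilde{\mathtt{J}}$ trivialising $\mathtt{J}^*A_0$ over $\mathcal{V}_\kappa$ is the content of \autoref{prop:Example18_bundle_around_singular_set}. The Hermitian Yang--Mills connection $\hat{A}_\zeta$ on $\SO(\hat{E}_\zeta)$ of rate $-5$, asymptotic to the flat connection on $(\mathbb{C}^3\setminus\{0\})\times_{\mathbb{Z}_7}(\mathbb{C}\oplus \mathbb{C}^3\otimes_\mathbb{R}\mathbb{C})$, is furnished by applying \autoref{prop:irred_HYM_on_tautological_bundles} and \autoref{prop:rigid_SO(n)-connections} to the representation $\nu_{\textup{triv}}\oplus \nu$. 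Finally, the lift $\hat{\tilde{\rho}}_\zeta$ is defined to be the trivial action, which is consistent with the trivial $\mathbb{Z}$-monodromy recorded in \autoref{prop:example18_description_of_singularset} and vacuously preserves $\hat{A}_\zeta$.

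Next I would verify the hypotheses of \autoref{prop:pregluing-family}. Smoothness of both families in $\mathfrak{f}$ is clear from the explicit formulas in \autoref{def:example18_identifying_flat_bundles-II} and \autoref{prop:example18-families-over-resolution}. Each $A_0+a_{0,\mathfrak{f}}$ is flat because by construction it is the pullback of the flat connection $A_{0,\e^{i\mathfrak{f}}}$ along the bundle isomorphism $\overline{F_{\mathfrak{f}}}$. To see that each $\hat{A}_\zeta+\hat{a}_{\zeta,\mathfrak{f}}$ is a $\G_2$-instanton on $(\mathbb{R}\times \hat{Z}_\zeta)/\mathbb{Z}$, the key observation is that the endomorphism $\id_{\mathbb{R}\oplus \hat{E}_\nu}\otimes_\mathbb{R} i$ is covariantly constant for $\hat{A}_\zeta$ (since multiplication by $i$ is parallel under any unitary connection, and $\hat{A}_\zeta$ descends from such) and that $\hat{a}_{\zeta,\mathfrak{f}}\wedge \hat{a}_{\zeta,\mathfrak{f}} = 0$ because $\diff s\wedge \diff s=0$. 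Consequently $F_{\hat{A}_\zeta+\hat{a}_{\zeta,\mathfrak{f}}}$ agrees with the pullback of $F_{\hat{A}_\zeta}$ to $\mathbb{R}\times \hat{Z}_\zeta$, and the HYM condition on $\hat{A}_\zeta$ combined with \autoref{ex:HYM_is_G2-instanton} delivers the instanton equation.

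The matching condition $\tilde{\mathtt{J}}^*a_{0,\mathfrak{f}} = (\tilde{\tau}_\zeta)_*\hat{a}_{\zeta,\mathfrak{f}}$ over $\mathcal{V}_\kappa\setminus \mathcal{V}_{\kappa/8}$ follows by combining \autoref{prop:example18-a_(0,f)-in-gluing-region} (whose explicit formula applies throughout $\mathcal{V}_\kappa$ since $\chi$ vanishes there) with \autoref{prop:example18-families-over-resolution}. The dilation-invariance bullet of \autoref{prop:pregluing-family} is immediate from the form $\tfrac{\mathfrak{f}}{\sqrt{7}}(\id\otimes i)\diff s$: the coefficient involves neither radial nor angular coordinates on $\mathbb{C}^3/\mathbb{Z}_7$, so the expression is pulled back from $(\mathbb{R}\times S^5/\mathbb{Z}_7)/\mathbb{Z}$ and is manifestly $\delta_\lambda^*$-invariant. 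The corresponding statements for $\O(E_0)$ require no additional work because $\mathfrak{o}_\mathbb{R}(\mathbb{C}^7) = \mathfrak{so}_\mathbb{R}(\mathbb{C}^7)$, and so $A_0$, $\hat{A}_\zeta$ as well as both perturbations extend verbatim from the $\SO$-reductions to the full orthogonal frame bundles. I do not anticipate any serious obstacle: the proof is entirely an assembly of ingredients that have been prepared earlier in this section.
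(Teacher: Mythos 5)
Your proof is correct and essentially constitutes what the paper means by its remark that the proposition ``follows directly from the previous discussion'' — the paper provides no proof of its own, so your careful verification of each gluing-data axiom and each hypothesis of \autoref{prop:pregluing-family} fills in exactly the bookkeeping that was implicitly deferred. The two arguments you supply that go beyond mere citation — that $\hat{A}_\zeta+\hat{a}_{\zeta,\mathfrak{f}}$ is a $\G_2$-instanton because $\id_{\mathbb{R}\oplus\hat{E}_\nu}\otimes_\mathbb{R} i$ is $\hat{A}_\zeta$-parallel and $\diff s\wedge\diff s=0$ make $F_{\hat{A}_\zeta+\hat{a}_{\zeta,\mathfrak{f}}}=F_{\hat{A}_\zeta}$, and that $(\tilde\tau_\zeta)_*\hat{a}_{\zeta,\mathfrak{f}}$ is dilation-invariant because the coefficient is constant and $\delta_\lambda^*\diff s=\diff s$ — are exactly right and are the only places where a genuine observation is needed.

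One small remark: you state that $A_0+a_{0,\mathfrak{f}}$ is flat because it is ``the pullback of $A_{0,\e^{i\mathfrak{f}}}$ along $\overline{F_{\mathfrak{f}}}$.'' This is the correct and intended interpretation, consistent with how $a_{0,\mathfrak{f}}$ is used in \autoref{prop:example18_family_not_tangent_to_gaugeorbig}; note that the displayed formula in \autoref{def:example18_identifying_flat_bundles-II} has the sign reversed, which is most plausibly a typo in the paper since otherwise $A_0+a_{0,\mathfrak{f}}=2A_0-(\overline{F_\mathfrak{f}})^*A_{0,\e^{i\mathfrak{f}}}$ would not be flat. You implicitly used the corrected sign, so no error on your part.
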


\begin{proposition}\label{prop:example18 lift of Z2 action to resolution data}
There exists a lift of the $\mathbb{Z}_2$-action to $(\mathbb{R}\times \hat{E}_\zeta)/\mathbb{Z} \to (\mathbb{R}\times \hat{Z}_\zeta)/\mathbb{Z}$ which preserves $\hat{A}_\zeta$ and $\hat{a}_{\zeta,\mathfrak{f}}$ for every $\mathfrak{f}\in \mathbb{R}$.
\end{proposition}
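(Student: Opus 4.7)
The plan is to build the lift separately on the two summands of $\hat{E}_\zeta = \underline{\mathbb{C}} \oplus (\hat{E}_\nu \otimes_\mathbb{R} \underline{\mathbb{C}})$ and combine them. Since $-1 \in \U(\mathbb{C}^3)$ is central, it lies in $C_{\U(\mathbb{C}^3)}(\mathbb{Z}_7)$; hence \autoref{lem:group_lift_to_HYMbundle} provides a complex-linear isometric lift $\hat{U} \col \hat{E}_\nu \to \hat{E}_\nu$ covering $\hat{\vartheta} \col \hat{Z}_\zeta \to \hat{Z}_\zeta$ (the lift of $z\mapsto -z$ produced in \autoref{lem:Example18_Z2-action_on_ALE}) that preserves the HYM connection $A_\nu$. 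On the trivial summand $\underline{\mathbb{C}}$ I would take the antilinear involution given by complex conjugation $w\mapsto \bar{w}$ on fibres; this is a real isometry and preserves the flat connection. I would then define
\[
\hat{\tilde{\vartheta}} \col \hat{E}_\zeta \to \hat{E}_\zeta, \qquad (w,\, v\otimes c) \mapsto (\bar{w},\, \hat{U}(v)\otimes \bar{c}),
\]
covering $\hat{\vartheta}$, and extend to $\mathbb{R} \times \hat{E}_\zeta$ so as to cover the base map $(s,\hat{z}) \mapsto (-s,\hat{\vartheta}(\hat{z}))$. Since $\mathbb{Z}$ acts trivially on $\hat{E}_\zeta$ and translation-by-$\mathbb{Z}$ on $\mathbb{R}$ is normalised by $s\mapsto -s$, the whole construction descends to $(\mathbb{R}\times \hat{E}_\zeta)/\mathbb{Z}$.

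Next I would check the preservation properties. The connection $\hat{A}_\zeta$ is induced from $A_\nu$ on $\hat{E}_\nu$ (preserved by $\hat{U}$) and the flat connection on $\underline{\mathbb{C}}$ (preserved by complex conjugation) via the usual direct-sum and tensor-product constructions, so $\hat{\tilde{\vartheta}}$ preserves $\hat{A}_\zeta$. For $\hat{a}_{\zeta,\mathfrak{f}} = \tfrac{\mathfrak{f}}{\sqrt{7}}(\id_{\mathbb{R}\oplus \hat{E}_\nu} \otimes_\mathbb{R} i)\, ds$, the pullback of $ds$ along $s\mapsto -s$ is $-ds$; meanwhile, fibrewise conjugation of the endomorphism $\id\otimes i$ by the antilinear map $\id\otimes (\bar{\cdot})$ reverses the sign of $i$, yielding $\id\otimes(-i)$. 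The two sign changes cancel and hence $\hat{\tilde{\vartheta}}^*\hat{a}_{\zeta,\mathfrak{f}} = \hat{a}_{\zeta,\mathfrak{f}}$.

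The subtlest step, and the one I would treat most carefully, is verifying asymptotic compatibility with the model $\mathbb{Z}_2$-action on $(\mathbb{R}\times B_\kappa(0))\times_{\mathbb{Z}\times \mathbb{Z}_7} (\mathbb{C}\oplus \mathbb{C}^3\otimes_\mathbb{R}\mathbb{C})$ of \autoref{prop:Example18_bundle_around_singular_set} transported by $\tilde{\tau}_\zeta$; this is what is needed for $\hat{\tilde{\vartheta}}$ to enter as the data $\hat{\tilde{\lambda}}_{[S]}$ in \autoref{rem:equiv_gluing_data}. For the trivial summand this is immediate. For the $\hat{E}_\nu$-summand, recall from \autoref{bul: lifting group actions to resolution} of \autoref{sec:Ricci-Flat ALE metrics on crepant resolutions} that $\hat{U}$ is induced on $\hat{P}_\nu = (N\cap \mu^{-1}(\zeta)) \times_{\mathbb{P}\U(\mathbb{C}[\mathbb{Z}_7])^{\mathbb{Z}_7}} \U(3)$ by the action $(-1)\otimes \Ad_{\textup{conj}_{-1}} = (-1)\otimes \id$ on $N$; in particular the $\U(3)$-factor is untouched. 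Hence under $\tilde{\tau}_\nu$ the lift $\hat{U}$ corresponds asymptotically to $[z,v]\mapsto [-z,v]$ on $(\mathbb{C}^3\setminus\{0\})\times_{\mathbb{Z}_7,\nu}\mathbb{C}^3$. Tensoring with complex conjugation on the $\underline{\mathbb{C}}$-factor then reproduces exactly the fibrewise rule $v_i \otimes w_i \mapsto v_i \otimes \bar{w}_i$ of \autoref{prop:Example18_bundle_around_singular_set} over the base action $z\mapsto -z$, which together with $w\mapsto\bar w$ on the trivial summand constitutes the whole of the model $\mathbb{Z}_2$-action. This completes the construction.
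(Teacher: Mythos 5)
Your construction is correct and follows essentially the same route as the paper: both invoke \autoref{lem:group_lift_to_HYMbundle} for $-1 \in C_{\U(\mathbb{C}^3)}(\mathbb{Z}_7)$ to obtain the $\hat{A}_\nu$-preserving lift on $\hat{E}_\nu$, and both complete the action by antilinear conjugation on the trivial $\underline{\mathbb{C}}$-factor, combining with $s\mapsto -s$ on the $\mathbb{R}$-direction. You merely make explicit two verifications the paper leaves implicit --- the sign cancellation $ds\mapsto -ds$ against $i\mapsto -i$ proving $\hat{\tilde{\vartheta}}^*\hat{a}_{\zeta,\mathfrak{f}} = \hat{a}_{\zeta,\mathfrak{f}}$, and the asymptotic match with the model action of \autoref{prop:Example18_bundle_around_singular_set}.
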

\begin{proof}
Note that since $-1\in C_{\U(\mathbb{C}^3)}(\mathbb{Z}_7)$ (where $C_{\U(\mathbb{C}^3)}(\mathbb{Z}_7)$ denotes the centraliser of $\mathbb{Z}_7$), by \autoref{lem:group_lift_to_HYMbundle} there exists a lift of the $\mathbb{Z}_2$-action to $\hat{E}_\nu$ (covering the $\mathbb{Z}_2$-action of \autoref{lem:Example18_Z2-action_on_ALE}) which preserves $\hat{A}_\nu$ and is asymptotic to the action on $\mathbb{C}^3\times_{\mathbb{Z}_7,\nu} \mathbb{C}^3$ generated by $(-1) \cdot [z,w]\coloneqq[-z,w]$. We now define a $\mathbb{Z}_2$-action on the trivial bundle $\underline{\mathbb{C}} = \hat{Z}_\zeta \times \mathbb{C}$ by $(z,w) \mapsto (-1\cdot z, \overline{w})$ (where $-1 \cdot z$ denotes the induced action on $\hat{Z}_\zeta$ from \autoref{lem:Example18_Z2-action_on_ALE}). Together, these induce an action on $\hat{E}_\zeta$ which, when combined with the $\mathbb{Z}_2$ action on $\mathbb{R}$, satisfies the claim.
\end{proof}

\begin{proposition}\label{prop:example18_equivariant_gluing-data}
The collection $\{(\pi_0 \col \O(E_0)\to Y_0, A_0),(\tilde{\mathtt{J}},\pi\col \O(\hat{E}_\zeta) \to \hat{Z}_\zeta,\hat{A}_\zeta,\hat{\tilde{\rho}}_\zeta\col \mathbb{Z}\to \textup{Isom}(\O(\hat{E}_\zeta)))\}$ together with the $\mathbb{Z}_2$-actions described in \autoref{prop:Example18_Z2-action} and the previous proposition, respectively, form a set of $\mathbb{Z}_2$-equivariant gluing data in the sense of \autoref{rem:equiv_gluing_data} compatible with the $\mathbb{Z}_2$-equivariant resolution data consisting of $\mathtt{J}$ and $(\hat{Z}_\zeta,\tau_\zeta,\hat{\omega}_\zeta,\hat{\Omega}_\zeta)$ and the lift of the $\mathbb{Z}_2$-action to $(\mathbb{R}\times\hat{Z}_\zeta)/\mathbb{Z}$. The construction of $a_{0,\mathfrak{f}}$ and $\hat{a}_{\zeta,\mathfrak{f}}$ in \autoref{def:example18_identifying_flat_bundles-II} and \autoref{prop:example18-families-over-resolution} imply that all connections in $(A_0+a_{0,\mathfrak{f}},\hat{A}_\zeta+\hat{a}_{\zeta,\mathfrak{f}})_{\mathfrak{f}\in \mathbb{R}}$ are $\mathbb{Z}_2$-invariant (see also \autoref{rem:example18-flat_family_is_Z_2-invariant} and the previous proposition).
\end{proposition}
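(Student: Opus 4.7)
The plan is to check each clause of \autoref{rem:equiv_gluing_data} in turn, most of which has already been done in the preparatory propositions, and then verify the $\mathbb{Z}_2$-invariance of the family by a direct computation.

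First, by \autoref{prop:example18_description_of_singularset}, the singular set of $Y_0$ consists of a single connected stratum $S \cong S^1$, and it follows from the description of $\lambda_0(-1)$ and the orthonormal basis $\{b_1,\dots,b_7\}$ with $b_1 = \tfrac{1}{\sqrt 7}(1,1,1,-1,1,1,1)$ given there that $\lambda_0(-1)$ restricts to an isometric involution of $S$. Hence $\mathcal{S}/\mathbb{Z}_2 = \mathcal{S}$ and the equalities in clause (1) of \autoref{rem:equiv_gluing_data} are trivially satisfied. For clause (2), the explicit $\mathbb{Z}_2$-action on $(\mathbb{R} \times B_{\kappa}(0)) \times_{\mathbb{Z} \times \mathbb{Z}_7}(\mathbb{C} \oplus \mathbb{C}^3 \otimes_\mathbb{R} \mathbb{C})$ introduced in \autoref{prop:Example18_bundle_around_singular_set} provides the required lift $\tilde{\lambda}_S$ of $\lambda_S$: indeed, the last sentence of that proposition is precisely the commutativity $\tilde{\mathtt{J}} \circ [\tilde{\lambda}_S(-1)] = \tilde{\lambda}_0(-1) \circ \tilde{\mathtt{J}}$. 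Clause (3) is the content of \autoref{prop:example18 lift of Z2 action to resolution data}, which produces a lift $\hat{\tilde{\lambda}}_S$ of the $\mathbb{Z}_2$-action on $(\mathbb{R}\times \hat{Z}_\zeta)/\mathbb{Z}$ that preserves $\hat{A}_\zeta$; the required compatibility $\tilde{\tau}_\zeta \circ [\hat{\tilde{\lambda}}_S(-1)] = [\tilde{\lambda}_S(-1)] \circ \tilde{\tau}_\zeta$ follows because the lift on the $\hat{E}_\nu$-factor furnished by \autoref{lem:group_lift_to_HYMbundle} is, by construction, asymptotic to the complex-linear action $[z,w] \mapsto [-z,w]$, while the $\mathbb{Z}_2$-action on the trivial $\underline{\mathbb{C}}$-factor is complex conjugation. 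Under the identification $\mathbb{C}^3 \otimes_{\mathbb{R}} \mathbb{C} \cong \hat{E}_\nu \oplus \hat{E}_{\bar\nu}$ used in \autoref{prop:rigid_SO(n)-connections}, this precisely matches the asymptotic action $[(s,z),(w,z_i\otimes w_i)] \mapsto [(-s,-z),(\overline{w},z_i\otimes\overline{w}_i)]$ from \autoref{prop:Example18_bundle_around_singular_set}.

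It remains to verify the $\mathbb{Z}_2$-invariance of the family $(A_0 + a_{0,\mathfrak{f}}, \hat{A}_\zeta + \hat{a}_{\zeta,\mathfrak{f}})_{\mathfrak{f}\in \mathbb{R}}$. The flat connection $A_0$ is invariant by \autoref{prop:Example18_Z2-action}, and the explicit $\mathbb{R}$-linearity of the functions $\varphi_i$ used in \autoref{def:example18_identifying_flat_bundles-II} yields the $\mathbb{Z}_2$-invariance of $a_{0,\mathfrak{f}}$ (noted in \autoref{rem:example18-flat_family_is_Z_2-invariant}). The HYM connection $\hat{A}_\zeta$ is preserved by construction in \autoref{prop:example18 lift of Z2 action to resolution data}. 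Finally, to see that $\hat{a}_{\zeta,\mathfrak{f}} = \tfrac{\mathfrak{f}}{\sqrt{7}}(\id_{\mathbb{R}\oplus \hat{E}_\nu} \otimes_{\mathbb{R}} i)\,\diff s$ is $\mathbb{Z}_2$-invariant, note that $\hat{\tilde{\lambda}}_S(-1)$ reverses the $\mathbb{R}$-coordinate, hence pulls $\diff s$ back to $-\diff s$, while it acts on both summands of $\hat{E}_\zeta = \underline{\mathbb{C}} \oplus \hat{E}_\nu \otimes_{\mathbb{R}} \underline{\mathbb{C}}$ as a real-linear automorphism of the left factor tensored with complex conjugation on the right factor. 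Consequently, $\mathrm{Ad}_{\hat{\tilde{\lambda}}_S(-1)}$ sends $\id_{\mathbb{R}\oplus\hat{E}_\nu} \otimes_{\mathbb{R}} i$ to $\id_{\mathbb{R}\oplus\hat{E}_\nu} \otimes_{\mathbb{R}} (-i)$, and the two sign changes cancel to give $[\hat{\tilde{\lambda}}_S(-1)]^* \hat{a}_{\zeta,\mathfrak{f}} = \hat{a}_{\zeta,\mathfrak{f}}$.

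None of the steps presents a genuine obstacle: the heavy lifting has been done in the earlier propositions, and the present statement is essentially a bookkeeping result collecting them. The only place where a little care is required is in the last computation, where one must keep track of the action on $\hat{E}_\zeta$ as a real vector bundle of the form ``real bundle $\otimes_\mathbb{R}$ trivial $\mathbb{C}$'' and track how $\id\otimes_\mathbb{R} i$ transforms; this will constitute the main (and only) computational content of the proof.
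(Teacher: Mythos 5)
Your proof is correct and takes essentially the same approach the paper does: the paper states this proposition with no explicit proof, treating it as a bookkeeping consequence of \autoref{prop:Example18_Z2-action}, \autoref{prop:Example18_bundle_around_singular_set}, \autoref{prop:example18 lift of Z2 action to resolution data}, and \autoref{rem:example18-flat_family_is_Z_2-invariant}, and you simply make that bookkeeping explicit clause by clause. Your final computation --- that $\mathrm{Ad}_{\hat{\tilde{\lambda}}_{[S]}(-1)}$ sends $\id_{\mathbb{R}\oplus\hat{E}_\nu}\otimes_{\mathbb{R}} i$ to $\id_{\mathbb{R}\oplus\hat{E}_\nu}\otimes_{\mathbb{R}}(-i)$ and cancels against $\diff s \mapsto -\diff s$ --- is the content that \autoref{prop:example18 lift of Z2 action to resolution data} asserts but does not spell out, and it is correct given the form of the $\mathbb{Z}_2$-action on $\hat{E}_\zeta = (\underline{\mathbb{R}}\oplus\hat{E}_\nu)\otimes_{\mathbb{R}}\underline{\mathbb{C}}$, namely (real-linear automorphism of the first factor) $\otimes$ (complex conjugation on the second). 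One small imprecision worth tightening: you say the lift on $\hat{E}_\nu$ furnished by \autoref{lem:group_lift_to_HYMbundle} is ``by construction, asymptotic to'' the complex-linear action $[z,w]\mapsto[-z,w]$; for the compatibility $\tilde{\tau}_{[S]}\circ[\hat{\tilde{\lambda}}_{[S]}(h)]=[\tilde{\lambda}_{[S]}(h)]\circ\tilde{\tau}_{[S]}$ in clause (3) of \autoref{rem:equiv_gluing_data} one actually needs exact (not merely asymptotic) compatibility away from the exceptional divisor, which does hold because the lift in \autoref{lem:group_lift_to_HYMbundle} is induced from the explicit action on $N\cap\mu^{-1}(\zeta)$ in the Kähler quotient construction and therefore intertwines the framings on the nose.
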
 

\subsection{Turning the gluing data into $\G_2$-instantons}

We now fix a generic $\zeta \in \mathfrak{z}^* \cap (\mathfrak{pu}(\mathbb{C}[\Gamma])^{\mathbb{Z}_7})_\mathbb{Q}^*$ as defined in \autoref{sec:Ricci-Flat ALE metrics on crepant resolutions} and let $(\hat{Z}_\zeta,\tau_\zeta,\hat{\omega}_\zeta,\hat{\Omega}_\zeta)$ be the corresponding ALE space. In \autoref{prop:Example18_Z2-action} we have defined a $\mathbb{Z}_2$-action $\lambda_0 \col \mathbb{Z}_2 \to \textup{Isom}(Y_0,g_0)$ that preserves $\psi_0$. Moreover, we have argued in \autoref{prop:example18_equivariant_resolutiondata_1} and \autoref{lem:Example18_Z2-action_on_ALE} that there exists a $\mathbb{Z}_2$-action on $(\mathbb{R}\times \hat{Z}_\zeta)/\mathbb{Z}$ such that the data in \autoref{prop:example18_description_of_singularset}, the ALE space $(\hat{Z}_\zeta,\tau_\zeta,\hat{\omega}_\zeta,\hat{\Omega}_\zeta)$, and these $\mathbb{Z}_2$-actions give rise to an $\mathbb{Z}_2$-equivariant resolution data for $Y_0$. By \autoref{theo:torsionfree_G2_structure} and \autoref{rem:EquivariantGeneralisedKummer} we therefore have a $1$-parameter family of generalised Kummer constructions $(\hat{Y}_t,\phi_t)_{t\in(0,T_K)}$ and $\hat{\lambda}\col \mathbb{Z}_2 \to \textup{Isom}(\hat{Y}_t,\tilde{g}_t)$ that satisfies $\hat{\lambda}(h)^*\psi_t=\psi_t$ for every $h\in \mathbb{Z}_2$.

\begin{proposition}\label{prop:example18_almost-instantons+estimates}
For each $t\in (0,T_K)$ there exist a principal $\SO(14)$- and a $\O(14)$-bundle $\pi_t^{\SO} \col \SO(\hat{E}_t) \to \hat{Y}_t$ and $\pi_t^\O \col \O(\hat{E}_t) \to \hat{Y}_t$, respectively, together with smooth 1-parameter families of connections $(\tilde{A}_t^\SO + \tilde{a}_{t,\mathfrak{f}})_{\mathfrak{f}\in \mathbb{R}} \subset \mathcal{A}(\SO(\hat{E}_t))$ and $(\tilde{A}_t^\O + \tilde{a}_{t,\mathfrak{f}})_{\mathfrak{f}\in \mathbb{R}} \subset \mathcal{A}(\O(\hat{E}_t))$ that satisfy:
\begin{enumerate}
\item $\SO(\hat{E}_{t})\subset \O(\hat{E}_{t})$ is a subbundle and the connections $\tilde{A}_{t}^\O + \tilde{a}_{t,\mathfrak{f}}$ reduce to $\tilde{A}_{t}^{\SO}+\tilde{a}_{t,\mathfrak{f}}$ for every $\mathfrak{f}\in \mathbb{R}$.
\item There exists a lift of $\hat{\lambda} \col \mathbb{Z}_2 \to \textup{Isom}(\hat{Y}_t,\tilde{g}_t)$ to $\O(\hat{E}_{t})$ preserving $\tilde{A}_{t}^{\O}+\tilde{a}_{t,\mathfrak{f}}$ for every $\mathfrak{f}\in \mathbb{R}$.
\item For every compact interval $\mathfrak{F}\subset \mathbb{R}\setminus \pi \mathbb{Z}$ there exist $t$-independent constants $C>0$ and $0<T<T_K$ (which depend on $\mathfrak{F}$) such that for every $\mathfrak{f}\in \mathfrak{F}$:
\begin{itemize}
\item $\Vert (\partial_\mathfrak{F}^\ell F_{\tilde{A}_{t}^{\O}+\tilde{a}_{t,\mathfrak{f}}})\wedge\psi_t \VertWHt{0}{-5/2}{} \leq C t$ for $\ell =0,1,2$, where $\partial_\mathfrak{F}^\ell F_{\tilde{A}_{t}^{\O}+\tilde{a}_{t,\mathfrak{f}}}$ denotes the $\ell$-th derivative of the function $\mathfrak{f} \mapsto F_{\tilde{A}_{t}^{\O}+\tilde{a}_{t,\mathfrak{f}}}$,
\item for $\ell =0,1,2$ and any $\beta \in \mathbb{R}$ we have
\begin{align*}
\Vert (\partial_\mathfrak{F}^\ell L_{\tilde{A}_t^\O+\tilde{a}_{t,\mathfrak{f}}}) \underline{a} \VertWHt{0}{\beta-1}{} \leq C \Vert \underline{a} \VertWHt{1}{\beta}{} &\textup{ for any $\underline{a} \in \Omega^1(\hat{Y}_t,\mathfrak{so}(\hat{E}_t))\oplus \Omega^7(\hat{Y}_t,\mathfrak{so}(\hat{E}_t))$}\\
\Vert (\partial_\mathfrak{F}^\ell L^*_{\tilde{A}_t^\O+\tilde{a}_{t,\mathfrak{f}}}) \underline{b} \VertWHt{1}{\beta-1}{} \leq C \Vert \underline{b} \VertWHt{2}{\beta}{} &\textup{ for any $\underline{b} \in \Omega^6(\hat{Y}_t,\mathfrak{so}(\hat{E}_t))\oplus \Omega^0(\hat{Y}_t,\mathfrak{so}(\hat{E}_t))$}
\end{align*} 
where $\partial_\mathfrak{F}^\ell L_{\tilde{A}_t^\O + \tilde{a}_{t,\mathfrak{f}}}$ (and similarly $\partial_\mathfrak{F}^\ell L_{\tilde{A}_t^\O + \tilde{a}_{t,\mathfrak{f}}}^*$) denotes the $\ell$-th derivative of the function $\mathfrak{f} \mapsto  L_{\tilde{A}_t^\O + \tilde{a}_{t,\mathfrak{f}}} \in \textup{Lin}(C^{1,\alpha}_{\beta,t},C^{0,\alpha}_{\beta-1,t})$,
\item for any $\xi \in \Omega^0(\hat{Y}_t,\mathfrak{so}(\hat{E}_t))$ and $\beta \in \mathbb{R}$ we have \[ \Vert \diff_{\tilde{A}_t^\O + \tilde{a}_{t,\mathfrak{f}}} \xi \VertWHt{0}{\beta-1}{} \leq C \Vert \xi \VertWHt{1}{\beta}{},\]
\item for $t<T^\prime$ any $\mathbb{Z}_2$-invariant $\underline{b}\in \Omega^1(\hat{Y}_t,\mathfrak{so}(\hat{E}_{\theta,t}))\oplus \Omega^7(\hat{Y}_t,\mathfrak{so}(\hat{E}_{\theta,t}))$ satisfies \[ \Vert \underline{b} \VertWHt{2}{-1/2}{} \leq C \Vert L_{\tilde{A}_{t}^{\O}+\tilde{a}_{t,\mathfrak{f}}}L_{\tilde{A}_{t}^{\O}+\tilde{a}_{t,\mathfrak{f}}}^* \underline{b} \VertWHt{0}{-5/2}{}\] and any (not necessarily $\mathbb{Z}_2$-invariant) $\xi \in \Omega^0(\hat{Y}_t,\mathfrak{so}(\hat{E}_t))$ satisfies \[\Vert \xi \VertWHt{2}{-1/2}{} \leq C \Vert \diff_{\tilde{A}^O_{t}+\tilde{a}_{t,\mathfrak{f}}}^*\diff_{\tilde{A}^O_{t}+\tilde{a}_{t,\mathfrak{f}}} \xi \VertWHt{0}{-5/2}{}.\] 
\end{itemize}
\end{enumerate}
All norms in this statement are taken with respect to the metric and the Levi--Civita connection on $(\hat{Y}_t,\tilde{g}_t)$ and the (negative) Killing-form and $\tilde{A}_{t}^{\O}$ on $\mathfrak{o}(\hat{E}_{\theta,t})=\mathfrak{so}(\hat{E}_{\theta,t})$.
\end{proposition}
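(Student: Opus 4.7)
The plan is to feed the $\mathbb{Z}_2$-equivariant gluing data of \autoref{prop:example18_equivariant_gluing-data}, together with the $\mathbb{Z}_2$-invariant families $(A_0+a_{0,\mathfrak{f}},\hat{A}_\zeta+\hat{a}_{\zeta,\mathfrak{f}})_{\mathfrak{f}\in\mathbb{R}}$ from \autoref{def:example18_identifying_flat_bundles-II} and \autoref{prop:example18-families-over-resolution}, into the general machinery of \autoref{prop:pregluing}, \autoref{prop:pregluing-family}, \autoref{prop:LinearEstimate}, and \autoref{prop:linearEstimate-for-rigidity}. The essentially computational work then reduces to verifying, for $\mathfrak{f}\in\mathfrak{F}\subset\mathbb{R}\setminus\pi\mathbb{Z}$, the hypotheses of \autoref{ass:invertible_linearisations} and of \autoref{prop:linearEstimate-for-rigidity}, both of which I will deduce from \autoref{prop:example18_ker(L_A)} and \autoref{prop:rigid_SO(n)-connections}.

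First, I apply \autoref{prop:pregluing} to the $\O(14)$-gluing data to produce $\pi_t^\O\col\O(\hat{E}_t)\to\hat{Y}_t$ together with a $\mathbb{Z}_2$-equivariant pre-glued connection $\tilde{A}_t^\O$; the $\SO(14)$-reduction $\SO(\hat{E}_t)\subset\O(\hat{E}_t)$ and the reduction of $\tilde{A}_t^\O$ to $\tilde{A}_t^{\SO}$ come from the underlying $\SO(14)$-gluing data of \autoref{prop:Example18_gluing_data}. Then \autoref{prop:example18-a_(0,f)-in-gluing-region} and \autoref{prop:example18-families-over-resolution} show that the families $(A_0+a_{0,\mathfrak{f}})$ and $(\hat{A}_\zeta+\hat{a}_{\zeta,\mathfrak{f}})$ meet the matching and dilation-invariance hypotheses of \autoref{prop:pregluing-family} and satisfy \autoref{ass: very simple family of gluing data}, and both families depend smoothly on $\mathfrak{f}$ in a $C^\infty$-bounded manner on the compact set $\mathfrak{F}$. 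Applying \autoref{prop:pregluing-family} thus produces the smooth families $(\tilde{A}_t^\O+\tilde{a}_{t,\mathfrak{f}})_{\mathfrak{f}\in\mathfrak{F}}$ and $(\tilde{A}_t^{\SO}+\tilde{a}_{t,\mathfrak{f}})_{\mathfrak{f}\in\mathfrak{F}}$, and the $\mathbb{Z}_2$-invariance of each $\tilde{A}_t^\O+\tilde{a}_{t,\mathfrak{f}}$ follows from \autoref{prop:example18_equivariant_gluing-data}.

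Second, the pregluing error estimate is \autoref{prop:pregluing-family} with $\beta-2=-5/2$, i.e.\ $\vartheta=7/2$, which yields $c_{pI}t^{\min\{1,4\}}=Ct$ uniformly in $\mathfrak{f}\in\mathfrak{F}$ and for derivatives $\ell\in\{0,1,2\}$. The operator norm estimates on $\partial_\mathfrak{F}^\ell L_{\tilde A_t^\O+\tilde a_{t,\mathfrak{f}}}$ and its formal adjoint are also direct consequences of \autoref{prop:pregluing-family}, and the analogous bound for $\diff_{\tilde A_t^\O+\tilde a_{t,\mathfrak{f}}}\xi$ follows from \autoref{rem: estimate on d_A^*}.

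The central step is the verification of \autoref{ass:invertible_linearisations}. For Condition~(1), note that by construction the isomorphism $\overline{F_\mathfrak{f}}\col E_0\to E_{0,e^{i\mathfrak{f}}}$ of \autoref{def:example18_identifying_flat_bundles-II} is an isometric, $\mathbb{Z}_2$-equivariant bundle map (see \autoref{rem:example18-flat_family_is_Z_2-invariant}) pulling $A_{0,e^{i\mathfrak{f}}}$ back to $A_0+a_{0,\mathfrak{f}}$. Hence $\ker L_{A_0+a_{0,\mathfrak{f}}}^*$ is $\mathbb{Z}_2$-equivariantly isometric to $\ker L_{A_{0,e^{i\mathfrak{f}}}}^*$. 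Since $\mathfrak{F}\subset\mathbb{R}\setminus\pi\mathbb{Z}$, we have $e^{i\mathfrak{f}}\notin\{\pm 1\}$, so \autoref{prop:example18_ker(L_A)} identifies $\ker L_{A_{0,e^{i\mathfrak{f}}}}^*$ with a one-dimensional $\mathbb{Z}_2$-antiinvariant subspace; pulling back, the $\mathbb{Z}_2$-invariant part of $\ker L_{A_0+a_{0,\mathfrak{f}}}^*$ is trivial. For Condition~(2), the connection $\hat A_\zeta$ on $\SO(\hat E_\zeta)=\SO(\underline{\mathbb{R}}^2\oplus\hat E_\nu)$ is precisely the object produced by \autoref{prop:rigid_SO(n)-connections}, which supplies $\mathcal{H}^1_{\hat A_\zeta}=0$ and hence $*_{\hat\Omega_\zeta}\mathcal{H}^1_{\hat A_\zeta}=0$, so the first alternative of \autoref{ass:invertible_linearisations} is satisfied.

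Fourth, with \autoref{ass:invertible_linearisations} verified, \autoref{prop:LinearEstimate} applied with $\beta=-1/2\in(-4,0)$ and $k=0$ yields the asserted linear estimate for $\mathbb{Z}_2$-invariant $\underline{b}$, uniformly in $\mathfrak{f}\in\mathfrak{F}$ by compactness of $\mathfrak{F}$ and continuity of the constants produced in the proof of that proposition. For the corresponding estimate on $\xi$ I apply \autoref{prop:linearEstimate-for-rigidity}, whose hypothesis $\ker\diff_{A_0+a_{0,\mathfrak{f}}}=0$ on $\Omega^0(Y_0,\mathfrak{so}(E_0))$ follows from the infinitesimal irreducibility of $A_{0,e^{i\mathfrak{f}}}$ (\autoref{prop:example18_ker(L_A)}) combined with the fact that the center of $\mathfrak{so}(14)$ is trivial.

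The main obstacle is the invariant-kernel computation in Condition~(1) of \autoref{ass:invertible_linearisations}: although \autoref{prop:example18_ker(L_A)} already pins down the kernel of $L^*_{A_{0,\theta}}$ and its $\mathbb{Z}_2$-eigenvalue, one must keep careful track of the $\mathbb{Z}_2$-equivariance of the identification $\overline{F_\mathfrak{f}}$ — which is why $\overline{F_\mathfrak{f}}$ was built from $\mathbb{R}$-linear pieces in \autoref{def:example18_identifying_flat_bundles-II} — and, on the ALE side, one must use that the chosen lift of the $\mathbb{Z}_2$-action to $\hat E_\zeta$ in \autoref{prop:example18 lift of Z2 action to resolution data} preserves $\hat A_\zeta$ in order to be in a position to invoke \autoref{prop:rigid_SO(n)-connections}.
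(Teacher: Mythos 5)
Your proposal is correct and follows essentially the same route as the paper's own proof: apply \autoref{prop:pregluing} and \autoref{prop:pregluing-family} to the equivariant gluing data of \autoref{prop:Example18_gluing_data} and \autoref{prop:example18_equivariant_gluing-data}, then verify \autoref{ass:invertible_linearisations} via \autoref{prop:example18_ker(L_A)} and \autoref{prop:rigid_SO(n)-connections} before invoking \autoref{prop:LinearEstimate} and \autoref{prop:linearEstimate-for-rigidity}. Your spelled-out verification of Condition~(1) — transporting the cokernel computation along the $\mathbb{Z}_2$-equivariant isometry $\overline{F_\mathfrak{f}}$ — is actually slightly more explicit than what the paper writes.

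One small slip worth correcting: when invoking \autoref{prop:rigid_SO(n)-connections} you write $\SO(\hat E_\zeta)=\SO(\underline{\mathbb{R}}^2\oplus\hat E_\nu)$, but $\underline{\mathbb{R}}^2\oplus\hat E_\nu$ has real rank $2+6=8$, not $14$. The correct identification is $\hat E_\zeta=\underline{\mathbb{C}}\oplus\hat E_\nu\otimes_\mathbb{R}\underline{\mathbb{C}}\cong\underline{\mathbb{R}}^2\oplus\hat E_{\nu\oplus\bar\nu}$, so \autoref{prop:rigid_SO(n)-connections} should be applied with the unitary representation $\nu\oplus\bar\nu$ (or equivalently $\nu_{\textup{triv}}\oplus\nu\oplus\bar\nu$ with $k=0$); the conclusion $\mathcal H^1_{\hat A_\zeta}=0$ is unchanged.
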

\begin{proof}
In the beginning of \autoref{subsec:family of flat connections} we have defined the flat bundles $(\SO(E_{0}),A_{0}^{\SO})$ and $(\O(E_{0}),A_{0}^{\O})$ which we have equipped in \autoref{def:example18_identifying_flat_bundles-II} with a smooth 1-parameter family of flat connection $(A_0+a_{0,\mathfrak{f}})_{\mathfrak{f}\in \mathbb{R}} \subset \mathcal{A}(\SO(E_0))=\mathcal{A}(\O(E_0))$. In the previous section we have then augmented these to gluing data $\{(\pi_0\col \SO(E_{0})\to Y_0,A_0),(\tilde{\mathtt{J}},\pi\col \SO(\hat{E}_\zeta) \to \hat{Z}_\zeta,\hat{A}_\zeta,\hat{\tilde{\rho}}_\zeta\col \mathbb{Z}\to \textup{Isom}(\SO(\hat{E}_\zeta)))\}$ and $\{(\pi_0 \col \O(E_0)\to Y_0, A_0),(\tilde{\mathtt{J}},\pi\col \O(\hat{E}_\zeta) \to \hat{Z}_\zeta,\hat{A}_\zeta,\hat{\tilde{\rho}}_\zeta\col \mathbb{Z}\to \textup{Isom}(\O(\hat{E}_\zeta)))\}$ together with families of endomorphism valued 1-forms $(a_{0,\mathfrak{f}},\hat{a}_{\zeta,\mathfrak{f}})$ which satisfy the assumptions of \autoref{prop:pregluing-family} (cf. \autoref{prop:Example18_gluing_data}). 

The existence of the bundles $\pi_{t}^{\SO} \col \SO(\hat{E}_{t}) \to \hat{Y}_t$ and $\pi_{t}^{\O} \col \O(\hat{E}_{t}) \to \hat{Y}_t$ and the respective families of connection $(\tilde{A}_t^\SO + \tilde{a}_{t,\mathfrak{f}})_{\mathfrak{f}\in \mathbb{R}} \subset \mathcal{A}(\SO(\hat{E}_t))$ and $(\tilde{A}_t^\O + \tilde{a}_{t,\mathfrak{f}})_{\mathfrak{f}\in \mathbb{R}} \subset \mathcal{A}(\O(\hat{E}_t))$ follows therefore from \autoref{prop:pregluing} and \autoref{prop:pregluing-family}. Note that since $(\SO(E_{0}),A_{0}^{\SO})\subset (\O(E_{0}),A_{0}^{\O})$ and $(\SO(\hat{E}_\zeta),\hat{A}_\zeta^{\SO})\subset (\O(\hat{E}_\zeta),\hat{A}_\zeta^{\O})$ are subbundles, we also have $(\SO(\hat{E}_{t}),\tilde{A}_{t}^{\SO} + \tilde{a}_{t,\mathfrak{f}})\subset (\O(\hat{E}_{t}),\tilde{A}_{t}^{\O}+\tilde{a}_{t,\mathfrak{f}})$ by the construction of these bundles and connections in the proof of \autoref{prop:pregluing} and \autoref{prop:pregluing-family}. This proves the first point.

We have shown in \autoref{prop:Example18_Z2-action} and \autoref{rem:example18-flat_family_is_Z_2-invariant} that the $\mathbb{Z}_2$-action on $Y_0$ lifts to $\O(E_{0})$ and preserves $A_{0}^\O+\tilde{a}_{t,\mathfrak{f}}$ for every $\mathfrak{f}\in \mathbb{R}$. Furthermore, we have proven in \autoref{prop:example18 lift of Z2 action to resolution data} that there exists a $\mathbb{Z}_2$-action on $\O((\mathbb{R}\times\hat{E}_\zeta)/\mathbb{Z})$ that preserves $\hat{A}_\zeta^\O$ and that augments $\{(\pi_0 \col \O(E_0)\to Y_0, A_0),(\tilde{\mathtt{J}},\pi\col \O(\hat{E}_\zeta) \to \hat{Z}_\zeta,\hat{A}_\zeta,\hat{\tilde{\rho}}_\zeta\col \mathbb{Z}\to \textup{Isom}(\O(\hat{E}_\zeta)))\}$ to a set of $\mathbb{Z}_2$-equivariant gluing data (cf. \autoref{prop:example18_equivariant_gluing-data}). The existence of the lift of $\hat{\lambda}$ to $\O(\hat{E}_{\theta,t})$ preserving $\tilde{A}_{t}^\O + \tilde{a}_{t,\mathfrak{f}}$ follows therefore also from \autoref{prop:pregluing} and \autoref{prop:pregluing-family}. The estimates listed under the first three bullets of point three also follow from the same propositions (see also \autoref{rem: estimate on d_A^*}). 

The estimate $\Vert \underline{b} \VertWHt{2}{-1/2}{} \leq c \Vert L_{\tilde{A}_{t}^{\O}+\tilde{a}_{t,\mathfrak{f}}}L_{\tilde{A}_{t}^{\O}\tilde{a}_{t,\mathfrak{f}}}^* \underline{b} \VertWHt{0}{-5/2}{}$ for every $\mathbb{Z}_2$-invariant element $\underline{b}\in \Omega^1(\hat{Y}_t,\mathfrak{so}(\hat{E}_{\theta,t}))\oplus \Omega^7(\hat{Y}_t,\mathfrak{so}(\hat{E}_{\theta,t}))$ and sufficiently small $t\in (0,T)$ follows from \autoref{prop:LinearEstimate}. Our description of $\hat{a}_{\zeta,\mathfrak{f}}$ in \autoref{prop:example18-families-over-resolution} together with \autoref{prop:example18_ker(L_A)}, and \autoref{prop:rigid_SO(n)-connections} imply here-for that \autoref{ass:invertible_linearisations} is satisfied.

Similarly, \autoref{prop:example18_ker(L_A)} implies that the condition in \autoref{prop:linearEstimate-for-rigidity} is satisfied which then implies $\Vert \xi \VertWHt{2}{-1/2}{} \leq c \Vert \diff_{\tilde{A}^O_{t}+\tilde{a}_{t,\mathfrak{f}}}^*\diff_{\tilde{A}^O_{t}+\tilde{a}_{t,\mathfrak{f}}} \xi \VertWHt{0}{-5/2}{}$ for any $\xi \in \Omega^0(\hat{Y}_t,\mathfrak{so}(\hat{E}_t))$ and possibly smaller $t<T$.
\end{proof}
\begin{remark}
As in \autoref{rem:example18_bundles_are_isomorphic} one can see that the connections $\tilde{A}_t^{\SO}+\tilde{a}_{t,\mathfrak{f}_1}$ and $\tilde{A}_t^{\SO}+\tilde{a}_{t,\mathfrak{f}_2}$ are gauge equivalent if and only if $\mathfrak{f}_1 = \mathfrak{f}_2 + 2\pi m$ for some $m\in \mathbb{Z}$ (this is because the restriction of $\tilde{A}_{t}^{\SO}+\tilde{a}_{t,\mathfrak{f}_i}$ to $\hat{Y}_t \setminus \hat{\mathcal{V}}_{\kappa}^t$ agrees with the flat connection $A_{0}^\SO + a_{0,\mathfrak{f}}$). The corresponding statement about $\O(\hat{E}_{t})$ holds also true, where $\tilde{A}_{t}^{\O}+\tilde{a}_{t,\mathfrak{f}_1}$ and $\tilde{A}_t^\O+ \tilde{a}_{t,\mathfrak{f}_2}$ are gauge equivalent if and only if $\mathfrak{f}_1 = \pm \mathfrak{f}_2 + 2\pi m$ for some $m\in \mathbb{Z}$.
\end{remark}

\begin{proposition}\label{prop:example18-existence-instantons}
Let $\pi_t^{\SO} \col \SO(\hat{E}_t) \to \hat{Y}_t$ be the principal bundle constructed in the previous proposition. Furthermore, for a fixed compact interval $\mathfrak{F}\subset \mathbb{R}\setminus \pi \mathbb{Z}$ let $(\tilde{A}_{t}^{\SO} + \tilde{a}_{t,\mathfrak{f}})_{\mathfrak{f}\in \mathfrak{F}} \subset \mathcal{A}(\SO(\hat{E}_{t}))$ be the family of connections described in the previous proposition. Then there are constants $0<T^\prime<T$ and $C^\prime>0$ such that for each $t\in (0,T^\prime)$ there exists a smooth family of $\G_2$-instantons $(A_{t}^\SO + a_{t,\mathfrak{f}})_{\mathfrak{f}\in \mathfrak{F}} \in \mathcal{A}(\SO(\hat{E}_{t}))$ with 
\begin{align}\label{eq:example18-estiamtes-on-perturbed-instantons}
\Vert \partial_\mathfrak{F}^\ell (A_{t}^\SO + a_{t,\mathfrak{f}}) - \partial_\mathfrak{F}^{\ell}(\tilde{A}_{t}^{\SO} + \tilde{a}_{t,\mathfrak{f}}) \VertWHt{1}{-3/2}{} \leq C^\prime t \qquad \textup{for $\ell =0,1,2$},
\end{align} 
where $\partial_\mathfrak{F}^{\ell}(A_{t}^{\SO} + a_{t,\mathfrak{f}})$ (and similarly $\partial_\mathfrak{F}^\ell (\tilde{A}_{t}^\SO + \tilde{a}_{t,\mathfrak{f}})$) denotes the $\ell$-th derivative of the function $\mathfrak{f}\mapsto A_{t}^{\SO} + a_{t,\mathfrak{f}}$. For sufficiently small $T^\prime$, all $A^{\SO}_t+a_{t,\mathfrak{f}}$ are infinitesimally irreducible.
\end{proposition}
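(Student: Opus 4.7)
The plan is to apply \autoref{theo:perturbing_almost_instantons}, \autoref{prop: derivative estimates on deformed family of instantons}, and \autoref{prop:infinitesimal_irreducibility} to the $\O(14)$-family $(\tilde{A}_t^{\O}+\tilde{a}_{t,\mathfrak{f}})_{\mathfrak{f}\in \mathfrak{F}}$ constructed in \autoref{prop:example18_almost-instantons+estimates}, and then to argue that the resulting $\G_2$-instantons descend to the subbundle $\SO(\hat{E}_t)\subset \O(\hat{E}_t)$.

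First, I would invoke \autoref{theo:perturbing_almost_instantons} with group $H=\mathbb{Z}_2$ acting via the lift described in \autoref{prop:example18_almost-instantons+estimates}, Hölder exponent $\alpha\in(0,1)$, $\gamma=1$, and $\varepsilon=1/2$ (which satisfies $0<\varepsilon<\gamma$, as required; this weight exponent matches the one for which the linear estimate is proved in \autoref{prop:example18_almost-instantons+estimates}). The three hypotheses of the theorem are then directly the three estimates listed in point~3 of \autoref{prop:example18_almost-instantons+estimates}: the pregluing error bound $\Vert F_{\tilde{A}_t^\O+\tilde{a}_{t,\mathfrak{f}}}\wedge\psi_t\VertWHt{0}{-5/2}{}\leq Ct$, the linear estimate on $\mathbb{Z}_2$-invariant $\underline{b}$, and the operator bound on $L^*$. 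This yields, for each sufficiently small $t>0$ and each $\mathfrak{f}\in\mathfrak{F}$, a $\mathbb{Z}_2$-invariant $\G_2$-instanton $A_t^\O+a_{t,\mathfrak{f}}\in\mathcal{A}(\O(\hat{E}_t))$ satisfying $\Vert a_{t,\mathfrak{f}}-\tilde{a}_{t,\mathfrak{f}}\VertWHt{1}{-3/2}{}\leq c_I t$. Since the family $\mathfrak{f}\mapsto\tilde{A}_t^\O+\tilde{a}_{t,\mathfrak{f}}$ is smooth, the last assertion of the theorem gives that $\mathfrak{f}\mapsto A_t^\O+a_{t,\mathfrak{f}}$ is smooth as well.

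Next, I would verify the additional hypotheses (4)--(6) of \autoref{prop: derivative estimates on deformed family of instantons} for $L=2$. These are precisely the bounds on $\partial_\mathfrak{F}^\ell F_{\tilde{A}_t^\O+\tilde{a}_{t,\mathfrak{f}}}\wedge\psi_t$, $\partial_\mathfrak{F}^\ell L_{\tilde{A}_t^\O+\tilde{a}_{t,\mathfrak{f}}}$, and $\partial_\mathfrak{F}^\ell L_{\tilde{A}_t^\O+\tilde{a}_{t,\mathfrak{f}}}^*$ listed in point~3 of \autoref{prop:example18_almost-instantons+estimates}. The proposition then yields the estimate \eqref{eq:example18-estiamtes-on-perturbed-instantons} for $\ell=0,1,2$.

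To pass from $\O(\hat{E}_t)$ back to $\SO(\hat{E}_t)$, I would use that, as a vector bundle, $\mathfrak{o}(\hat{E}_t)=\mathfrak{so}(\hat{E}_t)$, and that $\SO(\hat{E}_t)\subset \O(\hat{E}_t)$ is an open subbundle to which $\tilde{A}_t^\O+\tilde{a}_{t,\mathfrak{f}}$ reduces by construction. Thus any $a\in\Omega^1(\hat{Y}_t,\mathfrak{o}(\hat{E}_t))$ equally well perturbs the $\SO$-connection $\tilde{A}_t^{\SO}+\tilde{a}_{t,\mathfrak{f}}$, and the resulting connection $A_t^{\SO}+a_{t,\mathfrak{f}}\in \mathcal{A}(\SO(\hat{E}_t))$ is a $\G_2$-instanton satisfying the same estimates.

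Finally, for infinitesimal irreducibility I would apply \autoref{prop:infinitesimal_irreducibility} (with the same choices $\alpha,\gamma,\varepsilon,C$). Hypothesis~(4) of that proposition, the estimate $\Vert \xi\VertWHt{2}{-1/2}{}\leq C\Vert \diff_{\tilde{A}_t^\O+\tilde{a}_{t,\mathfrak{f}}}^*\diff_{\tilde{A}_t^\O+\tilde{a}_{t,\mathfrak{f}}}\xi\VertWHt{0}{-5/2}{}$ for all $\xi\in\Omega^0(\hat{Y}_t,\mathfrak{so}(\hat{E}_t))$, is exactly the last estimate in point~3 of \autoref{prop:example18_almost-instantons+estimates}; here I use that the Lie algebras $\mathfrak{o}(14)=\mathfrak{so}(14)$ have trivial center, so $\mathfrak{z}^{\perp,L^2}=\Omega^0(\hat{Y}_t,\mathfrak{so}(\hat{E}_t))$. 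Hypothesis~(5) is the estimate on $\diff_{\tilde{A}_t^\O+\tilde{a}_{t,\mathfrak{f}}}\xi$, also in point~3 of \autoref{prop:example18_almost-instantons+estimates}. Shrinking $T^\prime$ if necessary then guarantees that each $A_t^{\SO}+a_{t,\mathfrak{f}}$ is infinitesimally irreducible. The main conceptual point to take care of is the reduction from $\O$ to $\SO$ (which is clean because the adjoint bundles agree); everything else is a direct application of the abstract machinery of \autoref{sec: instantons over Kummer}.
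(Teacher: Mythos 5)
Your proposal is correct and follows essentially the same route as the paper: first applying \autoref{theo:perturbing_almost_instantons}, \autoref{prop: derivative estimates on deformed family of instantons}, and \autoref{prop:infinitesimal_irreducibility} to the $\O(14)$-family using the estimates from \autoref{prop:example18_almost-instantons+estimates}, and then reducing to $\SO(\hat{E}_t)$ via $\mathfrak{o}(14)=\mathfrak{so}(14)$. Your version is slightly more explicit in recording the parameter choices $\gamma=1$, $\varepsilon=1/2$ that the paper leaves implicit, but the argument is the same.
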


\begin{proof}
In the previous proposition we have also constructed the bundle $\pi_t^\O \col \O(\hat{E}_t) \to \hat{Y}_t$ together with the smooth family of connections $(\tilde{A}_t^\O+\tilde{a}_{t,\mathfrak{f}})_{\mathfrak{f}\in \mathfrak{F}}$. Furthermore, we have shown that \autoref{theo:perturbing_almost_instantons}, \autoref{prop: derivative estimates on deformed family of instantons}, and \autoref{prop:infinitesimal_irreducibility} can be applied to $(\tilde{A}_t^\O+\tilde{a}_{t,\mathfrak{f}})_{\mathfrak{f}\in \mathfrak{F}}$. This implies the existence of a family of $\G_2$-instantons $(A_{t}^\O+a_{t,\mathfrak{f}})_{\mathfrak{f}\in \mathfrak{F}} \subset \mathcal{A}(\O(\hat{E}_t))$ that satisfies all the properties stated in the proposition (where \eqref{eq:example18-estiamtes-on-perturbed-instantons} are taken with respect to $\tilde{A}_t^\O+\tilde{a}_{t,\mathfrak{f}}$). Since $\SO(\hat{E}_{t}) \subset \O(\hat{E}_{t})$ is a subbundle and $\mathfrak{so}(14)=\mathfrak{o}(14)$, the instanton $A_{t}^\O + a_{t,\mathfrak{f}}$ reduces for any $\mathfrak{f}\in \mathfrak{F}$ to $\SO(\hat{E}_{t})$ which is the instanton that we call $A_{t}^\SO + a_{t,\mathfrak{f}} \in \mathcal{A}(\SO(\hat{E}_{\theta,t}))$. 

Since $\tilde{A}_t^{\SO} + \tilde{a}_{t,\mathfrak{f}}$ is the reduction of $\tilde{A}_t^\O + \tilde{a}_{t,\mathfrak{f}}$, \eqref{eq:example18-estiamtes-on-perturbed-instantons} follows from the corresponding estimate on $\partial_\mathfrak{F}^{\ell}(\tilde{A}_{t}^{\O} + \tilde{a}_{t,\mathfrak{f}}) -\partial_\mathfrak{F}^\ell (A_{t}^\O + a_{t,\mathfrak{f}})$. Furthermore, since the adjoint bundles of $\SO(\hat{E}_t)$ and $\O(\hat{E}_t)$ agree, $\tilde{A}_{t}^{\SO} + \tilde{a}_{t,\mathfrak{f}}$ is infinitesimally irreducible if and only if $\tilde{A}_{t}^{\O} + \tilde{a}_{t,\mathfrak{f}}$ is.
\end{proof}

\begin{remark}
The construction of the connection $A^\O_{t}$ in the previous proof is a technical step because the $\mathbb{Z}_2$-action does not lift directly to $\SO(\hat{E}_{t})$. However, as already noted in \autoref{rem:example18-explanation O(14)-bundle I} we obtain an induced action on the adjoint bundle $\mathfrak{so}(\hat{E}_{t})$ and the map $\Upsilon_{\tilde{A}^{\SO}_{t}}$ (as defined in \autoref{subsec: gauge theory on Kummer}) associated to the connection $\tilde{A}^{\SO}_{t}$ is equivariant with respect to this action. Thus, one can avoid the auxiliary construction of $A^\O_{t}$ by appealing to \autoref{bul: perturbing for more general H-actions} of \autoref{rem:perturbing almost instantons weakening assumptions}.
\end{remark}

\begin{proposition}\label{prop:example18-non-flat-instantons}
Let $\mathfrak{F}\subset \mathbb{R}\setminus \pi \mathbb{Z}$ be a compact interval and let $(A_{t}^\SO+a_{t,\mathfrak{f}})_{\mathfrak{f}\in \mathfrak{F}} \subset \mathcal{A}(\SO(\hat{E}_{t}))$ be the family of $\G_2$-instantons of the previous proposition. For every $\mathfrak{f}\in \mathfrak{F}$ the connection $A_{t}^\SO+ a_{t,\mathfrak{f}}$ is non-flat.  
\end{proposition}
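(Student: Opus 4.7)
The plan is to derive a contradiction from the assumption that $A^{\SO}_t + a_{t,\mathfrak{f}}$ is flat by exhibiting a point in the core of the ALE region where the curvature must remain bounded below by a positive $t$-independent constant. First I would pick once and for all a point $p_0 \in \hat{Z}_\zeta$ with $|F_{\hat{A}_\zeta}|_{g_{\hat{\omega}_\zeta}}(p_0) =: c_0 > 0$: such a point exists because by \autoref{prop:irred_HYM_on_tautological_bundles} the HYM connection on each of the non-trivial tautological line-bundle summands of $\hat{E}_\nu$ has non-zero curvature (its first Chern class is non-zero), and the inclusion $\mathfrak{u}(n) \hookrightarrow \mathfrak{so}(2n)$ is faithful, so $\hat{A}_\zeta \cong 0 \oplus \hat{A}_\nu \oplus \hat{A}_{\bar\nu}$ is non-flat as an $\mathfrak{so}(14)$-valued connection as well. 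For every sufficiently small $t$, the point $[s_0, p_0] \in (\mathbb{R} \times \hat{Z}_\zeta)/\Lambda_S$ lies in the core $\hat{\mathcal{V}}^t_{S,\kappa/8}$, on which the cut-off $\tilde{\chi}_t$ equals $1$, so $\tilde{A}^{\SO}_t + \tilde{a}_{t,\mathfrak{f}} = \hat{A}_\zeta + \hat{a}_{\zeta, \mathfrak{f}}$ by the pre-gluing construction of \autoref{prop:pregluing} and \autoref{prop:pregluing-family}. Since $\hat{a}_{\zeta, \mathfrak{f}} = \tfrac{\mathfrak{f}}{\sqrt{7}}(\id \otimes i)\,ds$ is closed and takes values in the center of the Lie algebra, $F_{\tilde{A}^{\SO}_t + \tilde{a}_{t,\mathfrak{f}}} = F_{\hat{A}_\zeta}$ at $[s_0, p_0]$.

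Next, I would transport the estimate $\|b\|_{C^{1,\alpha}_{-3/2,t}} \le C' t$ from \eqref{eq:example18-estiamtes-on-perturbed-instantons} (applied with $\ell = 0$, where $b \coloneqq (A^{\SO}_t + a_{t,\mathfrak{f}}) - (\tilde{A}^{\SO}_t + \tilde{a}_{t,\mathfrak{f}})$) to the unrescaled ALE picture by means of the isomorphism $\mathrm{s}_{-3/2, t, s_0}$ of \autoref{lem:ALE-rescaling-comparison}. Unwinding the definition $\mathrm{s}_{-3/2, t, s_0}(b) = t^{1/2}\, \delta^*_{t, s_0} b$ and using the uniform comparison between the two weighted norms provided by that lemma shows that at the lifted point $(0, p_0) \in \mathbb{R} \times \hat{Z}_\zeta$, where the weight $w = 1 + r(\cdot)$ is bounded by a $t$-independent constant depending only on $|\tau_\zeta(p_0)|$, one has
\begin{equation*}
|\delta^*_{t, s_0} b|_{g_{\hat{\omega}_\zeta}} + |\nabla_{\hat{A}_\zeta}(\delta^*_{t, s_0} b)|_{g_{\hat{\omega}_\zeta}} \le C\, t^{1/2}.
\end{equation*}

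Finally, the curvature identity $F_{\tilde{A} + \tilde{a} + b} = F_{\tilde{A} + \tilde{a}} + d_{\tilde{A} + \tilde{a}} b + \tfrac{1}{2}[b \wedge b]$, pulled back by $\delta_{t, s_0}$, yields on the core
\begin{equation*}
F_{\delta^*_{t, s_0}(A^{\SO}_t + a_{t,\mathfrak{f}})} = F_{\hat{A}_\zeta} + d_{\hat{A}_\zeta + \hat{a}_{\zeta, \mathfrak{f}}}(\delta^*_{t, s_0} b) + \tfrac{1}{2}[\delta^*_{t, s_0} b \wedge \delta^*_{t, s_0} b].
\end{equation*}
Bounding the last two terms pointwise at $(0, p_0)$ by $C\bigl(|\nabla_{\hat{A}_\zeta}(\delta^*_{t, s_0} b)| + |\hat{a}_{\zeta, \mathfrak{f}}|\,|\delta^*_{t, s_0} b| + |\delta^*_{t, s_0} b|^2\bigr) = O(t^{1/2})$, where $|\hat{a}_{\zeta, \mathfrak{f}}|_{g_{\hat{\omega}_\zeta}}$ is uniformly bounded because $\mathfrak{F}$ is compact, the hypothesis $F_{A^{\SO}_t + a_{t,\mathfrak{f}}} = 0$ (equivalently $F_{\delta^*_{t, s_0}(A^{\SO}_t + a_{t,\mathfrak{f}})} = 0$) would force $|F_{\hat{A}_\zeta}|_{g_{\hat{\omega}_\zeta}}(p_0) \le C\, t^{1/2}$, contradicting the choice $|F_{\hat{A}_\zeta}|_{g_{\hat{\omega}_\zeta}}(p_0) = c_0 > 0$ once $t$ is sufficiently small.

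The main obstacle will be bookkeeping how the $C^{1,\alpha}_{-3/2, t}$-bound on the gauge-theoretic perturbation translates into a genuine $O(t^{1/2})$ pointwise bound in the unrescaled ALE picture; in particular, getting all the metric scalings right between $\tilde{g}_t \approx g_{\mathbb{R}} \oplus t^2 g_{\hat{\omega}_\zeta}$ and $g_{\hat{\omega}_\zeta}$ and between their induced norms on $1$-forms and on covariant derivatives of $1$-forms. Once that is in place the argument is purely local and only uses the non-flatness of the model HYM connection; no Chern--Weil or characteristic-class input is needed.
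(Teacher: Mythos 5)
Your proof is correct, but it takes a genuinely different route from the paper's. The paper proves non-flatness by a purely topological argument: it restricts a putative flat connection on $\SO(\hat{E}_t)$ to the subset $(t\tau_\zeta)^{-1}(B_\kappa(0)/\mathbb{Z}_7)$, which is homotopy equivalent to $\hat{Z}_\zeta$, applies Chern--Weil theory to conclude $\widetilde{\textup{ch}}(\hat{E}_\zeta\otimes_\mathbb{R}\mathbb{C})=0$, and then derives a contradiction to the linear independence of the reduced Chern characters of the tautological bundles $\hat{E}_{\nu_i}$ established in \cite[Equation~(1.8)]{WalpuskiDegeratu-HYM_on_crepant_resolutions}. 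This shows the stronger statement that the bundle $\SO(\hat{E}_t)$ admits \emph{no} flat connection whatsoever, for every $t$ in the allowed range. Your argument instead exploits the quantitative closeness of $A_t^{\SO}+a_{t,\mathfrak{f}}$ to the pre-glued connection, which at the core agrees with the (non-flat) model HYM connection $\hat{A}_\zeta$; after rescaling via $\mathrm{s}_{-3/2,t,s_0}$ the gluing estimate \eqref{eq:example18-estiamtes-on-perturbed-instantons} becomes a pointwise $O(t^{1/2})$ bound on $|\delta^*_{t,s_0}b|$ and its covariant derivative in the unrescaled ALE metric, which is incompatible with flatness once $t$ is below a threshold. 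Your bookkeeping of the metric and weight rescalings is correct: the normalising factor $t^{-\beta-\ell}=t^{1/2}$ in Lemma~\ref{lem:ALE-rescaling-comparison}, together with the $t$-independence of the weight $1+r(p_0)$ at a fixed core point, gives exactly the advertised $t^{1/2}$-gain. What this buys you is an explicit lower bound on the curvature in the core that is uniform in $\mathfrak{f}$ and localises the non-flatness, which the topological argument does not; what it costs you is that the conclusion only kicks in for $t$ sufficiently small and depends on the particular construction of $A_t^{\SO}+a_{t,\mathfrak{f}}$ rather than just the topology of the bundle.

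Two minor technical remarks worth recording if you flesh this out. First, the reason $F_{\hat{A}_\zeta+\hat{a}_{\zeta,\mathfrak{f}}}=F_{\hat{A}_\zeta}$ is not that $\hat{a}_{\zeta,\mathfrak{f}}$ takes values in the centre of the Lie algebra --- $\mathfrak{so}(14)$ has trivial centre --- but rather that the endomorphism $\id_{\mathbb{R}\oplus\hat{E}_\nu}\otimes_\mathbb{R} i$ is parallel for the unitary connection $\hat{A}_\zeta$ (so $d_{\hat{A}_\zeta}\hat{a}_{\zeta,\mathfrak{f}}=0$) and $[\hat{a}_{\zeta,\mathfrak{f}}\wedge\hat{a}_{\zeta,\mathfrak{f}}]=0$ because $ds\wedge ds=0$. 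Second, after pulling back by $\delta_{t,s_0}$ the pre-glued connection becomes $\hat{A}_\zeta+t\hat{a}_{\zeta,\mathfrak{f}}$ rather than $\hat{A}_\zeta+\hat{a}_{\zeta,\mathfrak{f}}$, so the term $|\hat{a}_{\zeta,\mathfrak{f}}||\delta^*_{t,s_0}b|$ in your bound actually carries an extra factor of $t$; this only helps, so the conclusion is unaffected.
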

\begin{proof}
We will show that due to its topology the bundle $\SO(\hat{E}_t)$ does not admit any flat connection. Assume therefore that $A \in \mathcal{A}(\SO(\hat{E}_t))$ is flat to produce a contradiction. Fix a point $s_0 \in \mathbb{R}$ and define \[ \iota \col (t\tau_\zeta)^{-1}(B_\kappa(0)/\mathbb{Z}_7)\subset \hat{Z}_\zeta \to \{s_0\} \times (t\tau_\zeta)^{-1}(B_\kappa(0)/\mathbb{Z}_7) \to \hat{\mathcal{V}}_{\kappa}^t \to \hat{Y}_t \] to be the canonical inclusion (see the paragraph before \autoref{def:OrbifoldResolution} for the definition of these sets). By the construction in \autoref{prop:pregluing}, we have that $\iota^*\SO(\hat{E}_{t}) \cong \SO(\hat{E}_\zeta)_{\vert (t\tau_\zeta)^{-1}(B_\kappa(0)/\mathbb{Z}_7)}$. Pulling back $A$, we obtain a flat connection on the (real) vector bundle $(\hat{E}_{\zeta})_{{\vert (t\tau_\zeta)^{-1}(B_\kappa(0)/\mathbb{Z}_7)}}$ and therefore a flat unitary connection on \[(\hat{E}_\zeta\otimes_\mathbb{R} \mathbb{C})_{{\vert (t\tau_\zeta)^{-1}(B_\kappa(0)/\mathbb{Z}_7)}} \cong (\hat{E}_\zeta\oplus \overline{\hat{E}}_\zeta)_{{\vert (t\tau_\zeta)^{-1}(B_\kappa(0)/\mathbb{Z}_7)}}\cong (2 \oplus_{i=0}^{6} \hat{E}_{\nu_i})_{{\vert (t\tau_\zeta)^{-1}(B_\kappa(0)/\mathbb{Z}_7)}} \] where $\pi_\ell \col \hat{E}_{\nu_\ell} \to \hat{Z}_\zeta$ denotes to complex vector bundle associated to the representation $\nu_\ell \col \mathbb{Z}_7 \to \U(1)$ which maps $1 \mapsto \e^{2\pi i \ell/7}$ (this follows from the definition of $\hat{E}_\zeta$ in the previous section). Since $(t\tau_\zeta)^{-1}(B_\kappa(0)/\mathbb{Z}_7)$ and $\hat{Z}_\zeta$ are homotopy-equivalent, this implies by Chern--Weil theory for the reduced Chern-character $\widetilde{\textup{ch}} = \textup{ch} - \textup{rk}$: 
\begin{equation}\label{eq:example18_reduced_Chern-character}
 0 = \widetilde{\textup{ch}}(\hat{E}_\zeta \otimes_\mathbb{R} \mathbb{C}) = 2 \sum_{i=0}^6 \widetilde{\textup{ch}}(\hat{E}_{\nu_i}) = 2 \sum_{i=1}^6 \widetilde{\textup{ch}}(\hat{E}_{\nu_i}) \in \H^*(\hat{Z}_\zeta,\mathbb{R})
\end{equation}
(where we have used in the last equation that $\pi_0 \col \hat{E}_{\nu_0} \to \hat{Z}_\zeta$ is the trivial vector bundle).

Next, we define vectors $v_1,\dots,v_6 \in \mathbb{R}^6$ via \[ (v_i)_j \coloneqq \int_{\hat{Z}_\zeta} \widetilde{\textup{ch}}(\hat{E}_{\nu_i})\widetilde{\textup{ch}}(\hat{E}_{\nu_j}^*) \quad \textup{for $j=1,\dots,6$} \] where the integral is to be understood as in~\cite[Section~7]{WalpuskiDegeratu-HYM_on_crepant_resolutions} via $\H^k(\hat{Z}_\zeta,\mathbb{R})\cong \H^k_{\textup{cpt}}(\hat{Z}_\zeta,\mathbb{R})$ for $k=2,4$ (cf.~\cite[Section~7]{WalpuskiDegeratu-HYM_on_crepant_resolutions}). Multiplying~\eqref{eq:example18_reduced_Chern-character} with $\widetilde{\textup{ch}}(\hat{E}_{\nu_j}^*)$ for $j=1,\dots,6$ and integrating yields \[0 = 2 \sum_{i=1}^6 v_i.\] However, this is in contradiction to~\cite[Equation~(1.8)]{WalpuskiDegeratu-HYM_on_crepant_resolutions} which implies that $v_1,\dots,v_6$ are linearly independent.
\end{proof}

\subsection{Injectivity of the associated curve into the moduli space}

We fix a compact interval $\mathfrak{F}  \subset \mathbb{R}\setminus \pi \mathbb{Z}$ and let $(A_{t,\mathfrak{f}})_{\mathfrak{f} \in \mathfrak{F}} \subset \mathcal{A}(\SO(\hat{E}_{t}))$ be the corresponding family of $\G_2$-instantons constructed in \autoref{prop:example18-existence-instantons}. The following proves that for sufficiently small $t$ no two connections $A_{t,\mathfrak{f}_1}$, $A_{t,\mathfrak{f}_2}$ for distinct $\mathfrak{f}_1,\mathfrak{f}_2 \in \mathfrak{F}$ are gauge equivalent.

\begin{proposition}\label{prop:example18-non-gauge-equivalent-instantons}
There exists a $0<T^{\prime\prime}< T^\prime$ such that for $t<T^{\prime\prime}$ the curve 
\begin{align*}
\mathfrak{F} &\to \mathcal{A}(\SO(\hat{E}_t))/\mathcal{G}(\SO(\hat{E}_t)) \\
f &\mapsto [A_{t,\mathfrak{f}}]
\end{align*}
into the space of connections modulo gauge is injective.
\end{proposition}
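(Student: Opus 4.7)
The plan is a proof by contradiction, split into a \emph{local} case (where $|\mathfrak{f}_1-\mathfrak{f}_2|$ is small) and a \emph{global} case (where it is bounded below). The local case shows that $\mathfrak{f}\mapsto [A_{t,\mathfrak{f}}]$ is a uniform immersion, using the quantitative transversality of the flat family to the gauge orbits established in \autoref{prop:example18_family_not_tangent_to_gaugeorbig}. The global case rules out the curve ``wrapping back'' on itself by passing to the limit $t\to 0$ and deducing a conjugation between two distinct flat monodromies, which is forbidden by \autoref{prop:example18-monodromies-not-conjugated}.

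For the local case, the proof of \autoref{prop:example18_family_not_tangent_to_gaugeorbig} exhibits an explicit component $\diag(i\diff y^1,\dots,i\diff y^7)$ of $\partial_\mathfrak{f}(A_0+a_{0,\mathfrak{f}})$ which is $L^2$-orthogonal to $\Im\diff_{A_0+a_{0,\mathfrak{f}}}$ and has an $\mathfrak{f}$-independent positive $L^2$-norm. Combined with the derivative estimate~\eqref{eq:example18-estiamtes-on-perturbed-instantons}, which gives $\Vert \partial_\mathfrak{f}(A_{t,\mathfrak{f}}-(\tilde{A}_t^{\SO}+\tilde{a}_{t,\mathfrak{f}}))\Vert = O(t)$, the analogous transversality persists uniformly for the perturbed family $\mathfrak{f}\mapsto A_{t,\mathfrak{f}}$ once $t$ is small. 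A hypothetical gauge equivalence $u\cdot A_{t,\mathfrak{f}_1}=A_{t,\mathfrak{f}_2}$ with $|\mathfrak{f}_1-\mathfrak{f}_2|$ small can be brought, via Coulomb gauge fixing and the Poincar\'e-type estimate~\autoref{prop:example18_Poincare-type-estimate}, to the form $u=c\cdot \exp\xi$ with $c\in\SO(14)$ central and $\xi$ small; the equation then linearises to $\diff_{A_{t,\mathfrak{f}_1}}\xi = A_{t,\mathfrak{f}_2}-A_{t,\mathfrak{f}_1}+O(\xi^2)$, which, after integrating the transverse component over $[\mathfrak{f}_1,\mathfrak{f}_2]$, contradicts the lower bound above.

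For the global case, assume for contradiction that there exist sequences $t_n\to 0$, $\mathfrak{f}_{i,n}\to\mathfrak{f}_i\in\mathfrak{F}$ with $|\mathfrak{f}_1-\mathfrak{f}_2|\geq \delta$, and gauge transformations $u_n\cdot A_{t_n,\mathfrak{f}_{1,n}}=A_{t_n,\mathfrak{f}_{2,n}}$. Identifying $\hat{Y}_{t_n}\setminus \hat{\mathcal{V}}_\kappa^{t_n}$ with $Y_0\setminus\mathtt{J}(\mathcal{V}_\kappa)$ via the construction in \autoref{prop:pregluing}, the pre-glued connections equal the flat $A_0+a_{0,\mathfrak{f}_{i,n}}$ there, and~\eqref{eq:example18-estiamtes-on-perturbed-instantons} yields $A_{t_n,\mathfrak{f}_{i,n}}\to A_0+a_{0,\mathfrak{f}_i}$ in $C^\infty_\textup{loc}(Y_0\setminus S)$. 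Compactness of $\SO(14)$ bounds $|u_n|$ pointwise, and $\diff u_n = u_n A_{t_n,\mathfrak{f}_{2,n}}-A_{t_n,\mathfrak{f}_{1,n}}u_n$ gives uniform $C^k_\textup{loc}(Y_0\setminus S)$ bounds. An Arzel\`a--Ascoli/diagonal subsequence extracts $u_\infty\in C^\infty(Y_0\setminus S,\SO(14))$ intertwining $A_0+a_{0,\mathfrak{f}_1}$ and $A_0+a_{0,\mathfrak{f}_2}$. Equivalently, the monodromies $f_{\e^{i\mathfrak{f}_1}}$ and $f_{\e^{i\mathfrak{f}_2}}$ are conjugate in $\SO(14)$, contradicting \autoref{prop:example18-monodromies-not-conjugated} since $\mathfrak{F}\subset\mathbb{R}\setminus\pi\mathbb{Z}$ lies in an open interval of length at most $\pi$, so that $\e^{i\mathfrak{f}_1}\neq\e^{i\mathfrak{f}_2}$.

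The main obstacle will be the local case: making the transversality quantitative requires a Coulomb-slice description that is uniform in $t\in(0,T'')$ and $\mathfrak{f}_0\in\mathfrak{F}$, and in particular promoting the a priori pointwise bound $|u|\leq 1$ (from compactness of $\SO(14)$) to pointwise smallness of $u-c$ for an $\SO(14)$-valued constant commuting with $A_{t,\mathfrak{f}_1}$. \autoref{prop:example18_Poincare-type-estimate} supplies precisely this promotion at the flat orbifold level, but it must be transported across the gluing region (where the metric $\tilde{g}_t$ degenerates) and across the $O(t)$-perturbation from $A_0+a_{0,\mathfrak{f}_1}$ to $A_{t,\mathfrak{f}_1}$, which is the central technical step.
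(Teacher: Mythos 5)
Your global case (sequence $t_n\to 0$, limits $\mathfrak{f}_{i,n}\to\mathfrak{f}_i$ bounded apart, Arzel\`a--Ascoli limit $u_\infty$ intertwining the flat connections, contradiction with \autoref{prop:example18-monodromies-not-conjugated}) is essentially the paper's argument, including the observation that $\mathfrak{F}\subset\mathbb{R}\setminus\pi\mathbb{Z}$ forces distinct $\e^{i\mathfrak{f}_i}$.

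The local case is where your proposal diverges, and it contains a genuine gap which you yourself flag as ``the central technical step'' but do not resolve. You want a uniform-in-$t$ Coulomb-slice/immersion estimate for the perturbed family on $\hat{Y}_t$, which requires transporting the Poincar\'e-type estimate \autoref{prop:example18_Poincare-type-estimate} from $Y_0$ to the degenerating manifolds $(\hat{Y}_t,\tilde{g}_t)$. This is not established anywhere in the paper and is nontrivial: near the exceptional divisor the weight $w_t$ is of order $t$, the diameter of the gluing region changes with $t$, and the connection $A_{t,\mathfrak{f}}$ is not flat there, so the purely algebraic argument in the proof of \autoref{prop:example18_Poincare-type-estimate} (using \autoref{prop:example18_no_fixpoints_quantitative_version}) does not directly apply. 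Without this, the step ``integrating the transverse component over $[\mathfrak{f}_1,\mathfrak{f}_2]$'' does not yield a contradiction, because you have no control of the tangential (gauge-orbit) component of the derivative on the glued region.

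The paper avoids the issue by \emph{not} splitting into local and global cases. It assumes a single contradicting sequence $(t_n,\mathfrak{f}_n,\mathfrak{f}_n',u_n)$ with $t_n\to 0$, $\mathfrak{f}_n\neq\mathfrak{f}_n'$, and always works on a compact exhaustion $K_i\Subset Y_0\setminus S$ where $\tilde{g}_{t_n}\to g_0$ and $A_{t_n,\mathfrak{f}}\to A_{0}+a_{0,\mathfrak{f}}$ in $C^\infty_{\mathrm{loc}}$. First it shows $\mathfrak{f}_n,\mathfrak{f}_n'\to\mathfrak{f}_\infty$ and $u_n\to\pm\id$ (this subsumes your global case and simultaneously forces $\mathfrak{f}_n-\mathfrak{f}_n'\to 0$). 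Then, writing $u_n=\exp(\xi_n)$ for large $n$ on $K_i$, it differentiates the gauge-equivalence relation, divides by $\mathfrak{f}_n-\mathfrak{f}_n'$, and applies \autoref{prop:example18_Poincare-type-estimate} on each $K_i$ (where it is valid) together with the derivative estimates \eqref{eq:example18-estiamtes-on-perturbed-instantons} to extract a bounded limit $\xi_\infty$ on $Y_0\setminus S$ with $\diff_{A_{0,\mathfrak{f}_\infty}}\xi_\infty=\partial_\mathfrak{F}A_{0,\mathfrak{f}_\infty}$; integration by parts and elliptic regularity extend this across $S$. This contradicts \autoref{prop:example18_family_not_tangent_to_gaugeorbig}. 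In short: instead of establishing transversality uniformly in $t$ on $\hat{Y}_t$, the paper transports the \emph{problem} to the flat orbifold limit, where the required estimates already exist. You should adopt this structure; your local-case strategy as written does not close.
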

\begin{proof}
Assume that this was not the case. Then there exists a sequence $(t_n)_{n\in \mathbb{N}} \subset (0,T^\prime)$ with $t_n\to 0$ and for every $n\in \mathbb{N}$ two distinct $\mathfrak{f}_n,\mathfrak{f}_n^\prime \in \mathfrak{F}$ and a gauge transformation $u_n\in \mathcal{G}(\SO(\hat{E}_{t_n}))$ such that $u_n^*(A_{t_n,\mathfrak{f}_n^\prime}) = A_{t_n,\mathfrak{f}_n}$. We will prove in the following proposition that this implies the existence of an $\mathfrak{f} \in \mathfrak{F}$ and $\xi \in \Omega^0(Y_0,\mathfrak{so}(E_0))$ such that $\diff_{A_{0}+a_{0,\mathfrak{f}}} \xi = (\partial_\mathbb{R}a_{0,\, \cdot\, })(\mathfrak{f})$ (where $A_0$ and $a_{0,\mathfrak{f}}$ are as in \autoref{subsec:family of flat connections} and \autoref{def:example18_identifying_flat_bundles-II}). However, by \autoref{prop:example18_family_not_tangent_to_gaugeorbig} this cannot happen and we have arrived therefore at a contradiction.
\end{proof}

\begin{proposition}
Suppose there exist $(t_n)_{n\in \mathbb{N}}\subset (0,T^\prime)$, $(\mathfrak{f}_n,\mathfrak{f}_n^\prime)_{n\in\mathbb{N}}\subset \mathfrak{F}\times \mathfrak{F}$ with $\mathfrak{f}_n \neq \mathfrak{f}_n^\prime$ and for every $n\in \mathbb{N}$ a gauge transformation $u_n \in \mathcal{G}_{\hat{Y}_{t_n}}(\SO(\hat{E}_{t_n}))$ such that \[t_n \to 0 \qquad \textup{and} \qquad u_n^*(A_{t_n,\mathfrak{f}_n^\prime}) = A_{t_n,\mathfrak{f}_n}. \] Then the following hold: 
\begin{enumerate}
\item There is an $\mathfrak{f}_\infty \in \mathfrak{F}$ such that (up to taking subsequences) $\mathfrak{f}_n,\mathfrak{f}_n^\prime \to \mathfrak{f}_\infty$,
\item The restriction of $(u_n)_{n\in \mathbb{N}}$ to $\hat{Y}_{t_n}\setminus (t_n\tau)^{-1}(S) \cong Y_0 \setminus S$ (where $S$ denotes the singular set of $Y_0$) converges (up to taking a subsequence) in $C^2_{\textup{loc}}(Y_0\setminus S)$ to $\pm \id \in \mathcal{G}_{Y_0}(\SO(E_{0}))$
\item There exists a $\xi \in \Omega^0(Y_0,\mathfrak{so}(E_0))$ such that $\diff_{A_{0}+a_{0,\mathfrak{f}_\infty}} \xi = (\partial_{\mathbb{R}} a_{0,\, \cdot} \, )(\mathfrak{f}_\infty)$. 
\end{enumerate}
\end{proposition}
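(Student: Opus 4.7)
The approach is to pass to subsequential limits on $Y_0 \setminus S$, where the approximate connections $\tilde{A}_{t_n,\mathfrak{f}}$ equal the flat $A_0 + a_{0,\mathfrak{f}}$ outside the ALE region by the gluing construction (\autoref{prop:pregluing-family}), and then linearise the gauge transformation equation via Baker--Campbell--Hausdorff to produce the desired $\xi$. The principal ingredients are the non-conjugacy and centraliser statements of \autoref{prop:example18-monodromies-not-conjugated}, the derivative estimates of \autoref{prop: derivative estimates on deformed family of instantons} and \autoref{prop:example18-existence-instantons}, and the weighted elliptic estimate of \autoref{prop:example18_almost-instantons+estimates}.

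\emph{Parts 1 and 2.} Compactness of $\mathfrak{F}$ extracts subsequential limits $\mathfrak{f}_n \to \mathfrak{f}_\infty$ and $\mathfrak{f}_n^\prime \to \mathfrak{f}_\infty^\prime$. Identifying $\hat{Y}_{t_n} \setminus (t_n\tau)^{-1}(S)$ with $Y_0 \setminus S$ via $t_n\tau$, the estimate of \autoref{prop:example18-existence-instantons} gives $A_{t_n,\mathfrak{f}_n} \to A_0 + a_{0,\mathfrak{f}_\infty}$ in $C^1_{\textup{loc}}(Y_0\setminus S)$, and similarly for $\mathfrak{f}_n^\prime$. Since $u_n$ is pointwise bounded in $\SO(14)$, the gauge equation $\diff u_n = u_n A_{t_n,\mathfrak{f}_n} - A_{t_n,\mathfrak{f}_n^\prime} u_n$ combined with interior Schauder bootstrap yields uniform $C^k_{\textup{loc}}(Y_0\setminus S)$-bounds for every $k$, and Arzel\`a--Ascoli extracts a subsequential limit $u_\infty \in C^\infty(Y_0 \setminus S, \SO(E_0))$ with $u_\infty^*(A_0 + a_{0,\mathfrak{f}_\infty^\prime}) = A_0 + a_{0,\mathfrak{f}_\infty}$. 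Boundedness of $u_\infty$ and codimension $6$ of $S$ force a removable-singularity extension to a global $u_\infty \in \mathcal{G}(\SO(E_0))$; its existence conjugates the monodromies $f_{\e^{i\mathfrak{f}_\infty}}$ and $f_{\e^{i\mathfrak{f}_\infty^\prime}}$, and \autoref{prop:example18-monodromies-not-conjugated} then forces $\e^{i\mathfrak{f}_\infty} = \e^{i\mathfrak{f}_\infty^\prime}$, so after restricting $\mathfrak{F}$ (without loss of generality, by a finite covering argument in the outer proposition) to a subinterval of length $< 2\pi$ one concludes $\mathfrak{f}_\infty = \mathfrak{f}_\infty^\prime$. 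The centraliser half of the same proposition, combined with $\mathfrak{F} \subset \mathbb{R} \setminus \pi \mathbb{Z}$, then identifies $u_\infty = \pm \id$.

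\emph{Part 3.} With $u_n \to \pm\id$ locally on $Y_0 \setminus S$, set $\xi_n \coloneqq \log(\pm u_n)$ and $\epsilon_n \coloneqq \mathfrak{f}_n^\prime - \mathfrak{f}_n \to 0$; Baker--Campbell--Hausdorff converts the gauge equation into
\[
\diff_{A_{t_n,\mathfrak{f}_n^\prime}} \xi_n \;=\; A_{t_n,\mathfrak{f}_n} - A_{t_n,\mathfrak{f}_n^\prime} - Q_n(\xi_n),
\]
with $Q_n$ pointwise quadratic in $(\xi_n, \diff \xi_n)$. Setting $\tilde{\xi}_n \coloneqq \xi_n/\epsilon_n$ and applying the weighted Laplace estimate of \autoref{prop:example18_almost-instantons+estimates} to the second-order consequence
\[
\diff^*_{A_{t_n,\mathfrak{f}_n^\prime}} \diff_{A_{t_n,\mathfrak{f}_n^\prime}} \tilde{\xi}_n \;=\; \diff^*_{A_{t_n,\mathfrak{f}_n^\prime}}\!\bigl[\tfrac{1}{\epsilon_n}(A_{t_n,\mathfrak{f}_n} - A_{t_n,\mathfrak{f}_n^\prime}) - \tfrac{1}{\epsilon_n}Q_n(\xi_n)\bigr],
\]
together with the uniform weighted bound on $\partial_{\mathfrak{F}} A_{t_n,\cdot}$ from \autoref{prop: derivative estimates on deformed family of instantons} and the quadratic bound $\Vert Q_n(\xi_n)/\epsilon_n \VertWHt{0}{-5/2}{} \le c \epsilon_n \Vert \tilde{\xi}_n \VertWHt{2}{-1/2}{}^2$, permits a continuity-type absorption (valid for $\epsilon_n$ small) yielding $\Vert \tilde{\xi}_n \VertWHt{2}{-1/2}{} \le C$ uniformly in $n$. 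A diagonal subsequence produces a limit $\tilde{\xi}_\infty$ on $Y_0 \setminus S$ solving $\diff_{A_0 + a_{0,\mathfrak{f}_\infty}} \tilde{\xi}_\infty = -(\partial_{\mathbb{R}} a_{0,\cdot})(\mathfrak{f}_\infty)$. The inherited bound $|\tilde{\xi}_\infty(y)| = O(d(y,S)^{-1/2})$ combined with the PDE places $\tilde{\xi}_\infty$ in $W^{1,p}_{\textup{loc}}(Y_0)$ for some $p \in (7,12)$ (using $\textup{codim}_{Y_0}\,S = 6$); Sobolev embedding followed by elliptic regularity upgrade this to $\tilde{\xi}_\infty \in \Omega^0(Y_0, \mathfrak{so}(E_0))$, and $\xi \coloneqq -\tilde{\xi}_\infty$ is the required section.

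\emph{Main obstacle.} The delicate step is the uniform weighted-norm bound $\Vert \tilde{\xi}_n \VertWHt{2}{-1/2}{} \le C$: it requires simultaneously the smallness of $\epsilon_n$ to absorb the quadratic remainder $Q_n(\xi_n)/\epsilon_n$, the $t$-uniform Laplace invertibility of \autoref{prop:example18_almost-instantons+estimates}, and uniform control of $\partial_{\mathfrak{F}} A_{t_n,\mathfrak{f}}$ in the weighted H\"older norm near $S$. The removable-singularity extension across the codimension-$6$ circle $S$ is a secondary subtlety, where the Sobolev threshold $p > \dim Y_0 = 7$ is only just met by the weighted bound $|\tilde{\xi}_\infty| = O(r^{-1/2})$.
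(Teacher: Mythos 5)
The plan for parts 1 and 2 matches the paper, apart from a superfluous step: you claim a removable-singularity extension of $u_\infty$ across $S$, but this is neither needed nor obviously available (the equation $\diff_{A_0}u_\infty = u_\infty a_{0,\mathfrak{f}_\infty} - a_{0,\mathfrak{f}_\infty^\prime}u_\infty$ is first-order and $u_\infty$ is valued in a compact group, so there is no ready-made elliptic removability statement to invoke). The monodromy argument only uses $u_\infty$ on $Y_0\setminus S$, where $\pi_1(Y_0\setminus S)\cong \Gamma$, so \autoref{prop:example18-monodromies-not-conjugated} applies directly. Also, the ``finite covering argument'' is unnecessary: a compact interval $\mathfrak{F}\subset\mathbb{R}\setminus\pi\mathbb{Z}$ automatically has length $<\pi<2\pi$, so $\e^{i\mathfrak{f}_\infty}=\e^{i\mathfrak{f}_\infty^\prime}$ already forces $\mathfrak{f}_\infty=\mathfrak{f}_\infty^\prime$.

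The genuine gap is in part 3. You set $\xi_n\coloneqq\log(\pm u_n)$ and then feed $\tilde\xi_n=\xi_n/\epsilon_n$ into the \emph{global} weighted Laplace estimate of \autoref{prop:example18_almost-instantons+estimates}, which is an inequality for sections over all of $\hat{Y}_{t_n}$. But the convergence $u_n\to\pm\id$ established in part 2 is only in $C^2_{\textup{loc}}(Y_0\setminus S)$: on each fixed compact $K_i\subset Y_0\setminus S$ the logarithm is well-defined for $n$ large (depending on $i$), but you have no control whatsoever on $u_n$ over the shrinking ALE region $(t_n\tau)^{-1}(\mathcal{V}_\kappa)$, where $u_n$ need not be near $\pm\id$ at all. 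Hence $\xi_n$ is not a globally defined section of $\mathfrak{so}(\hat{E}_{t_n})$, the global estimate cannot be applied, and the quadratic bound on $Q_n(\xi_n)/\epsilon_n$ that you want to absorb presupposes exactly the global smallness of $\xi_n$ that you are trying to prove. The paper's proof sidesteps this entirely: it works only on the exhaustion $K_i\subset Y_0\setminus S$ and replaces the second-order weighted estimate with the flat first-order Poincar\'e-type estimate of \autoref{prop:example18_Poincare-type-estimate} (an elementary consequence of $\Gamma$-equivariance), which is genuinely local, has a constant uniform over the $K_i$, and yields a $C^0$-bound $\Vert\xi_\infty\VertC{0}{(Y_0\setminus S)}<c$ with no $r^{-1/2}$ blow-up. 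That uniform bound makes the final distributional extension across the codimension-$6$ set $S$ a one-line integration by parts, rather than a borderline $W^{1,p}$ argument. You do not invoke \autoref{prop:example18_Poincare-type-estimate} at all, yet it is the key ingredient that makes part 3 work; without a substitute for it, the absorption step and the uniform bound on $\tilde\xi_n$ do not go through.
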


\begin{proof}
Since $\mathfrak{F}$ is compact, we can assume that up to taking subsequences $\mathfrak{f}_n \to \mathfrak{f}_\infty$ and $\mathfrak{f}_n^\prime \to \mathfrak{f}_\infty^\prime$. In order to prove the first point we therefore only need to show that $\mathfrak{f}_\infty^\prime = \mathfrak{f}_\infty$.

For this we focus on the domain $\hat{Y}_t\setminus (t\tau)^{-1}(S)$ (where $S\subset Y_0$ denotes the singular set and $t\tau \col \hat{\mathcal{V}}_\kappa^t \to \mathcal{V}_\kappa$ was defined prior to \autoref{def:OrbifoldResolution}) which we will subsequently identify with $Y_0 \setminus S$ using $t\tau$. Likewise, we identify the bundles \[\SO(\hat{E}_{t})_{\vert \hat{Y}_t \setminus (t\tau)^{-1}(S)} \cong \SO(E_0)_{\vert Y_0 \setminus S}\] (cf. the construction in the proof of \autoref{prop:pregluing}). 

We will show in the subsequent paragraphs that there exists a converging subsequence $u_{n \vert Y_0\setminus S} \to u$ (in $C^2(Y_0\setminus S)$) where $u\col \SO(E_{0})_{\vert Y_0 \setminus S} \to \SO(E_{0})_{\vert Y_0 \setminus S}$ is a (smooth) bundle isomorphism that satisfies $u^*(A_{0, \mathfrak{f}_\infty^\prime})_{\vert Y_0 \setminus S} = (A_{0,\mathfrak{f}_\infty})_{\vert Y_0 \setminus S}$. This implies that the monodromy representations of $(A_{0,\mathfrak{f}_\infty^\prime})_{\vert Y_0\setminus S}$ and $(A_{0,\mathfrak{f}_\infty})_{\vert Y_0\setminus S}$ are conjugated in $\SO(14)$ (cf.~\cite[Proposition~2.2.3]{DonaldsonKronheimer}). It is not difficult to see that $\pi_1(Y_0 \setminus  S ) \cong \Gamma$ (this follows, for example, by realising that $\Gamma$ acts freely on the pre-image of $Y_0 \setminus S$ under the quotient map $\mathbb{R}^7 \to \mathbb{R}^7/\Gamma =Y_0$ and that this pre-image is simply connected) and that the monodromy representation of each $(A_{0,\mathfrak{f}})_{\vert Y_0 \setminus S}$ for any $\mathfrak{f} \in \mathfrak{F}$ corresponds to the homomorphism $f_{\e^{i{\mathfrak{f}}}}$ as defined in \autoref{prop:example18-definition-monodromy-representation}. The first point of the proposition follows therefore from \autoref{prop:example18-monodromies-not-conjugated} where we have proven that the homomorphisms associated to two distinct $\theta_1,\theta_2 \in S^1$ are not conjugated in $\SO(14)$ and therefore $\mathfrak{f}_\infty = \mathfrak{f}_\infty^\prime$.  Similarly, the fact that $\pm \id \in \SO(14)$ are the only elements whose conjugation fixes $f_{\theta}$ for any $\theta \in S^1 \setminus \{\pm 1\}$ (also proven in \autoref{prop:example18-monodromies-not-conjugated}) implies that $u= \pm \id$.

In order to prove the existence of a gauge transformation $u\in \mathcal{G}_{Y_0\setminus S}\SO(E_0))$ as described in the previous paragraph, we write for any $t\in(0,T^\prime)$ and $\mathfrak{f} \in \mathfrak{F}$ the connection $(A_{t,\mathfrak{f}})_{\vert Y_0 \setminus S}$ as $(A_{t,\mathfrak{f}})_{\vert Y_0 \setminus S} = (A_{0})_{\vert Y_0 \setminus S} + a_{t,\mathfrak{f}}^{\circ}$ for $a_{t,\mathfrak{f}}^\circ \in \Omega^1(Y_0\setminus S,\mathfrak{so}(E_0))$. Next, we use the isometric inclusions $\mathcal{G}(\SO(E_{0})) \subset \Gamma(\End(E_{0}))$ and $\mathfrak{so}(E_{0}) \subset \End(E_{0})$ (induced by the respective inclusions $\SO(\mathbb{R}^{14}),\mathfrak{so}(\mathbb{R}^{14})\subset \End(\mathbb{R}^{14})$) to regard $u_n$ and $a_{t,\mathfrak{f}}^\circ$ as sections of this (linear) bundle. The condition that $u_n^*(A_{t_n,\mathfrak{f}_n^\prime}) = A_{t_n,\mathfrak{f}_n}$ implies 
\begin{align*}
(A_{0})_{\vert Y_0 \setminus S} + a_{t_n,\mathfrak{f}_n}^\circ &= (A_{t_n,\mathfrak{f}_n})_{\vert Y_0 \setminus S} = u_n^*(A_{t_n,\mathfrak{f}_n^\prime})_{\vert Y_0 \setminus S} \\
&= (A_{0,1})_{\vert Y_0 \setminus S} + u_n^{-1} \circ \diff_{A_{0}}(u_n) + u_n^{-1} \circ a_{t_n,\mathfrak{f}_n^\prime}^\circ \circ u_n
\end{align*}
and therefore 
\begin{equation}\label{eq:example18-gauge_trafos-bootstrap}
\diff_{A_{0}} u_n = u_n \circ a_{t_n,\mathfrak{f}_n}^\circ - a_{t_n,\mathfrak{f}_n^\prime}^\circ \circ u_n.
\end{equation}

Next, we pick an exhaustion \[ \overline{Y_0\setminus \mathtt{J}(\mathcal{V}_\kappa}) \coloneqq K_1 \subset K_2 \subset K_3 \subset \cdots \subset Y_0 \setminus S \] by compact subsets. The restriction of the weighted norm $\Vert \cdot \VertWHt{k}{\beta}{(K_i)}$ to any $K_i$ is uniformly (in $t$) equivalent to the norm $\Vert \cdot \VertH{k}{(K_i)}$ because the weight function $w_t$ is bounded from below and above on this domain. \autoref{theo:torsionfree_G2_structure} implies therefore that the restriction $g_{t_n \vert Y_0\setminus S}$ converge in $C^\infty_{\text{loc}}$ to $g_{0 \vert Y_0 \setminus S}$. Similarly, \eqref{eq:example18-estiamtes-on-perturbed-instantons} for $\ell=0$ and the construction of $\tilde{A}_{t,\mathfrak{f}}$ in \autoref{prop:pregluing} and \autoref{prop:pregluing-family} imply that for any $i\in \mathbb{N}$ and $\mathfrak{f} \in \mathfrak{F}$ we have $(A_{t,\mathfrak{f}})_{\vert K_i} \to (A_{0,\mathfrak{f}})_{\vert K_i}$ in $C^{1,\alpha}(K_i)$, or equivalently, $a_{t,\mathfrak{f}}^\circ \to a_{0,\mathfrak{f}}$ in $C^{1,\alpha}(K_i)$ (where $a_{0,\mathfrak{f}} \in \Omega^1(Y_0,\mathfrak{so}(E_0))$ is as in \autoref{def:example18_identifying_flat_bundles-II} and $a_{t_n,\mathfrak{f}}^\circ\in \Omega^1(Y_0\setminus S,\mathfrak{so}(E_0))$ was defined above so that $(A_{t,\mathfrak{f}})_{\vert Y_0 \setminus S} = (A_{0})_{\vert Y_0 \setminus S} + a_{t,\mathfrak{f}}^\circ$).

In particular, this implies that for any $i\in \mathbb{N}$ the $C^{1,\alpha}(K_i)$-norms of $a_{t_n,\mathfrak{f}}^\circ$ are uniformly bounded. Since $\SO(14)$ is compact, we obtain an a priori bound on the $C^0$-norms of $u_n$ and by~\eqref{eq:example18-gauge_trafos-bootstrap} a uniform bound on their $C^1$-norms. Bootstrapping improves this to a $C^{2,\alpha}$-bound. As $K_i$ is compact, the Arzelà--Ascoli Theorem implies that there is a $C^2$-converging subsequence $u_n \to u$ in $K_i$ and since $\SO(\mathbb{R}^{14}) \subset \End(\mathbb{R}^{14})$ is closed, $u \in \mathcal{G}_{K_i}(\SO(E_0))$. Taking a diagonal sequence over all $K_i$, we obtain a $u\in \mathcal{G}_{Y_0\setminus S}(\SO(E_0))$ such that $u_n \to u$ in $C^2_{\text{loc}}(Y_0 \setminus S)$. Continuity then gives $u^*(A_{0,\mathfrak{f}_\infty^\prime})_{\vert Y_0\setminus S}=(A_{0,\mathfrak{f}_\infty})_{\vert Y_0\setminus S}$ and bootstrapping via~\eqref{eq:example18-gauge_trafos-bootstrap} improves the regularity of $u$ to $C^\infty$.

In order to prove the last point note that after possibly multiplying $u_n$ by $-\id$ we can assume that $u = \id$. For each of the compact subsets $K_i \subset Y_0\setminus S$ there thus exists an $N\in \mathbb{N}$ such that for each $n>N$ the restriction $(u_n)_{\vert K_i}$ lies in the image of the exponential map. Thus, for sufficiently large $n$ there exists a unique $\xi_n \in \Omega^{0}(K_i,\mathfrak{so}(E_0))$ such that \[ (A_{t_n,\mathfrak{f}_n})_{\vert K_i} = (A_{t_n,\mathfrak{f}_n^\prime})_{\vert K_i} + \e^{-\xi_n} \circ (\diff_{A_{t_n,\mathfrak{f}_n^\prime}} \e^{\xi_n}) = (A_{t_n,\mathfrak{f}_n^\prime})_{\vert K_i} + \Upsilon(\xi_n) \circ (\diff_{A_{t_n,\mathfrak{f}_n^\prime}} \xi_n) \] where \[\Upsilon(\xi_n) \coloneqq \frac{1-\e^{-\ad_{\xi_n}}}{\ad_{\xi_n}}. \] This implies \begin{align*}
\diff_{A_{t_n,\mathfrak{f}_n^\prime}} \big(\tfrac{\xi_n}{\mathfrak{f}_n-\mathfrak{f}_n^\prime} \big) = \Upsilon(\xi_n)^{-1} \circ \tfrac{A_{t_n,\mathfrak{f}_n}-A_{t_n,\mathfrak{f}_n^\prime}}{\mathfrak{f}_n-\mathfrak{f}_n^\prime}
\end{align*}
and since $(A_{t,\mathfrak{f}})_{\vert K_i} \to (A_{0,\mathfrak{f}})_{\vert K_i}$ in $C^{1,\alpha}(K_i)$ for every $\mathfrak{f} \in \mathfrak{F}\subset \mathbb{R}\setminus \pi \mathbb{Z}$ we have by \autoref{prop:example18_Poincare-type-estimate} 
\begin{align}\label{equ:example18_estimate_on_xi_n/f_n}
\Vert \tfrac{\xi_n}{\mathfrak{f}_n-\mathfrak{f}_n^\prime} \VertC{2}{(K_i)} \leq c \Vert \Upsilon(\xi_n)^{-1} \circ \tfrac{A_{t_n,\mathfrak{f}_n}-A_{t_n,\mathfrak{f}_n^\prime}}{\mathfrak{f}_n-\mathfrak{f}_n^\prime} \VertC{1}{(K_i)}
\end{align} 
for some $c>0$ independent of $K_i$ and $n>N$. Since $u_n \to \id$ in $C^2(K_i)$ we obtain $\xi_n \to 0$ in $C^2(K_i)$. Furthermore, there exists an $\tilde{\mathfrak{f}}_n\in \mathfrak{F}$ between $\mathfrak{f}_n$ and $\mathfrak{f}_n^\prime$ such that \[ \tfrac{A_{t_n,\mathfrak{f}_n}-A_{t_n,\mathfrak{f}_n^\prime}}{\mathfrak{f}_n-\mathfrak{f}_n^\prime} = \partial_\mathfrak{F} A_{t_n,\mathfrak{f}_n^\prime} + \tfrac{1}{2} \partial_\mathfrak{F}^2 A_{t_n,\tilde{\mathfrak{f}}_n} (\mathfrak{f}_n-\mathfrak{f}_n^\prime). \] Since $\mathfrak{f}_n,\mathfrak{f}_n^\prime \to \mathfrak{f}_\infty$ by the first point of this proposition and \[(\partial_\mathfrak{F} A_{t_n,\mathfrak{f}_n^\prime})_{\vert K_i} \to (\partial_\mathfrak{F} A_{0,\mathfrak{f}_\infty})_{\vert K_i} \quad \textup{and} \quad (\partial_\mathfrak{F}^2 A_{t_n,\tilde{\mathfrak{f}}_n})_{\vert K_i} \to (\partial_\mathfrak{F}^2 A_{0,\mathfrak{f}_\infty})_{\vert K_i} \quad \textup{in $C^{1,\alpha}(K_i)$} \] by \eqref{eq:example18-estiamtes-on-perturbed-instantons} in \autoref{prop:example18-existence-instantons} and the construction in \autoref{prop:pregluing-family}, we obtain that the quotient $\tfrac{A_{t_n,\mathfrak{f}_n}-A_{t_n,\mathfrak{f}_n^\prime}}{\mathfrak{f}_n-\mathfrak{f}_n^\prime}$ converges in $C^{1,\alpha}(K_i)$ to $(\partial_\mathfrak{F} A_{0,\mathfrak{f}_\infty})_{\vert K_i}$. The right-hand side of \eqref{equ:example18_estimate_on_xi_n/f_n} is therefore uniformly bounded and the Arzelà--Ascoli Theorem implies that $\tfrac{\xi_n}{\mathfrak{f}_n-\mathfrak{f}_n^\prime}$ converges (up to taking a subsequence) in $C^1(K_i)$. Taking a diagonal sequence over all $K_i \subset Y_0\setminus S$ we obtain a $\xi_\infty \in \Omega^0(Y_0\setminus  S, \mathfrak{g}_P)$ such that $\xi_n \to \xi_\infty$ in $C^1_{\textup{loc}}(Y_0\setminus S)$. By continuity $\diff_{A_{0,\mathfrak{f}_\infty}} \xi_\infty = \partial_\mathfrak{F} A_{0,\mathfrak{f}_\infty}$ over $Y_0 \setminus S$ and \autoref{prop:example18_Poincare-type-estimate} implies that $\Vert \xi_\infty \VertC{0}{(Y_0\setminus S)} < c$. Integration by parts implies that $\diff_{A_{0,\mathfrak{f}_\infty}} \xi_\infty = \partial_\mathfrak{F} A_{0,\mathfrak{f}_\infty}$ holds over the entire orbifold $Y_0$ in the sense of distributions. Elliptic regularity for the (overdetermined elliptic) operator $\diff_{A_{0,\mathfrak{f}_\infty}}$ then implies that $\xi_\infty \in \Omega^0(Y_0,\mathfrak{g}_P)$ and writing $A_{0,\mathfrak{f}} = A_0 +a_{0,\mathfrak{f}}$ for any $\mathfrak{f}\in \mathfrak{F}$ proves the third statement.
\end{proof}
\begin{remark}
A similar proof shows that the connections $A_{t,\mathfrak{f}}$ for compact $\mathfrak{F}\subset \mathbb{R}\setminus \pi \mathbb{Z}$ are all irreducible (i.e. the only $u\in \mathcal{G}(\SO(\hat{E}_t))$ with $u^*(A_{t,\mathfrak{f}})=A_{t,\mathfrak{f}}$ are $u=\pm \id$) once $t$ is sufficiently small.
\end{remark}

\bibliography{references}{}
\bibliographystyle{alpha}

\end{document}